\documentclass[11pt,letterpaper]{article}
\usepackage[margin=0.9in]{geometry}
\usepackage[utf8]{inputenc} % allow utf-8 input
\usepackage[T1]{fontenc}    % use 8-bit T1 fonts
\usepackage{dsfont}
\usepackage[maxfloats=100]{morefloats}[2015/07/22]% v1.0h
\usepackage{multirow}
\usepackage{array}
\usepackage{framed}
\usepackage{mathrsfs}
\usepackage{wrapfig}
\usepackage[T1]{fontenc}
\usepackage{upgreek}
\usepackage{lmodern}
\usepackage{float}
\usepackage{bm}

\usepackage{booktabs}       % professional-quality tables
\usepackage{amsfonts}       % blackboard math symbols
\usepackage{nicefrac}       % compact symbols for 1/2, etc.
\usepackage{microtype}      % microtypography
\usepackage{enumerate}
\usepackage{lipsum}
\usepackage{mathtools}
\usepackage{cuted}
\usepackage{float}
\usepackage{mdframed}
\usepackage{amssymb}
\usepackage[dvipsnames,table,xcdraw]{xcolor}
\usepackage{bbm}
\usepackage{varioref}
\usepackage{hyperref}
                                
\usepackage{rotating}
\usepackage{graphicx}
\usepackage{subcaption}
\usepackage[normalem]{ulem}

\usepackage{amsthm}
\DeclareUnicodeCharacter{00A0}{~}
\usepackage{algorithm}
\usepackage{algpseudocode}
\usepackage{amsmath}
\usepackage{tikz}
\usepackage{pgfplots}
\pgfplotsset{compat=1.18}
\usetikzlibrary{arrows.meta, positioning}
\usetikzlibrary{decorations.pathreplacing, calc}

\newcommand{\tabincell}[2]{\begin{tabular}{@{}#1@{}}#2\end{tabular}}

\newtheorem{theorem}{Theorem}[section]
\newtheorem{lemma}{Lemma}[section]
\newtheorem{proposition}{Proposition}[section]
\newtheorem{definition}{Definition}[section]
\newtheorem{example}{Example}[section]
\newtheorem{corollary}{Corollary}[section]
\newtheorem{remark}{Remark}[section]
\theoremstyle{plain}
\RequirePackage[capitalize,nameinlink]{cleveref}[0.19]

\def\be{\begin{enumerate}}
\def\ee{\end{enumerate}}

\def\argmin{\mathop{\rm argmin}}

 \newcommand{\remove}[1]{}

\newcommand{\pmat}[1]{\begin{pmatrix} #1 \end{pmatrix}}

\newcommand{\uu}{{\mathbf{u}}}
\newcommand{\vv}{{\mathbf{v}}}
\newcommand{\y}{{\mathbf{y}}}

\def\argmin{\mathop{\rm argmin}}

\def\argmax{\mathop{\rm argmax}}

\definecolor{britishracinggreen}{rgb}{0.0, 0.36, 0.15}

\usepackage[textsize=small]{todonotes}
\usepackage{tcolorbox}

\newcommand{\bxi}{\boldsymbol{\xi}}

 \usepackage{pifont}
 
\crefname{section}{section}{sections}
\crefname{subsection}{subsection}{subsections}
\Crefname{section}{Section}{Sections}
\Crefname{subsection}{Subsection}{Subsections}

\Crefname{figure}{Figure}{Figures}

\crefformat{equation}{\textup{#2(#1)#3}}
\crefrangeformat{equation}{\textup{#3(#1)#4--#5(#2)#6}}
\crefmultiformat{equation}{\textup{#2(#1)#3}}{ and \textup{#2(#1)#3}}
{, \textup{#2(#1)#3}}{, and \textup{#2(#1)#3}}
\crefrangemultiformat{equation}{\textup{#3(#1)#4--#5(#2)#6}}%
{ and \textup{#3(#1)#4--#5(#2)#6}}{, \textup{#3(#1)#4--#5(#2)#6}}{, and \textup{#3(#1)#4--#5(#2)#6}}

\Crefformat{equation}{#2Equation~\textup{(#1)}#3}
\Crefrangeformat{equation}{Equations~\textup{#3(#1)#4--#5(#2)#6}}
\Crefmultiformat{equation}{Equations~\textup{#2(#1)#3}}{ and \textup{#2(#1)#3}}
{, \textup{#2(#1)#3}}{, and \textup{#2(#1)#3}}
\Crefrangemultiformat{equation}{Equations~\textup{#3(#1)#4--#5(#2)#6}}%
{ and \textup{#3(#1)#4--#5(#2)#6}}{, \textup{#3(#1)#4--#5(#2)#6}}{, and \textup{#3(#1)#4--#5(#2)#6}}

\crefdefaultlabelformat{#2\textup{#1}#3}

\title{\Large Stochastic Linesearch Methods for Stochastic Variational Inequality Problems: Parameter-Free Schemes and Non-Monotone Variants
}
\author{Zhuoyu Xiao\footnote{Department of Industrial and Operations Engineering, University of Michigan, Ann Arbor, MI, USA, 48109. Email: zyxiao@umich.edu.} \and Uday V. Shanbhag\footnote{Corresponding author. Department of Industrial and Operations Engineering, University of Michigan, Ann Arbor, MI, USA, 48109. Email: udaybag@umich.edu.} \and Yue Xie\footnote{Department of Mathematics, University of Hong Kong, Hong Kong SAR, China. Email: yxie21@hku.hk.}}

\newmdenv[
  backgroundcolor=gray!15,
  linecolor=white,
  linewidth=0pt,
  innertopmargin=10pt,
  innerbottommargin=10pt,
  innerleftmargin=10pt,
  innerrightmargin=10pt,
]{custombox}

\begin{document}

\maketitle
\thispagestyle{empty}

%%%%%%%%%%%%%%%%%%%%%%%%%%%%%%%%%%%%%%%%%%%%%%%%%%%%%%%%%%%%%%%%%%%%%%%%%%%%%%%%
\begin{abstract}
Much of the existing convergence guarantees and rate statements for stochastic variational inequality (SVI) problems are limited to stochastic approximation schemes that employ constant or diminishing steplength sequences bounded by some reciprocal of the Lipschitz constant $L$ of the map. This requires both knowing such constants and may lead to debilitatingly small steplengths, afflicting the scheme with potentially poor performance. We propose a parameter-free stochastic extragradient method with periodically restarted linesearch (\textbf{SEG-LS}) for solving pseudomonotone and monotone SVI problems. The resulting stepsize sequence is neither monotone nor requires prior knowledge of the Lipschitz constant $L$. We show that the iteration and sample complexities for computing a point whose residual has norm at most $\epsilon$ in expectation are $\mathcal{O}(\epsilon^{-2})$ and $\mathcal{O}(\epsilon^{-4})$, respectively. These results improve to $\mathcal{O}(\epsilon^{-1})$ and $\mathcal{O}(\epsilon^{-2})$ in the monotone setting. In addition, we propose a periodically restarted biased stochastic linesearch method (\textbf{B-SLAM}) for solving non-monotone SVIs under strict copositivity. The resulting iteration and sample complexities are $\mathcal{O}(\epsilon^{-2})$ and $\mathcal{O}(\epsilon^{-4})$, respectively. Preliminary numerics support these findings and display promise.
\end{abstract}

\textbf{Key words.} stochastic variational inequalities, periodically restarted linesearch, extragradient methods, regularized gap function, randomized smoothing

\section{Introduction}\label{Sec-1}

The variational inequality (VI) problem \cite{facchinei-pang-2003} is a broad framework that encompasses a wide range of optimization and equilibrium problems, with applications including convex–concave saddle-point minimax problems in machine learning \cite{kovalev-gasnikov-2022, lin-jin-jordan-2020}, the modeling of imperfectly competitive regimes in electricity markets \cite{hobbs-pang-2007, jing-yuan-smeers-1999}, and routing problems in networks \cite{dafermos-1980, friesz-bernstein-smith-tobin-wie-1993}, among others. Given a closed convex set $X \subseteq \mathbb{R}^n$ and a mapping $F: X \to \mathbb{R}^n$, the variational inequality problem $\mathrm{VI}\:(X,F)$ requires an $x^{\ast} \in X$ such that 
\begin{equation*}
    F(x^{\ast})^{\top}\left(\, x - x^{\ast} \, \right) \, \geq \, 0, \quad \forall \, x \, \in \, X.
\end{equation*}
Our focus lies in the stochastic generalization $\mathrm{SVI}\:(X,F)$ where $F$ is defined as $F(x) \triangleq \mathbb{E} \left[ \Tilde{F}(x, \bxi) \right]$, with $\xi: \Omega \to $ $\mathbb{R}^{m}$ defined on the probability space $(\Omega, \mathcal{F}, \mathbb{P})$ and $\Xi \triangleq \{ \xi(w) \:\vert\: w\in \Omega \}$. Such a problem capture a range of problems, three of which are presented below.

\subsection{Motivating examples}

In what follows, we highlight three representative settings in which SVIs provide a unified framework for machine learning, stochastic optimization, and equilibrium computation.

\begin{example}[Stochastic optimization problems with expectation-valued constraints \cite{akhtar-bedi-rajawat-2021, lan-zhou-2020}]\em
    The class of expectation-constrained stochastic programs capture learning and decision-making problems in which performance is optimized subject to requirements expressed through expected losses or moments. Representative applications include logistic regression with covariance-based fairness constraints \cite{zafar-valera-rogriguez-gummadi-2017} and approximate linear programming for Markov decision processes \cite{lin-nadarajah-soheili-yang-2020}. These applications admit convex formulations with expectation-valued objectives and constraints. We consider the following formulation with smooth convex objective and constraint functions.
    \begin{equation}\label{SOEC}
        \begin{aligned}
            \min_{x \in X} & \;\; f(x) \, \triangleq \, \mathbb{E} \left[ \, \tilde{f}(x,\bxi) \, \right] \\
            \mathrm{s.t.} & \;\; c(x) \, \triangleq \, \mathbb{E} \left[ \, \tilde{c}(x,\bxi) \, \right] \le 0, \quad (\lambda)
        \end{aligned}\tag{SOEC}
    \end{equation}
    where $f : X \to \mathbb{R}$ and $c_i : X \to \mathbb{R}$ are smooth convex functions for $i=1, \hdots, m$. Under some regularity condition, we may show that $z \triangleq (x, \lambda)$ is a KKT point if and only if $z \triangleq (x, \lambda)$ is a solution of $\mathrm{SVI}\:(Z,F)$, where $Z \triangleq X \times \mathbb{R}^{m}_{+}$ and
    \begin{equation*}
        F(x,\lambda)\, \triangleq \, \pmat{\nabla f(x) + \displaystyle \sum_{i=1}^m \lambda_i \nabla c_i(x) \\ -c(x)}.
    \end{equation*}
    Therefore, computing a KKT point of \eqref{SOEC} reduces to solving a monotone SVI problem.
\end{example}

\begin{example}[Stochastic fractional programming \cite{kannan-shanbhag-2019, li-cai-song-ma-2023}]\em
    Fractional programs arise in signal processing and communication networks when performance is measured relative to resource consumption or interference. Their stochastic counterparts account for uncertainty in the quantities defining these ratios \cite{bot-mertikopoulos-staudigl-vuong-2021}. A representative class consists of quadratic fractional models with random numerator coefficients and a positive deterministic affine denominator. When the expected numerator is positive and convex, the resulting objective has a pseudomonotone gradient mapping \cite{bot-mertikopoulos-staudigl-vuong-2021}. To this end, we consider the following expectation-of-ratio formulation.
    \begin{equation}\label{SFP}
        \min_{x \, \in \, X} \;\; f(x) \, \triangleq \, \mathbb{E} \left[ \, \frac{\Tilde{g}(x, \bxi)}{\Tilde{h}(x, \bxi)} \, \right], \tag{SFP}
    \end{equation}
    where $g(x) \triangleq \mathbb{E}[\Tilde{g}(x, \bxi)]$ and $h(x) \triangleq \mathbb{E}[\Tilde{h}(x, \bxi)]$ are strictly positive convex quadratic and linear functions, respectively. By \cite[Lemma 1]{kannan-shanbhag-2019}, it can be known that the optimality condition of \eqref{SFP} is characterized by a pseudomonotone $\mathrm{SVI}\:(X, F)$ problem where $F(x) = \nabla f(x)$.
\end{example}

\begin{example}[Stochastic quasi-Nash equilibrium problems \cite{pang-scutari-2011, xiao-shanbhag-2025-GR}]\em
Stochastic noncooperative games arise naturally in adversarial learning, where competing models optimize coupled expected losses. In generative adversarial networks \cite{gidel-berard-vignoud-vincent-lacoste-julien-2019}, the generator and discriminator form a two-player game whose first-order stationarity conditions admit a variational inequality formulation. Neural-network parameterizations can make the players' objectives nonconvex, so the existence of a pure-strategy Nash equilibrium (NE) is no longer guaranteed. A quasi-Nash equilibrium (QNE) instead captures the players' first-order stationarity conditions and can be characterized through an SVI. We consider the following stochastic $N$-player game.
\begin{equation}\label{SQNE}
    \min_{x_i \, \in \, X_i} \;\; f_i(x_i,x_{-i}) \, \triangleq  \mathbb{E}\left[\, \tilde{f}_i(x_{i}, x_{-i}, \bxi)\, \right], ~ \forall \, i \in [N], \tag{SQNE}
\end{equation}
where each $X_i\, \subseteq \, \mathbb{R}^{n_i}$ is closed and convex, $x_{-i} \, \triangleq \, (x_j)_{j \ne i}$, $X_{-i} \, \triangleq \, \prod_{j \ne i} X_j$, and each $f_i(\bullet, x_{-i}): X_i \to \mathbb{R}$ is smooth but possibly nonconvex. Then a quasi-Nash equilibrium (QNE) of \eqref{SQNE} is a tuple $x^{\ast} = (x^{\ast}_{i})_{i=1}^{N}$ such that for any $i\in [N]$, we have
\begin{equation*}
    \nabla_{x_{i}} f_{i}(x^{\ast}_{i}, x^{\ast}_{-i})^{\top}\left(\, x_{i} - x^{\ast}_{i} \, \right) \, \geq \, 0, ~ \forall \, x_{i} \in X_{i}.
\end{equation*}
Therefore, we may see that $x^{\ast}$ is a QNE if and only if $x^\ast$ is a solution of $\mathrm{SVI}\:(X,F)$ where
\begin{equation*}
    X \, \triangleq \, \prod_{i=1}^N X_i \;\;\text{and}\;\; F(x) \triangleq \pmat{\nabla_{x_1} f_1(x) \\
    \vdots \\
    \nabla_{x_N} f_N(x)}.
\end{equation*}
If each $f_i(\bullet, x_{-i}): X_i \to \mathbb{R}$ is additionally convex, then a QNE reduces to an NE. If $F$ is additionally monotone, a NE can be captured by a solution of the resulting monotone SVI. Furthermore, if each $f_i(\bullet, x_{-i}): X_i \to \mathbb{R}$ is nonsmooth, then an equilibrium of the game can be characterized by a generalized stochastic variational inequality $\mathrm{GSVI}\:(X,F)$, where the map $F$ is a concatenation of the (Clarke) subdifferentials \cite{xiao-2026, xiao-shanbhag-2026-BR} and is therefore a set-valued mapping. In this work, we do not consider such a case and focus exclusively on the single-valued setting.
\end{example}

\subsection{Related work}

This paper is closely related to several influential works in variational inequalities and game-theoretic problems. We review them below. \\

\noindent (I) \textbf{Monotone SVIs.} While deterministic VI problems have seen a comprehensive study over the last 50 years \cite{cottle-pang-stone-2009, facchinei-pang-2003}, there has been relatively less study of the stochastic counterpart in which the map $F$ is expectation-valued. Jiang and Xu \cite{jiang-xu-2008} presented a stochastic approximation framework with asymptotic almost sure convergence guarantees for strongly monotone problems and their variants. Building on the deterministic mirror-prox method \cite{nemirovski-2004}, Juditsky et al. \cite{juditsky-nemirovski-tauvel-2011} developed a stochastic counterpart for monotone SVIs over a compact convex set. Under Lipschitz continuity and an unbiased stochastic oracle, their scheme achieves a sublinear convergence of expected dual gap for the averaged iterate. Further efforts have established asymptotic convergence guarantees for merely monotone settings via iterative regularization \cite{kannan-shanbhag-2012, koshal-nedic-shanbhag-2013}. \\

\noindent (II) \textbf{Pseudomonotone and other non-monotone SVIs.} Pseudomonotone VIs are characterized by maps satisfying pseudomonotonicity, a weaker condition than monotonicity. Stochastic generalizations find relevance in aircraft design \cite{fokin-1998}, the competitive exchange economy \cite{brighi-john-2002}, and product pricing \cite{choi-deSarbo-harker-1990}. Stochastic extragradient schemes have been studied extensively \cite{dang-lan-2015}, with rate statements under strong pseudomonotonicity \cite{kannan-shanbhag-2019} and variance reduction through mini-batches \cite{iusem-jofre-oliveira-thompson-2017}. Yet much of the prior work relies on diminishing or constant steplength schemes requiring knowledge of the Lipschitz constant $L$ of the mapping $F$. Beyond pseudomonotonicity, recent work has investigated non-monotone VIs under the Minty condition \cite{arefizadeh-nedic-2024, hsieh-iutzeler-malick-mertikopoulos-2020, huang-zhang-2024, vankov-nedic-sankar-2023}, weak Minty condition \cite{alacaoglu-kim-wright-2024, bohm-sedlmayer-csetnek-bot-2022, choudhury-gorbunov-loizou-2023, diakonikolas-daskalakis-jordan-2021}, and cohypomonotonicity \cite{cai-alacaoglu-diakonikolas-2023, cai-oikonomou-zheng-2022, cai-zheng-2022, lee-kim-2021}. Non-monotone VIs are also closely related to nonconvex games, whose equilibrium structure often leads naturally to non-monotone operators; recent related developments can be found in \cite{xiao-shanbhag-2025-GR, xiao-2026, xiao-shanbhag-2026-BR}. \\

\noindent (III) \textbf{Stochastic linesearch methods for stochastic optimization and SVIs.} The stochastic linesearch method in \cite{vaswani-mishkin-laradji-schmidt-gidel-lacoste-julien-2019} achieves nonconvex optimization convergence under a strong growth condition and $\mathcal{O}(1/L)$ stepsize bounds. Alternatively, probabilistic oracles enable adaptive stepsizes with expected or high-probability complexity guarantees \cite{berahas-cao-scheinberg-2021, jin-scheinberg-xie-2024, paquette-scheinberg-2020}. A comprehensive comparison between stochastic linesearch methods in optimization can be found in \cite{wang-shanbhag-xie-2025}. Amongst the earliest linesearch schemes for deterministic pseudomonotone VI problems came in the form of an extragradient framework \cite{solodov-svaiter-1999}. A related scheme in the form of a linesearch-enabled hyperplane projection scheme has also been suggested \cite[Chapter 12]{facchinei-pang-2003}. The challenge with incorporating a linesearch in stochastic regimes emerges from a key complication: the resulting steplength is a random variable and may be correlated with the direction employed. This concern was the focus of the study by Iusem et al. \cite{iusem-jofre-oliveira-thompson-2019}, by introducing a variance-reduced EG scheme equipped with a linesearch that does not require prior knowledge of $L$. This scheme achieves an optimal iteration complexity of $\mathcal{O}(\epsilon^{-2})$ and a near-optimal sample complexity of $\mathcal{O}(\epsilon^{-4})$ for attaining a point whose natural map has norm at most~$\epsilon$ in expectation. More recently, Li et al. \cite{li-cai-song-ma-2023} further improved upon \cite{iusem-jofre-oliveira-thompson-2019} by employing different stepsizes in the prediction and correction steps. However, we observe that the underlying analysis in \cite{iusem-jofre-oliveira-thompson-2019} relies on significantly involved and complicated analysis leveraging empirical process theory. Such an avenue may impede generalizability and broader applicability, and a natural question is whether simpler avenues are available. \\

\noindent (IV) \textbf{Randomized smoothing and noisy function evaluations.} Gradient sampling \cite{burke-lewis-overton-2005, kiwiel-2007, kiwiel-2010} and randomized smoothing-based zeroth-order methods \cite{lin-zheng-jordon-2022, marrinan-shanbhag-yousefian-2026} provide two avenues for addressing nonconvex nonsmooth optimization. The latter have also been extended to stochastic MPECs \cite{cui-shanbhag-yousefian-2023} and hierarchical federated optimization \cite{qiu-shanbhag-yousefian-2023}. In related work on noisy function evaluations, Berahas et al.\ \cite{berahas-cao-choromanski-scheinberg-2022} analyze finite-difference, interpolation, and noisy gradient approximations, quantifying the sampling requirements and sampling radii needed for convergence guarantees. Berahas et al. \cite{berahas-cao-scheinberg-2021} further establish convergence rates for a modified linesearch method with bounded function-value noise and inexact, possibly random gradient estimates. These developments provide relevant context for the randomized smoothing-based gradient estimation and stochastic biased linesearch method for non-monotone SVIs.

~

\begin{tcolorbox}
    {\bf Gaps:} (I) Can we develop a parameter-free linesearch-based extragradient scheme for solving $\mathrm{SVI}\:(X,F)$ that avoids reliance on the Lipschitz constant $L$ of $F$ and allows a relatively simple analysis when specifying stepsizes? (II) Can we develop a stochastic gradient-type scheme for solving non-monotone $\mathrm{SVI}\:(X,F)$ beyond pseudomonotonicity? If yes, under what condition can convergence and rate statement be established?
\end{tcolorbox}

\subsection{Outline and contributions}

After providing some preliminaries in Section \ref{Sec-2}, our main schemes are captured in Sections \ref{Sec-3} and \ref{Sec-4}. We provide numerical experiments and concluding remarks in Sections \ref{Sec-5} and \ref{Sec-6}, respectively. Our contributions lie in the development of two stochastic periodically restarted linesearch-based schemes, namely, an extragradient method and a gradient method. We summarize our main results in Table \ref{main-contribution-table}. \\

\noindent (I) \textbf{Periodically restarted linesearch.} The periodically restarted linesearch mechanism was first introduced in \cite{wang-shanbhag-xie-2025} for stochastic optimization, motivated by the limitations of existing stochastic linesearch schemes that often require non-increasing steplength sequences or impose restrictive upper bounds of order $\mathcal{O}(1/L)$ on the initial steplength. Specifically, the iteration horizon is divided into a sequence of periods. At the beginning of each period, the initial steplength for the backtracking linesearch is reset to a prescribed maximum value, while within each period, the initial steplength is set to the accepted steplength from the prior iteration. Consequently, the steplength sequence is non-increasing within each period but may increase when a new period begins; see Figure~\ref{period-example} for an illustration. This mechanism allows the scheme to periodically explore using a larger steplength while retaining the adaptation to the local problem geometry achieved by the linesearch within each period. As a result, the generated steplength sequence need not be globally non-increasing and requires neither prior knowledge of the Lipschitz constant nor restrictive bounds of order $\mathcal{O}(1/L)$ on the initial steplength. In this work, we extend this idea to stochastic variational inequality problems.

\begin{figure}[t]
\centering
\begin{tikzpicture}[>=Stealth, x=1.35cm, y=1.1cm, scale=0.9, transform shape]

    \draw[->, thick] (0,0) -- (10.5,0) node[below left, font=\LARGE] {$k$};
    \draw[->, thick] (0,0) -- (0,5.0) node[below left, font=\LARGE] {$t_k$};
    
    \draw[thin] (0,4.0) -- (10.0,4.0) node[left, pos=0, font=\Large, xshift=-2pt] {$t^{\mathrm{upp}}$};
    \draw[thin] (0,1.1) -- (10.0,1.1) node[left, pos=0, font=\Large, xshift=-2pt] {$t^{\mathrm{low}}$};

    \draw[thick, dotted] (0.3, 2.0) -- (2.0, 2.0);
    \draw[thick, dotted] (8.3, 2.0) -- (10.0, 2.0);

    \def\xstart{2.3}
    \def\yvals{4.0, 3.8, 3.5, 3.2, 2.8, 2.5, 2.0, 1.7}
    \def\stepw{0.35}
    
    \draw[thick] (\xstart, 0) -- (\xstart, 4.0);
    
    \foreach \y [count=\i] in \yvals {
        \pgfmathsetmacro{\xcurr}{\xstart + (\i-1)*\stepw}
        \pgfmathsetmacro{\xnext}{\xstart + \i*\stepw}
        
        \draw[thick, dashed] (\xcurr, \y) -- (\xnext, \y);
        
        \draw[thin, densely dotted] (\xcurr, \y) -- (\xcurr, 0);
        
        \ifnum\i<8
            \pgfmathsetmacro{\ynext}{{\yvals}[\i]}
            \draw[thick] (\xnext, \y) -- (\xnext, \ynext);
        \fi
    }

    \draw[thick] ({\xstart + 8*\stepw}, 0) -- ({\xstart + 8*\stepw}, 4.0);

    \def\xstarttwo{5.1}
    \def\yvalstwo{4.0, 3.8, 3.5, 3.2, 2.8, 2.5, 2.0, 1.4}
    
    \foreach \y [count=\i] in \yvalstwo {
        \pgfmathsetmacro{\xcurr}{\xstarttwo + (\i-1)*0.35}
        \pgfmathsetmacro{\xnext}{\xstarttwo + \i*0.35}
        
        \draw[thick, dashed] (\xcurr, \y) -- (\xnext, \y);
        
        \ifnum\i<8
            \pgfmathsetmacro{\ynext}{{\yvalstwo}[\i]}
            \draw[thick] (\xnext, \y) -- (\xnext, \ynext);
        \fi
        
        \draw[thin, densely dotted] (\xcurr, \y) -- (\xcurr, 0);
    }

    \draw[thick] ({\xstarttwo + 8*0.35}, 0) -- ({\xstarttwo + 8*0.35}, 4.0);

    \draw[<->, >=Latex, thick] (\xstart, 4.4) -- (5.1, 4.4);
    \node[above, font=\huge, fill=white, inner sep=2pt] at ({( \xstart + 5.4 ) / 2}, 4.4) {$p_j$};
    
    \draw[<->, >=Latex, thick] (5.1, 4.4) -- ({\xstarttwo + 8*0.35}, 4.4);
    \node[above, font=\huge, fill=white, inner sep=2pt] at ({( 5.4 + \xstarttwo + 8*0.35 ) / 2}, 4.4) {$p_{j+1}$};

    \node[font=\normalsize, anchor=south] (period_label) at (2.0, 5.0) {period $j$};
    \draw[thin, ->] (period_label.south) to[out=-60, in=155] (3.5, 4.5);

\end{tikzpicture}
\caption{The reset of the initial stepsize according to the period sequence $\{p_{j}\}$.}
\label{period-example}
\end{figure}

~

\noindent (II) \textbf{A stochastic extragradient method with linesearch.} Existing studies on extragradient methods for solving SVIs either rely on the Lipschitz constant \cite{iusem-jofre-oliveira-thompson-2017} or require a more involved analysis \cite{iusem-jofre-oliveira-thompson-2019}. In this paper, we draw inspiration from the recent study \cite{wang-shanbhag-xie-2025} on a linesearch-based SGD method for stochastic nonconvex smooth optimization, and propose a \emph{parameter-free} stochastic extragradient method with linesearch (\textbf{SEG-LS}). Our algorithm possesses two main features. One is the mini-batch sampling technique to handle stochasticity. The other is the periodical reset mechanism. We reset the initial stepsize for backtracking linesearch at the beginning of each cycle, while the initial stepsize is set to the accepted stepsize from the prior iteration within each cycle; see Figure \ref{period-example} for an illustration. In Section \ref{Sec-3}, we discuss the performances of \textbf{SEG-LS} on two classes of SVIs. (II-A) \emph{Pseudomonotone SVIs.} In contrast to Iusem et al. \cite{iusem-jofre-oliveira-thompson-2019}, our work incorporates a periodically restarted linesearch mechanism without prior knowledge of $L$, while the accompanying analysis is significantly simpler and relies only on standard conditioning arguments. The resulting scheme is characterized by an iteration complexity of $\mathcal{O}(\epsilon^{-2})$ and a sample complexity of $\mathcal{O}(\epsilon^{-4})$ for computing an $\epsilon$-solution, in terms of the expected norm of a suitably defined residual function. (II-B) \emph{Monotone SVIs.} Under the monotonicity, we adopt the expected dual gap function as the convergence measure and show that the iteration and sample complexities are improved to $\mathcal{O}(\epsilon^{-1})$ and $\mathcal{O}(\epsilon^{-2})$, respectively. Notably, the resulting linesearch scheme allows for non-monotone steplength sequences that require neither knowledge of $L$ nor necessitate imposing restrictive bounds of $\mathcal{O}(1/L)$ on the initial steplength.

~

\noindent (III) \textbf{A biased stochastic gradient method with linesearch.} In Section \ref{Sec-4}, we consider a more general class of non-monotone SVIs beyond pseudomonotonicity and propose a biased stochastic linesearch method (\textbf{B-SLAM}). Unlike in Section \ref{Sec-3}, our stochastic gradient method is applied to a regularized gap function rather than the map $F$ itself. The main technical challenge lies in the inexactness of the lower-level solution, which makes unbiased function and gradient oracles unavailable. To address this, we employ the randomized smoothing and develop a stochastic biased Armijo linesearch procedure that performs backtracking using biased function and zeroth-order gradient oracles. Unlike the unbiased setting in Section \ref{Sec-3}, the biased linesearch procedure requires knowledge of $L$ to explicitly set the stepsize lower bound $t^{\mathrm{low}}$. We also show that it terminates in finite time with probability one by incorporating a multiple-batch setting. We show that, under appropriate parameter choices, the method converges to a solution of the SVI under strict copositivity. The iteration and sample complexities are $\mathcal{O}(\epsilon^{-2})$ and $\mathcal{O}(\epsilon^{-4})$, respectively. It should be noted, however, that a limitation of this method is the requirement of prior knowledge of the Lipschitz constant and Lipschitz smoothness constant of the regularized gap function. \\

\begin{table}[!htb]
  \centering
  \footnotesize
  {\renewcommand{\arraystretch}{1.6}
  \begin{tabular}{|c|c|c|c|c|}
    \hline
    \textbf{scheme} & \textbf{parameter-free} & \textbf{condition} & \textbf{iteration complexity} & \textbf{sample complexity} \\
    \hline
    \hline
    
    \tabincell{c}{\textbf{SEG-LS}}
    &
    \tabincell{c}{\ding{51}}
    &
    \tabincell{c}{pseudomonotonicity}
    &
    \tabincell{c}{%
    $\mathcal{O}(\epsilon^{-2})$ (Theorem \ref{SEG-main-theorem})%
    }
    &
    \tabincell{c}{%
    $\mathcal{O}(\epsilon^{-4})$ (Theorem \ref{SEG-main-theorem})%
    }
    \\
    \hline

    \tabincell{c}{\textbf{SEG-LS}}
    &
    \tabincell{c}{\ding{51}}
    &
    \tabincell{c}{monotonicity}
    &
    \tabincell{c}{%
    $\mathcal{O}(\epsilon^{-1})$ (Theorem \ref{SEG-monotone-main-theorem})%
    }
    &
    \tabincell{c}{%
    $\mathcal{O}(\epsilon^{-2})$ (Theorem \ref{SEG-monotone-main-theorem})%
    }
    \\
    \hline

    \tabincell{c}{\textbf{B-SLAM}}
    &
    \tabincell{c}{\ding{55}}
    &
    \tabincell{c}{strict copositivity}
    &
    \tabincell{c}{%
    $\mathcal{O}(\epsilon^{-2})$ (Theorem \ref{slam-main-result})%
    }
    &
    \tabincell{c}{%
    $\mathcal{O}(\epsilon^{-4})$ (Theorem \ref{slam-main-result})%
    }
    \\
    \hline
    
  \end{tabular}}

  \caption{A summary of main results.}
  \label{main-contribution-table}
\end{table}

\noindent\textbf{Notation.} $\Pi_{X}[x]$ denotes the Euclidean projection of $x$ onto set $X$. The expectation and a realization of a random variable $\bxi$ are denoted by $\mathbb{E}[\bxi]$ and $\xi$, respectively. The notation ${\bf 1}_X(x)$ denotes the indicator function of $X$, i.e., ${\bf 1}_{X}(x) = 1$ if $x\in X$ and $0$ otherwise. Let $\mathbf{I}_{n} \in \mathbb{R}^{n\times n}$ and $\mathbf{1}_{n} \in \mathbb{R}^{n\times 1}$ be the identity matrix and the vector of all ones, respectively.

\section{Preliminaries}\label{Sec-2}
We first recall a sufficient guarantee for the existence of a solution of $\mathrm{SVI}\:(X,F)$.

\begin{theorem}[Existence of solution \cite{ravat-shanbhag-2011}]\em
    Consider $\mathrm{SVI}\:(X,F)$ where $X$ is closed and convex. Then the solution set $\mathrm{SOL}\:(X, F)$ is nonempty if (i) or (ii) holds: (i) If $X$ is compact; (ii) If there exists $x^{\mathrm{ref}}\in X$ such that $\liminf\limits_{\|x\|\to\infty, x\in X} \Tilde{F}(x, \xi)^{\top}(x-x^{\mathrm{ref}})\geq 0$ holds almost surely. $\hfill \Box$
\end{theorem}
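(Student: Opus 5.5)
The plan is to reduce both parts to the classical existence theory for deterministic VIs with continuous maps. I will assume, as is implicit, that $F(x)=\mathbb{E}[\Tilde F(x,\bxi)]$ is well defined and continuous on $X$. This follows, for instance, if $\Tilde F(\cdot,\xi)$ is continuous almost surely and dominated by an integrable function, so that dominated convergence applies.

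\textbf{Part (i).} Use Brouwer's fixed point theorem. The natural map $x\mapsto \Pi_X[x-F(x)]$ is a continuous self-map of the compact convex set $X$, so it has a fixed point $x^\ast$. The variational characterization of the projection, $(y-\Pi_X[y])^\top(x-\Pi_X[y])\le 0$ for all $x\in X$, applied with $y=x^\ast-F(x^\ast)$, gives $F(x^\ast)^\top(x-x^\ast)\ge 0$ for all $x\in X$. Hence $x^\ast\in\mathrm{SOL}(X,F)$.

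\textbf{Part (ii).} Use truncation plus a coercivity argument.
\begin{itemize}
\item For $R>\|x^{\mathrm{ref}}\|$, set $X_R\triangleq X\cap\{\|x\|\le R\}$. This set is compact, convex and nonempty, so part (i) gives $x_R\in \mathrm{SOL}(X_R,F)$.
\item By the standard localization lemma (Facchinei--Pang, Prop.~2.2.8), it suffices to find some $R$ with $\|x_R\|<R$. Indeed, $X$ then coincides with $X_R$ near $x_R$. For any $x\in X$, a small convex combination $x_R+\tau(x-x_R)$ lies in $X_R$, so $F(x_R)^\top(x-x_R)\ge0$, and $x_R$ solves the original problem.
\item To produce such an $R$, pass the almost-sure condition to $F$. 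I would need a strict or uniform form of it, say $\Tilde F(x,\xi)^\top(x-x^{\mathrm{ref}})>0$ for all $\|x\|\ge R_0$ almost surely, with enough integrability to take expectations (Fatou's lemma applied to the negative parts). This yields $F(x)^\top(x-x^{\mathrm{ref}})>0$ for all $x\in X$ with $\|x\|\ge R_0$.
\item Suppose now that $\|x_R\|=R\ge R_0$. Taking $x=x^{\mathrm{ref}}\in X_R$ in the VI for $x_R$ gives $F(x_R)^\top(x^{\mathrm{ref}}-x_R)\ge0$, which contradicts the previous step. Hence $\|x_R\|<R$, and the localization step finishes the proof.
\end{itemize}

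\textbf{Main obstacle.} The hard part is the passage from the liminf condition on $\Tilde F$ to a usable condition on $F$.
\begin{itemize}
\item As literally stated, the condition is a non-strict liminf holding pathwise. It does not directly give strict positivity of $\mathbb{E}[\Tilde F(x,\bxi)]^\top(x-x^{\mathrm{ref}})$ for all large $x$.
\item One needs uniformity in $\xi$ to exchange the liminf and the expectation, via Fatou with an integrable lower bound. One also needs to handle the non-strict boundary case.
\end{itemize}
I would first follow Ravat--Shanbhag and interpret the hypothesis as the standard coercivity condition, namely eventual strict positivity holding almost surely with integrable minorants. If only the non-strict version is available, I would instead perturb to $F_\epsilon(x)=F(x)+\epsilon(x-x^{\mathrm{ref}})$. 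This map satisfies the strict condition, so the argument above gives solutions $x_\epsilon$. The remaining step is to show that $\{x_\epsilon\}$ stays bounded, extract a limit point, and use continuity of $F$ to pass to a solution of $\mathrm{SVI}(X,F)$. This boundedness is exactly where the argument may fail without further assumptions.
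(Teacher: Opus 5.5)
The paper gives no proof of this statement; it is quoted from Ravat--Shanbhag. Your part (i) is correct: Brouwer's theorem applied to the natural map, with continuity of $F$ assumed, which is genuinely needed. Your truncation-plus-localization argument for (ii) is also correct under the strengthened hypothesis you adopt, and that is the standard route for this kind of coercivity result.

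The piece you leave open cannot be closed, because part (ii) is false with the non-strict inequality $\geq 0$. This holds even for a deterministic, monotone, $1$-Lipschitz map.
\begin{itemize}
\item Take $X=[0,\infty)$, $\tilde F(x,\xi)\equiv F(x)=-e^{-x}$ and $x^{\mathrm{ref}}=0$.
\item Then $\tilde F(x,\xi)(x-x^{\mathrm{ref}})=-xe^{-x}\to 0$, so the liminf equals $0$ surely.
\item Yet a solution $x^\ast$ would need $-e^{-x^\ast}(x-x^\ast)\ge 0$, i.e.\ $x\le x^\ast$, for every $x\ge 0$, which is impossible.
\end{itemize}
Your perturbed problems here have the unique solution $x_\epsilon$ with $x_\epsilon e^{x_\epsilon}=1/\epsilon$, which diverges as $\epsilon\downarrow 0$. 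So boundedness is exactly where the argument breaks, as you suspected, and it breaks unavoidably. At a minimum the inequality must be strict, as in the standard coercivity condition.

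Your insistence on uniformity or integrable minorants when passing from $\tilde F$ to $F$ is also necessary. Let $\bxi\sim\mathrm{Exp}(1)$, $\psi(t)\triangleq\max\{0,1-|t|\}$, $m(x)\triangleq\mathbb{E}[\psi(x-\bxi)]>0$, and $\tilde F(x,\xi)\triangleq 1-\frac{1+e^{-x}}{m(x)}\psi(x-\xi)$. Then $\tilde F(x,\xi)\,x=x$ whenever $x>\xi+1$, so even the strict liminf holds surely, while $F(x)=-e^{-x}$ once more.

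Your perturbation device is still useful: it shows that strictness can be weakened to \emph{eventual} nonnegativity, i.e.\ $F(x)^\top(x-x^{\mathrm{ref}})\ge 0$ for all $x\in X$ with $\|x\|\ge R_0$.
\begin{itemize}
\item Fix $R>\max\{R_0,\|x^{\mathrm{ref}}\|\}$ and set $F_\epsilon(x)\triangleq F(x)+\epsilon(x-x^{\mathrm{ref}})$.
\item On $\|x\|=R$ one has $F_\epsilon(x)^\top(x-x^{\mathrm{ref}})\ge\epsilon\|x-x^{\mathrm{ref}}\|^2>0$.
\item Your truncation argument then yields $x_\epsilon\in\mathrm{SOL}(X,F_\epsilon)$ with $\|x_\epsilon\|<R$.
\item These points lie in a fixed compact set, and passing to the limit in $F_\epsilon(x_\epsilon)^\top(y-x_\epsilon)\ge 0$ for each $y\in X$ shows that any limit point solves $\mathrm{SVI}(X,F)$.
\end{itemize}
So the true dividing line is ``eventually $\ge 0$'' (sufficient) versus ``$\liminf\ge 0$'' (insufficient).
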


Next we provide some mathematical preliminaries. In particular, we will present two gap functions as solution measures of $\mathrm{SVI}\:(X, F)$ in subsection.

\subsection{Dual gap function and regularized gap function}\label{Sec-2.1}

Different from \cite{facchinei-pang-2003} in which the gap function is employed, here we consider the dual gap function \cite{facchinei-pang-2003,juditsky-nemirovski-tauvel-2011}. Under monotonicity and continuity, a feasible zero of the dual gap function is a solution to $\mathrm{SVI}\:(X,F)$, and vice versa.

\begin{definition}[Dual gap function \mbox{\cite[p. 18]{juditsky-nemirovski-tauvel-2011}}]
    Suppose $F: X \to \mathbb{R}^{n}$. Then the dual gap function, denoted by $\theta$, is defined as
    \begin{equation}\label{weak-gap-function}
        \theta(x) \triangleq \sup_{y\in X} F(y)^{\top} (x-y).
    \end{equation}
\end{definition}

\begin{proposition}[\mbox{\cite[p. 18]{juditsky-nemirovski-tauvel-2011}}]\label{weak-gap-function-property}\em
    Consider $\mathrm{SVI}\:(X,F)$ where $X$ is closed and convex. Suppose that $F$ is monotone and continuous. For $x\in X$, we have that $\theta(x) = 0$ if and only if $x \in \mathrm{SOL}\:(X,F)$. $\hfill \Box$
\end{proposition}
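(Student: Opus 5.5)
The plan is to prove the two implications separately. Throughout, fix $x \in X$ and note that taking $y = x$ in the supremum defining \eqref{weak-gap-function} gives $\theta(x) \geq F(x)^{\top}(x-x) = 0$. So for feasible $x$ the statement $\theta(x) = 0$ is equivalent to $\theta(x) \le 0$, that is,
\[
F(y)^{\top}(y - x) \geq 0 \quad \text{for all } y \in X .
\]
This says precisely that $x$ solves the Minty (dual) variational inequality. The proposition therefore reduces to the classical equivalence between Stampacchia and Minty solutions under monotonicity and continuity.

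\emph{Solution implies zero gap.} Let $x \in \mathrm{SOL}\:(X,F)$ and take any $y \in X$. Monotonicity gives $(F(y) - F(x))^{\top}(y - x) \geq 0$, hence
\[
F(y)^{\top}(y-x) \geq F(x)^{\top}(y-x) \geq 0 ,
\]
where the last inequality is the definition of a solution. Therefore $F(y)^{\top}(x-y) \le 0$ for every $y \in X$, so $\theta(x) \le 0$. Combined with $\theta(x) \geq 0$, this yields $\theta(x) = 0$. Only monotonicity is used in this direction.

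\emph{Zero gap implies solution (Minty's lemma).} Suppose $\theta(x) = 0$, so that $F(y)^{\top}(y-x) \geq 0$ for all $y \in X$. Fix an arbitrary $z \in X$ and, for $t \in (0,1]$, set $y_t = x + t(z - x)$. By convexity of $X$, $y_t \in X$. Substituting $y_t$ gives $t\,F(y_t)^{\top}(z-x) \geq 0$, and dividing by $t > 0$ gives $F(y_t)^{\top}(z - x) \geq 0$. As $t \downarrow 0$ we have $y_t \to x$ within $X$, so continuity of $F$ yields $F(x)^{\top}(z-x) \geq 0$. Since $z \in X$ was arbitrary, $x \in \mathrm{SOL}\:(X,F)$.

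The only delicate point is this last limiting step. It needs continuity of $F$ along segments in $X$ (hemicontinuity would suffice), and it needs $y_t$ to remain feasible, which is exactly where convexity of $X$ enters. Closedness of $X$ is not needed, since $x$ is assumed to lie in $X$. I do not expect further obstacles: monotonicity drives the forward direction and continuity with convexity drives the converse.
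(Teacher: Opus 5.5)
Your proof is correct and complete. It uses $y=x$ to get $\theta(x)\ge 0$, monotonicity for the forward direction, and Minty's segment argument with $y_t = x + t(z-x)$ plus continuity for the converse. The paper gives no proof of its own and simply defers to Juditsky et al., where the result rests on exactly this Stampacchia--Minty equivalence. Your side remarks are also accurate: hemicontinuity along segments would suffice, and closedness of $X$ is never used.
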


Another concept that has been used for solving non-monotone VIs is the regularized gap function, which is defined below.

\begin{definition}[Regularized gap function \mbox{\cite[Definition 10.2.2]{facchinei-pang-2003}}]
 Suppose $F: X \to \mathbb{R}^{n}$ and $c > 0$. Then the associated regularized gap function, denoted by $\theta_{c}$, is defined as
    \begin{equation}\label{regularized-gap-function}
        \theta_{c}(x) \triangleq \sup_{y \in X} \left( F(x)^{\top} (x-y) - \frac{c}{2}(x-y)^{\top} (x-y) \right).
    \end{equation}
\end{definition}

\begin{lemma}[\mbox{\cite[Proposition 3.1]{li-ng-2009}} and \mbox{\cite[Theorem 10.2.3]{facchinei-pang-2003}}]\label{lemma-regularized-gap-function}\em
    Consider $\mathrm{SVI}\:(X,F)$ where $X$ is closed and convex and the associated regularized gap function $\theta_{c}(x)$. Then the following hold.
    
    \noindent (i) For any $x\in X$, there exists a unique vector $y_{c}(x)\in X$ at which the supremum in \eqref{regularized-gap-function} is attained at $y_{c}(x) = \Pi_{X}[x-\frac{1}{c}F(x)]$, based on
    \begin{align}
        y_{c}(x) \, &\triangleq \, \argmax_{y\in X} \, \left(\, F(x)^\top (x-y) - \tfrac{c}{2}(x-y)^\top (x-y) \,\right) \,\label{yc-x-1} \\
        &= \, \argmin_{y\in X} \, \mathbb{E}_{\bxi} \left[ \underbrace{\Tilde{F}(x, \bxi)^\top (y-x) + \tfrac{c}{2}(y-x)^\top (y-x)}_{\triangleq \, \widehat{F}(x, \, y, \, \bxi) } \right]. \label{yc-x-2}
    \end{align}
    
    \noindent (ii) $y_{c}$ and $\theta_{c}$ are continuous on $\mathbb{R}^{n}$ and $\theta_{c}(x)\geq 0$ for all $x\in X$.

    \noindent (iii) $\left[\, x\, \in \, X,\:\theta_{c}(x)\, =\, 0\, \right] \, \iff \,  \left[\, x = y_{c}(x)\, \right] \, \iff \, \left[\, x\, \in \, \mathrm{SOL}\:(X,F) \, \right]$.

    \noindent (iv) If $F$ is locally Lipschitz on $\mathbb{R}^{n}$, then $y_{c}$ and $\theta_{c}$ are also locally Lipschitz on $\mathbb{R}^{n}$.
    
    \noindent (v) If $F$ is $\mathrm{C}^1$, then $\theta_{c}$ is $\mathrm{C}^1$ and $\nabla \theta_{c}(x)$ can be expressed as  
    \begin{equation}\label{regularized-gap-function-gradient}
        \nabla\theta_{c}(x) = F(x) + \mathbf{J} F(x)^\top (x-y_{c}(x)) - c(x-y_{c}(x)),
    \end{equation}
    where $\mathbf{J} F(x)$ denotes the Jacobian of $F$ at $x$. $\hfill \Box$
\end{lemma}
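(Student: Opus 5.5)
The plan is to reduce every part of the lemma to properties of the strongly concave inner problem in \eqref{regularized-gap-function}, written for fixed $x$ as $\phi_x(y) \triangleq F(x)^\top(x-y) - \tfrac{c}{2}\|x-y\|^2$.

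For (i), we complete the square. This gives $\phi_x(y) = \tfrac{1}{2c}\|F(x)\|^2 - \tfrac{c}{2}\|y - (x - \tfrac1c F(x))\|^2$. Maximizing $\phi_x$ over the closed convex set $X$ is therefore the same as projecting $x-\tfrac1c F(x)$ onto $X$, and the projection exists and is unique. For \eqref{yc-x-2}, we write $\widehat F(x,y,\bxi)$ and interchange expectation with the linear term, using $F(x)=\mathbb{E}[\tilde F(x,\bxi)]$. The expectation becomes $-\phi_x(y)$, so its argmin coincides with the argmax of $\phi_x$.

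For (ii), $y_c(x)=\Pi_X[x-\tfrac1cF(x)]$ is a composition of continuous maps, since the projection is nonexpansive and $F$ is continuous on its domain; we assume $F$ is extended to $\mathbb{R}^n$ as the statement requires. Hence $y_c$ is continuous. Next, $\theta_c(x)=\phi_x(y_c(x))$ is continuous jointly in $(x,y)$ composed with $y_c$. For $x\in X$ we may take $y=x$, which gives $\theta_c(x)\ge\phi_x(x)=0$.

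For (iii), the projection characterization $(x-\tfrac1cF(x)-y_c)^\top(z-y_c)\le0$ for all $z\in X$ shows that $x=y_c(x)$ holds iff $F(x)^\top(z-x)\ge0$ for all $z\in X$, i.e. iff $x\in\mathrm{SOL}$. For the gap condition, strong concavity gives $\theta_c(x)=\phi_x(y_c)\ge \phi_x(x)+\tfrac c2\|x-y_c\|^2=\tfrac c2\|x-y_c\|^2$ for $x\in X$. So $\theta_c(x)=0$ forces $x=y_c(x)$. Conversely, $y_c=x$ gives $\theta_c=\phi_x(x)=0$.

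For (iv), nonexpansiveness of the projection gives $\|y_c(x)-y_c(x')\|\le\|x-x'\|+\tfrac1c\|F(x)-F(x')\|$, so $y_c$ is locally Lipschitz. Then $\theta_c$ is a polynomial expression in $x$, $F(x)$ and $y_c(x)$, all of which are locally Lipschitz and hence locally bounded, so $\theta_c$ is locally Lipschitz.

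Part (v) is the main obstacle, because $y_c$ is generally not differentiable. We apply Danskin's theorem to $\theta_c(x)=\max_{y\in X}\phi_x(y)$. The maximizer is unique, so $\theta_c$ is differentiable with $\nabla\theta_c(x)=\nabla_x\phi_x(y)|_{y=y_c(x)}$. Differentiating $\phi_x$ in $x$ yields $\mathbf{J}F(x)^\top(x-y)+F(x)-c(x-y)$, which is \eqref{regularized-gap-function-gradient}. This gradient is continuous because $y_c$ is.

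To apply Danskin properly, the maximization must be restricted to a compact set. We use the fact that $y_c$ is continuous, so it is bounded on a neighbourhood of $x$. Alternatively, we can verify the derivative directly. Plugging $y_c(x)$ into $\phi_{x'}$ gives the lower bound $\theta_c(x')\ge\phi_{x'}(y_c(x))$. Plugging $y_c(x')$ into $\phi_x$ gives a matching upper bound, and continuity of $y_c$ then shows the remainder is $o(\|x'-x\|)$.
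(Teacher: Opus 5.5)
Your proof is correct. The paper does not prove this lemma; it only cites Li--Ng and Facchinei--Pang. Your argument is a sound self-contained version of the standard proof:
\begin{itemize}
\item completing the square identifies $y_c(x)=\Pi_X[x-\tfrac1cF(x)]$;
\item the variational characterization of the projection gives $x=y_c(x)\iff x\in\mathrm{SOL}(X,F)$;
\item the quadratic-growth bound $\theta_c(x)\ge\tfrac c2\|x-y_c(x)\|^2$ on $X$ settles the gap equivalence in (iii);
\item nonexpansiveness of $\Pi_X$ gives (iv);
\item uniqueness of the maximizer gives (v), via either Danskin's theorem on a compact truncation or your direct two-sided bound, and you rightly avoid differentiating $y_c$.
\end{itemize}

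Two small remarks. First, you were right to state explicitly that $F$ must be defined and continuous on all of $\mathbb{R}^n$. The lemma as written never assumes continuity, yet (ii) is false without it: for $X=\mathbb{R}^n$ one has $y_c(x)=x-\tfrac1cF(x)$. So you are supplying a hypothesis that the cited Facchinei--Pang theorem carries, not using one the statement grants. Second, (v) has a slightly shorter route. Write $\theta_c(x)=\tfrac1{2c}\|F(x)\|^2-\tfrac c2\,\mathrm{dist}^2\bigl(x-\tfrac1cF(x),X\bigr)$ and use that $\tfrac12\mathrm{dist}^2(\cdot,X)$ is $C^1$ with gradient $z-\Pi_X[z]$. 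The chain rule then gives the gradient formula in (v) directly, with no compactness or Danskin argument.
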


By \eqref{yc-x-1}, we may observe that the regularized gap function $\theta_{c}$ in \eqref{regularized-gap-function} can be written as
\begin{equation*}
    \theta_{c}(x)\, = \, F(x)^\top (x-y_{c}(x)) - \frac{c}{2}(x-y_{c}(x))^{\top}(x-y_{c}(x)).
\end{equation*}
The regularized gap function $\theta_{c}$ may well lose convexity. Suppose that $\mathbf{J}F(x)$ exists, implying that $\theta_{c}$ is C$^{1}$. By Lemma \ref{lemma-regularized-gap-function} (ii)-(iii), our goal lies in finding a feasible zero of $\theta_{c}$, which effectively requires globally minimizing $\theta$ over the set $X$. However, gradient-based methods will \emph{at best} provide guarantees for computing a stationary point of $\theta_{c}$, i.e., $0\in \nabla \theta_{c}(x) + {\cal N}_{X}(x)$ holds, where ${\cal N}_{X}(x)$ is the normal cone to $X$ at $x$. But under some conditions, a stationary point of $\theta_{c}$ on $X$ can indeed  be shown to be a feasible zero. We elaborate on such a condition below after recalling the definition of strict copositivity on a cone.

\begin{definition}[\mbox{\cite[Definition 2.5.1]{facchinei-pang-2003}}]
    We say a matrix $M$ is strictly copositive on a cone $C$ if for all $x\in C\setminus \{0\}$ we have $x^{\top}Mx>0$.
\end{definition}

\begin{lemma}[\mbox{\cite[Corollary 10.2.7]{facchinei-pang-2003}}]\label{copositivity_2}\em
    Consider $\mathrm{SVI}\:(X,F)$ where $X$ is closed and convex and $F$ is $\mathrm{C}^1$ on an open set containing $X$. Suppose $x$ is a stationary point of $\theta_{c}$ on $X$, i.e., $0\in \nabla \theta_{c}(x) + \mathcal{N}_{X}(x)$. If $\mathbf{J} F(x)$ is strictly copositive on $\mathcal{T}_{X}(x)\cap (-F(x))^{\ast}$ for any $x\in X$, where $\mathcal{T}_{X}(x)$ denotes the tangent cone to $X$ at $x$ and the dual cone $(-F(x))^{\ast}$ is defined as $(-F(x))^{\ast} \triangleq \{ d \,\vert\, d^{\top}F(x)\leq 0 \}$, then $x$ is an isolated solution of $\mathrm{SVI}\:(X,F)$. $\hfill \Box$
\end{lemma}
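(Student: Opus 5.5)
The plan is to work directly from the stationarity condition and the variational characterization of $y_{c}(x)$, showing that $d \triangleq y_{c}(x) - x$ must vanish. By Lemma \ref{lemma-regularized-gap-function}(iii), $d = 0$ gives $x \in \mathrm{SOL}\:(X,F)$.

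\emph{Two variational inequalities.} Since $y_{c}(x) = \Pi_{X}[x - \tfrac{1}{c}F(x)]$, the projection inequality gives
\[
\bigl(F(x) + c(y_{c}(x)-x)\bigr)^{\top}(z - y_{c}(x)) \ge 0 \quad \forall z \in X .
\]
Taking $z = x$ yields $F(x)^{\top} d \le -c\|d\|^2 \le 0$, so $d \in (-F(x))^{\ast}$. Moreover $d = y_{c}(x) - x \in \mathcal{T}_{X}(x)$ because $X$ is convex and $y_{c}(x)\in X$. Stationarity, $0 \in \nabla\theta_{c}(x) + \mathcal{N}_{X}(x)$, means $\nabla\theta_{c}(x)^{\top}(z-x) \ge 0$ for all $z\in X$. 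Taking $z = y_{c}(x)$ and using \eqref{regularized-gap-function-gradient} with $x - y_{c}(x) = -d$ gives
\[
F(x)^{\top} d - d^{\top}\mathbf{J}F(x) d + c\|d\|^2 \ge 0 .
\]

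\emph{Combining.} Substituting $F(x)^{\top}d \le -c\|d\|^2$ into the last inequality yields $-d^{\top}\mathbf{J}F(x)d \ge 0$, that is, $d^{\top}\mathbf{J}F(x)d \le 0$. Since $d \in \mathcal{T}_{X}(x)\cap(-F(x))^{\ast}$, strict copositivity of $\mathbf{J}F(x)$ on this cone forces $d = 0$. Hence $x = y_{c}(x)$, and Lemma \ref{lemma-regularized-gap-function}(iii) gives $x \in \mathrm{SOL}\:(X,F)$.

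\emph{Isolatedness.} This is the step I expect to be the main obstacle, and it is the only part not covered by the computation above. Suppose, for contradiction, that solutions $x^{k}\to x$ with $x^{k}\neq x$ exist, and set $d^{k} = (x^{k}-x)/\|x^{k}-x\|$. Along a subsequence $d^{k}\to \bar d \in \mathcal{T}_{X}(x)$ with $\|\bar d\| = 1$.

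First, $\bar d \in (-F(x))^{\ast}$. Indeed, since $x^{k}$ solves the VI, $F(x^{k})^{\top}(x - x^{k}) \ge 0$, i.e. $F(x^{k})^{\top}d^{k} \le 0$; passing to the limit gives $F(x)^{\top}\bar d \le 0$.

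Second, the solution $x$ satisfies $F(x)^{\top}(x^{k}-x)\ge 0$, while $x^{k}$ satisfies $F(x^{k})^{\top}(x - x^{k})\ge 0$. Adding these gives $(F(x^{k})-F(x))^{\top}(x^{k}-x) \le 0$. Dividing by $\|x^{k}-x\|^2$ and using the $\mathrm{C}^1$ expansion $F(x^{k}) - F(x) = \mathbf{J}F(x)(x^{k}-x) + o(\|x^{k}-x\|)$, we obtain $\bar d^{\top}\mathbf{J}F(x)\bar d \le 0$ in the limit.

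These two facts contradict strict copositivity of $\mathbf{J}F(x)$ on $\mathcal{T}_{X}(x)\cap(-F(x))^{\ast}$, since $\bar d \neq 0$ lies in that cone. Hence $x$ is an isolated solution, and no other earlier results are needed.
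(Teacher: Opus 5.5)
Your proof is correct. The paper gives no proof of this lemma: it imports it from Facchinei--Pang (Corollary 10.2.7) and closes it with $\Box$. Your argument is therefore a self-contained replacement rather than an alternative to an in-paper route.

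Both halves check out.
\begin{itemize}
\item \emph{Stationary point is a solution.} You test stationarity along $d = y_{c}(x)-x$ and combine it with the projection inequality at $z=x$. This gives $d^{\top}\mathbf{J}F(x)d \le F(x)^{\top}d + c\|d\|^{2} \le 0$ with $d \in \mathcal{T}_{X}(x)\cap(-F(x))^{\ast}$, which forces $d=0$.
\item \emph{Isolatedness.} The local-uniqueness step via the limiting unit direction $\bar d$ is valid for an arbitrary closed convex $X$, with no polyhedrality needed. It uses only differentiability of $F$ at $x$, continuity of $F$, and closedness of $\mathcal{T}_{X}(x)$, which contains every $d^{k}$.
\end{itemize}

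Two small observations make explicit how much weaker a hypothesis your argument actually needs.
\begin{itemize}
\item You invoke strict copositivity only at the given stationary point. The ``for any $x\in X$'' quantifier in the statement is therefore stronger than necessary.
\item In the isolatedness step, $x$ being a solution also gives $F(x)^{\top}\bar d \ge 0$. So $\bar d$ lies in the critical cone $\mathcal{T}_{X}(x)\cap F(x)^{\perp}$. For that half, strict copositivity on this smaller cone would already suffice.
\end{itemize}
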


\subsection{Randomized smoothing}\label{Sec-2.2}

We now discuss the form of smoothing that allows for constructing tractable gradient estimators of $\theta_c$, as $\nabla \theta_c(x)$ in \eqref{regularized-gap-function-gradient} is not directly accessible due to the unavailability of $\mathbf{J}F(x)$ and $y_c(x)$ in the stochastic setting ($\mathbf{J}F(x)$ is a random matrix). Let $f: X\subseteq \mathbb{R}^{n} \to \mathbb{R}$ be locally Lipschitz continuous. Recall that the randomized smoothing \cite{steklov-1907} of $f$ with $\eta > 0$ is defined as 
\begin{equation}\label{RS-function-value}
    f^{\eta}(x) \triangleq \mathbb{E}_{\uu\in \mathbb{B}} \left[\, f(x+\eta \uu)\, \right],
\end{equation}
where $\mathbb{B}$ denotes the unit ball and $u$ is uniformly distributed over $\mathbb{B}$. We recall some basic properties of randomized smoothing in the following lemma, where we denote the surface of $\mathbb{B}$ by $\mathbb{S}$ and the Minkowski sum of $X$ and $\eta \mathbb{B}$ by $X_{\eta} \triangleq X+\eta \mathbb{B}$.

\begin{lemma}[\mbox{\cite[Lemma 2.4]{marrinan-shanbhag-yousefian-2026}}]\label{RS-property-grad}\em
    Consider $f: X\subseteq \mathbb{R}^{n}\to \mathbb{R}$ and its randomized smoothing $f^{\eta}$, where $\eta>0$. Then the following hold.
    
    \noindent (i) $f^{\eta}$ is C$^{1}$ over $X$ and 
    \begin{align}\label{RS-gradient}
       \nabla_{x} f^{\eta}(x) \, = \, \left(\frac{n}{2\eta}\right)\mathbb{E}_{\vv\in \eta\mathbb{S}}\left[\, (f(x+\vv)-f(x-\vv))\frac{\vv}{\|\vv\|}\, \right]
    \end{align}
     holds for all $x\in X$. Suppose $f$ is $L_{0}$-Lipschitz continuous on $X_{\eta}$. For any $x, y\in X$, (ii)-(v) hold.

    \noindent (ii) $|f^{\eta}(x)-f^{\eta}(y)|\leq L_{0}\|x-y\|$. \emph{(iii)} $|f^{\eta}(x)-f(x)|\leq L_{0}\eta$.
    
    \noindent (iv) $\|\nabla_{x}f^{\eta}(x)-\nabla_{x}f^{\eta}(y)\|\leq \frac{L_{0}\sqrt{n}}{\eta}\|x-y\|$.

    \noindent (v) $f$ is $L_{1}$-smooth on $X_{\eta} \implies  \forall x \in X_{\eta}$, $\| \nabla_{x}f^{\eta}(x) - \nabla_{x}f(x) \|\leq \eta L_{1} n$.

    \noindent (vi) For any $x\in X$, we have $\mathbb{E}_{\vv \in \eta\mathbb{S}}[\|g(x, \vv)\|^{2}]\leq 16\sqrt{2\pi}L^{2}_{0}n$ where $g(x, \vv) \triangleq \left( \frac{n(f(x + \vv)-f(x - \vv)) \vv}{2\eta\| \vv \|} \right)$. $\hfill \Box$
\end{lemma}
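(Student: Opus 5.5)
The plan is to prove the six items of Lemma~\ref{RS-property-grad} in turn. Items (ii) and (iii) follow directly from the definition \eqref{RS-function-value}, (i) from the divergence theorem, and (iv)--(vi) from (i) combined with concentration on the sphere.

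\textbf{Items (ii) and (iii).} For (ii), we write $f^{\eta}(x)-f^{\eta}(y)=\mathbb{E}_{\uu}[f(x+\eta\uu)-f(y+\eta\uu)]$. Both arguments lie in $X_\eta$, so the Lipschitz bound passes inside the expectation and gives $L_0\|x-y\|$. For (iii), we use $|f(x+\eta\uu)-f(x)|\le L_0\eta\|\uu\|\le L_0\eta$ and take expectations.

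\textbf{Item (i).} We write $f^\eta(x)=\frac{1}{\mathrm{vol}(\eta\mathbb{B})}\int_{\eta\mathbb{B}}f(x+w)\,dw$ and differentiate under the integral sign. By the divergence (Stokes) theorem this gives
\[
\nabla f^\eta(x)=\frac{1}{\mathrm{vol}(\eta\mathbb{B})}\int_{\eta\mathbb{S}}f(x+\vv)\frac{\vv}{\|\vv\|}\,dS .
\]
The ratio $\mathrm{area}(\eta\mathbb{S})/\mathrm{vol}(\eta\mathbb{B})=n/\eta$, so $\nabla f^\eta(x)=\frac{n}{\eta}\mathbb{E}_{\vv\in\eta\mathbb{S}}[f(x+\vv)\vv/\|\vv\|]$. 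Because the uniform distribution on $\eta\mathbb{S}$ is symmetric under $\vv\mapsto-\vv$, we can replace $f(x+\vv)$ by $\tfrac12(f(x+\vv)-f(x-\vv))$, which yields \eqref{RS-gradient}. Continuity of the gradient follows from continuity of $f$ and dominated convergence. Since $f$ is only Lipschitz, the differentiation step is justified either by Rademacher's theorem or by first mollifying $f$ and then passing to the limit.

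\textbf{Item (iv).} Using the one-sided form of the gradient,
\[
\nabla f^\eta(x)-\nabla f^\eta(y)=\tfrac{n}{\eta}\,\mathbb{E}\big[(f(x+\vv)-f(y+\vv))\,\vv/\|\vv\|\big].
\]
The crude bound $\tfrac{n}{\eta}L_0\|x-y\|$ has a factor $n$, not $\sqrt n$. To sharpen it, we test against a unit vector $e$ and bound $\mathbb{E}\big[|f(x+\vv)-f(y+\vv)|\,|e^\top \vv/\|\vv\||\big]\le L_0\|x-y\|\,\mathbb{E}|e^\top \mathbf{s}|$, where $\mathbf{s}$ is uniform on $\mathbb{S}$. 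Since $\mathbb{E}|e^\top \mathbf{s}|\le (\mathbb{E}(e^\top \mathbf{s})^2)^{1/2}=1/\sqrt n$, this gives $\frac{n}{\eta}L_0\|x-y\|/\sqrt n=\frac{L_0\sqrt n}{\eta}\|x-y\|$.

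\textbf{Item (v).} Since $f^\eta$ is a convolution, $\nabla f^\eta(x)=\mathbb{E}_{\uu}[\nabla f(x+\eta\uu)]$ wherever $f$ is differentiable. Then $L_1$-smoothness gives $\|\nabla f^\eta(x)-\nabla f(x)\|\le L_1\eta\,\mathbb{E}\|\uu\|\le L_1\eta\le \eta L_1 n$, which is weaker than what we obtain and so suffices.

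\textbf{Item (vi).} This is the step I expect to be the main obstacle, because a naive bound gives $n^2 L_0^2$ rather than $nL_0^2$. We write $\|g\|^2=\frac{n^2}{4\eta^2}(f(x+\vv)-f(x-\vv))^2$. The function $h(\mathbf{s})=f(x+\eta \mathbf{s})$ is $\eta L_0$-Lipschitz on the unit sphere, so the key tool is concentration of measure on $\mathbb{S}$ (Lévy's lemma). In the form of the Shamir / Gasnikov et al.\ bound it reads $\mathbb{E}(h(\mathbf{s})-\mathbb{E}h)^4\le c\,(\eta L_0)^4/n^2$. Writing $f(x+\vv)-f(x-\vv)=(h(\mathbf{s})-\mathbb{E}h)-(h(-\mathbf{s})-\mathbb{E}h)$, we obtain $\mathbb{E}(f(x+\vv)-f(x-\vv))^2\le 4\sqrt{\mathbb{E}(h-\mathbb{E}h)^4}\le 4\sqrt{c}\,\eta^2L_0^2/n$. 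Multiplying by $n^2/(4\eta^2)$ gives $\sqrt{c}\,nL_0^2$. The constant $16\sqrt{2\pi}$ comes from the Gaussian-tail form of the sphere concentration inequality, $\mathbb{P}(|h-\mathbb{E}h|>t)\le 2\exp(-nt^2/(2\eta^2L_0^2))$, integrated to bound the fourth moment.
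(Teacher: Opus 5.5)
Your argument is correct. The paper gives no proof of this lemma; it imports it verbatim from \cite[Lemma 2.4]{marrinan-shanbhag-yousefian-2026}, so the comparison is with the standard arguments behind that citation.

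For (i)--(iii) and (vi) you follow the usual route. You use the divergence-theorem formula for ball smoothing plus antipodal symmetrization, and then concentration of Lipschitz functions on the sphere. The second step is how the $16\sqrt{2\pi}\,nL_0^2$ bound is obtained by Lin, Zheng and Jordan \cite{lin-zheng-jordon-2022}.

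For (iv) and (v) your arguments are cleaner and slightly sharper than the usual ones. In (iv), testing against a unit vector $e$ and using $\mathbb{E}|e^\top\mathbf{s}|\le(\mathbb{E}(e^\top\mathbf{s})^2)^{1/2}=n^{-1/2}$ yields the constant $L_0\sqrt{n}/\eta$ directly, without evaluating $\mathbb{E}|e^\top\mathbf{s}|$ exactly. In (v), the convolution identity $\nabla f^\eta(x)=\mathbb{E}_{\uu}[\nabla f(x+\eta\uu)]$ gives $\eta L_1$. The factor $n$ in the stated bound appears only if one instead inserts a Taylor remainder into the sphere formula \eqref{RS-gradient}.

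Two details should be corrected.

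\textbf{Constant in (vi).} Integrating the tail $2\exp(-nt^2/(2\eta^2L_0^2))$ gives $\mathbb{E}(h-\mathbb{E}h)^4\le 16\eta^4L_0^4/n^2$, i.e.\ $c=16$. Hence $\mathbb{E}\|g\|^2\le 4nL_0^2$, not $16\sqrt{2\pi}\,nL_0^2$. The stated constant comes from the weaker tail $2\sqrt{2\pi}\exp(-nt^2/(8\eta^2L_0^2))$ integrated against the \emph{second} moment, which gives $\mathbb{E}(h-\mathbb{E}h)^2\le 16\sqrt{2\pi}\,\eta^2L_0^2/n$. Combine this with $\mathbb{E}(f(x+\vv)-f(x-\vv))^2\le 4\,\mathbb{E}(h-\mathbb{E}h)^2$; the detour through the fourth moment is unnecessary. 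Because the claimed constant has slack, any correctly quoted concentration inequality suffices. Standard versions carry $n-1$ or $n-2$ in the exponent, and for $n\le 16\sqrt{2\pi}$ the trivial bound $\|g\|^2\le n^2L_0^2$ already does the job. You should still cite one precise form rather than an exponent and a constant that do not match.

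\textbf{Domain in (v).} Your argument uses smoothness of $f$ at $x+\eta\uu$, which lies in $X_\eta$ only when $x\in X$. It therefore proves (v) for $x\in X$, consistent with the lemma's preamble. It does not prove it for every $x\in X_\eta$, as item (v) is literally written; that would need smoothness on $X_{2\eta}$. This is a defect of the statement rather than of your proof.
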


The $\delta$-Clarke subdifferential \cite{goldstein-1977} of a Lipschitz continuous function $f: \mathbb{R}^{n}\to \mathbb{R}$ is defined as
\begin{equation*}
    \partial^{C}_{\delta} f(x) \, \triangleq \, \mbox{conv} \left\{ \, \zeta \, \mid \,  \zeta\in \partial^{C} f(y),\; \left\|\, x-y\, \right\|\, \leq \, \delta \, \right\} \, = \, \mbox{conv} \left(\, \bigcup_{ \, \|y-x\| \le \delta} \partial^C f(y) \right).
\end{equation*}
The following lemma is useful for our approximation analysis, showing the inclusion relation between the smoothed gradient and the $\delta$-Clarke subdifferential.

\begin{lemma}[\mbox{\cite[Proposition 2.5]{marrinan-shanbhag-yousefian-2026}}]\label{RS-property-inclusion}\em
    Consider the optimization problem $\min_{x\in X} f(x)$ where $f$ is locally Lipschitz continuous and $X \subseteq \mathbb{R}^{n}$ is closed, convex, and bounded. Then we have: (i) For any $\eta>0$ and any $x\in \mathbb{R}^{n}$, $\nabla f^{\eta}(x)\in \partial^{C}_{\eta} f(x)$; and (ii) For any $\eta>0$ and any $x\in X$, if $0\in \nabla f^{\eta}(x) + \mathcal{N}_{X}(x)$ holds, we have $0\in \partial^{C}_{\eta} f(x) + \mathcal{N}_{X}(x)$. $\hfill \Box$
\end{lemma}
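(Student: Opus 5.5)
The statement just above is Lemma~\ref{RS-property-inclusion}. Part (ii) follows from part (i) at once: if $0 \in \nabla f^{\eta}(x) + \mathcal{N}_X(x)$, then $-\nabla f^{\eta}(x) \in \mathcal{N}_X(x)$, and (i) gives $\nabla f^{\eta}(x) \in \partial^C_\eta f(x)$, so $0 \in \partial^C_\eta f(x) + \mathcal{N}_X(x)$. The work is therefore all in (i).

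For (i), I will represent $\nabla f^{\eta}(x)$ as an average of gradients of $f$ at points within distance $\eta$ of $x$.

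\textbf{Step 1: gradient as an average.} Since $f$ is locally Lipschitz, Rademacher's theorem says $f$ is differentiable almost everywhere, with $\nabla f$ locally bounded. On the compact set $x + \eta\mathbb{B}$ the function $f$ is Lipschitz, so the difference quotients $(f(x+\eta u + h e_i) - f(x+\eta u))/h$ are uniformly bounded. Dominated convergence then lets me differentiate under the integral in \eqref{RS-function-value}, giving
\[
\nabla f^{\eta}(x) = \mathbb{E}_{\uu\in\mathbb{B}}\big[\nabla f(x+\eta \uu)\big],
\]
where the integrand is defined for almost every $\uu$. At each such point $y = x + \eta\uu$ we have $\nabla f(y) \in \partial^C f(y)$, and $\|y - x\| \le \eta$. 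So $\nabla f^{\eta}(x)$ is the mean of a bounded measurable random vector that takes values in $S \triangleq \bigcup_{\|y-x\|\le\eta} \partial^C f(y)$ almost surely.

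\textbf{Step 2: the mean lies in the convex hull.} The main point is to show the expectation lands in $\mathrm{conv}(S)$ itself, not just its closure. First I check that $S$ is compact. It is bounded because $f$ is Lipschitz near the compact ball. It is closed because the Clarke subdifferential map $\partial^C f$ is outer semicontinuous with compact values and the ball is compact. The convex hull of a compact set in $\mathbb{R}^n$ is compact by Carathéodory, so $\mathrm{conv}(S)$ is closed.

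Next, suppose $\nabla f^{\eta}(x) \notin \mathrm{conv}(S)$. A separating hyperplane gives a vector $a$ and a scalar $b$ with $a^\top \nabla f^{\eta}(x) > b \ge a^\top s$ for all $s \in \mathrm{conv}(S)$. Taking expectations in Step 1 gives $a^\top \nabla f^{\eta}(x) = \mathbb{E}[a^\top \nabla f(x+\eta\uu)] \le b$, a contradiction. Hence $\nabla f^{\eta}(x) \in \mathrm{conv}(S) = \partial^C_\eta f(x)$, which proves (i).

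I expect the main obstacle to be the technical justification in Step 1, namely the interchange of differentiation and integration at points where $f$ is not differentiable. If dominated convergence feels delicate there, a safer route is to argue directional derivative by directional derivative. For each direction $d$, Fatou's lemma gives $\nabla f^{\eta}(x)^\top d \le \mathbb{E}[f^\circ(x+\eta\uu; d)]$, where $f^\circ$ is the Clarke directional derivative. Since $f^\circ(y; d) = \max_{\zeta \in \partial^C f(y)} \zeta^\top d$, this is bounded by the support function of $\mathrm{conv}(S)$ in direction $d$. A vector whose inner product with every $d$ is bounded by the support function of a closed convex set lies in that set, so this again places $\nabla f^{\eta}(x)$ in the closed convex set $\mathrm{conv}(S)$.
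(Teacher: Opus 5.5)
Your proof is correct. The paper does not prove this lemma itself but imports it from the cited reference, and your route is the standard argument for this fact: (ii) is immediate from (i), and for (i) you write $\nabla f^{\eta}(x)=\mathbb{E}[\nabla f(x+\eta u)]$ via Rademacher and dominated convergence, then use strict separation from $\mathrm{conv}(S)$. Your check that $\mathrm{conv}(S)$ is already closed (compactness of $S$ from outer semicontinuity of $\partial^{C} f$, then Carath\'eodory) is a worthwhile detail, because the paper defines $\partial^{C}_{\eta} f(x)$ as a convex hull without closure.
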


\section{SEG-LS under Pseudomonotonicity and Monotonicity}\label{Sec-3}

In this section, we develop a parameter-free stochastic extragradient scheme equipped with a linesearch procedure for solving pseudomonotone and monotone SVIs. Given that our problem of interest is characterized by an expectation-valued map, we always assume that for any given $x$, there exists a stochastic oracle such that it generates $\Tilde{F}(x, \xi)$, a sample realization associated with $F(x) \triangleq \mathbb{E}[\tilde{F}(x,\bxi)]$.

\subsection{SEG-LS for pseudomonotone SVIs}\label{Sec-3.2}

In this subsection, we consider the following standard assumption. For simplicity, here we assume that $\Tilde{F}(\bullet, \xi)$ is $L$-Lipschitz for any $\xi\in \Xi$. We may obtain analogues of our main results by instead assuming that $\Tilde{F}(\bullet, \xi)$ is $\tilde{L}(\xi)$-Lipschitz under a suitable integrability condition on $\tilde{L}(\bxi)$.

\vspace{4pt}

\begin{custombox}
    \textbf{Assumption $\mathrm{A}$.} \\
    (A1) $F(\bullet)$ is pseudomonotone, i.e. $(x-y)^{\top}F(y) \geq 0 \implies (x-y)^{\top}F(x) \geq 0$, $\forall x, y\in X$. \\
    (A2) For every $\xi\in \Xi$, $\Tilde{F}(\bullet, \xi)$ is $L$-Lipschitz, i.e., $\| \Tilde{F}(x, \xi) - \Tilde{F}(y, \xi) \|\leq L\| x - y\|$, $\forall x, y\in X$. \\
    (A3) The set $X$ is compact with diameter $D_{X} \triangleq \sup_{x, y\in X} \| x - y \| < \infty$ and there exists $D_{F} > 0$ such that $\left\|\, F(x)\, \right\|\, \leq \, D_{F}$ holds over $X$. \\
    (A4) For any $x\, \in \, X$, there exists $\sigma > 0$ such that the following hold almost surely: (i) $\mathbb{E} [\, \Tilde{F}(x, \bxi) \:\vert\: x\, ] \, = \, F(x)$; and (ii) $\mathbb{E} [\, \| \, \Tilde{F}(x, \bxi) - F(x) \, \|^{2} \:\vert\: x\, ] \, \leq \, \sigma^{2}$.
\end{custombox}

\vspace{4pt}

\begin{remark}\em
    We provide a brief commentary on the assumptions. Regarding (A1), observe that $F$ is pseudomonotone over $X$ if it satisfies monotonicity on $X$, i.e., for any $x, y \in X$, $(F(x)-F(y))^\top(x-y) \ge 0$. While (A2) requires Lipschitzian properties on $\tilde{F}(\bullet,\xi)$, this is essential since we will be employing a linesearch scheme on the sampled problem.  The compactness requirement in (A3) can potentially be weakened in prior work. The standard unbiasedness and second moment bound assumptions are presented in (A4). $\hfill \Box$
\end{remark}

\subsubsection{SEG-LS scheme}

Our proposed scheme requires generating independent batches of $N_k$ i.i.d. realizations of the random variable $\bxi$, as specified by $\{ \xi^{k}_{i} \}_{i=1}^{N_{k}}$ and $\{ \zeta^{k}_{i} \}_{i=1}^{N_{k}}$. We may define the associated sample means as
\begin{equation}\label{sample-mean-F}
    \bar{F}_{k}(\bullet, \xi^{k}) \triangleq \frac{1}{N_{k}}\sum_{i=1}^{N_{k}} \Tilde{F}(\bullet, \xi^{k}_{i}) \;\;\text{and}\;\; \bar{F}_{k}(\bullet, \zeta^{k}) \triangleq \frac{1}{N_{k}}\sum_{i=1}^{N_{k}} \Tilde{F}(\bullet, \zeta^{k}_{i}),
\end{equation}
respectively. This allows us to define the history. Specifically, $\mathcal{F}_0 \triangleq \{x_0\}$ and for any $k > 0$, we have 
\begin{align}\label{SEG-history}
    \mathcal{F}_{k} \triangleq \left\{ x^{0}, \left\{ \{\xi^{j}_{i}\}_{i=1}^{N_{j}} \right\}_{j=0}^{k-1}, \left\{ \{ \zeta^{j}_{i} \}_{i=1}^{N_{j}} \right\}_{j=0}^{k-1} \right\}.
\end{align}
In addition, we define $\mathcal{F}_{k+1/2}$ as  
\begin{align}\label{SEG-history-2}
\mathcal{F}_{k+1/2} \triangleq \mathcal{F}_{k} \cup \{\xi_{i}^{k}\}_{i=1}^{N_{k}}.
\end{align}
The following stochastic Armijo line search is defined via the sample-mean objectives defined above.

\vspace{4pt}

\begin{custombox}
\noindent\textbf{stochastic Armijo linesearch}: Given $\alpha, \beta \in (0, 1)$. At iteration $k$, we reduce the stepsize $t_{k}$ at a rate $\beta>0$ from some initial value $t^{\mathrm{init}}_{k} > 0$ until the following condition is satisfied:
\begin{equation}\label{SEG-Armijo}
    t_{k} \left\| \, \bar{F}_{k}\left(\, \Pi_{X}\left[\, x^{k}-t_{k}\bar{F}_{k}(x^{k}, \xi^{k})\, \right], \xi^{k}\, \right) - \bar{F}_{k}(x^{k}, \xi^{k}) \, \right\| \, \leq \, \alpha \left\| \, \Pi_{X}\left[\, x^{k}-t_{k}\bar{F}_{k}(x^{k}, \xi^{k})\, \right] - x^{k} \, \right\|.
\end{equation}
\end{custombox}

\vspace{4pt}

The proposed \textbf{SEG-LS} scheme is given in Algorithm \ref{SEG-LS}. It can be observed that $k$ is the actual number of iterations, while the counter $j$ denotes the cycle number, and $d$ represents the number of iterations within each cycle, which is reset to $1$ at the beginning of the next cycle.

\begin{algorithm}
\caption{~\textbf{SEG-LS}}\label{SEG-LS}
\begin{algorithmic}[1]
\State \textbf{Input}: $x^{0}\in X$, $\alpha \in (0,1)$, $\beta \in (0,1)$, $\{p_{j}\}$ where $p_{j}\in \mathbb{N}$, $t^{\mathrm{upp}}\in (0, \infty)$.
\State Set $j\leftarrow -1$;
\For{$k = 0, 1, \cdots, K-1$}
    \State Generate i.i.d. samples $\{ \xi^{k}_{i} \}_{i=1}^{N_{k}}$ and $\{ \zeta^{k}_{i} \}_{i=1}^{N_{k}}$;
    \State \textbf{if} \: $k = 0$ or $d = p_{j}$ \: \textbf{then}
    \State \quad\quad Set $t^{\mathrm{init}}_{k} \leftarrow t^{\mathrm{upp}}$;\; Set $d \leftarrow 1$, $j \leftarrow j+1$;
    \State \textbf{else}
    \State \quad\quad Set $t^{\mathrm{init}}_{k} \leftarrow t_{k-1}$;\; Set $d \leftarrow d+1$;
    \State Set $t_{k} \leftarrow t^{\mathrm{init}}_{k}$;
    \State \textbf{while} \: \eqref{SEG-Armijo} is not satisfied \: \textbf{do}
    \State \quad\quad Set $t_{k} \gets \beta t_{k}$;
    \State Update $z^{k}$ and $x^{k+1}$ as follows, where $\bar{F}_{k}(\bullet, \xi^{k})$ and $\bar{F}_{k}(\bullet, \zeta^{k})$ are defined in \eqref{sample-mean-F}.
    \begin{align}
        z^{k} &\, \triangleq \, \Pi_{X}\left[\, x^{k} - t_{k} \bar{F}_{k}(x^{k}, \xi^{k})\, \right], \label{SEG-LS-eqn1} \\
        x^{k+1} &\, \triangleq \, \Pi_{X}\left[\, x^{k} - t_{k} \bar{F}_{k}(z^{k}, \zeta^{k})\, \right]. \label{SEG-LS-eqn2}
    \end{align}
\EndFor
\State \textbf{Output}: $x^{R_{K}}$ as final estimate where $R_{K}$ is a random variable with $\mathbb{P}[R_{K} = j] = 1/K$ for any $j\in \{ 0, 1, \cdots, K-1 \}$.
\end{algorithmic}
\end{algorithm}

\subsubsection{Supporting lemmas}

In this subsection, we consider the residual $G_t$, given by $G_{t}(x) \triangleq t^{-1}\left( x - \Pi_{X}\left[ x - t F(x) \right] \right)$. The following lemma establishes an equivalent relationship between $\left\|\, G_{t}(x) \,\right\|$ and $\mathrm{SOL}\:(X, F)$.

\begin{lemma}[\mbox{\cite[Proposition 1.5.8]{facchinei-pang-2003}}]\em
    For any $x^{\ast}\in X$, we have that $\left\|\, G_{t}(x^{\ast}) \,\right\| = 0$ if and only if $x^{\ast}$ is a solution of $\mathrm{VI}\:(X, F)$. $\hfill \Box$
\end{lemma}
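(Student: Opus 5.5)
The plan is to reduce the claim to the variational characterization of the Euclidean projection onto the closed convex set $X$. Since $t>0$, we have $\|G_t(x^\ast)\| = 0$ if and only if $x^\ast = \Pi_X[x^\ast - tF(x^\ast)]$. So it suffices to show that $x^\ast$ is a fixed point of the map $x \mapsto \Pi_X[x - tF(x)]$ exactly when $x^\ast \in \mathrm{SOL}\:(X,F)$.

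The key tool is the projection inequality: for any $z \in \mathbb{R}^n$, a point $w \in X$ equals $\Pi_X[z]$ if and only if
\begin{equation*}
(z - w)^\top (y - w) \le 0 \quad \text{for all } y \in X.
\end{equation*}
This follows from the first-order optimality condition for the strongly convex problem $\min_{y\in X}\tfrac12\|y-z\|^2$. Strong convexity gives existence and uniqueness of the minimizer, and convexity makes the first-order condition both necessary and sufficient.

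I would apply this with $z = x^\ast - tF(x^\ast)$ and $w = x^\ast$, which is admissible because $x^\ast \in X$ by hypothesis. Then $x^\ast = \Pi_X[x^\ast - tF(x^\ast)]$ holds if and only if
\begin{equation*}
(-tF(x^\ast))^\top (y - x^\ast) \le 0 \quad \text{for all } y \in X,
\end{equation*}
that is, $tF(x^\ast)^\top (y - x^\ast) \ge 0$ for all $y\in X$. Dividing by $t>0$ gives exactly the defining condition of a solution of $\mathrm{VI}\:(X,F)$. Because every step in this chain is an equivalence, both directions are obtained at once.

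There is no real obstacle here. The only points needing care are the equivalence (not merely the implication) in the projection characterization and the requirement $t>0$. The argument is independent of $t$, which also shows that the zero set of $G_t$ does not depend on $t$. Continuity of $F$ and all the stochastic structure are irrelevant to this statement.
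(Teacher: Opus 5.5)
Your argument is correct and is essentially the standard one: the paper gives no proof of its own and cites Facchinei--Pang's Proposition 1.5.8, whose proof is exactly the fixed-point characterization $x^\ast = \Pi_X[x^\ast - tF(x^\ast)]$ obtained from the variational inequality characterizing the projection, as you do. Your observation that only $t>0$ and closedness and convexity of $X$ are needed is accurate.
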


The following lemma derives lower and upper bounds for the stepsize sequence $\{ t_{k} \}^{K - 1}_{k\geq 0}$.

\begin{lemma}[Bounds on $t_{k}$]\label{SEG-stepsize}\em
    Consider the stepsize sequence $\{ t_{k} \}^{K - 1}_{k\geq 0}$ generated by \textbf{SEG-LS}. Suppose that Assumption $\mathrm{A}$ holds. Then for any $k = 0, 1, \cdots, K-1$, we always have
    \begin{equation*}
        \min \left\{ \, t^{\mathrm{upp}}, \tfrac{\alpha\beta}{L} \, \right\} \, \triangleq t^{\mathrm{low}} \, \leq \,  t_{k} \, \leq \,  t^{\mathrm{upp}}.
    \end{equation*}
\end{lemma}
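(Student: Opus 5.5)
The plan is to establish the upper and lower bounds separately. The lower bound will come from an induction over iterations within each cycle, driven by a sufficient condition for the Armijo test \eqref{SEG-Armijo} to hold.

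\emph{Upper bound.} Every trial stepsize is either $t^{\mathrm{upp}}$ (at the start of a cycle) or $t_{k-1}$ (inside a cycle), and backtracking only multiplies by $\beta\in(0,1)$. Hence $t_k\le t^{\mathrm{init}}_k$. Since $t^{\mathrm{init}}_k\in\{t^{\mathrm{upp}},t_{k-1}\}$, a simple induction on $k$ gives $t_k\le t^{\mathrm{upp}}$ for all $k$.

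\emph{Key observation.} Any $t\le \alpha/L$ satisfies \eqref{SEG-Armijo}. Indeed, the sample mean $\bar F_k(\bullet,\xi^k)$ is an average of maps that are $L$-Lipschitz by (A2), so it is itself $L$-Lipschitz on $X$. Both $x^k$ and $\Pi_X[x^k-t\bar F_k(x^k,\xi^k)]$ lie in $X$. Therefore the left-hand side of \eqref{SEG-Armijo} is at most
\[
tL\,\bigl\|\Pi_X[x^k-t\bar F_k(x^k,\xi^k)]-x^k\bigr\|\le \alpha\,\bigl\|\Pi_X[x^k-t\bar F_k(x^k,\xi^k)]-x^k\bigr\|.
\]
This holds pathwise, for every sample realization. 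It follows that the while-loop always terminates.

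\emph{Lower bound.} I would prove $t_k\ge t^{\mathrm{low}}=\min\{t^{\mathrm{upp}},\alpha\beta/L\}$ by induction on $k$, with two cases.

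\textbf{Case 1: no backtracking occurs.} Then $t_k=t^{\mathrm{init}}_k$. If $t^{\mathrm{init}}_k=t^{\mathrm{upp}}$, the bound is immediate. Otherwise $t^{\mathrm{init}}_k=t_{k-1}$, which is at least $t^{\mathrm{low}}$ by the induction hypothesis.

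\textbf{Case 2: at least one reduction occurs.} Then the previous trial value $t_k/\beta$ was rejected. By the key observation this forces $t_k/\beta>\alpha/L$, so $t_k>\alpha\beta/L\ge t^{\mathrm{low}}$.

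The base case $k=0$ is covered by the same argument, since there $t^{\mathrm{init}}_0=t^{\mathrm{upp}}$.

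There is no real obstacle in this argument. The one point that needs care is that Lipschitz continuity must hold for the sampled map $\bar F_k$ itself, not just for $F$. This is exactly why (A2) is imposed for every $\xi$, and it is what makes the bound deterministic rather than merely in expectation.
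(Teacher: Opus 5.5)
Your proof is correct and follows essentially the same route as the paper: the upper bound comes from the reset/non-increasing structure, and the lower bound from comparing the Lipschitz bound on $\bar{F}_k(\bullet,\xi^k)$ given by (A2) with the failed Armijo test at $t_k/\beta$. Your induction on $k$ is slightly cleaner than the paper's case split, because it explicitly covers iterations that inherit $t_{k-1}$ without backtracking after an earlier reduction in the same cycle.
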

\begin{proof}
    The upper bound requirement, given by $t_{k}\leq t^{\mathrm{upp}}$ for any $k$, follows immediately from the initialization at the beginning of each cycle (Line 6) and the fact that the stepsize sequence is are non-increasing (Lines 8 and 11) within each cycle. Next, we establish the lower bound $t_{k}\, \geq \, t^{\mathrm{low}} \triangleq \, \min \left\{\, t^{\mathrm{upp}}, \frac{\alpha\beta}{L} \, \right\}$. We consider two possibilities. (i) If no backtracking (Line 11) occurs from the beginning of the cycle containing the $k$-th iteration up to iteration $k$, we have that $t_{k} = t^{\mathrm{upp}} \geq t^{\mathrm{low}}$. (ii) Otherwise, we need to implement at least one backtracking step (Line 11) within such a cycle. Then the accepted stepsize $t_{k}$ satisfying the stochastic Armijo condition \eqref{SEG-Armijo} would be less than $t^{\mathrm{upp}}$ and the prior stepsize $t_{k}/\beta$ does not satisfy \eqref{SEG-Armijo}, i.e.,
    \begin{equation}\label{SEG-stepsize-eqn2}
        \begin{aligned}
            \frac{t_{k}}{\beta} \left\|\, \bar{F}_{k} \left( \Pi_{X} \left[\, x^{k}-\frac{t_{k}}{\beta}\bar{F}_{k}(x^{k}, \xi^{k}) \, \right], \xi^{k} \, \right) - \bar{F}_{k}(x^{k}, \xi^{k}) \right\| \, > \, \alpha \left\| \, \Pi_{X} \left[ x^{k}-\frac{t_{k}}{\beta}\bar{F}_{k}(x^{k}, \xi^{k})\, \right] - x^{k} \, \right\|.
        \end{aligned}
    \end{equation}
    On the other hand, since $\Tilde{F}(\bullet, \xi^{k}_{i})$ is $L$-Lipschitz by assumption $\mathrm{(A2)}$, we have that $\bar{F}_{k}(\bullet, \xi^{k})$ is $L$-Lipschitz and
    \begin{equation}\label{SEG-stepsize-eqn3}
        \begin{aligned}
            \left\| \bar{F}_{k} \left( \Pi_{X} \left[ x^{k}-\frac{t_{k}}{\beta}\bar{F}_{k}(x^{k}, \xi^{k}) \right], \xi^{k} \right) - \bar{F}_{k}(x^{k}, \xi^{k}) \right\| \leq L \left\| \Pi_{X} \left[ x^{k}-\frac{t_{k}}{\beta}\bar{F}_{k}(x^{k}, \xi^{k}) \right] - x^{k} \right\|
        \end{aligned}
    \end{equation}
    holds. By comparing \eqref{SEG-stepsize-eqn2} and \eqref{SEG-stepsize-eqn3}, we deduce that $t_{k} \geq \frac{\alpha \beta}{L} \geq t^{\mathrm{low}}$. Combing these two possibilities, it follows that $t_{k}\geq t^{\mathrm{low}} \triangleq \min \{ t^{\mathrm{upp}}, \frac{\alpha\beta}{L} \}$ for any $k = 0, 1, \cdots, K-1$.
\end{proof}

Figure \ref{period-example} demonstrates an example of the associated periods. Given the generated stepsize sequence $\{ t_{k} \}_{k = 0}^{K-1}$, we denote the number of times that the stepsize changes during $K$ iterations by $T(K)$, i.e.,
\begin{equation*}
    T(K) \triangleq \sum_{k=0}^{K-1} \mathbf{1}_{\{t_{k}\neq t_{k-1}\}} \;\;\text{with}\;\; t_{-1} \triangleq t^{\mathrm{upp}},
\end{equation*}
where $\mathbf{1}_{\{t_{k}\neq t_{k-1}\}} = 1$ if $t_k \ne t_{k-1}$ and $0$ otherwise. By leveraging Lemma \ref{SEG-stepsize}, we derive a deterministic upper bound of $T(K)$. The proof of the following lemma is similar to that of \cite[Lemma 4]{wang-shanbhag-xie-2025} hence we omit it here.

\begin{lemma}[Bound on $T(K)$]\label{bound-TK-lemma}\em
    Consider the stepsize sequence $\{ t_{k} \}_{k=0}^{K-1}$ generated by \textbf{SEG-LS}. Suppose that Assumption $\mathrm{A}$ holds. Then we can bound $T(K)$ from above as follows.
    \begin{equation*}
        T(K)\leq (J(K)+1) \left\lceil \left[ \log_{1/\beta}\left(\frac{t^{\mathrm{upp}}L}{\alpha}\right) \right]_{+} + 2 \right\rceil,
    \end{equation*}
    where $J(K)$ is defined as
    \begin{equation*}
        J(K)\, \triangleq \, \min \left\{ l\in \mathbb{N} \;\;\middle\vert\;\; \sum_{j=0}^{l} p_{j} \geq K \right\},
    \end{equation*}
    i.e., $J(K)+1$ is the total number of cycles during $K$ iterations. $\hfill \Box$
\end{lemma}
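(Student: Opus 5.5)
The bound follows from counting stepsize changes cycle by cycle. By construction of \textbf{SEG-LS}, the $K$ iterations split into $J(K)+1$ consecutive cycles: cycles $0,\dots,J(K)-1$ have lengths $p_0,\dots,p_{J(K)-1}$, and the last cycle may be truncated. By the definition of $J(K)$ we have $\sum_{j=0}^{J(K)} p_j \ge K$, so at most $J(K)+1$ cycles are started within $K$ iterations. It therefore suffices to show that within any single cycle the number of indices $k$ with $t_k \neq t_{k-1}$ is at most $\lceil [\log_{1/\beta}(t^{\mathrm{upp}}L/\alpha)]_+ + 2\rceil$. Summing this per-cycle bound over the at most $J(K)+1$ cycles then gives the claim.

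Fix a cycle that begins at iteration $k_0$. A change $t_k \ne t_{k-1}$ inside the cycle can occur in only two ways.

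\emph{At the first iteration $k=k_0$.} Here $t^{\mathrm{init}}_{k_0}=t^{\mathrm{upp}}$ may differ from $t_{k_0-1}$, and backtracking may also occur. This iteration contributes at most one change, and that change is counted in the ``$+1$'' slack.

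\emph{At a later iteration $k>k_0$.} Here $t^{\mathrm{init}}_k=t_{k-1}$, so $t_k\ne t_{k-1}$ holds exactly when at least one backtracking step occurs. In that case $t_k \le \beta t_{k-1}$.

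Within the cycle the accepted stepsizes therefore form a non-increasing sequence. Every value has the form $t^{\mathrm{upp}}\beta^{m}$ with $m\in\mathbb{N}$, and each change after $k_0$ increases the exponent $m$ by at least one.

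Next I bound the largest exponent that can occur. Consider any iteration where backtracking happens, so the accepted $t_k$ is strictly less than its initial value. By the argument in the proof of Lemma \ref{SEG-stepsize}, the rejected trial stepsize $t_k/\beta$ must satisfy $t_k/\beta > \alpha/L$. To see this, note that by \eqref{SEG-stepsize-eqn3} every stepsize $t\le \alpha/L$ satisfies \eqref{SEG-Armijo}. One caveat: if the projection gives $\Pi_X[\cdot]=x^k$, then \eqref{SEG-Armijo} holds trivially, so a rejected stepsize always has a nonzero right-hand side and the comparison is valid. Consequently every stepsize produced by backtracking satisfies $t^{\mathrm{upp}}\beta^m \ge \beta\alpha/L$. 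This gives $m \le \log_{1/\beta}(t^{\mathrm{upp}}L/\alpha)+1$, and $m\ge 0$ always. Hence the number of distinct exponent increases within the cycle is at most $[\log_{1/\beta}(t^{\mathrm{upp}}L/\alpha)]_+ + 1$.

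Adding the possible change at the cycle start gives at most $[\log_{1/\beta}(t^{\mathrm{upp}}L/\alpha)]_+ + 2$ changes per cycle. Since the count is an integer, this is bounded by the stated ceiling.

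The step that needs the most care is the bookkeeping at cycle boundaries and at $k=0$. The convention $t_{-1}=t^{\mathrm{upp}}$ means the first cycle begins without a reset-induced change. If the number of cycles started is $J(K)$ rather than $J(K)+1$, the bound is only looser. So I will simply allow one reset change plus the backtracking changes in each of the at most $J(K)+1$ cycles. The remaining points are routine: the integrality and ceiling rounding, and the case $t^{\mathrm{upp}}\le \alpha/L$. In that case no backtracking ever occurs, and the bound holds trivially by the $[\cdot]_+$.
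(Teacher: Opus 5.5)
Your proposal is correct and follows the same per-cycle counting that the paper defers to via \cite[Lemma 4]{wang-shanbhag-xie-2025}. Within each of the at most $J(K)+1$ cycles, the accepted stepsizes are non-increasing values $t^{\mathrm{upp}}\beta^{m}$; the floor $\alpha\beta/L$ on any backtracked stepsize (Lemma~\ref{SEG-stepsize}) caps the exponent at $[\log_{1/\beta}(t^{\mathrm{upp}}L/\alpha)]_+ + 1$, one extra change is allowed at the cycle start, and your handling of the degenerate cases (a zero projection step, and $t^{\mathrm{upp}} \leq \alpha/L$) is sound.
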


Note that $J(K)$ is a deterministic quantity since $\{p_{j}\}$ and $K$ are prescribed. The following lemma is useful in our subsequent analysis.

\begin{lemma}[\mbox{\cite[Lemma 2.1]{iusem-jofre-oliveira-thompson-2019}}]\label{useful-lemma-SEG}\em
    Suppose that $X\subseteq \mathbb{R}^{n}$ is a closed and convex set. Let $v\in \mathbb{R}^{n}$, $x\in X$, and $z \triangleq \Pi_{X}[x-v]$. Then for all $u\in X$,  $2v^{\top}(z-u) \, \leq \, \left\|\, x-u\,\right\|^{2} - \left\|\, z-u \, \right\|^{2} - \left\| \, z-x \, \right\|^{2}$. $\hfill \Box$
\end{lemma}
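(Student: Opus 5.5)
The statement is Lemma 2.1 of Iusem et al.: for $z=\Pi_X[x-v]$, $x\in X$ and any $u\in X$,
$$2v^\top(z-u)\le \|x-u\|^2-\|z-u\|^2-\|z-x\|^2 .$$
The plan is to combine the variational characterization of the Euclidean projection with the three-point (polarization) identity.

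\textbf{Step 1 (projection inequality).} Since $X$ is closed and convex, $z=\Pi_X[x-v]$ is characterized by
$$\big((x-v)-z\big)^\top(u-z)\le 0\quad \text{for all } u\in X .$$
Rearranging gives
$$v^\top(z-u)\le (x-z)^\top(z-u)\qquad \forall\, u\in X .$$

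\textbf{Step 2 (three-point identity).} Write $x-u=(x-z)+(z-u)$ and expand:
$$\|x-u\|^2=\|x-z\|^2+\|z-u\|^2+2(x-z)^\top(z-u).$$
Hence
$$2(x-z)^\top(z-u)=\|x-u\|^2-\|z-u\|^2-\|z-x\|^2 .$$

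\textbf{Step 3 (combine).} Multiply the inequality from Step 1 by $2$ and substitute the identity from Step 2; this yields the claimed bound directly.

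There is no real obstacle here. The only care needed is the sign convention in the projection inequality, namely that the residual $(x-v)-z$ makes an obtuse angle with every feasible direction $u-z$. The hypothesis $x\in X$ is not actually used; only $u\in X$ is needed for the projection inequality.
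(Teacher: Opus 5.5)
Your proof is correct: the projection characterization $((x-v)-z)^\top(u-z)\le 0$ combined with the expansion of $\|(x-z)+(z-u)\|^2$ gives the inequality directly, and you are right that $x\in X$ is never used. The paper gives no proof of its own and simply cites Iusem et al.; the standard argument behind that lemma is essentially the one you wrote.
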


We define $w^{k}_{1}$, $w^{k}_{2}$, and $w^{k}_{3}$ as 
\begin{align}
    w^{k}_{1} &\triangleq \bar{F}_{k}(x^{k}, \xi^{k}) - F(x^{k}), \label{w_k_1} \\
    w^{k}_{2} &\triangleq \bar{F}_{k}(z^{k}, \zeta^{k}) - F(z^{k}), \label{w_k_2} \\
    w^{k}_{3} &\triangleq \bar{F}_{k}(z^{k}, \xi^{k}) - F(z^{k}). \label{w_k_3}
\end{align}

By Assumption $\mathrm{(A4)}$, we know that
\begin{equation}\label{wk_1-wk_2-bounds}
    \mathbb{E}[\|w^{k}_{1}\|^{2}] \leq \frac{\sigma^{2}}{N_{k}} \;\;\text{and}\;\; \mathbb{E}[\|w^{k}_{2}\|^{2}] \leq \frac{\sigma^{2}}{N_{k}}.
\end{equation}
However, such a second moment bound does not hold for $w^{k}_{3} \triangleq \bar{F}_{k}(z^{k}, \xi^{k}) - F(z^{k})$ since $z^{k}$ is dependent of $\xi^{k}$ by \eqref{SEG-LS-eqn1}. To analyze $w^{k}_{3}$, we introduce the auxiliary iterate $\bar{z}^{k}$ as
\begin{equation*}
    \bar{z}^{k} \triangleq \Pi_{X}[x^{k} - t_{k-1}F(x^{k})] \;\;\text{for all}\;\; k\in \mathbb{N} \;\;\text{and}\;\; t_{-1} \triangleq t^{\mathrm{upp}}.
\end{equation*}
Observe that $\bar{z}^{k}$ is independent of the samples generated at iteration $k$ and is adapted to $\mathcal{F}_k$, as specified in \eqref{SEG-history}. Next, we derive  a moment bound of $w^{k}_{3}$ by leveraging $\bar{z}^{k}$.

\begin{lemma}[Second moment bound of $w^{k}_{3}$]\label{w3-bound-lemma}\em
    Consider the sequence $\{ x^{k} \}_{k=0}^{K-1}$ generated by \textbf{SEG-LS}. Suppose that Assumption $\mathrm{A}$ holds. We may bound $w^{k}_{3} \triangleq \bar{F}_{k}(z^{k}, \xi^{k}) - F(z^{k})$ from above
    \begin{equation}\label{w3-bound-lemma-main-result}
        \mathbb{E}[\|w^{k}_{3}\|^{2}] \leq (12(t^{\mathrm{upp}})^{2}L^{2}+3)\frac{\sigma^{2}}{N_{k}} + 12(t^{\mathrm{upp}}-t^{\mathrm{low}})^{2}D^{2}_{F}L^{2}\mathbb{P}\{t_{k}\neq t_{k-1}\}.
    \end{equation}
\end{lemma}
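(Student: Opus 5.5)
We compare $z^{k}$ with the auxiliary point $\bar{z}^{k}$, which is $\mathcal{F}_k$-measurable and so independent of the batch $\xi^{k}$. Write
\begin{equation*}
w^{k}_{3} = \underbrace{\big(\bar{F}_{k}(z^{k},\xi^{k}) - \bar{F}_{k}(\bar{z}^{k},\xi^{k})\big)}_{(a)} + \underbrace{\big(\bar{F}_{k}(\bar{z}^{k},\xi^{k}) - F(\bar{z}^{k})\big)}_{(b)} + \underbrace{\big(F(\bar{z}^{k}) - F(z^{k})\big)}_{(c)},
\end{equation*}
and use $\|a+b+c\|^{2}\le 3(\|a\|^{2}+\|b\|^{2}+\|c\|^{2})$. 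Term (b) is controlled by (A4) together with the independence of $\bar{z}^{k}$ from $\xi^{k}$. Conditioning on $\mathcal{F}_k$ gives $\mathbb{E}[\|(b)\|^{2}]\le \sigma^{2}/N_k$, which accounts for the constant $3\sigma^{2}/N_k$. By (A2), $\bar{F}_k(\bullet,\xi^k)$ is $L$-Lipschitz, and $F$, as an expectation of $L$-Lipschitz maps, is also $L$-Lipschitz. So (a) and (c) are each bounded by $L\|z^{k}-\bar{z}^{k}\|$, and together they contribute at most $6L^{2}\mathbb{E}[\|z^{k}-\bar{z}^{k}\|^{2}]$.

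It remains to bound $\|z^{k}-\bar{z}^{k}\|$. Nonexpansiveness of $\Pi_X$ gives
\begin{equation*}
\|z^{k}-\bar{z}^{k}\| \le \|t_{k}\bar{F}_k(x^{k},\xi^{k}) - t_{k-1}F(x^{k})\| \le t_{k}\|w^{k}_{1}\| + |t_{k}-t_{k-1}|\,\|F(x^{k})\|.
\end{equation*}
Squaring and using $(u+v)^{2}\le 2u^{2}+2v^{2}$ bounds $\|z^{k}-\bar{z}^{k}\|^{2}$ by $2(t^{\mathrm{upp}})^{2}\|w^{k}_{1}\|^{2} + 2|t_k-t_{k-1}|^{2}D_F^{2}$. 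Here Lemma \ref{SEG-stepsize} gives $t_k\le t^{\mathrm{upp}}$, and (A3) gives $\|F(x^k)\|\le D_F$. The first part has expectation at most $2(t^{\mathrm{upp}})^{2}\sigma^{2}/N_k$ by \eqref{wk_1-wk_2-bounds}.

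For the second part, note that $t_k,t_{k-1}\in[t^{\mathrm{low}},t^{\mathrm{upp}}]$ by Lemma \ref{SEG-stepsize}, with $t_{-1}=t^{\mathrm{upp}}$. Hence
\begin{equation*}
|t_k-t_{k-1}|^{2}\le (t^{\mathrm{upp}}-t^{\mathrm{low}})^{2}\mathbf{1}_{\{t_k\neq t_{k-1}\}},
\end{equation*}
and taking expectations yields the probability term. Combining, $6L^{2}\mathbb{E}\|z^k-\bar z^k\|^2\le 12(t^{\mathrm{upp}})^2L^2\sigma^2/N_k+12(t^{\mathrm{upp}}-t^{\mathrm{low}})^2D_F^2L^2\,\mathbb{P}\{t_k\ne t_{k-1}\}$. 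Adding $3\sigma^2/N_k$ gives exactly \eqref{w3-bound-lemma-main-result}.

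The main obstacle is term (b). It must be justified rigorously, because $\bar{z}^{k}$ uses $t_{k-1}$ and $x^{k}$, both of which are determined by samples up to iteration $k-1$ and hence are $\mathcal{F}_k$-measurable. The fresh batch $\xi^{k}$ is i.i.d. and independent of $\mathcal{F}_k$. The conditional variance of a sample mean of $N_k$ i.i.d. terms at a fixed point is then at most $\sigma^2/N_k$, by (A4) and the vanishing of the cross terms. Everything else is Lipschitz and projection bookkeeping.
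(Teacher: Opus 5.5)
Your proof is correct and follows the paper's argument: the same three-term split of $w^{k}_{3}$ through the $\mathcal{F}_k$-measurable point $\bar{z}^{k}$, the same $3\sigma^{2}/N_k$ bound obtained by conditioning on $\mathcal{F}_k$, and the same $6L^{2}\mathbb{E}[\|z^{k}-\bar{z}^{k}\|^{2}]$ bound on the two Lipschitz terms. The only cosmetic difference is how you bound $\|z^{k}-\bar{z}^{k}\|$: you use one nonexpansiveness step and the pointwise indicator $\mathbf{1}_{\{t_k\neq t_{k-1}\}}$, whereas the paper inserts $\Pi_{X}[x^{k}-t_{k}F(x^{k})]$ and conditions on the event $\{t_k\neq t_{k-1}\}$; the resulting constants are identical.
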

\begin{proof}
    By assumption $\mathrm{(A2)}$, we have that
    \begin{align}\label{w3-bound-lemma-eqn1}
        \| w^{k}_{3} \|^{2} &= \| F(z^{k}) - \bar{F}_{k}(z^{k}, \xi^{k}) \|^{2} \notag \\
        &= \| F(z^{k}) - F(\bar{z}^{k}) + F(\bar{z}^{k}) - \bar{F}_{k}(\bar{z}^{k}, \xi^{k}) + \bar{F}_{k}(\bar{z}^{k}, \xi^{k}) - \bar{F}_{k}(z^{k}, \xi^{k}) \|^{2} \notag \\
        &\leq 3\| F(z^{k}) - F(\bar{z}^{k})\|^{2} + 3\| F(\bar{z}^{k}) - \bar{F}_{k}(\bar{z}^{k}, \xi^{k}) \|^{2} + 3 \| \bar{F}_{k}(\bar{z}^{k}, \xi^{k}) - \bar{F}_{k}(z^{k}, \xi^{k}) \|^{2} \notag \\
        &\leq 6L^{2} \| z^{k} - \bar{z}^{k} \|^{2} + 3\| F(\bar{z}^{k}) - \bar{F}_{k}(\bar{z}^{k}, \xi^{k}) \|^{2}.
    \end{align}
    By taking the unconditional expectation on both sides of \eqref{w3-bound-lemma-eqn1}, it leads to
    \begin{equation}\label{w3-bound-lemma-eqn2}
        \mathbb{E}[\| w^{k}_{3} \|^{2}] \leq \underbrace{6L^{2} \mathbb{E}[\| z^{k} - \bar{z}^{k} \|^{2}]}_{(i)} \,+\, \underbrace{3\mathbb{E}[\| F(\bar{z}^{k}) - \bar{F}_{k}(\bar{z}^{k}, \xi^{k}) \|^{2}]}_{(ii)}.
    \end{equation}
    We know from assumption $\mathrm{(A4)}$ that
    \begin{equation}\label{w3-bound-lemma-eqn3}
        (ii) \leq 3\frac{\sigma^{2}}{N_{k}}.
    \end{equation}   
    Next we bound $(i)$ via a conditioning argument as follows. For $\mathbb{E}[\| \bar{z}^{k} - z^{k} \|^{2}]$, we have that
    \allowdisplaybreaks
    \begin{align*}
        &\mathbb{E} [\| \bar{z}^{k} - z^{k} \|^{2}] = \mathbb{E}[\| \Pi_{X}[x^{k} - t_{k-1}F(x^{k})] - \Pi_{X}[x^{k} - t_{k}\bar{F}_{k}(x^{k}, \xi^{k})] \|^{2}] \\
        &= \mathbb{E} \left[ \| \Pi_{X}[x^{k} - t_{k-1}F(x^{k})] - \Pi_{X}[x^{k} - t_{k}F(x^{k})] + \Pi_{X}[x^{k} - t_{k}F(x^{k})] - \Pi_{X}[x^{k} - t_{k}\bar{F}_{k}(x^{k}, \xi^{k})] \|^{2} \right] \\
        &\leq \mathbb{E} \left[ 2 \| \Pi_{X}[x^{k} - t_{k-1}F(x^{k})] - \Pi_{X}[x^{k} - t_{k}F(x^{k})]\|^{2} \:\vert\: t_{k}\neq t_{k-1} \right]\mathbb{P}\left\{ t_{k}\neq t_{k-1} \right\} \\
        &+ \mathbb{E} \left[ 2 \| \Pi_{X}[x^{k} - t_{k-1}F(x^{k})] - \Pi_{X}[x^{k} - t_{k}F(x^{k})]\|^{2} \:\vert\: t_{k} = t_{k-1} \right]\mathbb{P}\left\{ t_{k} = t_{k-1} \right\} \\
        &+ \mathbb{E} \left[ 2 \| \Pi_{X}[x^{k} - t_{k}F(x^{k})] - \Pi_{X}[x^{k} - t_{k}\bar{F}_{k}(x^{k}, \xi^{k})] \|^{2} \right] \\
        &\leq \mathbb{E} \left[ 2 \|(t_{k}-t_{k-1})F(x^{k})\|^{2} \:\vert\: t_{k}\neq t_{k-1} \right]\mathbb{P}\left\{ t_{k}\neq t_{k-1} \right\} + \mathbb{E} \left[ 2t^{2}_{k} \| F(x^{k}) - \bar{F}_{k}(x^{k}, \xi^{k}) \|^{2} \right] \\
        &\leq 2(t^{\mathrm{upp}}-t^{\mathrm{low}})^{2} D^{2}_{F}\mathbb{P}\left\{ t_{k}\neq t_{k-1} \right\} + 2(t^{\mathrm{upp}})^{2}\frac{\sigma^{2}}{N_{k}},
    \end{align*}
    where the penultimate inequality is due to nonexpansiveness of the projection operator, and the last inequality is due to Lemma \ref{SEG-stepsize} and Assumption $\mathrm{A}$. Therefore, we may bound (i) from above
    \begin{equation}\label{w3-bound-lemma-eqn4}
        (i) \leq 12(t^{\mathrm{upp}}-t^{\mathrm{low}})^{2} D^{2}_{F} L^{2} \mathbb{P}\left\{ t_{k}\neq t_{k-1} \right\} + 12 (t^{\mathrm{upp}})^{2} L^{2} \frac{\sigma^{2}}{N_{k}}.
    \end{equation}
    By combing \eqref{w3-bound-lemma-eqn3} with \eqref{w3-bound-lemma-eqn4}, we may arrive at \eqref{w3-bound-lemma-main-result}.
\end{proof}

The following lemma is crucial in our main theorem.

\begin{lemma}\label{sum-x_k+1-minus-x_star-lemma}\em
    Consider the sequence $\{x^{k}\}_{k=0}^{K-1}$ generated by \textbf{SEG-LS}. Suppose that Assumption $\mathrm{A}$ holds. Then, for any $k = 0, 1, \cdots, K-1$ and any $x^{\ast}\in \mathrm{SOL}\:(X, F)$, we have that
    \begin{equation*}
        \sum_{k=0}^{K-1} \mathbb{E} \left[ \| x^{k+1} - x^{\ast} \|^{2} \left( \frac{1}{t^{2}_{k+1}} - \frac{1}{t^{2}_{k}} \right) \right] \leq D^{2}_{X} \left( \frac{1}{(t^{\mathrm{low}})^{2}} - \frac{1}{(t^{\mathrm{upp}})^{2}} \right) (J(K)+1) \left\lceil \left[ \log_{1/\beta}\left(\frac{t^{\mathrm{upp}}L}{\alpha}\right) \right]_{+} + 3 \right\rceil.
    \end{equation*}
\end{lemma}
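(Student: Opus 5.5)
The plan is a pathwise (deterministic) bound, with expectations taken only at the very end. Fix any realization of the algorithm and any $x^{\ast}\in\mathrm{SOL}\:(X,F)$. Since $x^{k+1}\in X$ and $x^{\ast}\in X$, Assumption (A3) gives $\|x^{k+1}-x^{\ast}\|^{2}\le D_X^{2}$.

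Next we split the sum according to the sign of the coefficient. For each $k$, the coefficient $\frac{1}{t_{k+1}^{2}}-\frac{1}{t_k^{2}}$ takes one of three values:
\begin{itemize}
\item it is zero whenever $t_{k+1}=t_k$;
\item it is negative whenever $t_{k+1}>t_k$, which can only happen at the start of a new cycle;
\item it is positive only when $t_{k+1}<t_k$.
\end{itemize}
Dropping the nonpositive terms, we bound the sum pathwise by
\[
D_X^{2}\sum_{k:\,t_{k+1}<t_k}\left(\frac{1}{t_{k+1}^{2}}-\frac{1}{t_k^{2}}\right).
\]
Here the stepsize $t_K$ is understood as the one the algorithm would produce at iteration $K$. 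We then have to bound this positive sum of increments.

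This is the crux, and we handle it cycle by cycle. Within a single cycle the stepsizes are non-increasing, so the decreasing increments inside the cycle telescope. Their sum is $\frac{1}{t_{\text{last}}^{2}}-\frac{1}{t_{\text{first}}^{2}}\le \frac{1}{(t^{\mathrm{low}})^{2}}-\frac{1}{(t^{\mathrm{upp}})^{2}}$, by Lemma \ref{SEG-stepsize}. The transition between two consecutive cycles either raises the stepsize back to $t^{\mathrm{upp}}$, which gives a negative term we discard, or keeps it below its previous value. In the latter case the transition increment is also bounded by $\frac{1}{(t^{\mathrm{low}})^{2}}-\frac{1}{(t^{\mathrm{upp}})^{2}}$.

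This already yields the bound $(J(K)+1)$ times the bracket, with one extra cycle-transition term absorbed. To match the stated constant, we can argue more crudely instead. Each positive term is at most $\frac{1}{(t^{\mathrm{low}})^{2}}-\frac{1}{(t^{\mathrm{upp}})^{2}}$, since every $t_k\in[t^{\mathrm{low}},t^{\mathrm{upp}}]$. The number of indices $k\in\{0,\dots,K-1\}$ with $t_{k+1}\neq t_k$ is at most $T(K+1)$. By the argument of Lemma \ref{bound-TK-lemma}, this count is at most
\[
(J(K)+1)\left\lceil \left[\log_{1/\beta}\left(\tfrac{t^{\mathrm{upp}}L}{\alpha}\right)\right]_+ +2\right\rceil
\]
plus one extra change coming from the final index $K$. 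That extra change accounts for the $+3$ inside the ceiling once we distribute it per cycle, using $J(K)+1\ge1$.

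The main obstacle is exactly this counting step. Lemma \ref{bound-TK-lemma} counts changes $t_k\ne t_{k-1}$ over $k<K$, while here we need the changes $t_{k+1}\ne t_k$ for $k\le K-1$, which are shifted by one index. We must check that the shift adds at most one change in total. Replacing $2$ by $3$ inside the ceiling adds $J(K)+1\ge 1$ to the count, which covers this. The same count also covers the possible restart at index $K$.

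Since the resulting bound is deterministic, taking expectations yields the claimed inequality.
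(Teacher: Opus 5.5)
Your proposal is correct and is essentially the paper's argument: you bound $\|x^{k+1}-x^{\ast}\|^{2}\le D_X^{2}$, discard the nonpositive increments, bound each positive increment by $\frac{1}{(t^{\mathrm{low}})^{2}}-\frac{1}{(t^{\mathrm{upp}})^{2}}$, and count the indices with $t_{k+1}\neq t_k$ by $T(K)+1$, absorbing the extra $1$ into the ceiling via $J(K)+1\ge 1$; the only difference is that you argue pathwise, whereas the paper conditions on the events $\{t_{k+1}>t_k\}$, $\{t_{k+1}=t_k\}$, $\{t_{k+1}<t_k\}$ and then sums the probabilities $\mathbb{P}\{t_{k+1}\neq t_k\}$ into $\mathbb{E}[T(K)]+1$. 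Your cycle-wise telescoping aside is also valid, and in fact gives the sharper pathwise bound $2(J(K)+1)D_X^{2}\bigl(\frac{1}{(t^{\mathrm{low}})^{2}}-\frac{1}{(t^{\mathrm{upp}})^{2}}\bigr)$, with no logarithmic factor and no need for Lemma~\ref{bound-TK-lemma}.
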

\begin{proof}
    For $k\geq 0$, we define three events, $I_{k,1} \triangleq \{ t_{k+1} > t_{k} \}$, $I_{k,2} \triangleq \{ t_{k+1} = t_{k} \}$, and $I_{k,3} \triangleq \{ t_{k+1} < t_{k} \}$, respectively. By the law of total expectation, we have that
    \begin{align*}
        &\mathbb{E} \left[ \| x^{k+1} - x^{\ast} \|^{2} \left( \frac{1}{t^{2}_{k+1}} - \frac{1}{t^{2}_{k}} \right) \right] \\
        &= \underbrace{\mathbb{E} \left[ \| x^{k+1} - x^{\ast} \|^{2} \left( \frac{1}{t^{2}_{k+1}} - \frac{1}{t^{2}_{k}} \right) \:\middle\vert\: I_{k,1} \right]}_{\leq 0} \mathbb{P}[I_{k,1}] + \underbrace{\mathbb{E} \left[ \| x^{k+1} - x^{\ast} \|^{2} \left( \frac{1}{t^{2}_{k+1}} - \frac{1}{t^{2}_{k}} \right) \:\middle\vert\: I_{k,2} \right]}_{= 0} \mathbb{P}[I_{k,2}] \\
        &+ \mathbb{E} \left[ \| x^{k+1} - x^{\ast} \|^{2} \left( \frac{1}{t^{2}_{k+1}} - \frac{1}{t^{2}_{k}} \right) \:\middle\vert\: I_{k,3} \right] \mathbb{P}[I_{k,3}] \\
        &\leq \mathbb{E} \left[ \| x^{k+1} - x^{\ast} \|^{2} \left( \frac{1}{t^{2}_{k+1}} - \frac{1}{t^{2}_{k}} \right) \:\middle\vert\: I_{k,3} \right] \mathbb{P}[I_{k,3}] \leq D^{2}_{X} \left( \frac{1}{(t^{\mathrm{low}})^{2}} - \frac{1}{(t^{\mathrm{upp}})^{2}} \right) \mathbb{P}\{t_{k+1}\neq t_{k}\},
    \end{align*}
    where the last inequality arises from Assumption $\mathrm{A}$, Lemma \ref{SEG-stepsize}, and the fact that $\mathbb{P}[I_{k,3}] \leq \mathbb{P}\{t_{k+1}\neq t_{k}\}$. By summing from $k = 0, \cdots, K-1$, we have that
    \begin{align*}
        &\sum_{k=0}^{K-1} \mathbb{E} \left[ \| x^{k+1} - x^{\ast} \|^{2} \left( \frac{1}{t^{2}_{k+1}} - \frac{1}{t^{2}_{k}} \right) \right]\\
        &\leq D^{2}_{X} \left( \frac{1}{(t^{\mathrm{low}})^{2}} - \frac{1}{(t^{\mathrm{upp}})^{2}} \right) \sum_{k=0}^{K-1} \mathbb{P}\{t_{k+1}\neq t_{k}\} = D^{2}_{X} \left( \frac{1}{(t^{\mathrm{low}})^{2}} - \frac{1}{(t^{\mathrm{upp}})^{2}} \right) \sum_{k=0}^{K-1} \mathbb{E}[\mathbf{1}_{\{ t_{k+1} \neq t_{k} \}}] \\
        &= D^{2}_{X} \left( \frac{1}{(t^{\mathrm{low}})^{2}} - \frac{1}{(t^{\mathrm{upp}})^{2}} \right) \sum_{k=1}^{K} \mathbb{E}[\mathbf{1}_{\{ t_{k} \neq t_{k-1} \}}] \leq D^{2}_{X} \left( \frac{1}{(t^{\mathrm{low}})^{2}} - \frac{1}{(t^{\mathrm{upp}})^{2}} \right) (\mathbb{E}[T(K)] + 1) \\
        &\leq D^{2}_{X} \left( \frac{1}{(t^{\mathrm{low}})^{2}} - \frac{1}{(t^{\mathrm{upp}})^{2}} \right) \left( (J(K)+1) \left\lceil \left[ \log_{1/\beta}\left(\frac{t^{\mathrm{upp}}L}{\alpha}\right) \right]_{+} + 2 \right\rceil + 1 \right) \\
        &\leq D^{2}_{X} \left( \frac{1}{(t^{\mathrm{low}})^{2}} - \frac{1}{(t^{\mathrm{upp}})^{2}} \right) (J(K)+1) \left\lceil \left[ \log_{1/\beta}\left(\frac{t^{\mathrm{upp}}L}{\alpha}\right) \right]_{+} + 3 \right\rceil,
    \end{align*}
    where the last inequality is due to the fact that $J(K)\geq 0$. This completes the proof.
\end{proof}

\subsubsection{Convergence analysis}

In this subsection, we will show that the expected norm of natural map diminishes at a rate of $\mathcal{O}(1/\sqrt{K})$, where $K$ is the iteration budget. Based on this result, the iteration and sample complexities of \textbf{SEG-LS} are shown to be $\mathcal{O}(\epsilon^{-2})$ and $\mathcal{O}(\epsilon^{-4})$, respectively. We begin with a useful proposition.

\begin{proposition}\label{SEG-key-recursion}\em
    Suppose Assumption $\mathrm{A}$ holds. Suppose that $\alpha \in (0, \frac{1}{\sqrt{3}})$. Then the sequence $\{ x^{k} \}_{k=0}^{K-1}$ generated by \textbf{SEG-LS} satisfies, for any $x^{\ast}\in \mathrm{SOL}\:(X, F)$ and $k\geq 0$,
    \begin{align}\label{SEG-key-recursion-eqn0}
        \| x^{k+1}-x^{\ast} \|^{2} &\leq \| x^{k}-x^{\ast} \|^{2} - \frac{1-3\alpha^{2}}{2}t^{2}_{k} \left\| G_{t^{\mathrm{upp}}}(x^{k}) \right\|^{2} + 2t_{k}(w^{k}_{2})^{\top}(x^{\ast}-z^{k}) \notag \\
        &+ (1-3\alpha^{2})t^{2}_{k}\|w^{k}_{1}\|^{2} + 3t^{2}_{k}\|w^{k}_{2}\|^{2} + 3t^{2}_{k}\|w^{k}_{3}\|^{2},
    \end{align}
    where $w^{k}_{1}$, $w^{k}_{2}$, and $w^{k}_{3}$ are defined in \eqref{w_k_1}-\eqref{w_k_3}, respectively.
\end{proposition}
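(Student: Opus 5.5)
Following the classical extragradient analysis, I will apply Lemma \ref{useful-lemma-SEG} twice: once to the correction step \eqref{SEG-LS-eqn2} and once to the prediction step \eqref{SEG-LS-eqn1}. The stochastic Armijo condition \eqref{SEG-Armijo} will then control the discrepancy between the two sampled maps.

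\textbf{Step 1 (correction step).} Apply Lemma \ref{useful-lemma-SEG} with $x=x^k$, $v=t_k\bar F_k(z^k,\zeta^k)$ and $u=x^\ast$ to get
\[
\|x^{k+1}-x^\ast\|^2\le \|x^k-x^\ast\|^2-\|x^{k+1}-x^k\|^2-2t_k\bar F_k(z^k,\zeta^k)^\top(x^{k+1}-x^\ast).
\]
Split $x^{k+1}-x^\ast=(x^{k+1}-z^k)+(z^k-x^\ast)$ and write $\bar F_k(z^k,\zeta^k)=F(z^k)+w_2^k$. By pseudomonotonicity (A1), $F(x^\ast)^\top(z^k-x^\ast)\ge0$ implies $F(z^k)^\top(z^k-x^\ast)\ge0$, so that term can be dropped. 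What remains is the noise term $2t_k(w_2^k)^\top(x^\ast-z^k)$ plus the cross term $-2t_k\bar F_k(z^k,\zeta^k)^\top(x^{k+1}-z^k)$.

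\textbf{Step 2 (prediction step).} Apply Lemma \ref{useful-lemma-SEG} with $v=t_k\bar F_k(x^k,\xi^k)$ and $u=x^{k+1}$ to get
\[
2t_k\bar F_k(x^k,\xi^k)^\top(z^k-x^{k+1})\le \|x^k-x^{k+1}\|^2-\|z^k-x^{k+1}\|^2-\|z^k-x^k\|^2.
\]
Adding this to the cross term, the $\|x^{k+1}-x^k\|^2$ terms cancel. This leaves $-\|z^k-x^{k+1}\|^2-\|z^k-x^k\|^2+2t_k(\bar F_k(x^k,\xi^k)-\bar F_k(z^k,\zeta^k))^\top(x^{k+1}-z^k)$. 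Decompose the difference of maps as
\[
\bar F_k(x^k,\xi^k)-\bar F_k(z^k,\xi^k)+w_3^k-w_2^k,
\]
using $\bar F_k(z^k,\xi^k)=F(z^k)+w_3^k$ and $\bar F_k(z^k,\zeta^k)=F(z^k)+w_2^k$. Bound the inner product by Young's inequality, $2ab\le 3a^2+\tfrac13 b^2$ applied to each of the three pieces, so that the three $\tfrac13\|x^{k+1}-z^k\|^2$ contributions are absorbed by $-\|z^k-x^{k+1}\|^2$. The Armijo condition \eqref{SEG-Armijo} gives $t_k^2\|\bar F_k(x^k,\xi^k)-\bar F_k(z^k,\xi^k)\|^2\le\alpha^2\|z^k-x^k\|^2$. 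The result is
\[
-(1-3\alpha^2)\|z^k-x^k\|^2+3t_k^2\|w_2^k\|^2+3t_k^2\|w_3^k\|^2 .
\]

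\textbf{Step 3 (link to the residual; main obstacle).} It remains to lower bound $\|z^k-x^k\|^2$ by $t_k^2\|G_{t^{\mathrm{upp}}}(x^k)\|^2/2$ up to a $t_k^2\|w_1^k\|^2$ error. Let $\hat z^k=\Pi_X[x^k-t_kF(x^k)]$. Nonexpansiveness of the projection gives $\|z^k-\hat z^k\|\le t_k\|w_1^k\|$, hence
\[
\|z^k-x^k\|^2\ge \tfrac12\|\hat z^k-x^k\|^2-t_k^2\|w_1^k\|^2 .
\]
Then invoke the standard monotonicity of the natural residual in the step (Facchinei--Pang, Prop.\ 10.3.6 type): $t\mapsto\|x-\Pi_X[x-tF(x)]\|/t$ is nonincreasing. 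Since $t_k\le t^{\mathrm{upp}}$ by Lemma \ref{SEG-stepsize}, this yields $\|\hat z^k-x^k\|\ge t_k\|G_{t^{\mathrm{upp}}}(x^k)\|$. Multiplying by $(1-3\alpha^2)>0$, which holds because $\alpha<1/\sqrt3$, produces exactly the terms $-\tfrac{1-3\alpha^2}{2}t_k^2\|G_{t^{\mathrm{upp}}}(x^k)\|^2+(1-3\alpha^2)t_k^2\|w_1^k\|^2$.

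The delicate points are keeping the Young constants consistent with the stated coefficient 3 and applying the residual monotonicity in the correct direction. Combining Steps 1--3 gives \eqref{SEG-key-recursion-eqn0}.
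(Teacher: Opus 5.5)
Your proposal is correct and follows essentially the same route as the paper: Lemma~\ref{useful-lemma-SEG} is applied to both the prediction and correction steps, combined at $x^{k+1}$ so that $\|x^{k+1}-x^k\|^2$ cancels, and pseudomonotonicity is used at $z^k$; then the three-way split of $\bar F_k(x^k,\xi^k)-\bar F_k(z^k,\zeta^k)$ is controlled by the Armijo condition, and the residual is linked through $\Pi_X[x^k-t_kF(x^k)]$ together with the monotonicity of $t\mapsto\|G_t(x)\|$ (which the paper takes from \cite[Lemma 5]{wang-shanbhag-xie-2025}). The only difference is cosmetic: you apply Young's inequality with weight $3$ to each of the three pieces, whereas the paper applies it with weight $1$ and then uses $\|a+b+c\|^2\le 3(\|a\|^2+\|b\|^2+\|c\|^2)$, and both yield exactly the stated constants.
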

\begin{proof}
    By invoking Lemma \ref{useful-lemma-SEG}, we have
    \begin{equation}\label{SEG-key-recursion-eqn1}
        2t_{k}\bar{F}_{k}(x^{k}, \xi^{k})^{\top}(z^{k}-x) \leq \| x^{k} - x \|^{2} - \| z^{k} - x \|^{2} - \| z^{k} - x^{k} \|^{2}, ~ \forall x\in X
    \end{equation}
    and
    \begin{equation}\label{SEG-key-recursion-eqn2}
        2t_{k}\bar{F}_{k}(z^{k}, \zeta^{k})^{\top}(x^{k+1}-x) \leq \| x^{k} - x \|^{2} - \| x^{k+1} - x \|^{2} - \| x^{k+1} - x^{k} \|^{2}, ~ \forall x\in X.
    \end{equation}
    By letting $x = x^{k+1}$ in \eqref{SEG-key-recursion-eqn1}, and then adding \eqref{SEG-key-recursion-eqn1} to \eqref{SEG-key-recursion-eqn2}, we have that
    \begin{align}\label{SEG-key-recursion-eqn3}
        \mathrm{Term}\;A &\triangleq 2t_{k}\bar{F}_{k}(x^{k}, \xi^{k})^{\top}(z^{k}-x^{k+1}) + 2t_{k}\bar{F}_{k}(z^{k}, \zeta^{k})^{\top}(x^{k+1}-x) \notag \\
        &\leq \| x^{k} - x \|^{2} - \| x^{k+1} - x \|^{2} - \| z^{k} - x^{k+1} \|^{2} - \| z^{k} - x^{k} \|^{2}.
    \end{align}
    We can see that $\mathrm{Term}\;A$ can be written as
    \begin{align}\label{SEG-key-recursion-eqn4}
        \mathrm{Term}\;A &= 2t_{k}(\bar{F}_{k}(x^{k}, \xi^{k})-\bar{F}_{k}(z^{k}, \zeta^{k}))^{\top}(z^{k}-x^{k+1}) + 2t_{k}\bar{F}_{k}(z^{k}, \zeta^{k})^{\top}(z^{k}-x) \notag \\
        &= 2t_{k}(\bar{F}_{k}(x^{k}, \xi^{k})-\bar{F}_{k}(z^{k}, \zeta^{k}))^{\top}(z^{k}-x^{k+1}) + 2t_{k}F(z^{k})^{\top}(z^{k}-x) + 2t_{k} (w^{k}_{2})^{\top}(z^{k}-x).
    \end{align}
    By \eqref{SEG-key-recursion-eqn3} and \eqref{SEG-key-recursion-eqn4}, it leads to
    \begin{equation*}
        \begin{aligned}
            2t_{k}F(z^{k})^{\top}(z^{k}-x) &\leq 2t_{k}(\bar{F}_{k}(x^{k}, \xi^{k})-\bar{F}_{k}(z^{k}, \zeta^{k}))^{\top}(x^{k+1}-z^{k}) + 2t_{k}(w^{k}_{2})^{\top}(x-z^{k}) \\
            &~~~+ \| x^{k} - x \|^{2} - \| x^{k+1} - x \|^{2} - \| z^{k} - x^{k+1} \|^{2} - \| z^{k} - x^{k} \|^{2} \\
            &\leq 2t_{k} \| \bar{F}_{k}(x^{k}, \xi^{k})-\bar{F}_{k}(z^{k}, \zeta^{k}) \| \| x^{k+1}-z^{k} \| + 2t_{k}(w^{k}_{2})^{\top}(x-z^{k}) \\
            &~~~+ \| x^{k} - x \|^{2} - \| x^{k+1} - x \|^{2} - \| z^{k} - x^{k+1} \|^{2} - \| z^{k} - x^{k} \|^{2} \\
            &\leq 2 \left( \frac{t^{2}_{k}}{2} \| \bar{F}_{k}(x^{k}, \xi^{k})-\bar{F}_{k}(z^{k}, \zeta^{k}) \|^{2} + \frac{\| x^{k+1} - z^{k} \|^{2}}{2} \right) + 2t_{k}(w^{k}_{2})^{\top}(x-z^{k}) \\
            &~~~+ \| x^{k} - x \|^{2} - \| x^{k+1} - x \|^{2} - \| z^{k} - x^{k+1} \|^{2} - \| z^{k} - x^{k} \|^{2},
        \end{aligned}
    \end{equation*}
    implying that
    \begin{align}\label{SEG-key-recursion-eqn5}
         2t_{k}F(z^{k})^{\top}(z^{k}-x) \notag  & \leq t^{2}_{k}\| \bar{F}_{k}(x^{k}, \xi^{k})-\bar{F}_{k}(z^{k}, \zeta^{k}) \|^{2} 
            + 2t_{k}(w^{k}_{2})^{\top}(x-z^{k})\\
                &~~~+ \| x^{k} - x \|^{2} - \| x^{k+1} - x \|^{2} - \| z^{k} - x^{k} \|^{2}.
    \end{align}
    Note that we cannot directly derive a bound on $t^{2}_{k}\| \bar{F}_{k}(x^{k}, \xi^{k})-\bar{F}_{k}(z^{k}, \zeta^{k}) \|^{2}$ since these use different samples. Instead, we have that
    \begin{align}\label{SEG-key-recursion-eqn6}
        & t^{2}_{k} \| \bar{F}_{k}(x^{k}, \xi^{k})-\bar{F}_{k}(z^{k}, \zeta^{k}) \|^{2} \notag \\
        &= t^{2}_{k} \| \bar{F}_{k}(x^{k}, \xi^{k}) - \bar{F}_{k}(z^{k}, \xi^{k}) + \bar{F}_{k}(z^{k}, \xi^{k}) - F(z^{k}) + F(z^{k}) - \bar{F}_{k}(z^{k}, \zeta^{k}) \|^{2} \notag \\
        &\leq  t^{2}_{k} ( 3\| \bar{F}_{k}(x^{k}, \xi^{k}) - \bar{F}_{k}(z^{k}, \xi^{k}) \|^{2} + 3\| \bar{F}_{k}(z^{k}, \xi^{k}) - F(z^{k}) \|^{2} + 3\| F(z^{k}) - \bar{F}_{k}(z^{k}, \zeta^{k}) \|^{2} ) \notag \\
        &= t^{2}_{k} ( 3\| \bar{F}_{k}(x^{k}, \xi^{k}) - \bar{F}_{k}(z^{k}, \xi^{k}) \|^{2} + 3\| w^{k}_{3} \|^{2} + 3\| w^{k}_{2} \|^{2} ) \notag \\
        &\overset{\eqref{SEG-Armijo}}{\leq} t^{2}_{k} \left( 3\cdot \frac{\alpha^{2}}{t^{2}_{k}}\|x^{k} - z^{k}\|^{2} + 3\| w^{k}_{3} \|^{2} + 3\| w^{k}_{2} \|^{2} \right) \notag \\
        &= 3\alpha^{2}\|x^{k} - z^{k}\|^{2} + 3t^{2}_{k}\| w^{k}_{2} \|^{2} + 3t^{2}_{k}\| w^{k}_{3} \|^{2}.
    \end{align}
    %where the last inequality is due to \eqref{SEG-Armijo}. 
    By choosing $x = x^{\ast}\in X^{\ast}$ and noting that $F$ is pseudomonotone by assumption $\mathrm{(A1)}$, we have that 
    \begin{equation}\label{SEG-key-recursion-eqn7}
        F(x^{\ast})^{\top}(z^{k}-x^{\ast})\, \geq \, 0 \, \implies \, F(z^{k})^{\top}(z^{k}-x^{\ast})\geq 0.
    \end{equation}
    By \eqref{SEG-key-recursion-eqn5}, \eqref{SEG-key-recursion-eqn6}, and \eqref{SEG-key-recursion-eqn7}, it follows that
    \begin{align}\label{SEG-key-recursion-eqn8}
        \| x^{k+1} - x^{\ast} \|^{2} &\leq \| x^{k} - x^{\ast} \|^{2} - \| x^{k} - z^{k} \|^{2} -2t_k F(z^{k})^\top(z^{k}-x^\ast) + 2t_{k}(w^{k}_{2})^{\top}(x^{\ast}-z^{k}) \notag \\
        &~~~ + 3\alpha^{2}\|x^{k} - z^{k}\|^{2} + 3t^{2}_{k}\| w^{k}_{2} \|^{2} + 3t^{2}_{k}\| w^{k}_{3} \|^{2} \notag \\
         &\leq \| x^{k} - x^{\ast} \|^{2} - \| x^{k} - z^{k} \|^{2} + 2t_{k}(w^{k}_{2})^{\top}(x^{\ast}-z^{k}) + 3\alpha^{2}\|x^{k} - z^{k}\|^{2} + 3t^{2}_{k}\| w^{k}_{2} \|^{2} + 3t^{2}_{k}\| w^{k}_{3} \|^{2} \notag \\
        &= \| x^{k} - x^{\ast} \|^{2} - (1-3\alpha^{2}) \| x^{k} - z^{k} \|^{2} + 2t_{k}(w^{k}_{2})^{\top}(x^{\ast}-z^{k}) + 3t^{2}_{k}(\| w^{k}_{2} \|^{2}+\| w^{k}_{3} \|^{2}).
    \end{align}
    By \cite[Lemma 5]{wang-shanbhag-xie-2025}, we know that $\| G_{t^{\mathrm{upp}}}(x) \| \leq \| G_{t}(x) \|$ holds for any $t\leq t^{\mathrm{upp}}$. Therefore, by Lemma \ref{SEG-stepsize}, we have that
    \begin{align*}
            t^{2}_{k} \left\| G_{t^{\mathrm{upp}}}(x^{k}) \right\|^{2} &\leq t^{2}_{k} \left\| G_{t_{k}}(x^{k}) \right\|^{2} = \left\| x^{k} - \Pi_{X}\left[x^{k} - t_{k}F(x^{k})\right] \right\|^{2} \\
            &= \left\| x^{k} - \underbrace{\Pi_{X}[x^{k} - t_{k}\bar{F}_{k}(x^{k}, \xi^{k})]}_{=z^{k}} + \Pi_{X}\left[x^{k} - t_{k}\bar{F}_{k}(x^{k}, \xi^{k})\right] - \Pi_{X}\left[x^{k} - t_{k}F(x^{k})\right] \right\|^{2} \\
            &\leq 2 \| x^{k} - z^{k} \|^{2} + 2t^{2}_{k} \| \bar{F}_{k}(x^{k}, \xi^{k}) - F(x^{k}) \|^{2} = 2 \| x^{k} - z^{k} \|^{2} + 2t^{2}_{k} \| w^{k}_{1} \|^{2},
        \end{align*}
    where the penultimate inequality is due to nonexpansiveness of the projection operator, implying that
    \begin{equation}\label{SEG-key-recursion-eqn9}
        -\| x^{k} - z^{k} \|^{2} \, \leq \, -\frac{1}{2} t^{2}_{k} \left\| G_{t^{\mathrm{upp}}}(x^{k}) \right\|^{2} + t^{2}_{k} \| w^{k}_{1} \|^{2}.
    \end{equation}
    Since $\alpha\in \left(0, \frac{1}{\sqrt{3}}\right)$, we have that $1 - 3\alpha^{2} > 0$. By \eqref{SEG-key-recursion-eqn8} and \eqref{SEG-key-recursion-eqn9}, it follows that
    \begin{equation*}
        \begin{aligned}
            \| x^{k+1}-x^{\ast} \|^{2} &\leq \| x^{k}-x^{\ast} \|^{2} - \frac{1-3\alpha^{2}}{2} t^{2}_{k} \left\| G_{t^{\mathrm{upp}}}(x^{k}) \right\|^{2} + 2t_{k}(w^{k}_{2})^{\top}(x^{\ast}-z^{k}) \\
            &~~~+ (1-3\alpha^{2})t^{2}_{k}\|w^{k}_{1}\|^{2} + 3t^{2}_{k}\|w^{k}_{2}\|^{2} + 3t^{2}_{k}\|w^{k}_{3}\|^{2},
        \end{aligned}
    \end{equation*}
    which completes the proof.
\end{proof}

To proceed, we recall the definition of $\mathcal{F}_{k+1/2}$ from \eqref{SEG-history-2}, whereby $\mathbb{E} [ w^{k}_{2} \:\vert\: \mathcal{F}_{k+1/2} ] = 0$ since $w^{k}_{2} \triangleq \bar{F}_{k}(z^{k}, \zeta^{k}) - F(z^{k})$.

\begin{proposition}\label{SEG-main-proposition}\em
    Consider the sequence $\{x^{k}\}_{k=0}^{K-1}$ generated by \textbf{SEG-LS}. Suppose Assumption $\mathrm{A}$ holds. Suppose that $\alpha \in (0, \frac{1}{\sqrt{3}})$. Then the following holds for any $K$, where $x^{R_K}$ is defined in Algorithm~\ref{SEG-LS}.
    \begin{align}\label{SEG-main-proposition-main-result}
        &\frac{1-3\alpha^{2}}{2} \mathbb{E} \left[ \left\|G_{t^{\mathrm{upp}}}(x^{R_{K}}) \right\|^{2} \right] \leq \frac{D^{2}_{X}}{K(t^{\mathrm{low}})^{2}} + \frac{1}{K} (36(t^{\mathrm{upp}})^{2}L^{2}+13-3\alpha^{2}) \sum_{k=0}^{K-1} \frac{\sigma^{2}}{N_{k}} \notag \\
        &+ \frac{1}{K} \left( D^{2}_{X} \left( \frac{1}{(t^{\mathrm{low}})^{2}} - \frac{1}{(t^{\mathrm{upp}})^{2}} \right) + 36(t^{\mathrm{upp}}-t^{\mathrm{low}})^{2}D^{2}_{F}L^{2} \right) (J(K)+1) \left\lceil \left[ \log_{1/\beta}\left( \frac{t^{\mathrm{upp}}L}{\alpha} \right) \right]_{+} + 3 \right\rceil.
    \end{align}
\end{proposition}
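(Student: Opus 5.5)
Divide the recursion of Proposition \ref{SEG-key-recursion} by $t_k^2$. Since $t_k\ge t^{\mathrm{low}}>0$ by Lemma \ref{SEG-stepsize}, this gives
\begin{align*}
\frac{1-3\alpha^{2}}{2}\left\|G_{t^{\mathrm{upp}}}(x^{k})\right\|^{2} &\leq \frac{\|x^{k}-x^{\ast}\|^{2}}{t_k^{2}} - \frac{\|x^{k+1}-x^{\ast}\|^{2}}{t_k^{2}} + \frac{2}{t_k}(w^{k}_{2})^{\top}(x^{\ast}-z^{k}) \\
&\quad + (1-3\alpha^{2})\|w^{k}_{1}\|^{2} + 3\|w^{k}_{2}\|^{2} + 3\|w^{k}_{3}\|^{2}.
\end{align*}
I will rewrite $-\|x^{k+1}-x^\ast\|^2/t_k^2$ as $-\|x^{k+1}-x^\ast\|^2/t_{k+1}^2 + \|x^{k+1}-x^\ast\|^2(1/t_{k+1}^2-1/t_k^2)$, so that summing over $k=0,\dots,K-1$ telescopes. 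The telescoped part is bounded by $\|x^0-x^\ast\|^2/t_0^2\le D_X^2/(t^{\mathrm{low}})^2$, after dropping the last negative term. The correction sum is handled in expectation directly by Lemma \ref{sum-x_k+1-minus-x_star-lemma}. A small point: $t_K$ is not generated by the algorithm, so I will define it, for instance as the stepsize the algorithm would produce at step $K$. The bound of Lemma \ref{sum-x_k+1-minus-x_star-lemma} covers this, and any convention with $t_K\in[t^{\mathrm{low}},t^{\mathrm{upp}}]$ works.

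Next, take expectations of the noise terms. The cross term is the delicate one, because $t_k$ and $z^k$ are random. Both are determined by $\mathcal{F}_{k+1/2}$: $t_k$ depends on $x^k$, $t_{k-1}$ and $\xi^k$ only, and $z^k$ is built from those. Hence
\[
\mathbb{E}\!\left[\tfrac{2}{t_k}(w^k_2)^\top(x^\ast-z^k)\,\middle|\,\mathcal{F}_{k+1/2}\right]=\tfrac{2}{t_k}\,\mathbb{E}[w^k_2\mid\mathcal{F}_{k+1/2}]^\top(x^\ast-z^k)=0,
\]
and boundedness from (A3) together with $t_k\ge t^{\mathrm{low}}$ guarantees integrability. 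For the remaining terms, \eqref{wk_1-wk_2-bounds} gives $\mathbb{E}\|w^k_1\|^2,\ \mathbb{E}\|w^k_2\|^2\le\sigma^2/N_k$, and Lemma \ref{w3-bound-lemma} bounds $3\mathbb{E}\|w^k_3\|^2$ by $(36(t^{\mathrm{upp}})^2L^2+9)\sigma^2/N_k+36(t^{\mathrm{upp}}-t^{\mathrm{low}})^2D_F^2L^2\,\mathbb{P}\{t_k\neq t_{k-1}\}$. Adding the variance coefficients gives $(1-3\alpha^2)+3+36(t^{\mathrm{upp}})^2L^2+9=36(t^{\mathrm{upp}})^2L^2+13-3\alpha^2$, which matches the statement.

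Finally, sum the probabilities: $\sum_{k=0}^{K-1}\mathbb{P}\{t_k\ne t_{k-1}\}=\mathbb{E}[T(K)]$, and Lemma \ref{bound-TK-lemma} bounds this by $(J(K)+1)\lceil[\log_{1/\beta}(t^{\mathrm{upp}}L/\alpha)]_++2\rceil$, which is at most the same expression with $+3$. This lets me combine it with the term from Lemma \ref{sum-x_k+1-minus-x_star-lemma} under a common factor. Dividing by $K$ and using $\mathbb{E}\|G_{t^{\mathrm{upp}}}(x^{R_K})\|^2=\frac1K\sum_k\mathbb{E}\|G_{t^{\mathrm{upp}}}(x^k)\|^2$, where $R_K$ is uniform and independent of the iterates, yields \eqref{SEG-main-proposition-main-result}.

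I expect the main obstacle to be the stepsize-weighted telescoping and the measurability argument for the martingale term. Both are already prepared by the earlier lemmas, so what remains is mostly careful bookkeeping of the constants.
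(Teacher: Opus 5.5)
Your proposal is correct and follows essentially the same route as the paper. You divide the recursion of Proposition~\ref{SEG-key-recursion} by $t_k^2$ and insert the $1/t_{k+1}^2$ correction so the sum telescopes. You kill the cross term by conditioning on $\mathcal{F}_{k+1/2}$, and you control the correction, $w^k_3$ and stepsize-change contributions via Lemmas~\ref{sum-x_k+1-minus-x_star-lemma}, \ref{w3-bound-lemma} and \ref{bound-TK-lemma}, with the same $+2\to+3$ relaxation and the same constant $36(t^{\mathrm{upp}})^2L^2+13-3\alpha^2$. Your remark on the convention for $t_K$ is a detail the paper leaves implicit; any choice in $[t^{\mathrm{low}},t^{\mathrm{upp}}]$ does suffice.
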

\begin{proof}
    By dividing both sides of \eqref{SEG-key-recursion-eqn0} by $t^{2}_{k}$ and then rearranging terms, it leads to
    \begin{align}\label{SEG-main-proposition-eqn0}
        \frac{1-3\alpha^{2}}{2} \left\| G_{t^{\mathrm{upp}}}(x^{k}) \right\|^{2} 
        &\leq \frac{\|x^{k}-x^{\ast}\|^{2}}{t^{2}_{k}} - \frac{\|x^{k+1}-x^{\ast}\|^{2}}{t^{2}_{k}} \notag \\
        & ~~~ + 2 \frac{1}{t_{k}} (w^{k}_{2})^{\top}(x^{\ast}-z^{k}) + (1-3\alpha^{2})\|w^{k}_{1}\|^{2}+ 3 \|w^{k}_{2}\|^{2} + 3\|w^{k}_{3}\|^{2} \notag \\
        &=  \frac{\|x^{k}-x^{\ast}\|^{2}}{t^{2}_{k}} - \frac{\|x^{k+1}-x^{\ast}\|^{2}}{t^{2}_{k+1}} + \frac{\|x^{k+1}-x^{\ast}\|^{2}}{t^{2}_{k+1}} - \frac{\|x^{k+1}-x^{\ast}\|^{2}}{t^{2}_{k}} \notag \\
        &~~~ + 2 \frac{1}{t_{k}} (w^{k}_{2})^{\top}(x^{\ast}-z^{k}) + (1-3\alpha^{2}) \|w^{k}_{1}\|^{2} + 3 \|w^{k}_{2}\|^{2} + 3\|w^{k}_{3}\|^{2}.
    \end{align}
    Since $t_{k}$ are $z^{k}$ are adapted to $\mathcal{F}_{k+1/2}$, we may observe that
    \begin{align}\label{SEG-main-proposition-eqn0.5}
        \mathbb{E} \left[ 2 \frac{1}{t_{k}} (w^{k}_{2})^{\top}(x^{\ast}-z^{k}) \right] &= \mathbb{E} \left[ \mathbb{E} \left[ 2 \frac{1}{t_{k}} (w^{k}_{2})^{\top}(x^{\ast}-z^{k}) \:\middle\vert\: \mathcal{F}_{k+1/2} \right] \right] \notag \\
        &= \mathbb{E} \left[ 2 \frac{1}{t_{k}} \mathbb{E} \left[ w^{k}_{2} \:\middle\vert\: \mathcal{F}_{k+1/2} \right]^{\top}(x^{\ast}-z^{k}) \right] = 0.
    \end{align}
    Therefore, by taking unconditional expectations on both sides of \eqref{SEG-main-proposition-eqn0}, invoking \eqref{wk_1-wk_2-bounds} and \eqref{w3-bound-lemma-main-result}, we obtain that
    \begin{align*}
        &\frac{1-3\alpha^{2}}{2} \mathbb{E} \left[ \left\| G_{t^{\mathrm{upp}}}(x^{k}) \right\|^{2} \right] \leq \mathbb{E} \left[ \frac{\|x^{k}-x^{\ast}\|^{2}}{t^{2}_{k}} - \frac{\|x^{k+1}-x^{\ast}\|^{2}}{t^{2}_{k+1}} \right] + \mathbb{E} \left[ \frac{\|x^{k+1}-x^{\ast}\|^{2}}{t^{2}_{k+1}} - \frac{\|x^{k+1}-x^{\ast}\|^{2}}{t^{2}_{k}} \right] \\
        &+ (4-3\alpha^{2})\frac{\sigma^{2}}{N_{k}} + (36(t^{\mathrm{upp}})^{2}L^{2}+9)\frac{\sigma^{2}}{N_{k}} + 36(t^{\mathrm{upp}}-t^{\mathrm{low}})^{2}D^{2}_{F}L^{2}\mathbb{P}\{t_{k}\neq t_{k-1}\} \\
        &= \mathbb{E} \left[ \frac{\|x^{k}-x^{\ast}\|^{2}}{t^{2}_{k}} - \frac{\|x^{k+1}-x^{\ast}\|^{2}}{t^{2}_{k+1}} \right] + \mathbb{E} \left[ \frac{\|x^{k+1}-x^{\ast}\|^{2}}{t^{2}_{k+1}} - \frac{\|x^{k+1}-x^{\ast}\|^{2}}{t^{2}_{k}} \right] \\
        &+ (36(t^{\mathrm{upp}})^{2}L^{2}+13-3\alpha^{2})\frac{\sigma^{2}}{N_{k}} + 36(t^{\mathrm{upp}}-t^{\mathrm{low}})^{2}D^{2}_{F}L^{2}\mathbb{P}\{t_{k}\neq t_{k-1}\}.
    \end{align*}
    By taking the sum over $k = 0, \cdots, K-1$ and dividing both sides by $K$, it follows that
    \begin{align}\label{SEG-main-proposition-eqn1}
        &\frac{1-3\alpha^{2}}{2} \frac{1}{K} \sum_{k=0}^{K-1} \mathbb{E} \left[\left\| G_{t^{\mathrm{upp}}}(x^{k}) \right\|^{2}\right] \notag \\
        &\leq \underbrace{\frac{1}{K} \sum_{k=0}^{K-1} \mathbb{E} \left[ \frac{\|x^{k}-x^{\ast}\|^{2}}{t^{2}_{k}} - \frac{\|x^{k+1}-x^{\ast}\|^{2}}{t^{2}_{k+1}} \right]}_{(i)} + \underbrace{\frac{1}{K} \sum_{k=0}^{K-1} \mathbb{E} \left[ \frac{\|x^{k+1}-x^{\ast}\|^{2}}{t^{2}_{k+1}} - \frac{\|x^{k+1}-x^{\ast}\|^{2}}{t^{2}_{k}} \right]}_{(ii)} \notag \\
        &+ \frac{1}{K}(36(t^{\mathrm{upp}})^{2}L^{2}+13-3\alpha^{2})\sum_{k=0}^{K-1}\frac{\sigma^{2}}{N_{k}} + \underbrace{\frac{1}{K} 36(t^{\mathrm{upp}}-t^{\mathrm{low}})^{2}D^{2}_{F}L^{2} \sum_{k=0}^{K-1} \mathbb{P}\{t_{k}\neq t_{k-1}\}}_{(iii)}.
    \end{align}
    By telescoping, we can see that
    \begin{equation*}
        (i) = \frac{1}{K} \mathbb{E} \left[ \frac{\|x^{0}-x^{\ast}\|^{2}}{t^{2}_{0}} - \frac{\|x^{K}-x^{\ast}\|^{2}}{t^{2}_{K}} \right] \leq \frac{1}{K} \mathbb{E} \left[ \frac{\|x^{0}-x^{\ast}\|^{2}}{t^{2}_{0}} \right] \leq \frac{\mathbb{E}[\|x^0-x^\ast\|^2]}{K (t^{\mathrm{low}})^{2}} \leq \frac{D_{X}^2}{K (t^{\mathrm{low}})^{2}}.
    \end{equation*}
    By Lemma \ref{sum-x_k+1-minus-x_star-lemma}, we have
    \begin{equation*}
        (ii) \leq \frac{1}{K} D^{2}_{X} \left( \frac{1}{(t^{\mathrm{low}})^{2}} - \frac{1}{(t^{\mathrm{upp}})^{2}} \right) (J(K)+1) \left\lceil \left[ \log_{1/\beta}\left(\frac{t^{\mathrm{upp}}L}{\alpha}\right) \right]_{+} + 3 \right\rceil.
    \end{equation*}
    We may bound (iii) from above by noting that
    \begin{align*}
        (iii) &= \frac{36(t^{\mathrm{upp}}-t^{\mathrm{low}})^{2}D^{2}_{F}L^{2}}{K} \sum_{k=0}^{K-1} \mathbb{P}\{t_{k}\neq t_{k-1}\} = \frac{36(t^{\mathrm{upp}}-t^{\mathrm{low}})^{2}D^{2}_{F}L^{2}}{K} \sum_{k=0}^{K-1} \mathbb{E}[\mathbf{1}_{\{ t_{k}\neq t_{k-1} \}}] \\
        &= \frac{36(t^{\mathrm{upp}}-t^{\mathrm{low}})^{2}D^{2}_{F}L^{2}}{K} \mathbb{E}[T(K)] \leq \frac{36(t^{\mathrm{upp}}-t^{\mathrm{low}})^{2}D^{2}_{F}L^{2}}{K} (J(K)+1) \left\lceil \left[ \log_{1/\beta}\left(\frac{t^{\mathrm{upp}}L}{\alpha}\right) \right]_{+} + 2 \right\rceil \\
        &\leq \frac{36(t^{\mathrm{upp}}-t^{\mathrm{low}})^{2}D^{2}_{F}L^{2}}{K} (J(K)+1) \left\lceil \left[ \log_{1/\beta}\left(\frac{t^{\mathrm{upp}}L}{\alpha}\right) \right]_{+} + 3 \right\rceil,
    \end{align*}
    where the penultimate inequality is due to Lemma \ref{bound-TK-lemma}. By plugging the upper bounds of $(i)$, $(ii)$, and $(iii)$ into \eqref{SEG-main-proposition-eqn1} and recalling the definition of $R_{K}$, we may arrive the final conclusion \eqref{SEG-main-proposition-main-result}. 
\end{proof}

Next, by specifying the parameters $p_{j}$ and $N_{k}$, we derive a nonasymptotic sublinear rate in terms of $K$ and the associated complexity guarantees.

\begin{theorem}\label{SEG-main-theorem}\em
    Consider the sequence generated by \textbf{SEG-LS}. Suppose Assumption $\mathrm{A}$ holds. Suppose that $\alpha \in (0, \frac{1}{\sqrt{3}})$. For a given $K$, we set $p_{j} \triangleq \left\lceil c_{p}K \right\rceil$ and $N_{k} \triangleq \left\lceil c_{N}K \right\rceil$, where $c_{p}\in (0, 1)$ and $c_{N} > 0$. Then the following hold. \\
    (i) We have that
    \begin{align}\label{SEG-main-theorem-eqn1}
        &\mathbb{E} \left[ \left\| G_{t^{\mathrm{upp}}}(x^{R_{K}}) \right\|^{2} \right] \leq \frac{2D^{2}_{X}}{(1-3\alpha^{2})(t^{\mathrm{low}})^{2}K} + \frac{2(36(t^{\mathrm{upp}})^{2}L^{2}+13-3\alpha^{2})\sigma^{2}}{(1-3\alpha^{2})c_{N}K} \notag \\
        &+ \frac{2\left( D^{2}_{X} \left( \frac{1}{(t^{\mathrm{low}})^{2}} - \frac{1}{(t^{\mathrm{upp}})^{2}} \right) + 36(t^{\mathrm{upp}}-t^{\mathrm{low}})^{2}D^{2}_{F}L^{2} \right)(c_{p}+1)\left\lceil \left[ \log_{1/\beta}\left(\frac{t^{\mathrm{upp}}L}{\alpha}\right) \right]_{+} + 3 \right\rceil}{(1-3\alpha^{2})c_{p}K} \notag \\
        &= \mathcal{O} \left( \frac{ D^{2}_{X} + (t^{\mathrm{upp}})^{2}L^{2}\sigma^{2} + \left( D^{2}_{X} \left( \frac{1}{(t^{\mathrm{low}})^{2}} - \frac{1}{(t^{\mathrm{upp}})^{2}} \right) + (t^{\mathrm{upp}}-t^{\mathrm{low}})^{2}D^{2}_{F}L^{2} \right) [\log{(t^{\mathrm{upp}}L)}]_{+} }{(t^{\mathrm{low}})^{2}K} \right).
    \end{align} 
    (ii) The iteration complexity $K_{\epsilon}$ and the sample complexity $S_{\epsilon} = \sum_{k=0}^{K_{\epsilon}-1} N_{k}$ that guarantee $\mathbb{E}[\|G_{t^{\mathrm{upp}}}(x^{R_{K}})\|]\leq \epsilon$ are $\mathcal{O}(\epsilon^{-2})$ and $\mathcal{O}(\epsilon^{-4})$, respectively.
\end{theorem}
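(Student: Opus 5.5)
The plan is to specialize Proposition \ref{SEG-main-proposition} to the prescribed choices of $p_j$ and $N_k$ and then multiply through by $2/(1-3\alpha^2)$, which is positive since $\alpha<1/\sqrt{3}$. Each of the three groups of terms on the right of \eqref{SEG-main-proposition-main-result} is then handled in turn.

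For the noise term, $N_k=\lceil c_N K\rceil\ge c_N K$ gives
\[
\sum_{k=0}^{K-1}\frac{\sigma^2}{N_k}\le \frac{\sigma^2}{c_N}.
\]
After the $1/K$ prefactor this contributes $(36(t^{\mathrm{upp}})^2L^2+13-3\alpha^2)\sigma^2/(c_NK)$.

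For the restart term, I bound $J(K)$ from Lemma \ref{bound-TK-lemma}. Since every $p_j=\lceil c_pK\rceil\ge c_pK$, we have $\sum_{j=0}^{l}p_j\ge (l+1)c_pK$. This is at least $K$ once $l+1\ge 1/c_p$, so $J(K)\le \lceil 1/c_p\rceil-1\le 1/c_p$. Hence
\[
J(K)+1\le \frac{1}{c_p}+1=\frac{c_p+1}{c_p},
\]
a constant independent of $K$. Substituting yields the third line of \eqref{SEG-main-theorem-eqn1}. The first term $D_X^2/(K(t^{\mathrm{low}})^2)$ carries over directly. For the $\mathcal{O}(\cdot)$ form, I treat $\alpha,\beta,c_p,c_N$ as constants and use $t^{\mathrm{low}}\le t^{\mathrm{upp}}$ to pull out a common factor $1/(t^{\mathrm{low}})^2$. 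Constants like $13$ are absorbed into $(t^{\mathrm{upp}})^2L^2\sigma^2$ plus lower-order terms, and $\lceil[\log_{1/\beta}(t^{\mathrm{upp}}L/\alpha)]_++3\rceil$ is replaced by $\mathcal{O}([\log(t^{\mathrm{upp}}L)]_+ +1)$. This bookkeeping is loose but routine.

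For (ii), part (i) gives $\mathbb{E}[\|G_{t^{\mathrm{upp}}}(x^{R_K})\|^2]\le C/K$ for a constant $C$ independent of $K$. Jensen's inequality then yields $\mathbb{E}[\|G_{t^{\mathrm{upp}}}(x^{R_K})\|]\le\sqrt{C/K}$. Choosing $K_\epsilon=\lceil C\epsilon^{-2}\rceil$ guarantees accuracy $\epsilon$, so the iteration complexity is $\mathcal{O}(\epsilon^{-2})$. The sample complexity is
\[
S_\epsilon=\sum_{k<K_\epsilon}\lceil c_NK_\epsilon\rceil\le K_\epsilon(c_NK_\epsilon+1)=\mathcal{O}(\epsilon^{-4}).
\]
If the two batches per iteration are counted, this only doubles the constant. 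The only step needing care is the bound on $J(K)$: one must check that the cycle count stays $\mathcal{O}(1/c_p)$ uniformly in $K$, which is exactly why $p_j$ is chosen proportional to $K$. Everything else is direct substitution.
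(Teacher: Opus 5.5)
Your proposal is correct and follows the paper's proof. The paper likewise substitutes $\sum_{k=0}^{K-1}1/N_k\le 1/c_N$ and $J(K)+1\le 1/c_p+1$ into Proposition~\ref{SEG-main-proposition} and reads off the complexities. The only differences are that the paper cites Lemmas 6 and 7 of \cite{wang-shanbhag-xie-2025} for these two bounds, where you verify them directly, and that you make the Jensen step in (ii) explicit.
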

\begin{proof}
    (i) By \cite[Lemmas 6 \& 7]{wang-shanbhag-xie-2025}, we know that $\sum_{k=0}^{K-1}\frac{1}{N_{k}}\leq \frac{1}{c_{N}}$ and $J(K)+1 \leq \frac{1}{c_{p}}+1$. Then the derivation of \eqref{SEG-main-theorem-eqn1} follows from Proposition \ref{SEG-main-proposition} directly. (ii) The expression of $K_{\epsilon}$ is a direct result of \eqref{SEG-main-theorem-eqn1}, and the expression of $S_{\epsilon}$ follows from $S_{\epsilon} = \sum_{k=0}^{K_{\epsilon}-1} N_{k}$ directly.
\end{proof}

\begin{remark}\em
    We make two remarks here. First, our new analysis is significantly simpler than that of Iusem et al. \cite{iusem-jofre-oliveira-thompson-2019}, obviating the heavier machinery of empirical process theory but instead relying only on standard conditioning arguments. This has the additional benefit of allowing for generalization to other settings. Second, by prescribing a constant stepsize with $t^{\mathrm{upp}} = t^{\mathrm{low}} \equiv \mathcal{O}(1/L)$, we derive from \eqref{SEG-main-theorem-eqn1} a convergence rate of $\mathcal{O}((D_X^2L^2+\sigma^2)/K)$, which matches the non-adaptive regime in Iusem et al. \cite{iusem-jofre-oliveira-thompson-2017} after rescaling their residual measure $r_t(x)\triangleq\|x-\Pi_X[x-tF(x)]\|$ by $1/t$. $\hfill \Box$
\end{remark}

\subsection{SEG-LS for monotone SVIs}

In this subsection, we consider the setting where the mapping $F$ is monotone over $X$, i.e., $(F(x)-F(y))^{\top}(x-y)\geq 0$ for any $x, y \in X$. Based on Proposition \ref{weak-gap-function-property}, we will show that the expected dual gap function diminishes at an improved rate of $\mathcal{O}(1/K)$, where $K$ is the iteration budget. Therefore, the resulting iteration and sample complexities are $\mathcal{O}(\epsilon^{-1})$ and $\mathcal{O}(\epsilon^{-2})$, respectively.

\begin{proposition}\label{SEG-monotone-proposition}\em
    Suppose Assumption $\mathrm{A}$ holds, except that the pseudomonotonicity assumption $\mathrm{(A1)}$ is replaced by monotonicity of $F$ on $X$. Then the sequence $\{ z^{k} \}_{k=0}^{K-1}$ generated by \textbf{SEG-LS} satisfies
    \begin{align}\label{SEG-monotone-main-result}
        \mathbb{E}[\theta(z^{R_{K}})] &\leq \frac{1}{2K} \left( \frac{1}{t^{\mathrm{low}}} D^{2}_{X} + D^{2}_{X} \right) + \frac{1}{2K} ( 3t^{\mathrm{upp}} + 1 ) \sum_{k=0}^{K-1} \frac{\sigma^{2}}{N_{k}} + \frac{1}{2K} ( 36(t^{\mathrm{upp}})^{3}L^{2} + 9t^{\mathrm{upp}} ) \sum_{k=0}^{K-1} \frac{\sigma^{2}}{N_{k}} \notag \\
        &+ \frac{1}{2K} D^{2}_{X} \left( \frac{1}{t^{\mathrm{low}}} - \frac{1}{t^{\mathrm{upp}}} \right) (J(K)+1) \left\lceil \left[ \log_{1/\beta}\left(\frac{t^{\mathrm{upp}}L}{\alpha}\right) \right]_{+} + 2 \right\rceil \notag \\
        &+ \frac{1}{2K} \cdot 36 t^{\mathrm{upp}} (t^{\mathrm{upp}}-t^{\mathrm{low}})^{2} D^{2}_{F} L^{2} (J(K)+1) \left\lceil \left[ \log_{1/\beta}\left(\frac{t^{\mathrm{upp}}L}{\alpha}\right) \right]_{+} + 2 \right\rceil,
    \end{align}
    where $z^{R_{K}}$ is a random variable uniformly distributed on $\{ z^{0}, z^{1}, \cdots, z^{K-1} \}$.
\end{proposition}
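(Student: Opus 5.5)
We plan to reuse the per-iteration inequality \eqref{SEG-key-recursion-eqn5} from the proof of Proposition~\ref{SEG-key-recursion}, but with an arbitrary $x \in X$ in place of $x^{\ast}$. By \eqref{SEG-key-recursion-eqn5} and \eqref{SEG-key-recursion-eqn6}, and since $1-3\alpha^{2}>0$ lets us drop the term $-(1-3\alpha^{2})\|x^{k}-z^{k}\|^{2}$, we get for every $x\in X$
\begin{equation*}
2t_{k}F(z^{k})^{\top}(z^{k}-x) \leq \|x^{k}-x\|^{2}-\|x^{k+1}-x\|^{2} + 2t_{k}(w^{k}_{2})^{\top}(x-z^{k}) + 3t_{k}^{2}(\|w^{k}_{2}\|^{2}+\|w^{k}_{3}\|^{2}).
\end{equation*}
Monotonicity gives $F(x)^{\top}(z^{k}-x)\le F(z^{k})^{\top}(z^{k}-x)$. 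We then divide by $2t_{k}$, which yields the $1/t_k$ weighting appearing in the statement. This produces the telescoping pieces $\frac{\|x^{k}-x\|^{2}}{t_{k}}-\frac{\|x^{k+1}-x\|^{2}}{t_{k+1}}$ plus the correction $\|x^{k+1}-x\|^{2}(\frac{1}{t_{k+1}}-\frac{1}{t_{k}})$. The correction is handled as in Lemma~\ref{sum-x_k+1-minus-x_star-lemma}, but now with first powers of $1/t$. It is positive only on the event $\{t_{k+1}<t_{k}\}$, is bounded there by $D_X^{2}(1/t^{\mathrm{low}}-1/t^{\mathrm{upp}})$, and these events occur at most $T(K)$ times. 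Lemma~\ref{bound-TK-lemma} bounds $T(K)$, and since the bound here is deterministic and pathwise, we keep the ``$+2$'' constant.

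The main obstacle is that $\theta(\bar z)=\sup_{x}F(x)^{\top}(\bar z-x)$ requires the supremum \emph{before} taking expectations. The noise term $(w_2^k)^{\top}(x-z^k)$ is no longer mean zero once $x$ is chosen as the data-dependent maximizer. We plan to use the standard ghost-iterate trick. Split $(w^{k}_{2})^{\top}(x-z^{k}) = (w^{k}_{2})^{\top}(x-v^{k}) + (w^{k}_{2})^{\top}(v^{k}-z^{k})$, where $v^{0}=x^{0}$ and $v^{k+1}=\Pi_X[v^{k}-t_{k}w^{k}_{2}]$. Lemma~\ref{useful-lemma-SEG} gives $2t_{k}(w^{k}_{2})^{\top}(v^{k+1}-x)\le\|v^{k}-x\|^{2}-\|v^{k+1}-x\|^{2}-\|v^{k+1}-v^{k}\|^{2}$. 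Combining this with Young's inequality on $2t_{k}(w_2^k)^{\top}(v^{k}-v^{k+1})$ gives $2t_k(w_2^k)^{\top}(x-v^k)\le \|v^k-x\|^2-\|v^{k+1}-x\|^2+t_k^2\|w_2^k\|^2$. This bound is again divided by $t_k$ and telescoped with the same weight-change correction.

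This ghost sequence is the source of the second $D_X^{2}$ in the first term. The extra $\|w^{k}_{2}\|^{2}$ contributions feed the coefficient $(3t^{\mathrm{upp}}+1)$ of $\sum\sigma^2/N_k$. The term $(w^{k}_{2})^{\top}(v^{k}-z^{k})$ has zero conditional mean given $\mathcal{F}_{k+1/2}$ because $v^{k}$ and $z^{k}$ are $\mathcal{F}_{k+1/2}$-measurable, exactly as in \eqref{SEG-main-proposition-eqn0.5}. It therefore vanishes after expectation. The remaining piece $\mathbb{E}\|w^k_3\|^2$ is handled by Lemma~\ref{w3-bound-lemma}, multiplied by $3t_k\le 3t^{\mathrm{upp}}$. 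This gives the $(36(t^{\mathrm{upp}})^3L^2+9t^{\mathrm{upp}})\sigma^2/N_k$ term and the $36t^{\mathrm{upp}}(t^{\mathrm{upp}}-t^{\mathrm{low}})^2D_F^2L^2\,\mathbb{P}\{t_k\ne t_{k-1}\}$ term. Summing the latter gives $\mathbb{E}[T(K)]$, which Lemma~\ref{bound-TK-lemma} bounds.

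Finally, we sum over $k$ and divide by $2K$. Each right-hand side is pathwise uniform in $x$ except for the mean-zero terms, so we take $\sup_{x\in X}$ before expectation. We then use $\sum_k F(x)^{\top}(z^k-x)\ge K\,F(x)^{\top}$-weighted averages, i.e.\ $\sup_x\frac1K\sum_k F(x)^{\top}(z^k-x)$. To obtain $\mathbb{E}[\theta(z^{R_K})]$ for a uniformly sampled index rather than for an average, we would try to bound $\theta$ at the averaged iterate and invoke convexity of $\theta$. If that does not literally match $z^{R_K}$, we would rely on $\frac1K\sum_k\theta(z^k)$ being handled through the same pathwise bound with the supremum taken per $k$. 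This interchange is the delicate point to verify. The initial term $\|x^0-x\|^2/t_0\le D_X^2/t^{\mathrm{low}}$ completes the bound.
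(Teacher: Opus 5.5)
Your skeleton coincides with the paper's. You start from \eqref{SEG-key-recursion-eqn5}--\eqref{SEG-key-recursion-eqn6} at a generic $x\in X$, apply monotonicity, divide by $t_k$, and telescope with the weight-change correction charged to $T(K)$. You then use a ghost sequence to remove the $x$-dependence of the $w_2^k$ noise, together with $\mathcal{F}_{k+1/2}$-measurability and Lemma~\ref{w3-bound-lemma}.

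\textbf{The genuine gap is the final step.} You flag it, but both proposed ways of closing it fail. After summing, the only quantity whose $\sup_{x\in X}$ you can take while keeping the telescoping is $\sum_{k=0}^{K-1}F(x)^{\top}(z^k-x)=K\,F(x)^{\top}(\bar z^{K}-x)$, with $\bar z^{K}\triangleq\frac1K\sum_{k=0}^{K-1}z^k$. So the argument bounds $\mathbb{E}[\theta(\bar z^{K})]$.
\begin{itemize}
\item Convexity of $\theta$ (a supremum of affine functions) gives $\theta(\bar z^{K})\le\frac1K\sum_k\theta(z^k)$. This is the wrong direction for bounding $\mathbb{E}[\theta(z^{R_K})]=\frac1K\sum_k\mathbb{E}[\theta(z^k)]$.
\item Taking the supremum separately for each $k$ leaves $\sum_k\sup_x\bigl(\|x^k-x\|^2-\|x^{k+1}-x\|^2\bigr)/t_k$. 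This does not telescope. It is only bounded by $2D_X\sum_k\|x^k-x^{k+1}\|/t^{\mathrm{low}}$, which in general grows with $K$.
\end{itemize}
The paper does not justify this interchange either. Midway through its proof it simply sets $z^{R_K}\triangleq\frac1K\sum_{k}z^k$. What is actually proved is the displayed bound for the ergodic average, not for a uniformly sampled iterate, and you should state and prove the result in that form.

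\textbf{Your ghost sequence also does not reproduce the stated constants.}
\begin{itemize}
\item Sign: with $v^{k+1}=\Pi_X[v^k-t_kw_2^k]$, Lemma~\ref{useful-lemma-SEG} plus Young controls $2t_k(w_2^k)^{\top}(v^k-x)$, not $2t_k(w_2^k)^{\top}(x-v^k)$. The ghost step needs a plus sign.
\item Weighting: because your ghost step is weighted by $t_k$ and you divide by $t_k$ afterwards, you get three discrepancies. You obtain $\|v^0-x\|^2/t_0\le D_X^2/t^{\mathrm{low}}$ instead of $D_X^2$. The coefficient on $\|w_2^k\|^2$ becomes $t_k$, hence $4t^{\mathrm{upp}}$ instead of $3t^{\mathrm{upp}}+1$. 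And you pick up a second weight-change correction $D_X^2(1/t^{\mathrm{low}}-1/t^{\mathrm{upp}})\,T(K)$ that is absent from the statement.
\end{itemize}
The paper divides by $t_k$ first and only then introduces the unweighted sequence $u^{k+1}=\Pi_X[u^k+w_2^k]$, $u^0\in X$. For this sequence, $2(w_2^k)^{\top}(x-u^k)\le\|u^k-x\|^2-\|u^{k+1}-x\|^2+\|w_2^k\|^2$. This is exactly the source of the bare $D_X^2$ and of the ``$+1$''. Similarly, the ``$+2$'' inside the ceiling survives only if you discard $-\|x^K-x\|^2/t_{K-1}$, as the paper does, rather than writing the last telescoping piece with $t_K$.
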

\begin{proof}
   By \eqref{SEG-key-recursion-eqn5} and \eqref{SEG-key-recursion-eqn6}, we arrive that  
   \begin{align}\label{SEG-monotone-proposition-eqn1}
        2 t_{k} F(z^{k})^\top(z^{k}-x) &\leq \| x^{k} - x \|^{2} - \| x^{k+1} - x \|^{2}- (1-3\alpha^{2}) \| x^{k} - z^{k} \|^{2} \notag \\
        &~~~+ 2t_{k}(w^{k}_{2})^{\top}(x-z^{k}) + 3t^{2}_{k}(\| w^{k}_{2} \|^{2}+\| w^{k}_{3} \|^{2}).
    \end{align}
    By leveraging monotonicity, we have that
    \begin{align}\label{SEG-monotone-proposition-eqn2}
        2 t_{k} F(z^{k})^\top(z^{k}-x) = \underbrace{2 t_{k} (F(z^{k}) - F(x))^\top(z^{k}-x)}_{\geq 0} \, + \,  2t_k F(x)^{\top}(z^{k}-x) \geq 2t_k F(x)^{\top}(z^{k}-x). 
    \end{align}
    It follows from \eqref{SEG-monotone-proposition-eqn1} and \eqref{SEG-monotone-proposition-eqn2}, together with $\alpha \in (0, \frac{1}{\sqrt{3}})$, that
    \begin{align*}
        2t_k F(x)^\top(z^{k}-x) &\leq  \| x^{k} - x \|^{2} - \| x^{k+1} - x \|^{2} - \underbrace{(1-3\alpha^{2}) \| x^{k} - z^{k} \|^{2}}_{\geq 0} \notag \\
        &~~~+ 2t_{k}(w^{k}_{2})^{\top}(x-z^{k}) + 3t^{2}_{k}(\| w^{k}_{2} \|^{2}+\| w^{k}_{3} \|^{2}),
    \end{align*}
    which implies that
    \begin{equation*}
        2 F(x)^\top(z^{k}-x) \leq  \frac{1}{t_k}\| x^{k} - x \|^{2} - \frac{1}{t_k}\| x^{k+1} - x \|^{2} + 2(w^{k}_{2})^{\top}(x-z^{k}) + 3t_{k}(\| w^{k}_{2} \|^{2}+\| w^{k}_{3} \|^{2})
    \end{equation*}
    since $t_{k} \geq t^{\mathrm{low}} > 0$. Summing the above inequality from $k = 0, \hdots, K-1$, we obtain
    \begin{align*}
      & 2 \sum_{k=0}^{K-1} F(x)^\top(z^{k}-x) \\
      & \leq \sum_{k=0}^{K-1} \left(\frac{1}{t_k}\| x^{k} - x \|^{2} - \frac{1}{t_k}\| x^{k+1} - x \|^{2} \right) + \sum_{k=0}^{K-1} \left( 2(w^{k}_{2})^{\top}(x-z^{k}) + 3t_{k}(\| w^{k}_{2} \|^{2}+\| w^{k}_{3} \|^{2}) \right) \\
      &\leq \frac{1}{t_0}\| x^{0} - x \|^{2} + \sum_{k=0}^{K-2} \left( \frac{1}{t_{k+1}}- \frac{1}{t_k}\right)\| x^{k+1} - x \|^{2} + \sum_{k=0}^{K-1}\left(2(w^{k}_{2})^{\top}(x-z^{k}) + 3t_{k}(\| w^{k}_{2} \|^{2}+\| w^{k}_{3} \|^{2})\right).
    \end{align*}
    By defining $z^{R_{K}} \triangleq \frac{1}{K} \sum_{k=0}^{K-1}z^{k}$, we obtain 
    \begin{align}\label{SEG-monotone-proposition-eqn3}
        & 2 K F(x)^{\top} (z^{R_{K}}-x) \notag \\
        &\leq \frac{1}{t_0}\| x^{0} - x \|^{2} + \sum_{k=0}^{K-2} \left( \frac{1}{t_{k+1}}- \frac{1}{t_k}\right)\| x^{k+1} - x \|^{2} + \sum_{k=0}^{K-1} \left( 2(w^{k}_{2})^{\top}(x-z^{k}) + 3t_{k}(\| w^{k}_{2} \|^{2}+\| w^{k}_{3} \|^{2}) \right).
    \end{align}
    To derive an upper bound on $\sum_{k=0}^{K-1} 2(w^{k}_{2})^{\top}(x-z^{k})$, we define the sequence $\{ u^{k} \}_{k = 0}^{K}$ as follows. Choose any $u^0 \in X$ and let $u^{k+1} \triangleq \Pi_{X} [ u^{k} + w^{k}_{2} ]$ for each $k \ge 0$. Then we have  
    \begin{align*} 
        \| u^{k+1} - x \|^2 & = \left\| \Pi_{X}[ u^{k} + w^{k}_{2} ] - \Pi_X[x]\right\|^2 \leq \left\| u^{k} - x + w^{k}_{2} \right\|^2 \\ 
        & = \| u^k - x \|^2 + 2 (w^{k}_{2})^\top ( u^k - x ) + \| w^{k}_{2} \|^2,
    \end{align*}
    implying that
    \begin{equation*}
       -2 (w^{k}_{2})^\top ( u^k - x ) \leq \| u^k - x \|^2 - \| u^{k+1} - x \|^2 + \| w^{k}_{2} \|^2.
    \end{equation*}
    Consequently, we may obtain
    \begin{align*}
        (2w^{k}_{2})^{\top}(x-z^{k}) &= (2w^{k}_{2})^{\top}(x-u^{k}) + (2w^{k}_{2})^{\top}(u^{k}-z^{k}) \\
        & \leq \| u^k - x \|^2 - \| u^{k+1} - x \|^2 + \| w^{k}_{2} \|^2 + (2w^{k}_{2})^{\top}(u^{k}-z^{k})
    \end{align*}
    and hence, summing over $k = 0, \cdots, K-1$, we obtain that
    \begin{equation}\label{SEG-monotone-proposition-eqn4}
        \sum_{k=0}^{K-1} 2(w^{k}_{2})^{\top}(x-z^{k}) \leq \| u^{0} - x \|^2 + \sum_{k=0}^{K-1} \| w^{k}_{2} \|^2 + \sum_{k=0}^{K-1} (2w^{k}_{2})^{\top}(u^{k}-z^{k}).
    \end{equation}
    By \eqref{SEG-monotone-proposition-eqn3} and \eqref{SEG-monotone-proposition-eqn4}, we may then obtain
    \begin{align*}
        2 K F(x)^{\top} (z^{R_{K}}-x) &\leq \frac{1}{t_0}\| x^{0} - x \|^{2} + \sum_{k=0}^{K-2} \left( \frac{1}{t_{k+1}}- \frac{1}{t_k}\right)\| x^{k+1} - x \|^{2} \\
        &+ \| u^{0} - x \|^2 + \sum_{k=0}^{K-1} \| w^{k}_{2} \|^2 + \sum_{k=0}^{K-1} (2w^{k}_{2})^{\top}(u^{k}-z^{k}) + \sum_{k=0}^{K-1} 3t_{k}(\| w^{k}_{2} \|^{2}+\| w^{k}_{3} \|^{2}).
    \end{align*}
    Taking the supremum over $X$, we obtain
    \begin{align*}
         2K \theta(z^{R_{K}}) &\leq \sup_{x\in X} \frac{1}{t_0} \| x^{0} - x \|^{2} + \sup_{x\in X} \left[ \sum_{k=0}^{K-2} \left( \frac{1}{t_{k+1}}- \frac{1}{t_k}\right)\| x^{k+1} - x \|^{2} \right] \\
         &+ \sup_{x\in X} \| u^{0} - x \|^2 + \sum_{k=0}^{K-1} \| w^{k}_{2} \|^2 + \sum_{k=0}^{K-1} (2w^{k}_{2})^{\top}(u^{k}-z^{k}) + \sum_{k=0}^{K-1} 3t_{k}(\| w^{k}_{2} \|^{2}+\| w^{k}_{3} \|^{2}).
    \end{align*}
    Then, by taking expectations and dividing both sides by $2K$, we obtain
    \begin{align}\label{SEG-monotone-proposition-eqn5}
        &\mathbb{E}[\theta(z^{R_{K}})] \leq \frac{1}{2K} \sup_{x\in X} \frac{1}{t_0} \| x^{0} - x \|^{2} + \frac{1}{2K} \underbrace{\mathbb{E} \left[ \sup_{x\in X} \left[ \sum_{k=0}^{K-2} \left( \frac{1}{t_{k+1}}- \frac{1}{t_k}\right)\| x^{k+1} - x \|^{2} \right] \right]}_{(i)} \notag \\
        &+ \frac{1}{2K} \sup_{x\in X} \| u^{0} - x \|^2 + \frac{1}{2K} \underbrace{\sum_{k=0}^{K-1} \mathbb{E} [ (2w^{k}_{2})^{\top}(u^{k}-z^{k}) ]}_{(ii)} + \frac{1}{2K} \sum_{k=0}^{K-1} (3t_{k} + 1) \mathbb{E} [\| w^{k}_{2} \|^{2}] + \frac{1}{2K} \underbrace{\sum_{k=0}^{K-1} 3t_{k} \mathbb{E} [\| w^{k}_{3} \|^{2}]}_{(iii)}.
    \end{align}
    Similar to \eqref{SEG-main-proposition-eqn0.5}, we may derive that $(ii) = 0$. For $(i)$, similar to the proof of Lemma \ref{sum-x_k+1-minus-x_star-lemma}, it follows from the law of total expectation that
    \begin{align}\label{SEG-monotone-proposition-eqn6}
        (i) &\leq \mathbb{E} \left[ \sum_{k=0}^{K-2} \sup_{x\in X} \left[ \left( \frac{1}{t_{k+1}}- \frac{1}{t_k}\right)\| x^{k+1} - x \|^{2} \right] \right] = \sum_{k=0}^{K-2} \mathbb{E} \left[ \sup_{x\in X} \left[ \left( \frac{1}{t_{k+1}}- \frac{1}{t_k}\right)\| x^{k+1} - x \|^{2} \right] \right] \notag \\
        &\leq \sum_{k=0}^{K-2} D^{2}_{X} \left( \frac{1}{t^{\mathrm{low}}} - \frac{1}{t^{\mathrm{upp}}} \right) \mathbb{P}\{ t_{k+1} \neq t_{k} \} = D^{2}_{X} \left( \frac{1}{t^{\mathrm{low}}} - \frac{1}{t^{\mathrm{upp}}} \right) \sum_{k=0}^{K-2} \mathbb{E}[\mathbf{1}\{ t_{k+1} \neq t_{k} \}] \notag \\
        &\leq D^{2}_{X} \left( \frac{1}{t^{\mathrm{low}}} - \frac{1}{t^{\mathrm{upp}}} \right) \sum_{k=1}^{K-1} \mathbb{E}[\mathbf{1}\{ t_{k} \neq t_{k-1} \}] \leq D^{2}_{X} \left( \frac{1}{t^{\mathrm{low}}} - \frac{1}{t^{\mathrm{upp}}} \right) \mathbb{E}[T(K)] \notag \\
        &\leq D^{2}_{X} \left( \frac{1}{t^{\mathrm{low}}} - \frac{1}{t^{\mathrm{upp}}} \right) (J(K)+1) \left\lceil \left[ \log_{1/\beta}\left(\frac{t^{\mathrm{upp}}L}{\alpha}\right) \right]_{+} + 2 \right\rceil.
    \end{align}
    For (iii), by Lemma \ref{w3-bound-lemma}, we have that
    \begin{align}\label{SEG-monotone-proposition-eqn7}
        & (iii) \leq 3 t^{\mathrm{upp}} \sum_{k=0}^{K-1} \mathbb{E}[\| w^{k}_{3} \|^{2}] \notag \\
        &\leq ( 36(t^{\mathrm{upp}})^{3}L^{2} + 9t^{\mathrm{upp}} ) \sum_{k=0}^{K-1} \frac{\sigma^{2}}{N_{k}} + 36 t^{\mathrm{upp}} (t^{\mathrm{upp}}-t^{\mathrm{low}})^{2} D^{2}_{F} L^{2} \underbrace{\sum_{k=0}^{K-1} \mathbb{P} \{ t_{k} \neq t_{k-1} \}}_{\,=\, \mathbb{E}[T(K)]} \notag \\
        &\leq ( 36(t^{\mathrm{upp}})^{3}L^{2} + 9t^{\mathrm{upp}} ) \sum_{k=0}^{K-1} \frac{\sigma^{2}}{N_{k}} + 36 t^{\mathrm{upp}} (t^{\mathrm{upp}}-t^{\mathrm{low}})^{2} D^{2}_{F} L^{2} (J(K)+1) \left\lceil \left[ \log_{1/\beta}\left(\frac{t^{\mathrm{upp}}L}{\alpha}\right) \right]_{+} + 2 \right\rceil.
    \end{align}
    By substituting \eqref{wk_1-wk_2-bounds}, \eqref{SEG-monotone-proposition-eqn6}, and \eqref{SEG-monotone-proposition-eqn7} into \eqref{SEG-monotone-proposition-eqn5}, and using the fact that $t^{\mathrm{low}} \le t_k \le t^{\mathrm{upp}}$, we obtain
    \allowdisplaybreaks
    \begin{align*}
        \mathbb{E}[\theta(z^{R_{K}})] &\leq \frac{1}{2K} \left( \frac{1}{t^{\mathrm{low}}} D^{2}_{X} + D^{2}_{X} \right) + \frac{1}{2K} ( 3t^{\mathrm{upp}} + 1 ) \sum_{k=0}^{K-1} \frac{\sigma^{2}}{N_{k}} + \frac{1}{2K} ( 36(t^{\mathrm{upp}})^{3}L^{2} + 9t^{\mathrm{upp}} ) \sum_{k=0}^{K-1} \frac{\sigma^{2}}{N_{k}} \\
        &+ \frac{1}{2K} D^{2}_{X} \left( \frac{1}{t^{\mathrm{low}}} - \frac{1}{t^{\mathrm{upp}}} \right) (J(K)+1) \left\lceil \left[ \log_{1/\beta}\left(\frac{t^{\mathrm{upp}}L}{\alpha}\right) \right]_{+} + 2 \right\rceil \\
        &+ \frac{1}{2K} \cdot 36 t^{\mathrm{upp}} (t^{\mathrm{upp}}-t^{\mathrm{low}})^{2} D^{2}_{F} L^{2} (J(K)+1) \left\lceil \left[ \log_{1/\beta}\left(\frac{t^{\mathrm{upp}}L}{\alpha}\right) \right]_{+} + 2 \right\rceil,
    \end{align*}
    as desired.
\end{proof}

Similar to Theorem \ref{SEG-main-theorem}, by specifying the parameters $p_{j}$ and $N_{k}$, we may derive the following iteration and sample complexities.

\begin{theorem}\label{SEG-monotone-main-theorem}\em
    Consider the sequence generated by \textbf{SEG-LS}. Suppose Assumption $\mathrm{A}$ holds, except that the pseudomonotonicity assumption $\mathrm{(A1)}$ is replaced by monotonicity.
    For a given $K$, we set $p_{j} \triangleq \left\lceil c_{p}K \right\rceil$ and $N_{k} \triangleq \left\lceil c_{N}K \right\rceil$, where $c_{p}\in (0, 1]$ and $c_{N} > 0$. Then the iteration complexity $K_{\epsilon}$ and the sample complexity $S_{\epsilon} = \sum_{k=0}^{K_{\epsilon}-1} N_{k}$ that guarantee $\mathbb{E}[\theta(z^{R_{K}})]\leq \epsilon$ are $\mathcal{O}(\epsilon^{-1})$ and $\mathcal{O}(\epsilon^{-2})$, respectively.
\end{theorem}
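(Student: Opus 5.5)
The plan is to instantiate Proposition \ref{SEG-monotone-proposition} with the prescribed choices $p_j = \lceil c_p K\rceil$ and $N_k = \lceil c_N K\rceil$, show that the right-hand side of \eqref{SEG-monotone-main-result} is $\mathcal{O}(1/K)$, and then invert this bound to obtain the two complexities. The argument parallels the proof of Theorem \ref{SEG-main-theorem}(i). The difference is that the dual gap itself, rather than its square, is bounded by $\mathcal{O}(1/K)$. This is exactly where the improvement from $\epsilon^{-2}$ to $\epsilon^{-1}$ comes from.

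\textbf{Step 1 (sample term).} Since $N_k \ge c_N K$,
\[
\sum_{k=0}^{K-1} \frac{\sigma^2}{N_k} \le \frac{\sigma^2}{c_N},
\]
as in \cite[Lemma 6]{wang-shanbhag-xie-2025}. Hence both sample-variance terms in \eqref{SEG-monotone-main-result} are at most
\[
\frac{1}{2K}\bigl(3t^{\mathrm{upp}}+1+36(t^{\mathrm{upp}})^3L^2+9t^{\mathrm{upp}}\bigr)\frac{\sigma^2}{c_N}.
\]

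\textbf{Step 2 (cycle count).} Each cycle has length $p_j \ge c_p K$. Therefore $\sum_{j=0}^{l} p_j \ge (l+1)c_p K \ge K$ as soon as $l+1 \ge 1/c_p$, which gives $J(K)+1 \le \lceil 1/c_p\rceil \le 1/c_p + 1$, as in \cite[Lemma 7]{wang-shanbhag-xie-2025}. This holds for $c_p \in (0,1]$, since then $p_j \ge 1$. The logarithmic factor $\lceil [\log_{1/\beta}(t^{\mathrm{upp}}L/\alpha)]_+ + 2\rceil$ does not depend on $K$. So the last two lines of \eqref{SEG-monotone-main-result} contribute
\[
\frac{C_3}{K}, \qquad C_3 = \tfrac12\Bigl(D_X^2\bigl(\tfrac{1}{t^{\mathrm{low}}}-\tfrac{1}{t^{\mathrm{upp}}}\bigr)+36t^{\mathrm{upp}}(t^{\mathrm{upp}}-t^{\mathrm{low}})^2D_F^2L^2\Bigr)\bigl(\tfrac{1}{c_p}+1\bigr)\Bigl\lceil \bigl[\log_{1/\beta}\bigl(\tfrac{t^{\mathrm{upp}}L}{\alpha}\bigr)\bigr]_+ + 2\Bigr\rceil .
\]
The first term of \eqref{SEG-monotone-main-result} is already $\frac{D_X^2(1/t^{\mathrm{low}}+1)}{2K}$. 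Note that $t^{\mathrm{low}}$ is a fixed constant by Lemma \ref{SEG-stepsize}.

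\textbf{Step 3 (complexities).} Combining the steps gives $\mathbb{E}[\theta(z^{R_K})] \le C/K$, with $C$ independent of $K$ and $\epsilon$. Choosing $K_\epsilon = \lceil C/\epsilon\rceil$ yields $\mathbb{E}[\theta(z^{R_K})] \le \epsilon$, so $K_\epsilon = \mathcal{O}(\epsilon^{-1})$. Then
\[
S_\epsilon = \sum_{k=0}^{K_\epsilon-1}\lceil c_N K_\epsilon\rceil \le K_\epsilon(c_N K_\epsilon + 1) = \mathcal{O}(\epsilon^{-2}).
\]

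\textbf{Main obstacle.} There is no real obstacle beyond bookkeeping. The one point to state carefully is that $J(K)$ stays bounded by a constant independent of $K$ under $p_j = \lceil c_p K\rceil$. This is what makes the linesearch-induced terms (those proportional to $\mathbb{E}[T(K)]$) decay as $1/K$ rather than remaining $\mathcal{O}(1)$.
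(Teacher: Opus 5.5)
Your proposal is correct and follows the route the paper intends when it says the proof is ``similar to that of Theorem~\ref{SEG-main-theorem}'': bound $\sum_{k}1/N_k\le 1/c_N$ and $J(K)+1\le 1/c_p+1$, substitute these into Proposition~\ref{SEG-monotone-proposition} to get $\mathbb{E}[\theta(z^{R_K})]\le C/K$ with $C$ independent of $K$, and invert using $S_\epsilon=K_\epsilon\lceil c_NK_\epsilon\rceil$. Two points come from that proposition rather than from your argument: its proof discards the term $-(1-3\alpha^2)\|x^k-z^k\|^2$, so $\alpha\in(0,1/\sqrt{3})$ is tacitly required here too, and it actually bounds $\theta$ at the ergodic average $\tfrac1K\sum_{k}z^k$, so the guarantee holds for that average rather than for a uniformly sampled $z^k$ (convexity of $\theta$ gives the inequality only in the other direction).
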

\begin{proof}
    The proof is similar to that of Theorem \ref{SEG-main-theorem} and we omit it here.
\end{proof}

\begin{remark}\em
    We make several remarks here. First, as we mentioned before, it is an encouraging result that our iteration and sample complexities are improved from $\mathcal{O}(\epsilon^{-2})$ and $\mathcal{O}(\epsilon^{-4})$ to $\mathcal{O}(\epsilon^{-1})$ and $\mathcal{O}(\epsilon^{-2})$, respectively under monotonicity. Second, with the a prescribed constant stepsize $t^{\mathrm{upp}} = t^{\mathrm{low}} \equiv \mathcal{O}(1/L)$, we obtain the bound $\mathcal{O}(L D_{X}^2 /K)$. This matches the first term in the bound $\mathcal{O}(LD_X^2/K+\sigma D_X/\sqrt{K})$ established in Juditsky et al. \cite{juditsky-nemirovski-tauvel-2011} with a fixed batch size. The additional $\mathcal{O}(\sigma D_X/\sqrt{K})$ term accounts for stochastic noise under a fixed batch size, whereas we employ increasing batch sizes for variance reduction. Third, it is worth noting that the above result on monotone SVIs also applies to stochastic convex optimization with expectation-valued constraints \eqref{SOEC}, which is not addressed in \cite{wang-shanbhag-xie-2025}. $\hfill \Box$
\end{remark}

\section{B-SLAM under Strict Copositivity}\label{Sec-4}

In many cases, the pseudomonotonicity assumption $\mathrm{(A1)}$ does not always hold. In this section, we focus on examining the non-monotone setting where $F$ does not necessarily satisfy pseudomonotonicity. However, we do impose a requirement that $F$ is C$^1$ and  assume the Jacobian of $F$, defined as $\mathbf{J} F(x)$ for a given $x \, \in \, X$, always exists. In other words, the regularized gap function $\theta_{c}$ is C$^{1}$ by \eqref{regularized-gap-function-gradient}, where
\begin{equation}\label{stochastic-gap-function}
    \theta_{c}(x) = \mathbb{E}_{\bxi} \left[ \underbrace{\Tilde{F}(x, \bxi)^\top (x-y_{c}(x)) - \frac{c}{2}(x-y_{c}(x))^{\top}(x-y_{c}(x))}_{\triangleq \, \Tilde{\theta}_{c}(x, y_{c}(x), \bxi)} \right],
\end{equation}
where $y_c(x) \triangleq \displaystyle \argmin_{y\in X}~\mathbb{E}[\widehat{F}(x,y,\bxi)]$ and $\widehat{F}(x,y,\xi) \, \triangleq \, \Tilde{F}(x,\xi)^\top (y-x) + \frac{c}{2}(y-x)^\top (y-x)$. In the stochastic setting, the Jacobian at any  $x \, \in \, X$, given by $\mathbf{J}F(x)$ is expectation-valued and therefore difficult to estimate. Consequently, the gradient $\nabla \theta_c(x)$ in \eqref{regularized-gap-function-gradient} is not directly accessible. We contend with this unavailability by considering a framework reliant on biased values of $\theta_c$ via a  randomized smoothing framework
\begin{equation}\label{gap-function-RS}
    \theta^{\eta}_{c}(x) \triangleq \mathbb{E}_{\uu\in \mathbb{B}}\left[\, \theta_{c}(x+\eta \uu)\, \right] = \mathbb{E}_{\uu, \bxi}\left[\, \Tilde{\theta}_{c}(x+\eta \uu, y_{c}(x+\eta \uu), \bxi)\, \right],
\end{equation} 
where $\mathbb{B}$ denotes the unit ball. Computing a zeroth-order gradient estimator requires evaluating $\tilde{\theta}_c(x,y_c(x),\xi)$. However, this object cannot be computed in finite time since $y_c(x)$ necessitates evaluating the projection of an expectation-valued metric. Instead, we employ an approximation of $y_c(x)$, leading to the bias. Accordingly, we develop a biased stochastic linesearch framework for minimizing $\theta_{c}$ over $X$. The main idea is formalized next.

\vspace{4pt}

\begin{tcolorbox}
\textbf{Main idea.} Given the challenge in estimating the expectation-valued Jacobian matrix $\mathbf{J}F(x)$ as well as $\tilde{\theta}_c(x,y_c(x),\xi)$, we leverage the biased \emph{zeroth-order} gradient descent and a \emph{biased} stochastic Armijo linesearch on the smoothed approximation $\theta^{\eta_k}_{c}$ with a diminishing sequence of smoothing parameters $\{\eta_k\}_{k=0}^{K-1}$ upper bounded by $\bar{\eta} > 0$, a diminishing error sequence $\{\epsilon_k\}_{k=0}^{K-1}$ and an increasing mini-batch size sequence $\{N_{k}\}^{K-1}_{k = 0}$.
\end{tcolorbox}

\vspace{4pt}

Our biased SLAM (\textbf{B-SLAM}) is described in Algorithm \ref{SLAM}. We impose the following standard assumption throughout this section.

\vspace{4pt}

\begin{custombox}
    \textbf{Assumption $\mathrm{B}$.} \\
    (B1) Each batch $\{(u^{i,k,r},v^{i,k,r},\xi^{i,k,r})\}_{i=1}^{N_k}$ is i.i.d. and is independent of the history $\mathcal{H}^{r}_{k}$. \\
    (B2) For any $\xi \in \Xi$, $\Tilde{\theta}_{c}(\bullet, y_{c}(\bullet), \xi)$ is $L_{\nabla \theta}$-smooth and $L_{\theta}$-Lipschitz. \\
    (B3) For any $x\in X$ and $\xi \in \Xi$, $\Tilde{\theta}_{c}(x, \bullet, \xi)$ is $L^{y}_{\theta}$-Lipschitz. \\
    (B4) The set $X$ is compact with diameter $D_{X} \triangleq \sup_{x, y\in X} \| x - y \| < \infty$. \\
    (B5) For any $x \in X$, there exists $\sigma_{F} > 0$ such that the following hold almost surely: (i) $\mathbb{E} [\, \Tilde{F}(x, \bxi) \:\vert\: x\, ] \, = \, F(x)$; and (ii) $\mathbb{E} [\, \| \, \Tilde{F}(x, \bxi) - F(x) \, \|^{2} \:\vert\: x\, ] \, \leq \, \sigma^{2}_{F}$. \\
    (B6) Consider Algorithm \ref{SA-lower}. For any $k, t \geq 0$, any $r \leq \tau_{k}$, we have that (i) $\mathbb{E}[\nabla_{y}\widehat{F}(\hat{x}^{k}, y^{t}, \xi^{r, t}) \mid \hat{x}^{k}, y^{t}] \, = \, \nabla_{y}\mathbb{E}[\widehat{F}(\hat{x}^{k}, y^{t}, \bxi) \mid \hat{x}^{k}, y^{t}]$; (ii) $\mathbb{E}[ \| \nabla_{y}\widehat{F}(\hat{x}^{k}, y^{t}, \xi^{r, t}) - \nabla_{y}\mathbb{E}[\widehat{F}(\hat{x}^{k}, y^{t}, \bxi)] \|^{2} \mid \hat{x}^{k}, y^{t}] \, \leq \, v^{2}_{F}$ holds a.s. for some $v_{F} > 0$; and (iii) the bound $\| \nabla_{y}\mathbb{E}[\widehat{F}(x, y, \bxi)] \| \, \leq \, c_{F}$ holds for some $c_{F} > 0$ and any $x, y\in X$.
\end{custombox}

\vspace{4pt}

The Lipschitz continuity assumptions $\mathrm{(B2)}$ and $\mathrm{(B3)}$ can be guaranteed under suitable requirements on $\tilde{F}(\bullet,\xi)$ and the set $X$. We formalize these relations in Lemma \ref{VI-to-Lipschitz}, provided next.

\begin{lemma}[\mbox{\cite[Lemma 5.7]{xiao-shanbhag-2025-GR}}]\label{VI-to-Lipschitz}\em
    Suppose $X$ is compact with diameter $D_{X} > 0$ and $\mathbf{J} F(x)$ exists for all $x\in X$. Assume that there exists some $L_{F} > 0$, $M_{F} > 0$, $M_{J} > 0$, and $L_{J} > 0$ such that for any $x, y \in X$ and any realization $\xi$, we have that $\| \Tilde{F}(x, \xi) - \Tilde{F}(y, \xi) \| \leq L_{F} \| x - y \|$, $\| \Tilde{F}(x, \xi) \| \leq M_{F}$, $\| \mathbf{J} \Tilde{F}(x, \xi) \| \leq M_{J}$, and $\| \mathbf{J} \Tilde{F}(x, \xi) - \mathbf{J} \Tilde{F}(y, \xi) \| \leq L_{J} \| x - y \|$. Then the following hold: (i) $y_{c}(\bullet)$ is $(1 + \tfrac{L_{F}}{c})$-Lipschitz; (ii) Each $\Tilde{\theta}_{c}(\bullet, y(\bullet), \xi)$ is $L_{\theta}$-Lipschitz, where $L_{\theta} \triangleq 2D_{X}(L_{F}+c) + M_{F}(2 + \tfrac{L_{F}}{c})$; (iii) Each $\Tilde{\theta}_{c}(\bullet, y(\bullet), \xi)$ is $L_{\nabla \theta}$-smooth, where $L_{\nabla \theta} \triangleq 2(L_{F}+M_{J}+c) + D_{X}L_{J} + \tfrac{M_{J}L_{F}}{c}$; and (iv) Each $\Tilde{\theta}_{c}(x, \bullet, \xi)$ is $L^{y}_{\theta}$-Lipschitz, where $L^{y}_{\theta} \triangleq M_{F} + c D_{X}$. $\hfill \Box$
\end{lemma}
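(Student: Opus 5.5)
The four claims rest on three ingredients: the projection characterization of $y_c$ from Lemma \ref{lemma-regularized-gap-function}(i), nonexpansiveness of $\Pi_X$, and the bounds in the hypotheses. I would prove them in the order (i), (iv), (ii), (iii).

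\textbf{Step (i).} By Lemma \ref{lemma-regularized-gap-function}(i), $y_c(x)=\Pi_X[x-\tfrac1c F(x)]$. Nonexpansiveness of $\Pi_X$ gives
\[
\|y_c(x)-y_c(x')\|\le \|x-x'\|+\tfrac1c\|F(x)-F(x')\|.
\]
Since $F=\mathbb{E}[\tilde F(\bullet,\bxi)]$, Jensen's inequality transfers the pathwise constant, so $F$ is $L_F$-Lipschitz. This yields the constant $1+L_F/c$.

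\textbf{Step (iv).} Fix $x$ and $\xi$ and write $\tilde\theta_c(x,y,\xi)=\tilde F(x,\xi)^\top(x-y)-\tfrac c2\|x-y\|^2$. Its gradient in $y$ is $-\tilde F(x,\xi)+c(x-y)$. Using $\|\tilde F\|\le M_F$ and $\|x-y\|\le D_X$ for $y\in X$, the mean value theorem gives the Lipschitz constant $M_F+cD_X$.

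\textbf{Step (ii).} Set $h(x)=\tilde\theta_c(x,y_c(x),\xi)$. For $x,x'\in X$, I would split $h(x)-h(x')$ into two pieces:
\begin{itemize}
\item the change in the first argument with $y=y_c(x)$ held fixed;
\item the change in $y$ from $y_c(x)$ to $y_c(x')$, with the first argument held at $x'$.
\end{itemize}
For the first piece, the gradient in $x$ is $\mathbf J\tilde F(x,\xi)^\top(x-y)+\tilde F(x,\xi)-c(x-y)$. The paper's constant contains no $M_J$ term, so I would avoid the Jacobian here. Instead I would bound the difference directly:
\[
|\tilde F(x)^\top(x-y)-\tilde F(x')^\top(x'-y)|\le L_F D_X\|x-x'\|+M_F\|x-x'\|,
\]
and the quadratic term changes by at most $cD_X\|x-x'\|$. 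The second piece is at most $(M_F+cD_X)(1+L_F/c)\|x-x'\|$, by steps (iv) and (i). Adding the pieces gives a constant of the form $2D_X(L_F+c)+M_F(2+L_F/c)$, after loosening the $cD_X$ and $L_FD_X$ terms and using $D_X\ge 0$.

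\textbf{Step (iii).} This is the main obstacle, because $h$ depends on $x$ through the projection $y_c$, which need not be differentiable. I would use a Danskin-type envelope argument. Since $y_c(x)$ maximizes the concave function $y\mapsto F(x)^\top(x-y)-\tfrac c2\|x-y\|^2$, I would mimic Lemma \ref{lemma-regularized-gap-function}(v) samplewise and take
\[
\nabla h(x)=\tilde F(x,\xi)+\mathbf J\tilde F(x,\xi)^\top(x-y_c(x))-c(x-y_c(x)).
\]
This formula is exact for $\theta_c$ itself. Samplewise it is not literally the gradient: the $y$-derivative term $(-\tilde F(x,\xi)+c(x-y_c))^\top\mathbf J y_c$ survives, and it has mean zero only after taking expectation. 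I would therefore either interpret (B2) through this formula, as the cited lemma presumably does, or verify the constant for $\theta_c$.

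Granting the formula, I would bound the variation of each term:
\begin{itemize}
\item $\tilde F(x,\xi)$ varies by at most $L_F\|x-x'\|$.
\item In $\mathbf J\tilde F^\top(x-y_c)$, the Jacobian change contributes $L_JD_X$ and the change in $x-y_c$ contributes $M_J(2+L_F/c)$.
\item $c(x-y_c)$ varies by at most $c(2+L_F/c)=2c+L_F$.
\end{itemize}
Summing gives $2(L_F+M_J+c)+D_XL_J+M_JL_F/c$, which matches the stated constant. If this samplewise issue cannot be cleanly resolved, I would defer to \cite[Lemma 5.7]{xiao-shanbhag-2025-GR} for the precise interpretation.
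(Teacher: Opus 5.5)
Your steps (i), (iv) and (ii) are correct and complete. In (ii) the two pieces in fact sum to exactly $2D_X(L_F+c)+M_F(2+L_F/c)$, so no loosening is needed. The paper itself gives no proof of this lemma; it only cites \cite[Lemma 5.7]{xiao-shanbhag-2025-GR}.

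\textbf{The gap is in (iii).} You bound the variation of
\[
g(x,\xi)\triangleq\tilde F(x,\xi)+\mathbf J\tilde F(x,\xi)^\top\bigl(x-y_c(x)\bigr)-c\bigl(x-y_c(x)\bigr).
\]
As you observe yourself, $g(\bullet,\xi)$ is not the gradient of $h=\tilde\theta_c(\bullet,y_c(\bullet),\xi)$. A Lipschitz bound on $g$ therefore proves nothing about the smoothness of $h$. The obstruction is worse than a surviving cross term. The envelope argument needs $y_c(x)$ to maximize the \emph{sampled} function $\tilde\theta_c(x,\bullet,\xi)$, which it does not. Moreover, $y_c$ is a projection: it is Lipschitz but in general not differentiable, so $\mathbf J y_c$ need not exist and $h$ can have kinks.

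Here is a concrete counterexample. Let $n=1$, $X=[0,1]$, $c=1$, and $\tilde F(x,\xi)=\tfrac{1}{2}+\xi$ with $\xi=\pm\tfrac{1}{10}$ equally likely. Every hypothesis holds (with $\mathbf J\tilde F\equiv 0$), $F\equiv\tfrac{1}{2}$, and $y_c(x)=\max\{x-\tfrac{1}{2},0\}$. For $\xi=\tfrac{1}{10}$ one gets $h(x)=\tfrac{3}{5}x-\tfrac{1}{2}x^2$ on $[0,\tfrac{1}{2}]$ and $h\equiv\tfrac{7}{40}$ on $[\tfrac{1}{2},1]$. Its one-sided derivatives at the interior point $x=\tfrac{1}{2}$ are $\tfrac{1}{10}$ and $0$. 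The kinks for $\xi=\pm\tfrac{1}{10}$ cancel in expectation, consistent with $\theta_c$ being $C^1$. So $h$ is not $L$-smooth for any $L$. Claim (iii), read with $y=y_c$ as in \eqref{stochastic-gap-function} and (B2), is false, and neither granting your formula nor deferring to the citation can close the step.

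\textbf{What your computation does establish.} Your bullet computation proves one of the following, and you should state it explicitly instead of leaving (iii) conditional.

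(a) $\theta_c$ itself is $L_{\nabla\theta}$-smooth. By Lemma \ref{lemma-regularized-gap-function}(v), $\nabla\theta_c$ is your formula with $F,\mathbf JF$ in place of $\tilde F,\mathbf J\tilde F$. Also $\mathbf JF=\mathbb E[\mathbf J\tilde F]$, by differentiation under the expectation justified by the uniform bounds. Hence $\mathbf JF$ inherits $M_J$ and $L_J$, and your three bullets give exactly $L_{\nabla\theta}$.

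(b) The surrogate $g(\bullet,\xi)$ satisfies $\mathbb E[g(x,\xi)]=\nabla\theta_c(x)$ and is $L_{\nabla\theta}$-Lipschitz.

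(c) Read $y(\bullet)$ in (ii)--(iii) as the sample maximizer $y(x,\xi)\triangleq\Pi_X[x-\tfrac{1}{c}\tilde F(x,\xi)]$; the statement does write $y(\bullet)$ rather than $y_c(\bullet)$ there. Then $h$ is the regularized gap function of the map $\tilde F(\bullet,\xi)$, and Lemma \ref{lemma-regularized-gap-function}(v) applies samplewise. So $g$ with $y(\bullet,\xi)$ in place of $y_c$ is the true gradient, and $y(\bullet,\xi)$ is $(1+L_F/c)$-Lipschitz by your step (i) applied pathwise. All your constants go through unchanged. The price is that then $\mathbb E[h]\ge\theta_c$, generally with strict inequality, so $h$ no longer averages to $\theta_c$.
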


\subsection{A stochastic linesearch framework}

We begin by defining unbiased sampled objective $\bar{\theta}^{\eta_{k}, r}_{c}(\bullet)$, unbiased sampled zeroth-order gradient estimator $\bar{g}^{\eta_{k}, r}_{c}(\bullet)$, the (biased) sampled objective $\bar{\theta}^{\eta_{k}, r}_{c, \epsilon_{k}}(\bullet)$, and the (biased) sampled zeroth-order gradient estimator $\bar{g}^{\eta_{k}, r}_{c, \epsilon_{k}}(\bullet)$ as
\allowdisplaybreaks
\begin{align}
    \bar{\theta}^{\eta_{k}, r}_{c}(\bullet) &\triangleq \frac{1}{N_{k}} \sum_{i=1}^{N_{k}} \Tilde{\theta}_{c}(\bullet+\eta_{k} u^{i,k,r}, y_{c}(\bullet+\eta_{k} u^{i,k,r}), \xi^{i,k,r}), \label{theta_bar} \\
    \bar{g}^{\eta_{k}, r}_{c}(\bullet) &\triangleq \frac{n}{2\eta_{k} N_{k}} \sum_{i=1}^{N_{k}}( \Tilde{\theta}_{c}(\bullet + v^{i,k,r}, y_{c}(\bullet+v^{i,k,r}), \xi^{i,k,r}) - \Tilde{\theta}_{c}(\bullet-v^{i,k,r}, y_{c}(\bullet-v^{i,k,r}), \xi^{i,k,r}) ) \frac{v^{i,k,r}}{\|v^{i,k,r}\|}, \label{g_bar} \\
    \bar{\theta}^{\eta_{k}, r}_{c, \epsilon_{k}}(\bullet) &\triangleq \frac{1}{N_{k}} \sum_{i=1}^{N_{k}} \Tilde{\theta}_{c}(\bullet+\eta_{k} u^{i,k,r}, y_{c}^{\epsilon_{k}}(\bullet+\eta_{k} u^{i,k,r}), \xi^{i,k,r}), \mbox{ and } \label{theta_bar_eps} \\
    \bar{g}^{\eta_{k}, r}_{c, \epsilon_{k}}(\bullet) &\triangleq \frac{n}{2\eta_{k} N_{k}} \sum_{i=1}^{N_{k}}( \Tilde{\theta}_{c}(\bullet+v^{i,k,r}, y_{c}^{\epsilon_{k}}(\bullet+v^{i,k,r}), \xi^{i,k,r}) - \Tilde{\theta}_{c}(\bullet-v^{i,k,r}, y_{c}^{\epsilon_{k}}(\bullet-v^{i,k,r}), \xi^{i,k,r}) ) \frac{v^{i,k,r}}{\|v^{i,k,r}\|}, \label{g_bar_eps}
\end{align}
respectively. Here the batch index $r$ specifies the batch $\{u^{i,k,r},v^{i,k,r},\xi^{i,k,r}\}_{i=1}^{N_k}$. For any $k$, the batches $\{u^{\bullet,k,r},v^{\bullet,k,r},\xi^{\bullet,k,r}\}$ are iid batches for any positive integer $r$. Due to the unavailability of exact lower-level solutions, we propose a stochastic \emph{biased} Armijo linesearch by employing $\bar{\theta}^{\eta_{k}, r}_{c, \epsilon_{k}}(\bullet)$ and $\bar{g}^{\eta_{k}, r}_{c, \epsilon_{k}}(\bullet)$ defined above. During the implementation of our \textbf{B-SLAM}, we use a stochastic approximation (\textbf{SA}) scheme (see Algorithm \ref{SA-lower}) to compute inexact lower-level solutions $y_{c}^{\epsilon_k}(\widehat{x}^{k})$ that approximate $y_c(\widehat{x}^{k})$, where $\widehat{x}^{k}$ denotes the upper-level parameter at the $k$-th outer iteration. Note that given iteration $k$, different batch $r$ uses the same lower-level SA iteration times $T_{k}$ but different realizations. The convergence of the lower-level \textbf{SA} scheme is captured in Lemma \ref{lower-level-error}.

\begin{remark}\em
    We slightly abuse the notation here. The approximate solution $y_c(\widehat{x}^{k})$ satisfying \eqref{SA-inexactness} is not unique and is obtained by implementing lower-level \textbf{SA} scheme (Algorithm \ref{SA-lower}). $\hfill \Box$
\end{remark}

\vspace{4pt}

\begin{custombox}
\noindent\textbf{Biased stochastic Armijo linesearch.} Specifically at iteration $k$ and when batch $r$ is employed, we reduce the stepsize $t_{k}$ at a geometric rate $\beta>0$ from some initial value $t^{\mathrm{upp}}>0$, i.e., $t_k\gets\max\{\beta t_k,t^{\mathrm{low}}\}$, until the following Armijo-type condition is satisfied
\begin{equation}\label{sto-biased-Armijo}
\begin{aligned}
    \bar{\theta}^{\eta_k,r}_{c,\epsilon_k}(\Pi_X[x^k-t_k\bar{g}^{\eta_k,r}_{c,\epsilon_k}(x^k)]) - \bar{\theta}^{\eta_k,r}_{c,\epsilon_k}(x^k) \leq -\frac{\alpha}{t_k} \|x^k-\Pi_X[x^k-t_k\bar{g}^{\eta_k,r}_{c,\epsilon_k}(x^k)]\|^2 + e^k,
\end{aligned}
\end{equation}
where $e^k$ is prescribed in \eqref{SLAM-noise} and $t^{\mathrm{low}}$ is defined by
\begin{equation}\label{SLAM-floor}
    t^{\mathrm{low}} \triangleq\min\left\{t^{\mathrm{upp}},\frac{2(1-\alpha)\beta}{L_{\nabla\theta}}\right\}>0
\end{equation}
for given $\alpha, \beta \in (0,1)$. If $t_{k} = t^{\mathrm{low}}$ fails, we discard this batch and repeat the procedure with a new batch from the same $t_k^{\mathrm{init}} > 0$.
\end{custombox}

\vspace{4pt}

\begin{remark}\em
    We provide some comments on the modifications made relative to the unbiased case to address the effects of bias. First, unlike the periodically restarted linesearch in Section \ref{Sec-3}, the above linesearch requires knowledge of $L_{\nabla\theta}$ to set the lower bound $t^{\mathrm{low}}$. Second, we allow for generating multiple independent batches at a given epoch $k$, each of which is indexed by $r$,  and will proceed to demonstrate that the above linesearch terminates in finite time with probability one. It bears reminding that the total number of batches $\tau_{k}$ at the $k$-th iteration is a random variable. Third, the bias $e^{k}$ defined in \eqref{SLAM-noise} diminishes to zero as $k$ increases, providing the possibility of approximate descent of the function value from \eqref{sto-biased-Armijo}. $\hfill \Box$
\end{remark}

\begin{algorithm}[htb]
\caption{~\textbf{B-SLAM}}\label{SLAM}
\begin{algorithmic}[1]
\State \textbf{Input}: $x^{0}\in X$, $\alpha \in (0,1)$, $\beta \in (0,1)$, $\{p_{j}\}$ where $p_{j}\in \mathbb{N}$, $t^{\mathrm{upp}} > 0$, $L_{\nabla \theta} > 0$, $\{ \eta_{k}, \epsilon_{k}, N_{k} \}_{k\geq 0}$.
\State Set $t^{\mathrm{low}}$ by \eqref{SLAM-floor} and $j\leftarrow -1$;
\For{$k = 0, 1, \cdots, K-1$} 
    \State \textbf{if} \: $k = 0$ or $d = p_{j}$ \: \textbf{then}
    \State \quad\quad Set $t^{\mathrm{init}}_{k} \leftarrow t^{\mathrm{upp}}$;\; Set $d \leftarrow 1$, $j \leftarrow j+1$;
    \State \textbf{else}
    \State \quad\quad Set $t^{\mathrm{init}}_{k} \leftarrow t_{k-1}$;\; Set $d \leftarrow d+1$;
    \State \textbf{end if}
    \State Set $r \leftarrow 0$;
    \State \textbf{repeat} \hfill $\triangleright$ \emph{a multiple-batch setting}
    \State \quad\quad Set $r \leftarrow r+1$; generate a batch $\{ u^{i,k,r}, v^{i,k,r}, \xi^{i,k,r} \}_{i=1}^{N_k}$;
    \State \quad\quad Set $t_{k} \leftarrow t^{\mathrm{init}}_{k}$;
    \State \quad\quad \textbf{loop}
    \State \quad\quad \quad\quad \textbf{if} \: \eqref{sto-biased-Armijo} holds for this batch \: \textbf{then} \hfill $\triangleright$ \emph{compute inexact estimators via} \textbf{SA}
    \State \quad\quad \quad\quad \quad\quad \textbf{break};
    \State \quad\quad \quad\quad \textbf{else if} \: $t_{k} = t^{\mathrm{low}}$ \: \textbf{then}
    \State \quad\quad \quad\quad \quad\quad \textbf{break}; \hfill $\triangleright$ \emph{discard this batch and retry with a new batch}
    \State \quad\quad \quad\quad \textbf{else}
    \State \quad\quad \quad\quad \quad\quad Update $t_{k} \leftarrow \max\{ \beta t_{k}, t^{\mathrm{low}} \}$;
    \State \quad\quad \quad\quad \textbf{end if}
    \State \quad\quad \textbf{end loop}
    \State \textbf{until} \: \eqref{sto-biased-Armijo} is satisfied
    \State Set $\tau_{k} \leftarrow r$; \hfill $\triangleright$ $\tau_{k}$: \emph{total number of batches used at $k$-th iteration}
    \State Update $x^{k+1} \gets \Pi_{X}[x^{k}-t_{k}\bar{g}^{\eta_{k}, \tau_{k}}_{c, \epsilon_{k}}(x^{k})]$;
\EndFor
\State \textbf{Output}: $x^{R_{K}}$ where $R_{K}$ is a random variable with $\mathbb{P}[R_{K} = j] = 1/K$ for $j\in \{ 0, 1, \cdots, K-1 \}$.
\end{algorithmic}
\end{algorithm}

\begin{algorithm}[t]
\caption{~\textbf{SA} scheme (at $k$-th iteration of \textbf{B-SLAM} when batch $r$ is employed)}\label{SA-lower}
\begin{algorithmic}[1]
\State \textbf{Input}: $y^{0}$, $\widehat{x}^{k}\in X_{\bar{\eta}}$, iteration times $T_{k}$, and stepsize $\alpha_{t} = \frac{\alpha_{0}}{t+\Gamma}$ where $\alpha_{0} > \frac{1}{2c}$ and $\Gamma>0$.
\For{$t = 0, 1, \cdots, T_{k}-1$}  
    \State Generate an unbiased gradient realization $\nabla_{y}\widehat{F}(\widehat{x}^{k}, y^{t}, \xi^{r, t})$ of $\mathbb{E}[\widehat{F}(\widehat{x}^{k}, y, \bxi)]$ at $y = y^{t}$;
    \State Update $y^{t+1} \leftarrow \Pi_{X}[y^{t}-\alpha_{t} \nabla_{y}\widehat{F}(\hat{x}^{k}, y^{t}, \xi^{r, t})]$;
\EndFor
\State \textbf{Output}: $y^{T_{k}}$ as final estimate and let $y_{c}^{\epsilon_{k}}(\widehat{x}^{k}) \triangleq y^{T_{k}}$.
\end{algorithmic}
\end{algorithm}

\begin{lemma}\label{lower-level-error}\em
Consider Algorithm \ref{SA-lower} for solving the strongly convex stochastic optimization problem \eqref{yc-x-2}. Suppose assumption $\mathrm{(B6)}$ holds. Given $k > 0$ and $\hat{x}^{k}\in X$, let $y_{c}(\hat{x}^{k})$ be the unique exact solution and $y^{\epsilon_{k}}_{c}(\hat{x}^{k})$ be the output of Algorithm \ref{SA-lower} after $T_{k}$ iterations. Then for any $k \geq 0$, we have that
\begin{equation}\label{SA-inexactness}
    \mathbb{E}[\|y^{\epsilon_{k}}_{c}(\hat{x}^{k})-y_{c}(\hat{x}^{k})\|^{2} \mid \hat{x}^{k}]\leq \epsilon_{k} \triangleq \frac{c_{\epsilon}}{T_{k}+\Gamma},
\end{equation}
where $\Gamma > 0$ and $c_{\epsilon} \triangleq \max \left\{\tfrac{(c^{2}_{F}+v^{2}_{F})\alpha^{2}_{0}}{2c\alpha_{0}-1}, \: \Gamma\sup_{y\in X}\|y_{0}-y\|^{2} \right\} > 0$.
\end{lemma}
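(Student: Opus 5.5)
This is the standard projected SGD analysis for a strongly convex problem. The lower-level objective $h(y)\triangleq\mathbb{E}[\widehat{F}(\hat{x}^{k},y,\bxi)]=F(\hat{x}^{k})^{\top}(y-\hat{x}^{k})+\tfrac{c}{2}\|y-\hat{x}^{k}\|^{2}$ is $c$-strongly convex in $y$, and its unique minimizer over $X$ is $y_{c}(\hat{x}^{k})$ by Lemma \ref{lemma-regularized-gap-function}(i). Write $y^{\ast}\triangleq y_{c}(\hat{x}^{k})$ and $a_{t}\triangleq\mathbb{E}[\|y^{t}-y^{\ast}\|^{2}\mid\hat{x}^{k}]$. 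The plan is to derive a one-step recursion for $a_{t}$ and then close it by induction with the claimed $\mathcal{O}(1/(t+\Gamma))$ rate.

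\textbf{Recursion.} Since $y^{\ast}=\Pi_{X}[y^{\ast}]$ and the projection is nonexpansive, with $g^{t}\triangleq\nabla_{y}\widehat{F}(\hat{x}^{k},y^{t},\xi^{r,t})$ we have
\[
\|y^{t+1}-y^{\ast}\|^{2}\le\|y^{t}-y^{\ast}\|^{2}-2\alpha_{t}(g^{t})^{\top}(y^{t}-y^{\ast})+\alpha_{t}^{2}\|g^{t}\|^{2}.
\]
Take the conditional expectation given $(\hat{x}^{k},y^{t})$.
\begin{itemize}
\item By (B6)(i), the cross term becomes $\nabla h(y^{t})^{\top}(y^{t}-y^{\ast})$.
\item Strong convexity together with optimality of $y^{\ast}$ gives $\nabla h(y^{t})^{\top}(y^{t}-y^{\ast})\ge(\nabla h(y^{t})-\nabla h(y^{\ast}))^{\top}(y^{t}-y^{\ast})\ge c\|y^{t}-y^{\ast}\|^{2}$. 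Here we use $\nabla h(y^{\ast})^{\top}(y^{t}-y^{\ast})\ge0$ because $y^{t}\in X$.
\item By (B6)(ii)--(iii), $\mathbb{E}\|g^{t}\|^{2}\le c_{F}^{2}+v_{F}^{2}$, since the noise has zero conditional mean and the cross term vanishes.
\end{itemize}
Taking the expectation given $\hat{x}^{k}$ then yields
\[
a_{t+1}\le(1-2c\alpha_{t})a_{t}+\alpha_{t}^{2}(c_{F}^{2}+v_{F}^{2}).
\]

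\textbf{Induction.} The claim is $a_{t}\le c_{\epsilon}/(t+\Gamma)$ for all $t$; setting $t=T_{k}$ gives \eqref{SA-inexactness}.
\begin{itemize}
\item Base case: $a_{0}\le\sup_{y\in X}\|y^{0}-y\|^{2}\le c_{\epsilon}/\Gamma$ by the second term in the max defining $c_{\epsilon}$.
\item Inductive step: with $\alpha_{t}=\alpha_{0}/(t+\Gamma)$ and $s\triangleq t+\Gamma$,
\[
a_{t+1}\le\frac{c_{\epsilon}}{s}-\frac{2c\alpha_{0}c_{\epsilon}}{s^{2}}+\frac{\alpha_{0}^{2}(c_{F}^{2}+v_{F}^{2})}{s^{2}}=\frac{c_{\epsilon}(s-1)}{s^{2}}-\frac{(2c\alpha_{0}-1)c_{\epsilon}-\alpha_{0}^{2}(c_{F}^{2}+v_{F}^{2})}{s^{2}}.
\]
The last numerator is nonnegative by the first term in the max, using $2c\alpha_{0}>1$. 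Since $(s-1)/s^{2}\le1/(s+1)$, we get $a_{t+1}\le c_{\epsilon}/(t+1+\Gamma)$.
\end{itemize}
This argument follows Nemirovski et al.'s SA analysis.

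\textbf{Main obstacle.} The delicate step is the inductive use of $1-2c\alpha_{t}$ when $2c\alpha_{t}>1$, which happens for small $t$ if $\Gamma$ is small. Then $1-2c\alpha_{t}$ is negative, so multiplying the hypothesis $a_{t}\le c_{\epsilon}/s$ by it reverses the inequality. This case is handled separately: since $a_{t}\ge0$, we simply have $a_{t+1}\le\alpha_{t}^{2}(c_{F}^{2}+v_{F}^{2})$. In that regime $2c\alpha_{0}>s$, so
\[
\alpha_{t}^{2}(c_{F}^{2}+v_{F}^{2})=\frac{\alpha_{0}^{2}(c_{F}^{2}+v_{F}^{2})}{s^{2}}\le\frac{(2c\alpha_{0}-1)c_{\epsilon}}{s^{2}}\le\frac{(s-1)c_{\epsilon}}{s^{2}}\le\frac{c_{\epsilon}}{s+1}.
\]
The middle step uses $2c\alpha_{0}-1\le s-1$, which holds whenever $s\ge2c\alpha_{0}$; otherwise one either assumes $\Gamma\ge2c\alpha_{0}$ or checks the estimate directly. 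I expect this small-$\Gamma$ case to be the only step needing care. I would first try to require $\Gamma\ge2c\alpha_{0}$ implicitly, so that $1-2c\alpha_{t}\ge0$ for all $t$.

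Finally, the bound depends only on $\hat{x}^{k}$ through the uniform constants in (B6). The samples $\xi^{r,t}$ are fresh for each batch $r$, so the bound holds for every $k$ and $r$, as stated.
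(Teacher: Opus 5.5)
Your route is the one the paper intends: it omits the proof and defers to the standard SA recursion-plus-induction argument. Your recursion $a_{t+1}\le(1-2c\alpha_t)a_t+\alpha_t^2(c_F^2+v_F^2)$, the base case, and the inductive step when $1-2c\alpha_t\ge 0$ are all correct.

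\textbf{The gap.} The problem is the regime $2c\alpha_t>1$, i.e.\ $s=t+\Gamma<2c\alpha_0$. Your chain there uses $2c\alpha_0-1\le s-1$, which is equivalent to $s\ge 2c\alpha_0$. That is the opposite of the regime you are treating, so the paragraph establishes nothing. No direct check can rescue it, because with only $\Gamma>0$ and $\alpha_0>1/(2c)$ the claimed bound is false. Here is a counterexample:
\begin{itemize}
\item Take $n=1$, $X=[-1,1]$, $c=1$, and $\tilde F(x,\xi)\equiv F(x)=x$. The lower-level gradient is then $F(x)+c(y-x)=y$, and (B6) holds with $c_F=1$ and, say, $v_F^2=0.01$.
\item Take $\hat x^k=0$, so $y_c(\hat x^k)=0$, and set $y^0=1$, $\alpha_0=1$, $\Gamma=0.1$, $T_k=1$.
\item The first step size is $\alpha_0/\Gamma=10$, so $y^1=\Pi_X[1-10]=-1$ and the squared error is $1$.
\item Meanwhile $c_\epsilon=\max\{1.01,\,0.4\}=1.01$, so $c_\epsilon/(T_k+\Gamma)\approx 0.92<1$.
\end{itemize}
This is not an artifact of tiny $\Gamma$. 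With $\alpha_0=10$ and $\Gamma=1$ the iterates alternate between $\pm 1$, and the bound fails at $T_k=5$, since $1>5.32/6$.

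\textbf{The fix.} The lemma needs an extra hypothesis, and the one you floated is the right one. If $\Gamma\ge 2c\alpha_0$, then $2c\alpha_t\le 1$ for all $t\ge 0$, the bad case never occurs, and your base case plus inductive step form a complete proof. State this as an explicit assumption rather than ``implicitly,'' and delete the incorrect chain and the ``check directly'' fallback.

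If you want to keep arbitrary $\Gamma>0$, you must instead enlarge the constant. Set $t_0\triangleq\lceil(2c\alpha_0-\Gamma)_+\rceil$, use $a_t\le D_X^2$ for $t\le t_0$ (valid if $y^0\in X$), and start your induction at $t_0$. This yields the bound with $c_\epsilon$ replaced by $\max\{\alpha_0^2(c_F^2+v_F^2)/(2c\alpha_0-1),\,(t_0+\Gamma)D_X^2\}$.

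Either way, this is a correction to the statement itself: the paper's version, which only requires $\Gamma>0$, has the same defect.

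A minor point: (B6)(iii) only bounds the gradient for $y\in X$, so the second-moment bound at $t=0$ also needs $y^0\in X$.
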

\begin{proof}
    The proof is similar to that of \cite[Theorem 2-(a)]{cui-shanbhag-yousefian-2023}, except that the cited result solves a strongly monotone stochastic VI, whereas here we solve the strongly convex stochastic optimization problem \eqref{yc-x-2}. We omit the proof here.
\end{proof}

\begin{remark}\em
    The unbiasedness and second moment conditions in assumption $\mathrm{(B6)}$ can be guaranteed under suitable requirements on $\mathrm{SVI}\:(X, F)$. Under the unbiasedness assumption $\mathbb{E}[\tilde{F}(\hat{x}, \xi) \mid \hat{x}] = F(\hat{x})$, it can be observed that $\nabla_{y}\widehat{F}(\hat{x}, \bullet\, \xi) \triangleq \tilde{F}(\hat{x}, \xi) + c( \bullet - \hat{x} )$ is an unbiased estimator. Also we have $\mathbb{E}[ \| \nabla_{y}\widehat{F}(\hat{x}, y, \bxi) - \nabla_{y}\mathbb{E}[\widehat{F}(\hat{x}, y, \bxi)] \|^{2} \mid \hat{x}, y] \, = \, \mathbb{E} [\, \| \, \Tilde{F}(x, \bxi) - F(x) \, \|^{2} \:\vert\: x\, ] \, \leq \, \sigma^{2}_{F}$ where $\sigma_{F}$ is defined in assumption $\mathrm{(B5)}$. Therefore, we may set $v_{F} \triangleq \sigma_{F}$. $\hfill \Box$
\end{remark}

\subsection{Supporting lemmas}

In this subsection, we provide some supporting lemmas that are helpful in our subsequent analysis. To proceed, we define the history $\mathcal{F}_k$ as
\begin{equation*}
    \mathcal{F}_k \triangleq \{ x^{0} \} \, \cup \, \left( \cup_{t=0}^{k-1} \left\{ \cup_{r=1}^{\tau_{t}} \left\{ \cup_{i=1}^{N_{t}} \left\{ u^{i,t,r}, v^{i,t,r}, \xi^{i,t,r} \right\} \right\} \right\} \right), \quad \mathcal{F}_{0} \triangleq \{ x^{0} \}
\end{equation*}
and define the history $\mathcal{H}^{r}_{k}$ $(1\leq r \leq \tau_{k})$ as
\begin{equation*}
    \mathcal{H}^{r}_{k} \triangleq \mathcal{F}_k \cup \left( \cup_{j=1}^{r-1} \left\{ \cup_{i=1}^{N_{k}} \left\{ u^{i,k,j}, v^{i,k,j}, \xi^{i,k,j} \right\} \right\} \right), \quad \mathcal{H}^{1}_{k} \triangleq \mathcal{F}_k.
\end{equation*}
It may be noted that $\mathcal{H}^r_k$ denotes the history up to and including the batches generated at index $r-1$. Then we may see that $x^k$ and $t_k^{\mathrm{init}}$ are $\mathcal{F}_k$-measurable, and the event $\{\tau_k\geq r\}\in\mathcal{H}^{r}_{k}$. Lemmas \ref{slam-suport-lemma-1}--\ref{slam-suport-lemma-3} concern properties of batch index $r$ before the final acceptance of the approximate descent requirement.

\begin{lemma}\label{slam-suport-lemma-1}\em
Suppose Assumption $\mathrm{B}$ holds. For any $k\geq 0$, $r\leq \tau_k$, and any $\mathcal{H}^{r}_{k}$-measurable $x\in X$, we have that
\begin{equation*}
    \mathbb{E} \left[\left\| \nabla\Tilde{\theta}_c(x+\eta_k u^{i,k,r},y_c(x+\eta_k u^{i,k,r}),\xi^{i,k,r}) - \nabla\theta^{\eta_k}_{c}(x)\right\|^2\,\middle|\,\mathcal{H}^{r}_{k}\right] \leq L_{\theta}^2,
\end{equation*}
where $L_{\theta}$ is given in assumption $\mathrm{(B2)}$.
\end{lemma}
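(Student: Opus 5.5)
The plan is to prove the bound as a conditional variance estimate: a random vector's second moment about its conditional mean never exceeds its raw second moment. Write $\psi^{i}(x) \triangleq \nabla\Tilde{\theta}_c(x+\eta_k u^{i,k,r},y_c(x+\eta_k u^{i,k,r}),\xi^{i,k,r})$, where $\nabla$ denotes the total gradient of the composite map $x\mapsto \Tilde{\theta}_c(x,y_c(x),\xi)$, which exists by (B2). The argument has two ingredients: (a) $\mathbb{E}[\psi^i(x)\mid \mathcal{H}^r_k] = \nabla\theta^{\eta_k}_c(x)$, and (b) $\|\psi^i(x)\|\le L_\theta$ almost surely. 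Given these, the inequality $\mathbb{E}[\|Z-\mathbb{E}[Z\mid\mathcal{G}]\|^2\mid\mathcal{G}] = \mathbb{E}[\|Z\|^2\mid\mathcal{G}] - \|\mathbb{E}[Z\mid\mathcal{G}]\|^2 \le \mathbb{E}[\|Z\|^2\mid\mathcal{G}]$, applied with $Z=\psi^i(x)$ and $\mathcal{G}=\mathcal{H}^r_k$, gives the claimed bound $L_\theta^2$.

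For (b), I will use the $L_\theta$-Lipschitz continuity of $\Tilde{\theta}_c(\bullet,y_c(\bullet),\xi)$ in (B2). Since this map is also differentiable (it is $L_{\nabla\theta}$-smooth), its gradient norm is bounded by the Lipschitz constant at every point. In particular this holds at $x+\eta_k u^{i,k,r}$ for every realization of $(u^{i,k,r},\xi^{i,k,r})$. The Lipschitz property must hold on $X_{\bar\eta}$, and I will read (B2) that way, as is implicit in the smoothing construction.

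For (a), I use (B1): the batch $\{u^{i,k,r},\xi^{i,k,r}\}$ is independent of $\mathcal{H}^r_k$. Also $x$ is $\mathcal{H}^r_k$-measurable and $\{\tau_k\ge r\}\in\mathcal{H}^r_k$, so conditioning on $\mathcal{H}^r_k$ amounts to freezing $x$ and integrating over $u\sim\mathrm{Unif}(\mathbb{B})$ and $\bxi$. This gives
\[
\mathbb{E}[\psi^i(x)\mid\mathcal{H}^r_k] = \mathbb{E}_{\uu,\bxi}\big[\nabla\Tilde{\theta}_c(z+\eta_k\uu,y_c(z+\eta_k\uu),\bxi)\big]\Big|_{z=x}.
\]

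The main obstacle is identifying this expectation with $\nabla\theta^{\eta_k}_c(x)$, which requires interchanging the gradient with the expectation over $(\uu,\bxi)$ in \eqref{gap-function-RS}. I will justify the interchange by dominated convergence. The difference quotients of $\Tilde{\theta}_c(\bullet,y_c(\bullet),\xi)$ are uniformly bounded by $L_\theta$, and the integrand is differentiable for every realization, so differentiation under the integral is valid. This yields $\nabla\theta^{\eta_k}_c(x)=\mathbb{E}_{\uu,\bxi}[\nabla\Tilde{\theta}_c(x+\eta_k\uu,y_c(x+\eta_k\uu),\bxi)]$, completing (a) and hence the proof.
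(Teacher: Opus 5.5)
Your proposal is correct and follows the paper's proof. Like the paper, you use the conditional unbiasedness $\mathbb{E}[\nabla\Tilde{\theta}_c(x+\eta_k u^{i,k,r},y_c(x+\eta_k u^{i,k,r}),\xi^{i,k,r})\mid\mathcal{H}^{r}_{k}]=\nabla\theta^{\eta_k}_{c}(x)$, the identity that conditional variance equals second moment minus squared mean, and the gradient bound $L_\theta$ that (B2) gives via Lipschitz continuity; your dominated-convergence justification for exchanging the gradient and the expectation, and your reading of (B2) on $X_{\bar\eta}$, supply details the paper leaves implicit.
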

\begin{proof}
     Denote $\zeta^{i,k,r} \triangleq \nabla\Tilde{\theta}_c(x+\eta_k u^{i,k,r},y_c(x+\eta_k u^{i,k,r}),\xi^{i,k,r})$. Then we can see that
    \begin{equation*}
        \mathbb{E}\left[ \zeta^{i,k,r} \,\middle\vert\, \mathcal{H}^{r}_{k} \right] = \nabla \theta^{\eta_{k}}_{c}(x).
    \end{equation*}
    Since the Lipschitz constant of a smooth function provides a bound on its gradient norm, we have
    \begin{equation*}
        \mathbb{E} \left[ \left\| \zeta^{i,k,r} - \mathbb{E}\left[ \zeta^{i,k,r} \,\middle\vert\, \mathcal{H}^{r}_{k} \right] \right\|^2 \,\middle\vert\, \mathcal{H}^{r}_{k} \right] = \mathbb{E}\left[ \left\| \zeta^{i,k,r}\right\|^2 \,\middle\vert\, \mathcal{H}^{r}_{k} \right] - \left\|\mathbb{E} \left[ \zeta^{i,k,r} \,\middle\vert\, \mathcal{H}^{r}_{k} \right]\right\|^2 \leq L_{\theta}^2
    \end{equation*}
    where $L_{\theta}$ is given in assumption $\mathrm{(B2)}$.
\end{proof}

The following lemma is a direct consequence of Lemma \ref{slam-suport-lemma-1}.

\begin{lemma}\label{slam-suport-lemma-2}\em
    Suppose Assumption $\mathrm{B}$ holds. For any $k\geq 0$, $r\leq \tau_k$, and any $\mathcal{H}^{r}_{k}$-measurable $x\in X$, we have that
    \begin{equation*}
         \mathbb{E} \left[ \nabla\bar{\theta}^{\eta_k,r}_{c}(x) \,\middle\vert\, \mathcal{H}^{r}_{k} \right] = \nabla\theta^{\eta_k}_{c}(x), \quad \mathbb{E} \left[ \left\| \nabla \bar{\theta}^{\eta_k,r}_{c}(x) - \nabla\theta^{\eta_k}_{c}(x) \right\|^2 \,\middle\vert\, \mathcal{H}^{r}_{k} \right] \leq\frac{L_{\theta}^2}{N_k},
    \end{equation*}
    where $L_{\theta}$ is given in assumption $\mathrm{(B2)}$.
\end{lemma}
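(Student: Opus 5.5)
The plan is to read $\nabla\bar{\theta}^{\eta_k,r}_{c}(x)$ as the sample average of the individual gradients $\zeta^{i,k,r} \triangleq \nabla\Tilde{\theta}_c(x+\eta_k u^{i,k,r},y_c(x+\eta_k u^{i,k,r}),\xi^{i,k,r})$, $i=1,\dots,N_k$. By \eqref{theta_bar} and assumption $\mathrm{(B2)}$, each summand is differentiable in $x$, so this identity is just linearity of differentiation. Both claims then reduce to the single-sample statement of Lemma \ref{slam-suport-lemma-1}, combined with conditional independence of the $N_k$ samples in batch $r$ given $\mathcal{H}^{r}_{k}$.

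For unbiasedness, I would use that $x$ is $\mathcal{H}^{r}_{k}$-measurable. By $\mathrm{(B1)}$, the batch $\{(u^{i,k,r},\xi^{i,k,r})\}_{i}$ is i.i.d. and independent of $\mathcal{H}^{r}_{k}$, so each conditional expectation $\mathbb{E}[\zeta^{i,k,r}\mid\mathcal{H}^{r}_{k}]$ is the integral of the fixed function $(u,\xi)\mapsto\nabla\Tilde{\theta}_c(x+\eta_k u,y_c(x+\eta_k u),\xi)$ evaluated at $x$. This integral equals $\nabla\theta^{\eta_k}_{c}(x)$, by the same identity used in the proof of Lemma \ref{slam-suport-lemma-1}. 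That identity requires exchanging gradient and expectation in \eqref{gap-function-RS}, which is justified by dominated convergence since gradients are uniformly bounded by $L_{\theta}$ via $\mathrm{(B2)}$. Averaging over $i$ gives the first identity.

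For the variance bound, I would write $\nabla\bar{\theta}^{\eta_k,r}_{c}(x)-\nabla\theta^{\eta_k}_{c}(x)=\frac{1}{N_k}\sum_{i}(\zeta^{i,k,r}-\nabla\theta^{\eta_k}_{c}(x))$ and expand the squared norm. The cross terms for $i\neq j$ vanish after conditioning on $\mathcal{H}^{r}_{k}$: the two centered summands are conditionally independent with zero conditional mean, by $\mathrm{(B1)}$ and the unbiasedness just shown. Only the $N_k$ diagonal terms remain, each bounded by $L_{\theta}^2$ via Lemma \ref{slam-suport-lemma-1}, so the total is $N_k L_\theta^2/N_k^2 = L_\theta^2/N_k$.

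The only delicate point is the conditioning when $r\le\tau_k$ is random. I expect it to be harmless: $\{\tau_k\ge r\}\in\mathcal{H}^{r}_{k}$, and the fresh batch $r$ is independent of $\mathcal{H}^{r}_{k}$ by $\mathrm{(B1)}$, so the conditional computation is that of a fixed i.i.d. sample with $x$ frozen. I would state this explicitly, for instance through the freezing lemma for conditional expectations, rather than grind further.
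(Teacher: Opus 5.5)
Your proposal is correct and follows the same route as the paper: unbiasedness from $\mathrm{(B1)}$ together with the randomized-smoothing definition \eqref{gap-function-RS}, and the $L_\theta^2/N_k$ bound from expanding the sample average in \eqref{theta_bar}, with Lemma \ref{slam-suport-lemma-1} controlling each diagonal term. You simply make explicit the details that the paper's two-line proof leaves implicit: exchanging gradient and expectation, the vanishing cross terms, and freezing the $\mathcal{H}^{r}_{k}$-measurable $x$ when $r\le\tau_k$ is random.
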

\begin{proof}
    The first unbiasedness equality follows from the i.i.d. assumption $\mathrm{(B1)}$ and the definition of randomized smoothing. The second moment bound follows immediately from the unbiasedness equality, the definition of $\bar{\theta}^{\eta_k,r}_{c}(\bullet)$ in \eqref{theta_bar}, and Lemma \ref{slam-suport-lemma-1}.
\end{proof}

Two increments that will be frequently used in the subsequent analysis are defined as
\begin{equation}\label{delta-Delta}
    \delta^{k,r}\triangleq \nabla\bar{\theta}^{\eta_k,r}_{c}(x^k)-\bar{g}^{\eta_k,r}_{c,\epsilon_k}(x^k) \mbox{ and } \Delta^{k,r} \triangleq  \nabla\theta^{\eta_k}_{c}(x^k)-\bar{g}^{\eta_k,r}_{c,\epsilon_k}(x^k),
\end{equation}
respectively.  The following lemma establishes bound on the first and second conditional moments of these two increments and plays an important role in our subsequent analysis.

\begin{lemma}[Second moments of two increments]\label{slam-suport-lemma-3}\em
    Suppose Assumption $\mathrm{B}$ holds. For any $k\geq 0$ and $r\leq \tau_k$, let $\delta^{k,r}$ and $\Delta^{k,r}$ be defined as in \eqref{delta-Delta}. Then we have
    \begin{align}
        \mathbb{E} \left[ \| \delta^{k,r} \|^{2} \,\middle\vert\, \mathcal{H}^{r}_{k} \right] & \leq d_{k} \triangleq \frac{2( L_\theta^2 + 16\sqrt{2\pi}L_\theta^2n )}{N_k}+\frac{2n^2}{\eta_k^2}(L^y_\theta)^2\epsilon_k, \label{dk-def} \\
        \mathbb{E} \left[ \| \Delta^{k,r} \|^2 \,\middle\vert\, \mathcal{H}^{r}_{k} \right] &\leq v_k \triangleq \frac{32\sqrt{2\pi}L_\theta^2n}{N_k} +\frac{2n^2(L^y_\theta)^2\epsilon_k}{\eta_k^2} \label{vk-def}
    \end{align}
    where $L_{\theta}$ and $L^{y}_{\theta}$ are given in assumptions $\mathrm{(B2)}$ and $\mathrm{(B3)}$, respectively, and $\epsilon_k$ is given in \eqref{SA-inexactness}.
\end{lemma}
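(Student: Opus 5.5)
The plan is to decompose each increment into a pure sampling error (unbiased, controlled by the batch size $N_k$) and a bias term coming from the inexact lower-level solutions. For the bias we use the Lipschitz continuity of $\tilde\theta_c$ in $y$, assumption (B3), together with \eqref{SA-inexactness}. Introduce the unbiased zeroth-order estimator $\bar g^{\eta_k,r}_c(x^k)$ from \eqref{g_bar}, built on the same batch. Then write
\[
\delta^{k,r} = \bigl(\nabla\bar\theta^{\eta_k,r}_c(x^k)-\bar g^{\eta_k,r}_c(x^k)\bigr) + \bigl(\bar g^{\eta_k,r}_c(x^k)-\bar g^{\eta_k,r}_{c,\epsilon_k}(x^k)\bigr),
\]
and similarly for $\Delta^{k,r}$ with $\nabla\theta^{\eta_k}_c(x^k)$ in place of $\nabla\bar\theta^{\eta_k,r}_c(x^k)$. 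Applying $\|a+b\|^2\le 2\|a\|^2+2\|b\|^2$ produces the factor $2$ visible in both $d_k$ and $v_k$.

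\textbf{Bias term.} The $i$-th summand of $\bar g^{\eta_k,r}_c-\bar g^{\eta_k,r}_{c,\epsilon_k}$ equals $\frac{n}{2\eta_k}$ times the difference of $\tilde\theta_c(x^k\pm v^{i,k,r},\,\cdot\,,\xi^{i,k,r})$ evaluated at $y_c$ versus $y_c^{\epsilon_k}$. By (B3), this difference is bounded by
\[
\frac{n}{2\eta_k}L^y_\theta\Bigl(\|y_c^{\epsilon_k}(x^k+v)-y_c(x^k+v)\|+\|y_c^{\epsilon_k}(x^k-v)-y_c(x^k-v)\|\Bigr).
\]
Squaring, using Jensen over the batch average and $(a+b)^2\le 2a^2+2b^2$, and conditioning on $\hat x^k=x^k\pm v^{i,k,r}$ so that Lemma \ref{lower-level-error} applies, gives a bound $\frac{n^2}{\eta_k^2}(L^y_\theta)^2\epsilon_k$. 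After the outer factor $2$ this is exactly the $\frac{2n^2}{\eta_k^2}(L^y_\theta)^2\epsilon_k$ term.

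\textbf{Sampling term for $\Delta^{k,r}$.} The estimator $\bar g^{\eta_k,r}_c(x^k)$ is an average of $N_k$ conditionally i.i.d. copies of $g(x^k,v)$ from Lemma \ref{RS-property-grad}(vi), applied to $\tilde\theta_c(\cdot,y_c(\cdot),\xi)$, which is $L_\theta$-Lipschitz by (B2). By (B1) and Lemma \ref{RS-property-grad}(i), its conditional mean is $\nabla\theta^{\eta_k}_c(x^k)$, since the smoothing commutes with $\mathbb{E}_\xi$. Hence its variance is at most the second moment divided by $N_k$, namely $16\sqrt{2\pi}L_\theta^2 n/N_k$. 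Doubling gives $v_k$.

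\textbf{Sampling term for $\delta^{k,r}$.} Split further as
\[
\bigl(\nabla\bar\theta^{\eta_k,r}_c-\nabla\theta^{\eta_k}_c\bigr)+\bigl(\nabla\theta^{\eta_k}_c-\bar g^{\eta_k,r}_c\bigr).
\]
The first piece has conditional second moment $\le L_\theta^2/N_k$ by Lemma \ref{slam-suport-lemma-2}, and the second $\le 16\sqrt{2\pi}L_\theta^2 n/N_k$ by the argument above. Both are zero-mean given $\mathcal{H}^r_k$, but they use the same batch, so they are not independent. The cross term therefore needs care. One route is to bound it crudely by $\|a+b\|^2\le 2\|a\|^2+2\|b\|^2$, but combined with the outer factor $2$ this yields $4(\cdot)$ rather than the stated $2(L_\theta^2+16\sqrt{2\pi}L_\theta^2 n)/N_k$.

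\textbf{Main obstacle.} The delicate step is matching the stated constant for $d_k$ in the presence of this correlation. The fix I would try is to avoid the extra factor: treat $a_i-b_i$ (the $i$-th sample's contribution to the sampling part of $\delta^{k,r}$) as i.i.d. zero-mean terms. Then the averaged variance is $\frac1{N_k}\mathbb{E}\|a_i-b_i\|^2$, and bounding $\mathbb{E}\|a_i-b_i\|^2\le \mathbb{E}\|a_i\|^2+\mathbb{E}\|b_i\|^2$ requires the cross term $-2\mathbb{E}[a_i^\top b_i]$ to be controlled. This is possible since $u$ and $v$ are independent draws, so $\mathbb{E}[a_i^\top b_i\mid \xi]=\|\nabla\theta^{\eta_k}_c\text{-type mean}\|^2\ge0$. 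That yields $(L_\theta^2+16\sqrt{2\pi}L_\theta^2n)/N_k$, and then doubling gives $d_k$. A second point to verify is that conditioning on $\mathcal{H}^r_k$ remains legitimate for $r\le\tau_k$. This holds because $\{\tau_k\ge r\}\in\mathcal{H}^r_k$ and, by (B1), the fresh batch is independent of $\mathcal{H}^r_k$.
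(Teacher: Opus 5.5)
Your proposal is correct and follows essentially the paper's route. It uses the same split through the unbiased estimator $\bar g^{\eta_k,r}_c(x^k)$ with the factor $2$, the same (B3) plus Lemma~\ref{lower-level-error} bias bound, and the same variance bound for $\Delta^{k,r}$. For $\delta^{k,r}$ it uses the key fact the paper encodes in \eqref{bar-g-gradient-bar-theta}: conditional on $\xi$, the draws $u$ and $v$ are independent and $\nabla\bar\theta^{\eta_k,r}_c(x^k)$ and $\bar g^{\eta_k,r}_c(x^k)$ share the same conditional mean, so the cross term has a favorable sign.

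The only difference is cosmetic: you reduce to a single i.i.d. sample before handling the cross term, while the paper expands at the batch level and uses the total-variance decomposition. Do state explicitly that the two conditional means coincide, since that is exactly what makes $\mathbb{E}[a_i^\top b_i\mid\xi]$ a squared norm.
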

\begin{proof}
   By the definitions of $\bar{\theta}^{\eta_k,r}_{c}$ in \eqref{theta_bar} and $\bar{g}^{\eta_k,r}_{c}$ in \eqref{g_bar}, we may see that
   \begin{equation}\label{bar-g-gradient-bar-theta}
       \mathbb{E}_{\vv} \left[ \bar{g}^{\eta_{k}, r}_{c}(x^{k}) \,\middle\vert\, \mathcal{H}^{r}_{k}, \{ \xi^{i,k,r} \}_{i=1}^{N_{k}} \right] = \mathbb{E}_{\uu} \left[ \nabla \bar{\theta}^{\eta_{k}, r}_{c}(x^{k}) \,\middle\vert\, \mathcal{H}^{r}_{k}, \{ \xi^{i,k,r} \}_{i=1}^{N_{k}} \right].
   \end{equation}
   Therefore, by Lemma \ref{slam-suport-lemma-1} and Lemma \ref{RS-property-grad}-(vi), we may deduce that
   \allowdisplaybreaks
    \begin{align}\label{slam-suport-lemma-3-eqn1}
        & \mathbb{E} \left[ \left\| \nabla \bar{\theta}^{\eta_{k}, r}_{c} (x^{k}) - \bar{g}^{\eta_{k}, r}_{c}(x^{k}) \right\|^{2} \,\middle\vert\, \mathcal{H}^{r}_{k} \right] \notag \\
        &= \mathbb{E}_{\bxi} \left[ \mathbb{E}_{\uu, \vv} \left[ \left\| \nabla \bar{\theta}^{\eta_{k}, r}_{c} (x^{k}) - \bar{g}^{\eta_{k}, r}_{c}(x^{k}) \right\|^{2} \,\middle\vert\, \mathcal{H}^{r}_{k}, \{ \xi^{i,k,r} \}_{i=1}^{N_{k}} \right] \,\middle\vert\, \mathcal{H}^{r}_{k} \right] \notag \\
        &= \mathbb{E}_{\bxi} \left[ \mathbb{E}_{\uu} \left[ \left\| \nabla \bar{\theta}^{\eta_{k}, r}_{c} (x^{k}) \right\|^{2} \,\middle\vert\, \mathcal{H}^{r}_{k}, \{ \xi^{i,k,r} \}_{i=1}^{N_{k}} \right] - 2\mathbb{E}_{\uu, \vv} \left[ \nabla \bar{\theta}^{\eta_{k}, r}_{c} (x^{k})^{\top} \bar{g}^{\eta_{k}, r}_{c}(x^{k}) \,\middle\vert\, \mathcal{H}^{r}_{k}, \{ \xi^{i,k,r} \}_{i=1}^{N_{k}} \right] \,\middle\vert\, \mathcal{H}^{r}_{k} \right] \notag \\
        &+ \mathbb{E}_{\bxi} \left[ \mathbb{E}_{\vv} \left[ \left\| \bar{g}^{\eta_{k}, r}_{c}(x^{k}) \right\|^{2} \,\middle\vert\, \mathcal{H}^{r}_{k}, \{ \xi^{i,k,r} \}_{i=1}^{N_{k}} \right] \,\middle\vert\, \mathcal{H}^{r}_{k} \right] \notag \\
        &\overset{\eqref{bar-g-gradient-bar-theta}}{=} \mathbb{E}_{\bxi} \left[ \mathbb{E}_{\uu} \left[ \left\| \nabla \bar{\theta}^{\eta_{k}, r}_{c} (x^{k}) \right\|^{2} \,\middle\vert\, \mathcal{H}^{r}_{k}, \{ \xi^{i,k,r} \}_{i=1}^{N_{k}} \right] - 2 \left\| \mathbb{E}_{\uu} \left[ \nabla \bar{\theta}^{\eta_{k}, r}_{c} (x^{k}) \,\middle\vert\, \mathcal{H}^{r}_{k}, \{ \xi^{i,k,r} \}_{i=1}^{N_{k}} \right] \right\|^{2} \,\middle\vert\, \mathcal{H}^{r}_{k} \right] \notag \\
        &+ \mathbb{E}_{\bxi} \left[ \mathbb{E}_{\vv} \left[ \left\| \bar{g}^{\eta_{k}, r}_{c}(x^{k}) \right\|^{2} \,\middle\vert\, \mathcal{F}_{k}, \{ \xi^{i,k} \}_{i=1}^{N_{k}} \right] \,\middle\vert\, \mathcal{F}_{k} \right] \leq \frac{L_{\theta}^{2} + 16\sqrt{2\pi}L^{2}_{\theta}n}{N_{k}},
    \end{align}
    where the penultimate inequality is from the law of total conditional variance, Lemma \ref{slam-suport-lemma-2}, and Lemma \ref{RS-property-grad}-(vi). Therefore, we have
    \begin{align*}
        \mathbb{E} \left[ \|\delta^{k,r}\|^2 \,\middle\vert\, \mathcal{H}^{r}_{k} \right] &\leq 2\mathbb{E}\left[ \left\| \nabla\bar{\theta}_c^{\eta_k,r}(x^k) - \bar{g}_c^{\eta_k,r}(x^k) \right\|^2 \,\middle\vert\, \mathcal{H}^{r}_{k} \right] + 2\mathbb{E} \left[ \left\| \bar{g}_c^{\eta_k,r}(x^k) -\bar{g}_{c,\epsilon_k}^{\eta_k,r}(x^k) \right\|^2 \,\middle\vert\, \mathcal{H}^{r}_{k} \right] \\
        &\leq \frac{2 ( L_{\theta}^2 + 16\sqrt{2\pi} L_\theta^2 n )} {N_k} + \frac{2 n^2 (L_\theta^y)^2}{\eta_k^2} \epsilon_k,
    \end{align*}
    where the last inequality is from \eqref{slam-suport-lemma-3-eqn1} and Lemma \ref{lower-level-error}. Similarly, by Lemma \ref{RS-property-grad}-(vi) and Lemma \ref{lower-level-error} again, we may establish
    \begin{align*}
        \mathbb{E}\left[ \|\Delta^{k,r}\|^2 \,\middle\vert\, \mathcal{H}^{r}_{k} \right] & \leq 2\mathbb{E} \left[ \left\| \nabla\theta_c^{\eta_k}(x^k) -\bar{g}_c^{\eta_k,r}(x^k) \right\|^2 \,\middle\vert\, \mathcal{H}^{r}_{k} \right] + 2\mathbb{E}\left[ \left\| \bar{g}_c^{\eta_k,r}(x^k) - \bar{g}_{c,\epsilon_k}^{\eta_k,r}(x^k) \right\|^2 \,\middle\vert\, \mathcal{H}^{r}_{k} \right]\\
        &\leq \frac{32\sqrt{2\pi}L_\theta^2n}{N_k} +\frac{2n^2(L_\theta^y)^2\epsilon_k}{\eta_k^2}.
    \end{align*}
    Then we complete the proof.
\end{proof}

Based on Lemma \ref{slam-suport-lemma-3}, we now specify the bias $e^{k}$ in the backtracking procedure \eqref{sto-biased-Armijo}. By the definition of $t^{\mathrm{low}}$ in \eqref{SLAM-floor}, we know that
\begin{equation}\label{c0-def}
    c_0\triangleq \frac{1-\alpha}{t^{\mathrm{low}}}-\frac{L_{\nabla \theta}}{2} \geq \frac{(1-\beta)L_{\nabla \theta}}{2\beta}>0.
\end{equation}
Based on Lemma \ref{slam-suport-lemma-3}, we define $B_{k}$ as
\begin{equation}\label{Bk-def}
    B_k \triangleq 2L^y_\theta\sqrt{\epsilon_k}+\frac{d_k}{4c_0},
\end{equation}
where $d_{k}$ is defined in \eqref{dk-def} and prescribe the linesearch tolerance as
\begin{equation}\label{SLAM-noise}
    e^k\triangleq 2B_k.
\end{equation}
By the above definition, we know that $e^{k} \to 0$ as $k$ goes to infinity. In our next result, we prove that linesearch scheme terminates with probability one. In addition, we derive a bound on the expected number of batches at any epoch $k$.

\begin{lemma}[Stepsize bounds and finite termination]\label{SLAM-stepsize-lemma}\em
    Suppose Assumption $\mathrm{B}$ holds. Consider the stepsizes $\{ t_{k} \}_{k\geq 0}$ and batches $\{ \tau_{k} \}_{k\geq 0}$ generated by \textbf{B-SLAM}. Then every iteration terminates almost surely, and
    \begin{equation*}
        \mathbb{P} \left[ \, t^{\mathrm{low}}\leq t_k\leq t^{\mathrm{upp}}, \;\; \forall k = 0, \ldots, K-1 \, \right] = 1.
    \end{equation*}
    Moreover, we have
    \begin{equation}\label{tauk-P-E}
        \mathbb{P}\left[\,\tau_k>r\mid\mathcal{F}_k\, \right]\leq2^{-r} \;\; (r=0,1,\ldots) \quad \textrm{and} \quad \mathbb{E}[\tau_k\mid\mathcal{F}_k] = \sum_{r=0}^{\infty} \mathbb{P}\left[\, \tau_k>r\mid\mathcal{F}_k\, \right] \leq 2.
    \end{equation}
\end{lemma}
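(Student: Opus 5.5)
The stepsize bounds are immediate from the mechanics of Algorithm~\ref{SLAM}. Every trial value of $t_k$ is either $t^{\mathrm{init}}_k$ or produced by $t_k\leftarrow\max\{\beta t_k,t^{\mathrm{low}}\}$. Moreover, $t^{\mathrm{init}}_k$ is either $t^{\mathrm{upp}}$ or the previously accepted $t_{k-1}$. An induction on $k$ therefore gives $t^{\mathrm{low}}\le t_k\le t^{\mathrm{upp}}$ along every sample path on which the iteration terminates. Within a single batch the inner loop is finite, since after at most $\lceil\log_{1/\beta}(t^{\mathrm{upp}}/t^{\mathrm{low}})\rceil+1$ reductions the trial value equals $t^{\mathrm{low}}$. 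So everything reduces to showing that the number of batches $\tau_k$ is a.s. finite, which follows from the tail bound in \eqref{tauk-P-E}.

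The core step is to show that for each batch $r$, conditional on $\mathcal{H}^r_k$, the biased Armijo condition \eqref{sto-biased-Armijo} holds at $t_k=t^{\mathrm{low}}$ (or at some earlier trial value) with probability at least $1/2$. Since batch $r$ is only discarded if the condition fails at $t^{\mathrm{low}}$, it suffices to bound the failure probability at $t=t^{\mathrm{low}}$.

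Fix $t=t^{\mathrm{low}}$ and write $x^+=\Pi_X[x^k-t\bar g]$ with $\bar g=\bar g^{\eta_k,r}_{c,\epsilon_k}(x^k)$.
\begin{itemize}
\item First, replace $\bar\theta^{\eta_k,r}_{c,\epsilon_k}$ by $\bar\theta^{\eta_k,r}_c$ at both $x^+$ and $x^k$. By (B3), the error in each evaluation is at most $L^y_\theta\|y^{\epsilon_k}_c-y_c\|$, whose conditional expectation is at most $L^y_\theta\sqrt{\epsilon_k}$ by Lemma~\ref{lower-level-error} and Jensen. This accounts for the $2L^y_\theta\sqrt{\epsilon_k}$ part of $B_k$.
\item Next, $\bar\theta^{\eta_k,r}_c$ is $L_{\nabla\theta}$-smooth by (B2), since it is an average of such functions. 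Hence
\[
\bar\theta^{\eta_k,r}_c(x^+)-\bar\theta^{\eta_k,r}_c(x^k)\le \nabla\bar\theta^{\eta_k,r}_c(x^k)^\top(x^+-x^k)+\tfrac{L_{\nabla\theta}}{2}\|x^+-x^k\|^2 .
\]
\item Split the gradient as $\nabla\bar\theta^{\eta_k,r}_c(x^k)=\bar g+\delta^{k,r}$. The projection property gives $\bar g^\top(x^+-x^k)\le -\tfrac1t\|x^+-x^k\|^2$.
\item Young's inequality gives $\delta^\top(x^+-x^k)\le c_0\|x^+-x^k\|^2+\|\delta\|^2/(4c_0)$.
\end{itemize}
Combining these with the definition \eqref{c0-def} of $c_0$ yields
\[
\bar\theta^{\eta_k,r}_c(x^+)-\bar\theta^{\eta_k,r}_c(x^k)\le-\tfrac{\alpha}{t}\|x^+-x^k\|^2+\tfrac{\|\delta^{k,r}\|^2}{4c_0}.
\]
Adding back the $\epsilon_k$-substitution errors, a failure at $t^{\mathrm{low}}$ forces the nonnegative random variable $Z\triangleq \|\delta^{k,r}\|^2/(4c_0)+L^y_\theta(\|y^{\epsilon_k}_c(x^+{+}\cdot)-y_c\|+\|y^{\epsilon_k}_c(x^k{+}\cdot)-y_c\|)$ to exceed $e^k=2B_k$. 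By Lemma~\ref{slam-suport-lemma-3}, $\mathbb{E}[Z\mid\mathcal{H}^r_k]\le B_k$, so Markov's inequality gives $\mathbb{P}[\text{fail}\mid\mathcal{H}^r_k]\le 1/2$.

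The main obstacle is the error term at $x^+$. This point depends on the same batch, so applying Lemma~\ref{lower-level-error} there needs care. I would handle it by noting that the lower-level SA runs use fresh inner samples $\xi^{r,t}$, so the inexactness bound holds conditionally on the evaluation point. If necessary, I would bound the average over $i$ of the lower-level errors pointwise using the conditional version of \eqref{SA-inexactness}.

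Since batches are independent of $\mathcal{H}^r_k$ by (B1), chaining the conditional bound gives $\mathbb{P}[\tau_k>r\mid\mathcal{F}_k]=\mathbb{E}\big[\prod_{j\le r}\mathbf{1}_{\text{fail}_j}\mid\mathcal{F}_k\big]\le 2^{-r}$, using the tower property over $\mathcal{H}^{j}_k$. Then $\mathbb{E}[\tau_k\mid\mathcal{F}_k]=\sum_{r\ge0}\mathbb{P}[\tau_k>r\mid\mathcal{F}_k]\le\sum_r2^{-r}=2$. In particular $\tau_k<\infty$ a.s., which completes the finite-termination claim, and intersecting over the finitely many $k$ gives the probability-one stepsize bound.
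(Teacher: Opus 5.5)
Your proposal is correct and follows the paper's proof essentially step by step. The shared steps are the projection inequality and descent lemma at the trial point $x^{k+1}_{\mathrm{low}}$, Young's inequality with $c_0$ giving the bound by $R^r_k$ with $\mathbb{E}[R^r_k\mid\mathcal{H}^r_k]\le B_k$, the conditional Markov bound $\tfrac12$ on rejection, and the tower-property chaining to $2^{-r}$ and $\mathbb{E}[\tau_k\mid\mathcal{F}_k]\le 2$. Your remark that the trial point depends on the current batch, and that this is handled by the fresh inner \textbf{SA} samples, makes explicit a conditioning step the paper uses silently when it invokes Lemma~\ref{lower-level-error} at $x^{k+1}_{\mathrm{low}}$.
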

\begin{proof}
    For convenience, we denote $x^{k+1}_{\mathrm{low}} \triangleq \Pi_X[x^k-t^{\mathrm{low}} \bar{g}^{\eta_k,r}_{c,\epsilon_k}(x^k)]$. We also define $e_1^{k,r}$, $e_2^{k,r}$, and $R^{r}_{k}$ as 
    \begin{equation}\label{e1-e2-Rrk}
        e_1^{k,r}\triangleq \bar{\theta}^{\eta_k,r}_{c}(x^{k+1}_{\mathrm{low}})-\bar{\theta}^{\eta_k,r}_{c,\epsilon_k}(x^{k+1}_{\mathrm{low}}), \;\; e_2^{k,r}\triangleq \bar{\theta}^{\eta_k,r}_{c}(x^k)-\bar{\theta}^{\eta_k,r}_{c,\epsilon_k}(x^k), \;\; \textrm{and} \;\; R^{r}_{k} \triangleq |e_1^{k,r}|+|e_2^{k,r}| + \frac{\| \delta^{k,r} \|^2}{4 c_0},
    \end{equation}
    respectively, where $\delta^{k,r}$ is defined in \eqref{delta-Delta}. By the projection property, we have that
    \begin{align}\label{g-zo-inner-product}
        \left( x^{k}-t^{\mathrm{low}} \bar{g}^{\eta_{k}}_{c, \epsilon_{k}}(x^{k}) - x^{k+1}_{\mathrm{low}} \right)^{\top} \left( x^{k} - x^{k+1}_{\mathrm{low}}\right) \leq 0 & \implies \| x^{k} - x^{k+1}_{\mathrm{low}} \|^{2} - t^{\mathrm{low}} \bar{g}^{\eta_{k}}_{c, \epsilon_{k}}(x^{k})^{\top} \left( x^{k} - x^{k+1}_{\mathrm{low}} \right) \leq 0 \notag \\
        \implies & \left\langle \bar{g}^{\eta_{k}}_{c, \epsilon_{k}}(x^{k}), x^{k+1}_{\mathrm{low}} - x^{k} \right\rangle \leq -\frac{1}{t^{\mathrm{low}}} \| x^{k} - x^{k+1}_{\mathrm{low}} \|^{2}.
    \end{align}
    Recall that $\delta^{k,r}\triangleq \nabla\bar{\theta}^{\eta_k,r}_{c}(x^k)-\bar{g}^{\eta_k,r}_{c,\epsilon_k}(x^k)$ is defined in \eqref{delta-Delta}. Since $\bar{\theta}^{\eta_k,r}_{c}$ is $L_{\nabla \theta}$-smooth by assumption $\mathrm{(B2)}$, the descent lemma yields
    \begin{align*}
        \bar{\theta}^{\eta_{k}, r}_{c} ( x^{k+1}_{\mathrm{low}} ) &\leq \bar{\theta}^{\eta_{k}, r}_{c} (x^{k}) + \left\langle \bar{g}^{\eta_{k}}_{c, \epsilon_{k}}(x^{k}), x^{k+1}_{\mathrm{low}} - x^{k} \right\rangle + \left\langle \delta^{k, r}, x^{k+1}_{\mathrm{low}} - x^{k} \right\rangle + \frac{L_{\nabla \theta}}{2} \| x^{k} - x^{k+1}_{\mathrm{low}} \|^{2} \notag \\
        &\overset{\eqref{g-zo-inner-product}}{\leq} \bar{\theta}^{\eta_{k}, r}_{c} (x^{k}) + \left( \frac{L_{\nabla \theta}}{2} - \frac{1}{t^{\mathrm{low}}} \right) \| x^{k} - x^{k+1}_{\mathrm{low}} \|^{2} + \left\langle \delta^{k, r}, x^{k+1}_{\mathrm{low}} - x^{k} \right\rangle.
    \end{align*}
    Therefore, by the definition of $c_{0}$ given in \eqref{c0-def}, we have
    \allowdisplaybreaks
    \begin{align}\label{SLAM-floor-descent}
        &\bar{\theta}^{\eta_k,r}_{c,\epsilon_k}(x^{k+1}_{\mathrm{low}})-\bar{\theta}^{\eta_k,r}_{c,\epsilon_k}(x^k) + \frac\alpha{t^{\mathrm{low}}} \|x^{k+1}_{\mathrm{low}}-x^k \|^2 \leq -c_0 \| x^{k+1}_{\mathrm{low}}-x^k \|^2 + \left\langle \delta^{k,r}, x^{k+1}_{\mathrm{low}}-x^k\right\rangle \notag \\
        &\qquad \leq -c_0 \| x^{k+1}_{\mathrm{low}} - x^k \|^2 + \left\langle \delta^{k,r}, x^{k+1}_{\mathrm{low}}-x^k\right\rangle + |e_1^{k,r}| + |e_2^{k,r}| \notag \\
        &\qquad \leq -c_0\| x^{k+1}_{\mathrm{low}} - x^k \|^2 + \frac{\| \delta^{k,r} \|^2}{4 c_0} + c_0\| x^{k+1}_{\mathrm{low}} - x^k \|^2 + |e_1^{k,r}| + |e_2^{k,r}| = R_k^r,
    \end{align}
    where the last inequality follows from $c_0>0$ and Young's inequality, $\alpha \in (0,1)$ is defined in the backtracking procedure \eqref{sto-biased-Armijo}, and $e_1^{k,r}$, $e_2^{k,r}$, and $R_k^r$ are defined in \eqref{e1-e2-Rrk}. By assumption $\mathrm{(B3)}$ and Lemmas \ref{lower-level-error} and \ref{slam-suport-lemma-3}, we have
    \begin{equation}\label{SLAM-floor-error-moment}
        \mathbb{E}[|e_1^{k,r}|\mid\mathcal{H}^{r}_{k}] \leq L^y_\theta\sqrt{\epsilon_k}, \quad \mathbb{E}[|e_2^{k,r}|\mid\mathcal{H}^{r}_{k}]\leq L^y_\theta\sqrt{\epsilon_k}, \quad \mathbb{E}[R^{r}_{k}\mid\mathcal{H}^{r}_{k}]\leq B_k,
    \end{equation}
    where $B_{k}$ is defined in \eqref{Bk-def}. If the biased stochastic Armijo linesearch is not satisfied when $t_{k} = t^{\mathrm{low}}$, then batch $r$ is rejected. By \eqref{SLAM-floor-descent} this implies $R^{r}_{k} > e^{k} \triangleq 2B_k$. We denote by $A^{r}_{k}$ the event that the batch $r$ is rejected. The conditional Markov's inequality gives
    \begin{equation}\label{SLAM-batch-failure}
        \mathbb{P} \left[\, A^{r}_{k} \mid \mathcal{H}^{r}_{k} \, \right] \leq \mathbb{P} \left[\, R^{r}_{k} > 2 B_k \mid \mathcal{H}^{r}_{k} \, \right] \leq \frac{\mathbb{E}[R^{r}_{k} \mid \mathcal{H}^{r}_{k}]}{2 B_{k}} \overset{\eqref{SLAM-floor-error-moment}}{\leq} \frac12.
    \end{equation}
    By definition, the event that batch $r$ does not lead to descent is given by the event that the batch indexed by $r-1$ does not lead to descent and the event $A_k^r$, which is the event that batch $r$ is rejected; this implies that
    \begin{equation}\label{event-intersection}
        \left\{\, \tau_{k} > r \, \right\} = \left\{ \, \tau_{k} > r-1 \, \right\} \cap A^{r}_{k}.
    \end{equation}
    Then it follows from \eqref{SLAM-batch-failure} and \eqref{event-intersection}, together with the tower property that
    \begin{equation}\label{SLAM-retry-tail}
    \begin{aligned}
        \mathbb{P}\left[\, \tau_k>r \mid \mathcal{F}_k \, \right] &= \mathbb{E} \left[ \mathbf{1}_{\{ \tau_{k} > r-1 \}} \mathbf{1}_{A^{r}_{k}} \mid \mathcal{F}_k \right] \\
        &= \mathbb{E} \left[ \mathbb{E} \left[ \mathbf{1}_{\{ \tau_{k} > r-1 \}} \mathbf{1}_{A^{r}_{k}} \mid \mathcal{H}^{r}_{k} \right] \mid \mathcal{F}_k \right] \qquad (\mathcal{F}_k \subseteq \mathcal{H}^{r}_{k}) \\
        &= \mathbb{E} \left[ \mathbf{1}_{\{ \tau_{k} > r-1 \}} \, \mathbb{E} \left[ \mathbf{1}_{A^{r}_{k}} \mid \mathcal{H}^{r}_{k} \right] \mid \mathcal{F}_k \right] \qquad \{ \tau_{k} > r-1 \}\in \mathcal{H}^{r}_{k} \\
        &= \mathbb{E} \left[ \mathbf{1}_{\{ \tau_{k} > r-1 \}} \, \mathbb{P} \left[\, A^{r}_{k} \mid \mathcal{H}^{r}_{k} \, \right] \mid \mathcal{F}_k \right] \\
        &\overset{\eqref{SLAM-batch-failure}}{\leq} \frac{1}{2} \mathbb{E} \left[ \mathbf{1}_{\{ \tau_{k} > r-1 \}} \mid \mathcal{F}_k \right] = \frac{1}{2} \mathbb{P} \left[\, \tau_k>r-1 \mid \mathcal{F}_k\, \right] \leq \cdots \leq \frac{1}{2^{r}}.
    \end{aligned}
    \end{equation}
    Therefore, $\tau_k<\infty$ almost surely. The algorithm therefore terminates with $t_k\in[t^{\mathrm{low}},t^{\mathrm{upp}}]$. Moreover, we have that
    \begin{equation*}
        \mathbb{E}\left[\, \tau_k\mid\mathcal{F}_k\, \right] = \sum_{r=0}^{\infty} \mathbb{P}\left[\, \tau_k>r\mid\mathcal{F}_k \, \right] \leq 2.
    \end{equation*}
    Then we complete the proof.
\end{proof}

\begin{remark}\em
    In contrast to the unbiased regime \cite{wang-shanbhag-xie-2025} and Section \ref{Sec-3}, here we introduce a multiple-batch sampling mechanism to address the fundamental difficulty arising from biased estimators. We employ the conditional Markov's inequality to establish almost sure upper and lower bounds on the stepsizes. This construction is needed because Lemma \ref{lower-level-error} controls the error between the inexact and exact lower-level solutions only in expectation, without a suitable bound in the almost sure sense. $\hfill \Box$
\end{remark}

Again, we denote the number of times that the stepsize sequence $\{ t_{k} \}_{k=0}^{K-1}$ changes during $K$ iterations by
\begin{equation}\label{Q-definition}
    T(K) \triangleq \sum_{k=0}^{K-1} \mathbf{1}_{\{t_k\neq t_{k-1}\}}, \quad t_{-1} \triangleq t^{\mathrm{upp}}, \quad Q \triangleq 1 + \left\lceil \log_{1/\beta} \frac{t^{\mathrm{upp}}}{t^{\mathrm{low}}} \right\rceil,
\end{equation}
where $Q$ denotes the number of backtracking steps from $t^{\mathrm{upp}}$ to $t^{\mathrm{low}}$. Let $J(K)+1$ denote the total number of complete cycles during $K$ iterations, where
\begin{equation*}
J(K)\triangleq \min\left\{l\in\mathbb{N}_0:\sum_{j=0}^{l}p_j\geq K\right\}.
\end{equation*}
By our biased stochastic Armijo linesearch procedure, we may get the upper bound of $T(K)$ immediately. It is worth noting that, unlike in Section \ref{Sec-3}, since we require knowledge of $t^{\mathrm{low}}$ here, we can directly use $Q$ to obtain a more intuitive upper bound on $T(K)$.

\begin{lemma}[Bounds on $T(K)$]\label{SLAM-bound-TK-lemma}\em
For Algorithm~\ref{SLAM}, we have
\begin{equation}\label{SLAM-step-change-bound}
    T(K) \leq (J(K)+1)Q,
\end{equation}
where $Q$ is defined in \eqref{Q-definition}.
\end{lemma}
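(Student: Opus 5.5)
The argument is a deterministic, pathwise count: split the horizon into cycles and bound the number of stepsize changes inside each cycle by $Q$. By definition of $J(K)$, the $K$ iterations are covered by at most $J(K)+1$ cycles, namely the blocks of $p_0,p_1,\dots,p_{J(K)}$ consecutive iterations. Since $T(K)=\sum_{k=0}^{K-1}\mathbf{1}_{\{t_k\neq t_{k-1}\}}$ decomposes as a sum over cycles, it suffices to show that within any single cycle the number of indices $k$ with $t_k\neq t_{k-1}$ is at most $Q$.

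\emph{Step 1: admissible values.} Fix a cycle starting at iteration $k_0$, where $t^{\mathrm{init}}_{k_0}=t^{\mathrm{upp}}$. Within the cycle, $t^{\mathrm{init}}_{k}=t_{k-1}$. In the loop of Algorithm~\ref{SLAM}, each batch restarts from $t^{\mathrm{init}}_k$ and only applies $t\mapsto\max\{\beta t,t^{\mathrm{low}}\}$. Hence, by induction, every accepted $t_k$ in the cycle lies in the finite set
\[
\mathcal{S}\triangleq\{\max\{\beta^{i}t^{\mathrm{upp}},t^{\mathrm{low}}\}: i=0,1,\dots\}.
\]
By Lemma~\ref{SLAM-stepsize-lemma}, almost surely every iteration terminates, so each $t_k$ is well defined and lies in $[t^{\mathrm{low}},t^{\mathrm{upp}}]$. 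Moreover, $t_k\le t^{\mathrm{init}}_k=t_{k-1}$ inside the cycle, so the accepted stepsizes are non-increasing there. The cardinality of $\mathcal{S}$ is at most $1+\lceil\log_{1/\beta}(t^{\mathrm{upp}}/t^{\mathrm{low}})\rceil=Q$, because $\beta^{i}t^{\mathrm{upp}}\le t^{\mathrm{low}}$ as soon as $i\ge\log_{1/\beta}(t^{\mathrm{upp}}/t^{\mathrm{low}})$.

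\emph{Step 2: counting changes.} A non-increasing sequence taking values in a set of size $Q$ changes value at most $Q-1$ times strictly inside the cycle. In addition, the first index $k_0$ of the cycle may register a change relative to $t_{k_0-1}$, the last stepsize of the previous cycle (or $t_{-1}=t^{\mathrm{upp}}$ when $k_0=0$). This contributes at most one more change, for a total of at most $Q$ per cycle. Summing over the at most $J(K)+1$ cycles yields \eqref{SLAM-step-change-bound}.

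The main obstacle is bookkeeping rather than analysis. One point is the boundary index of each cycle, where the reset to $t^{\mathrm{upp}}$ can count as a change, which is exactly why the bound is $Q$ rather than $Q-1$. The other is confirming that batch rejections and restarts within an iteration never push the accepted stepsize outside $\mathcal{S}$ or above $t^{\mathrm{init}}_k$. Both follow directly from the update rule $t_k\leftarrow\max\{\beta t_k,t^{\mathrm{low}}\}$ together with the reset $t_k\leftarrow t^{\mathrm{init}}_k$ at each new batch, so the bound holds on the probability-one event of Lemma~\ref{SLAM-stepsize-lemma}.
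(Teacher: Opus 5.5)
Your proof is correct and follows the argument the paper leaves implicit when it says the bound follows "immediately" from the linesearch procedure. The paper's version runs as follows: within each cycle the accepted stepsizes are non-increasing and lie in the $Q$-element set $\{\max\{\beta^{i}t^{\mathrm{upp}},t^{\mathrm{low}}\}\}$, giving at most $Q-1$ changes inside the cycle plus one at its first index, and one then sums over the $J(K)+1$ cycles. Your remarks that rejected batches restart from $t^{\mathrm{init}}_k$, and that everything is well defined on the probability-one termination event of Lemma~\ref{SLAM-stepsize-lemma}, correctly supply bookkeeping the paper does not state.
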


For convenience, we denote the final accepted batch by $\tau_k$ throughout the analysis. The following lemma is crucial in our subsequent analysis, which connects moment bounds for a new generated batch index $r$ conditional on $\mathcal{H}^{r}_{k}$, to bounds for the final accepted batch index $\tau_{k}$ conditional on $\mathcal{F}_k$. It is important because the final accepted batch index $\tau_{k}$ is random (since we do not how many batches needed until Armijo's rule is satisfied), so the moment bounds for a new batch cannot be applied directly to $\tau_{k}$. The proposition accounts for this selection effect through the bound $\mathbb{E}[\tau_k \mid \mathcal{F}_k] \leq 2$ we have established in Lemma \ref{SLAM-stepsize-lemma}.

\begin{lemma}\label{general-lemma}\em
    Let $W^{r}_{k}$ be a nonnegative quantity associated with batch $r$ at iteration $k$. If
    \begin{equation*}
        \mathbb{E}[ W^{r}_{k} \mid \mathcal{H}^{r}_{k}]\leq w_k
    \end{equation*}
    for a nonnegative deterministic $w_k$, then we have
    \begin{equation*}
        \mathbb{E}[ W^{\tau_{k}}_{k} \mid \mathcal{F}_k]\leq 2w_k.
    \end{equation*}
\end{lemma}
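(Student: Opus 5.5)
The plan is to decompose $W^{\tau_k}_k$ over the possible values of the random index $\tau_k$, writing
\begin{equation*}
    W^{\tau_k}_k = \sum_{r=1}^{\infty} W^{r}_{k}\,\mathbf{1}_{\{\tau_k = r\}} \leq \sum_{r=1}^{\infty} W^{r}_{k}\,\mathbf{1}_{\{\tau_k \geq r\}},
\end{equation*}
where the inequality uses nonnegativity of $W^r_k$. This avoids conditioning on the event $\{\tau_k = r\}$, which depends on batch $r$ itself (acceptance is decided by that batch) and would bias the conditional moment. By contrast, the event $\{\tau_k\geq r\}=\{\tau_k>r-1\}$ is determined by batches $1,\dots,r-1$ only, so it lies in $\mathcal{H}^r_k$, as already noted in the paper before Lemma \ref{slam-suport-lemma-1} and used in \eqref{SLAM-retry-tail}.

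Next I take $\mathbb{E}[\,\cdot\mid\mathcal{F}_k]$ and exchange sum and conditional expectation by monotone convergence (conditional version), justified by nonnegativity. For each $r$, since $\mathcal{F}_k\subseteq\mathcal{H}^r_k$, the tower property and the measurability of $\mathbf{1}_{\{\tau_k\geq r\}}$ give
\begin{equation*}
    \mathbb{E}\left[W^r_k\mathbf{1}_{\{\tau_k\geq r\}}\mid\mathcal{F}_k\right] = \mathbb{E}\left[\mathbf{1}_{\{\tau_k\geq r\}}\,\mathbb{E}[W^r_k\mid\mathcal{H}^r_k]\mid\mathcal{F}_k\right] \leq w_k\,\mathbb{P}[\tau_k\geq r\mid\mathcal{F}_k].
\end{equation*}
Summing over $r\geq1$ yields $w_k\sum_{r\geq1}\mathbb{P}[\tau_k\geq r\mid\mathcal{F}_k] = w_k\sum_{r\geq0}\mathbb{P}[\tau_k> r\mid\mathcal{F}_k] = w_k\,\mathbb{E}[\tau_k\mid\mathcal{F}_k]\leq 2w_k$ by \eqref{tauk-P-E} in Lemma \ref{SLAM-stepsize-lemma}. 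Since $\tau_k<\infty$ almost surely, the first decomposition is valid almost surely, which completes the argument.

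The main subtlety is the precise meaning of the hypothesis $\mathbb{E}[W^r_k\mid\mathcal{H}^r_k]\leq w_k$ for indices $r$ that may exceed $\tau_k$, since batch $r$ is only generated on $\{\tau_k\geq r\}$. I will interpret $W^r_k$ as defined for every $r$ through a conceptual infinite i.i.d. sequence of batches, with the hypothesis holding for all $r$; this is consistent with (B1) and with how Lemmas \ref{slam-suport-lemma-1}--\ref{slam-suport-lemma-3} are stated. Alternatively, the hypothesis need only hold on $\{\tau_k\geq r\}$, and the argument above only uses it there. With either reading, the rest is routine.
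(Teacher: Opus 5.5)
Your proposal is correct and follows essentially the same argument as the paper. Both bound $\mathbf{1}_{\{\tau_k=r\}}$ by $\mathbf{1}_{\{\tau_k\geq r\}}$, use $\{\tau_k\geq r\}\in\mathcal{H}^r_k$ with the tower property, and sum to obtain $w_k\,\mathbb{E}[\tau_k\mid\mathcal{F}_k]\leq 2w_k$; your extra justifications (conditional monotone convergence for the interchange, and noting the hypothesis is only needed on $\{\tau_k\geq r\}$) are sound refinements that do not change the route.
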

\begin{proof}
    By the conditional tower property, we have
    \begin{align*}
        \mathbb{E}[W^{\tau_k}_{k}\mid\mathcal{F}_k] &= \sum_{r\geq1} \mathbb{E}[\mathbf1_{\{\tau_k=r\}} W^{r}_{k} \mid\mathcal{F}_k] \\ &\leq \sum_{r\geq1}\mathbb{E}[\mathbf1_{\{\tau_k\geq r\}} W^{r}_{k} \mid\mathcal{F}_k] \\
        &= \sum_{r\geq1}\mathbb{E}[ \mathbb{E} [ \mathbf1_{\{\tau_k\geq r\}}W^{r}_{k} \mid \mathcal{H}^{r}_{k} ] \mid \mathcal{F}_k] \qquad (\mathcal{F}_k \subseteq \mathcal{H}^{r}_{k}) \\
        &= \sum_{r\geq1}\mathbb{E}[\mathbf1_{\{\tau_k\geq r\}} \mathbb{E}[W^{r}_{k} \mid \mathcal{H}^{r}_{k} ] \mid\mathcal{F}_k] \qquad \{ \tau_{k} \geq r \}\in \mathcal{H}^{r}_{k} \\
        &\leq w_k\sum_{r\geq1}\mathbb{P}\left[\, \tau_k\geq r\mid\mathcal{F}_k\, \right] = w_k\mathbb{E}[\tau_k\mid\mathcal{F}_k] \overset{\eqref{tauk-P-E}}{\leq} 2w_k
    \end{align*}
    since the event $\{\tau_k \geq r\}$ is known before batch $r$ is generated when the event $\{\tau_k > r-1\}$ is happened. Then we complete the proof.
\end{proof}

The following corollary is an immediate consequence of Lemma \ref{slam-suport-lemma-3} and Lemma \ref{general-lemma} and hence we omit the proof.

\begin{corollary}\em
    Suppose Assumption $\mathrm{B}$ holds. For any $k\geq 0$, we have the following two bounds
    \begin{equation}\label{SLAM-accepted-gradient-errors}
        \mathbb{E}[ \| \delta^{k,\tau_k} \| \mid \mathcal{F}_k ] \leq 2 a_k, \qquad \mathbb{E} [ \| \Delta^{k,\tau_k} \|^2 \mid \mathcal{F}_k ]\leq 2 v_k,
    \end{equation}
    where $a_{k}$ and $v_{k}$ are defined in \eqref{vk-def}.
\end{corollary}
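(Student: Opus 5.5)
The plan is to apply Lemma~\ref{general-lemma} twice, once for each increment in \eqref{SLAM-accepted-gradient-errors}. The per-batch conditional moment bounds come from Lemma~\ref{slam-suport-lemma-3}. I read $a_k$ as $a_k \triangleq \sqrt{d_k}$, with $d_k$ from \eqref{dk-def}; this is the natural first-moment analogue of $v_k$, since the paper does not spell $a_k$ out.

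For the first bound I take $W^{r}_{k} \triangleq \|\delta^{k,r}\|$. This is nonnegative and is associated with batch $r$ at iteration $k$. Lemma~\ref{slam-suport-lemma-3} gives $\mathbb{E}[\|\delta^{k,r}\|^2 \mid \mathcal{H}^{r}_{k}] \le d_k$. Conditional Jensen (concavity of the square root) then yields
\begin{equation*}
\mathbb{E}[\|\delta^{k,r}\| \mid \mathcal{H}^{r}_{k}] \le \sqrt{\mathbb{E}[\|\delta^{k,r}\|^2 \mid \mathcal{H}^{r}_{k}]} \le \sqrt{d_k} = a_k .
\end{equation*}
Here $d_k$ is deterministic, because it depends only on the prescribed $N_k$, $\eta_k$, $\epsilon_k$ and the constants $L_\theta, L^y_\theta, n$. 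Lemma~\ref{general-lemma} with $w_k = a_k$ therefore gives $\mathbb{E}[\|\delta^{k,\tau_k}\| \mid \mathcal{F}_k] \le 2a_k$.

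For the second bound I take $W^{r}_{k} \triangleq \|\Delta^{k,r}\|^2$. Estimate \eqref{vk-def} gives $\mathbb{E}[W^{r}_{k}\mid\mathcal{H}^{r}_{k}] \le v_k$, and $v_k$ is again deterministic. Lemma~\ref{general-lemma} then gives $\mathbb{E}[\|\Delta^{k,\tau_k}\|^2 \mid \mathcal{F}_k] \le 2v_k$.

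The only delicate point is a quantifier mismatch. Lemma~\ref{slam-suport-lemma-3} is stated for $r \le \tau_k$, whereas Lemma~\ref{general-lemma} uses the per-batch bound for every $r \ge 1$. I would resolve this by inspecting the proof of Lemma~\ref{general-lemma}: the bound on $\mathbb{E}[W^{r}_{k}\mid\mathcal{H}^{r}_{k}]$ is only ever used after multiplication by $\mathbf{1}_{\{\tau_k \ge r\}}$, which is $\mathcal{H}^{r}_{k}$-measurable. So it suffices for the inequality to hold on the event $\{\tau_k \ge r\}$, which is exactly the regime covered by Lemma~\ref{slam-suport-lemma-3}. Alternatively, one can regard batches as being drawn for all $r$, with i.i.d.\ independence from $\mathcal{H}^{r}_{k}$ by (B1), so that the bound holds unconditionally on $\tau_k$. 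Either reading closes the argument.
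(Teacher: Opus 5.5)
Your proposal is correct and is essentially the argument the paper has in mind when it calls the corollary an immediate consequence of Lemma~\ref{slam-suport-lemma-3} and Lemma~\ref{general-lemma}. Reading $a_k \triangleq \sqrt{d_k}$ and applying conditional Jensen is a natural way to supply the first-moment constant the paper leaves undefined. Your observation that the per-batch bound is only needed on the $\mathcal{H}^{r}_{k}$-measurable event $\{\tau_k \geq r\}$ correctly handles the restriction $r \leq \tau_k$ in Lemma~\ref{slam-suport-lemma-3}.
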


Similar to Section \ref{Sec-3}, in order to facilitate our analysis, we introduce the auxiliary iterate
\begin{equation}\label{bar-x-k+1}
    \bar{x}^{k+1} \triangleq \Pi_X[x^k-t_{k-1}\nabla\theta_{c}^{\eta_k}(x^k)], \quad t_{-1}= t^{\mathrm{upp}}.
\end{equation}
We can see that $\bar{x}^{k+1}$ is $\mathcal{F}_k$-measurable. The following two lemmas are useful in the analysis of Proposition \ref{slam-convergence-key-proposition-2}.

\begin{lemma}\label{diff-xk+1-barxk+1}\em
    Consider $\bar{x}^{k+1}$ defined in \eqref{bar-x-k+1}. Then for any $k \geq 0$, we have
    \begin{equation}\label{proposition-1.2-eqn1}
        \mathbb{E}\left[\, \left\| x^{k+1}-\bar{x}^{k+1} \right\|^2 \, \right] \leq 2D_X^2 \mathbb{P} \{ t_k\neq t_{k-1} \} + 4( t^{\mathrm{upp}})^2 v_k,
    \end{equation}
    where $v_k$ is given in \eqref{vk-def}.
\end{lemma}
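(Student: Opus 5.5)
We follow the route of Lemma \ref{w3-bound-lemma}. Recall that $x^{k+1} = \Pi_X[x^k - t_k \bar{g}^{\eta_k,\tau_k}_{c,\epsilon_k}(x^k)]$. Insert the intermediate point $\Pi_X[x^k - t_k \nabla\theta^{\eta_k}_c(x^k)]$ and apply $\|a+b\|^2\le 2\|a\|^2+2\|b\|^2$:
\begin{align*}
\|x^{k+1}-\bar{x}^{k+1}\|^2 &\le 2\left\|\Pi_X[x^k - t_k \bar{g}^{\eta_k,\tau_k}_{c,\epsilon_k}(x^k)] - \Pi_X[x^k - t_k \nabla\theta^{\eta_k}_c(x^k)]\right\|^2 \\
&\quad + 2\left\|\Pi_X[x^k - t_k \nabla\theta^{\eta_k}_c(x^k)] - \Pi_X[x^k - t_{k-1}\nabla\theta^{\eta_k}_c(x^k)]\right\|^2 .
\end{align*}

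For the first term, nonexpansiveness of $\Pi_X$ bounds it by $2t_k^2\|\Delta^{k,\tau_k}\|^2$, where $\Delta^{k,r}$ is the increment defined in \eqref{delta-Delta}. Lemma \ref{SLAM-stepsize-lemma} gives $t_k\le t^{\mathrm{upp}}$ almost surely, so this is at most $2(t^{\mathrm{upp}})^2\|\Delta^{k,\tau_k}\|^2$. Conditioning on $\mathcal{F}_k$ and invoking the corollary \eqref{SLAM-accepted-gradient-errors} (that is, Lemma \ref{slam-suport-lemma-3} combined with Lemma \ref{general-lemma}) yields $\mathbb{E}[\|\Delta^{k,\tau_k}\|^2\mid\mathcal{F}_k]\le 2v_k$. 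Taking total expectation, the first term contributes $4(t^{\mathrm{upp}})^2 v_k$.

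For the second term, we split on the event $\{t_k=t_{k-1}\}$. On this event the two projected points coincide, so the difference vanishes identically. On the complement, we avoid any bound involving $\|\nabla\theta^{\eta_k}_c\|$ (which would bring in $L_\theta$ and the stepsize gap). Instead we use only that both projected points lie in $X$, so their distance is at most $D_X$ by (B4). Hence the second term is bounded by $2D_X^2\mathbf{1}_{\{t_k\neq t_{k-1}\}}$, and its expectation is $2D_X^2\,\mathbb{P}\{t_k\neq t_{k-1}\}$. Summing the two contributions gives \eqref{proposition-1.2-eqn1}.

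The main point of care is the first term. The accepted batch index $\tau_k$ is random and depends on the samples, so the per-batch bound $v_k$ from Lemma \ref{slam-suport-lemma-3} cannot be applied to $\Delta^{k,\tau_k}$ directly. This is exactly what Lemma \ref{general-lemma} handles: applied with $W^r_k=\|\Delta^{k,r}\|^2$, it produces the factor $2$. This step also requires that $t_k$ be bounded by the deterministic constant $t^{\mathrm{upp}}$ before taking expectations, since $t_k$ is correlated with $\Delta^{k,\tau_k}$; Lemma \ref{SLAM-stepsize-lemma} provides this almost surely. With both in place, the remaining steps are routine.
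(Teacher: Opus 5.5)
Your proposal is correct and follows the paper's proof essentially step for step. It uses the same intermediate point $\Pi_X[x^k - t_k\nabla\theta_c^{\eta_k}(x^k)]$, the same nonexpansiveness bound $t^{\mathrm{upp}}\|\Delta^{k,\tau_k}\|$ combined with the corollary \eqref{SLAM-accepted-gradient-errors} (via Lemma \ref{general-lemma}) to get $4(t^{\mathrm{upp}})^2 v_k$, and the same diameter bound $D_X^2\mathbf{1}_{\{t_k\neq t_{k-1}\}}$ on the event where the stepsize changes.
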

\begin{proof}
    We define the intermediate point
    \begin{equation*}
        \tilde{x}^{k+1} \triangleq \Pi_X \left[ x^k-t_k \nabla\theta_c^{\eta_k}(x^k) \right].
    \end{equation*}
    By nonexpansivity of the projection operator and $t_k\leq t^{\mathrm{upp}}$,
    \begin{equation*}
        \|x^{k+1}-\tilde{x}^{k+1}\| \leq t_k \left\| \bar{g}_{c,\epsilon_k}^{\eta_k,\tau_k}(x^k) -\nabla\theta_c^{\eta_k}(x^k) \right\| =t_k\|\Delta^{k,\tau_k}\| \leq t^{\mathrm{upp}} \| \Delta^{k,\tau_k} \|,
    \end{equation*}
    where $\Delta^{k,\tau_k}$ defined in \eqref{delta-Delta}. Furthermore, we know from \eqref{bar-x-k+1} that $\tilde{x}^{k+1}=\bar{x}^{k+1}$ whenever $t_k=t_{k-1}$. Consequently, we have
    \begin{equation*}
        \|\tilde{x}^{k+1}-\bar{x}^{k+1}\|^2 \leq D_X^2\mathbf{1}_{\{t_k\neq t_{k-1}\}},
    \end{equation*}
    which implies that
    \begin{equation*}
        \|x^{k+1}-\bar{x}^{k+1}\|^2 \leq 2\|x^{k+1}-\tilde{x}^{k+1}\|^2 +2\|\tilde{x}^{k+1}-\bar{x}^{k+1}\|^2 \leq 2(t^{\mathrm{upp}})^2\|\Delta^{k,\tau_k}\|^2 + 2D_X^2\mathbf{1}_{\{t_k\neq t_{k-1}\}}.
    \end{equation*}
    Taking expectations yields
    \begin{equation*}
        \mathbb{E} \left[\|x^{k+1}-\bar{x}^{k+1}\|^2\right] \leq 2D_X^2\mathbb{P} \{ t_k\neq t_{k-1} \} + 2(t^{\mathrm{upp}})^2 \mathbb{E} \left[\|\Delta^{k,\tau_k}\|^2\right] \overset{\eqref{SLAM-accepted-gradient-errors}}{\leq} 2D_X^2 \mathbb{P} \{ t_k\neq t_{k-1} \} + 4( t^{\mathrm{upp}})^2 v_k,
    \end{equation*}
    which completes the proof.
\end{proof}

We denote the difference between $\theta^{\eta_{k}}_{c}$ and its sampled counterpart $\bar{\theta}^{\eta_{k}, r}_{c}$ for batch $r$ by $h^{r}_{k}$. The following lemma provides an upper bound on the expected difference between the absolute values of $h^{r}_{k}$ at $\bar{x}^{k+1}$ and $x^{k}$.

\begin{lemma}\label{two-h-diff}\em
    Consider $\bar{x}^{k+1}$ defined in \eqref{bar-x-k+1}. Suppose Assumption $\mathrm{B}$ holds. Define $h^{r}_{k}(x) \triangleq \theta_{c}^{\eta_k}(x)-\bar{\theta}_c^{\eta_k,r}(x)$ for batch $r$ at iteration $k$. Then for any $k \geq 0$, we have
    \begin{align}\label{proposition-1.2-eqn2}
        \mathbb{E}[|h^{\tau_{k}}_{k}(\bar{x}^{k+1})-h^{\tau_{k}}_{k}(x^k)|] &\leq \frac{\alpha}{2 t^{\mathrm{upp}}} \mathbb{E} [ \| x^{k+1}-x^k \|^2 ] + \frac{2 t^{\mathrm{upp}} L_\theta^2}{\alpha N_k} \notag \\
        &+ L_{\nabla \theta} D_X^2 \mathbb{P} \{ t_k\neq t_{k-1} \} + 2L_{\nabla \theta} ( t^{\mathrm{upp}})^2 v_k + \frac{2L_\theta^2}{ L_{\nabla \theta} N_k}.
    \end{align}
\end{lemma}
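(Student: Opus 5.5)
The plan is to split the increment of $h^{\tau_k}_k$ through the actual next iterate $x^{k+1}$, then control each piece by a first-order expansion of $h^{\tau_k}_k$ around $x^k$ plus Young's inequality. Concretely, write
\begin{equation*}
h^{\tau_k}_k(\bar{x}^{k+1})-h^{\tau_k}_k(x^k) = \bigl[h^{\tau_k}_k(x^{k+1})-h^{\tau_k}_k(x^k)\bigr] + \bigl[h^{\tau_k}_k(\bar{x}^{k+1})-h^{\tau_k}_k(x^{k+1})\bigr].
\end{equation*}
Both $\theta^{\eta_k}_c$ and $\bar\theta^{\eta_k,r}_c$ are $L_{\nabla\theta}$-smooth by (B2), since randomized smoothing and averaging preserve the constant. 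Hence $h^{r}_k$ is differentiable with $\nabla h^{r}_k(x)=\nabla\theta^{\eta_k}_c(x)-\nabla\bar\theta^{\eta_k,r}_c(x)$. Lemma \ref{slam-suport-lemma-2} gives $\mathbb{E}[\|\nabla h^r_k(x^k)\|^2\mid\mathcal{H}^r_k]\le L_\theta^2/N_k$, because $x^k$ is $\mathcal{F}_k\subseteq\mathcal{H}^r_k$-measurable. Applying Lemma \ref{general-lemma} with $W^r_k=\|\nabla h^r_k(x^k)\|^2$ then transfers this to the accepted batch: $\mathbb{E}[\|\nabla h^{\tau_k}_k(x^k)\|^2\mid\mathcal{F}_k]\le 2L_\theta^2/N_k$. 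This selection step is essential, since $\tau_k$ depends on the samples.

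For the first bracket, linearize at $x^k$ and apply Young with weight $\alpha/(2t^{\mathrm{upp}})$:
\begin{equation*}
\bigl|\langle\nabla h^{\tau_k}_k(x^k),x^{k+1}-x^k\rangle\bigr| \le \frac{\alpha}{2t^{\mathrm{upp}}}\|x^{k+1}-x^k\|^2+\frac{t^{\mathrm{upp}}}{2\alpha}\|\nabla h^{\tau_k}_k(x^k)\|^2 .
\end{equation*}
Taking expectations gives the terms $\frac{\alpha}{2t^{\mathrm{upp}}}\mathbb{E}\|x^{k+1}-x^k\|^2+\frac{2t^{\mathrm{upp}}L_\theta^2}{\alpha N_k}$, with some slack in the constant.

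For the second bracket, pair the same gradient with $\bar{x}^{k+1}-x^{k+1}$ and use Young with weight $L_{\nabla\theta}/2$:
\begin{equation*}
|\langle\nabla h,\bar{x}^{k+1}-x^{k+1}\rangle|\le \frac{L_{\nabla\theta}}{2}\|\bar{x}^{k+1}-x^{k+1}\|^2+\frac{1}{2L_{\nabla\theta}}\|\nabla h\|^2 .
\end{equation*}
Lemma \ref{diff-xk+1-barxk+1} bounds the first term by $L_{\nabla\theta}D_X^2\mathbb{P}\{t_k\neq t_{k-1}\}+2L_{\nabla\theta}(t^{\mathrm{upp}})^2v_k$. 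The second term contributes $L_\theta^2/(L_{\nabla\theta}N_k)\le 2L_\theta^2/(L_{\nabla\theta}N_k)$. Summing the two brackets reproduces exactly the shape of \eqref{proposition-1.2-eqn2}.

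The main obstacle is the curvature remainder left by the linearizations. The smoothness of $h$ only gives a second-order remainder of size $L_{\nabla\theta}\|\cdot\|^2$. I would absorb this remainder using the slack constants above: the factor $2$ in $2L_\theta^2/(L_{\nabla\theta}N_k)$ and in $2t^{\mathrm{upp}}L_\theta^2/(\alpha N_k)$, together with the quadratic terms already present. If that is not enough, the fallback is to expand at $\bar{x}^{k+1}$ instead of $x^k$, since $\bar{x}^{k+1}$ is $\mathcal{F}_k$-measurable. The point then is that $\bar{x}^{k+1}$ is fixed before the batches are drawn, so by Lemmas \ref{slam-suport-lemma-2} and \ref{general-lemma} the gradient noise $\nabla h^{\tau_k}_k$ evaluated there still has conditional second moment at most $2L_\theta^2/N_k$.

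The one genuinely delicate point is that every pointwise noise bound must be passed through Lemma \ref{general-lemma}. All such bounds hold conditionally on $\mathcal{H}^r_k$ for a fresh batch $r$, but we need them for the selected batch $\tau_k$. This only works at points measurable before the batch is drawn, namely $x^k$ and $\bar{x}^{k+1}$, and never at $x^{k+1}$.
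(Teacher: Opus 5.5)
Your proposal has a genuine gap, and it is exactly the curvature remainder you flag as the main obstacle: it cannot be absorbed. Both brackets are linearized with the gradient at $x^k$, so their sum is a first-order expansion of $h^{\tau_k}_k(\bar{x}^{k+1})-h^{\tau_k}_k(x^k)$ at $x^k$. Because (B2) only makes $\nabla h^{r}_k$ $2L_{\nabla\theta}$-Lipschitz, the remainder satisfies $|R|\le L_{\nabla\theta}\|\bar{x}^{k+1}-x^k\|^2$. This term carries no factor $1/N_k$, so the slack in the $1/N_k$ constants does not help.

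Any split $\|a+b\|^2\le(1+\epsilon)\|a\|^2+(1+\epsilon^{-1})\|b\|^2$ puts a coefficient larger than $L_{\nabla\theta}$ on each of $\|x^{k+1}-x^k\|^2$ and $\|x^{k+1}-\bar{x}^{k+1}\|^2$. By contrast, \eqref{proposition-1.2-eqn2} allots only $\frac{\alpha}{2t^{\mathrm{upp}}}$ to the first. It allots only $\frac{L_{\nabla\theta}}{2}$ to the second, since that is what Lemma \ref{diff-xk+1-barxk+1} turns into $L_{\nabla\theta}D_X^2\mathbb{P}\{t_k\neq t_{k-1}\}+2L_{\nabla\theta}(t^{\mathrm{upp}})^2v_k$. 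Your two Young steps already spend both budgets in full.

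The second budget is exceeded for every choice of parameters. The first is exceeded whenever $t^{\mathrm{upp}}\ge \alpha/(2L_{\nabla\theta})$, which is not excluded. Expanding at $\bar{x}^{k+1}$ instead produces the same remainder. This is not cosmetic: in Proposition \ref{slam-convergence-key-proposition-2} the quadratic term must be dominated by the Armijo decrease $\frac{\alpha}{t_k}\|x^{k+1}-x^k\|^2$, and an $L_{\nabla\theta}\|x^{k+1}-x^k\|^2$ term would not be. The root issue is that deterministic smoothness does not see that $h^r_k$ is zero-mean noise; you exploit the averaging only for the linear term.

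The missing idea is to average the whole increment between the two pre-sample points, with no Taylor step. This is the paper's route.
\begin{itemize}
\item Let $D^{i,k,r}$ be the increment of $\tilde{\theta}_c(\bullet+\eta_k u^{i,k,r},y_c(\bullet+\eta_k u^{i,k,r}),\xi^{i,k,r})$ from $x^k$ to $\bar{x}^{k+1}$. Then $h^r_k(\bar{x}^{k+1})-h^r_k(x^k)=\frac{1}{N_k}\sum_{i=1}^{N_k}(\mathbb{E}[D^{i,k,r}\mid\mathcal{H}^r_k]-D^{i,k,r})$, and $|D^{i,k,r}|\le L_\theta\|\bar{x}^{k+1}-x^k\|$ by the Lipschitz part of (B2).
\item Conditional independence and Jensen give $\mathbb{E}[|h^r_k(\bar{x}^{k+1})-h^r_k(x^k)|\mid\mathcal{H}^r_k]\le L_\theta\|\bar{x}^{k+1}-x^k\|/\sqrt{N_k}$. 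Lemma \ref{general-lemma} then yields $2L_\theta\|\bar{x}^{k+1}-x^k\|/\sqrt{N_k}$ for the accepted batch.
\item In your gradient language, the same bound follows from the exact identity $h^r_k(\bar{x}^{k+1})-h^r_k(x^k)=\int_0^1\langle\nabla h^r_k(x^k+s(\bar{x}^{k+1}-x^k)),\bar{x}^{k+1}-x^k\rangle\,ds$. Apply Lemma \ref{slam-suport-lemma-2} at the $\mathcal{F}_k$-measurable points of the segment.
\item Only then should $x^{k+1}$ enter, and only in the modulus, via $\|\bar{x}^{k+1}-x^k\|\le\|x^{k+1}-x^k\|+\|x^{k+1}-\bar{x}^{k+1}\|$. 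Follow with Young's inequality at weights $\frac{\alpha}{2t^{\mathrm{upp}}}$ and $\frac{L_{\nabla\theta}}{2}$, and then Lemma \ref{diff-xk+1-barxk+1}.
\end{itemize}
This produces exactly $\frac{2t^{\mathrm{upp}}L_\theta^2}{\alpha N_k}$ and $\frac{2L_\theta^2}{L_{\nabla\theta}N_k}$. So the factors of $2$ you counted as slack are in fact the selection factor from Lemma \ref{general-lemma}.
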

\begin{proof}
    Consider iteration $k$ and batch $r$. For $i = 1, \ldots, N_{k}$, we define the sample increment between $\bar{x}^{k+1}$ and $x^{k}$ by
    \begin{equation*}
        D^{i,k,r} \triangleq \tilde{\theta}_c \left( \bar{x}^{k+1}+\eta_k u^{i,k,r}, y_c(\bar{x}^{k+1} + \eta_k u^{i,k,r}), \xi^{i,k,r} \right) - \tilde{\theta}_c \left( x^k+\eta_k u^{i,k,r}, y_c(x^k+\eta_k u^{i,k,r}), \xi^{i,k,r} \right).
    \end{equation*}
    We know from assumption $\mathrm{(B2)}$ that
    \begin{equation*}
       |D^{i,k,r}| \leq L_\theta\|\bar{x}^{k+1}-x^k\|.
    \end{equation*}
    By the definition of $h^{r}_{k}$, we have
    \begin{equation*}
        \mathbb{E}[D^{i,k,r}\mid\mathcal{H}_k^r] = \theta_c^{\eta_k}(\bar{x}^{k+1}) - \theta_c^{\eta_k}(x^k) \implies h^{r}_{k}(\bar{x}^{k+1})-h^{r}_{k}(x^k) = \frac{1}{N_k}\sum_{i=1}^{N_k} \left( \mathbb{E}[D^{i,k,r}\mid\mathcal{H}_k^r] - D^{i,k,r} \right).
    \end{equation*}
    Therefore, we have
    \begin{equation*}
        \mathbb{E} \left[ |h^{r}_{k}(\bar{x}^{k+1})-h^{r}_{k}(x^k) |^2 \,\middle|\,\mathcal{H}_k^r \right] = \frac{1}{N_k^2}\sum_{i=1}^{N_k} \mathrm{Var}(D^{i,k,r}\mid\mathcal{H}_k^r) \leq \frac{1}{N_k^2}\sum_{i=1}^{N_k} \mathbb{E}[|D^{i,k,r}|^2\mid\mathcal{H}_k^r] \leq \frac{L_\theta^2}{N_k} \|\bar{x}^{k+1}-x^k\|^2.
    \end{equation*}
    Finally, the conditional Jensen's inequality gives
    \begin{equation}\label{bd:h_k_r}
        \mathbb{E} \left[ |h^{r}_{k}(\bar{x}^{k+1})-h^{r}_{k}(x^k)| \mid \mathcal{H}_k^r \right] \leq \left( \mathbb{E} \left[ | h^{r}_{k}(\bar{x}^{k+1})-h^{r}_{k}(x^k) |^2 \,\middle|\,\mathcal{H}_k^r \right] \right)^{1/2} \leq \frac{L_\theta}{\sqrt{N_k}} \|\bar{x}^{k+1}-x^k\|.
    \end{equation}
    By applying Lemma \ref{general-lemma} and Young's inequality, we have
    \begin{align*}
        &\mathbb{E} \left[ |h^{\tau_{k}}_{k}(\bar{x}^{k+1})-h^{\tau_{k}}_{k}(x^k)| \mid \mathcal{F}_k \right] \overset{\eqref{bd:h_k_r}, \textrm{Lemma } \ref{general-lemma}}{\leq} \frac{2L_\theta}{\sqrt{N_k}} \| \bar{x}^{k+1}-x^k \|  \\
        & \qquad \leq \frac{2L_\theta}{\sqrt{N_k}} \| x^{k+1}-x^k \| + \frac{2L_\theta}{\sqrt{N_k}} \| x^{k+1}-\bar{x}^{k+1} \| \\
        & \qquad \leq \left( \frac{\alpha}{2 t^{\mathrm{upp}}} \| x^{k+1}-x^k \|^2 + \frac{2 t^{\mathrm{upp}} L_\theta^2}{\alpha N_k} \right) + \left( \frac{ L_{\nabla \theta}}{2} \| x^{k+1}-\bar{x}^{k+1} \|^2\right) + \left(\frac{2L_\theta^2}{ L_{\nabla \theta} N_k} \right).
    \end{align*}
Then by taking unconditional expectations on both sides, we may derive that
\begin{align*}
    &\mathbb{E} \left[ |h^{\tau_{k}}_{k}(\bar{x}^{k+1})-h^{\tau_{k}}_{k}(x^k)| \right] \leq \frac{\alpha}{2 t^{\mathrm{upp}}} \mathbb{E}[ \| x^{k+1}-x^k \|^2 ] + \frac{2 t^{\mathrm{upp}} L_\theta^2}{\alpha N_k} + \frac{ L_{\nabla \theta}}{2} \mathbb{E}[ \| x^{k+1} - \bar{x}^{k+1} \|^2] + \frac{2L_\theta^2}{ L_{\nabla \theta} N_k} \\
    &\qquad \overset{\textrm{Lemma } \ref{diff-xk+1-barxk+1}}{\leq} \frac{\alpha}{2 t^{\mathrm{upp}}} \mathbb{E}[\| x^{k+1}-x^k \|^2] + \frac{2 t^{\mathrm{upp}} L_\theta^2}{\alpha N_k} + \frac{ L_{\nabla \theta}}{2} \left( 2D_X^2 \mathbb{P} \{ t_k\neq t_{k-1} \} + 4( t^{\mathrm{upp}})^2 v_k \right) + \frac{2L_\theta^2}{ L_{\nabla \theta} N_k} \\
    &\qquad = \frac{\alpha}{2 t^{\mathrm{upp}}} \mathbb{E} [ \| x^{k+1}-x^k \|^2] + \frac{2 t^{\mathrm{upp}} L_\theta^2}{\alpha N_k} + L_{\nabla \theta} D_X^2 \mathbb{P} \{ t_k\neq t_{k-1} \} + 2L_{\nabla \theta} ( t^{\mathrm{upp}})^2 v_k + \frac{2L_\theta^2}{ L_{\nabla \theta} N_k},
\end{align*}
which completes the proof.
\end{proof}

\subsection{Convergence analysis}

We define residuals $G_{t}(x)$, $G^{\eta}_{t}(x)$, $\bar{G}^{\eta}_{t}(x)$, and $\bar{G}^{\eta}_{t, \epsilon}(x)$ as
\begin{align*}
    G_{t}(x) &\triangleq t^{-1}(x - \Pi_{X}[x - t \nabla \theta_{c}(x)]), \quad G^{\eta}_{t}(x) \triangleq t^{-1}(x - \Pi_{X}[x - t \nabla \theta^{\eta}_{c}(x)]), \\
    \bar{G}^{\eta, r}_{t}(x) &\triangleq t^{-1}(x - \Pi_{X}[x - t \bar{g}^{\eta, r}_{c}(x)]), \quad \bar{G}^{\eta, r}_{t, \epsilon}(x) \triangleq t^{-1}(x - \Pi_{X}[x - t \bar{g}^{\eta, r}_{c, \epsilon}(x)]),
\end{align*}
respectively. Now we provide a roadmap of the next steps.

~

\begin{mdframed}[
  linewidth=0.8pt,
  innerleftmargin=8pt,
  innerrightmargin=8pt,
  innertopmargin=6pt,
  innerbottommargin=6pt
]
\noindent\textbf{Roadmap.} By taking expectations in \eqref{sto-biased-Armijo} with $r = \tau_{k}$ (accepted batch), we obtain
\begin{equation}\label{exp-sto-biased-Armijo}
    \underbrace{\mathbb{E}\left[\bar{\theta}^{\eta_k,\tau_k}_{c,\epsilon_k}(x^{k+1})-\bar{\theta}^{\eta_k,\tau_k}_{c,\epsilon_k}(x^k)\right]}_{\textrm{Term 1}} \leq -\alpha \underbrace{\mathbb{E}\left[t_k\left\|\bar{G}^{\eta_k,\tau_k}_{t_k,\epsilon_k}(x^k)\right\|^2\right]}_{\textrm{Term 2}} + e^k.
\end{equation}
Our goal is to relate Terms $1$ and $2$ to $\mathbb{E}\left[\theta_{c}(x^{k+1})-\theta_{c}(x^k)\right]$ and $\mathbb{E} \left[ \left\| G_{t_k}(x^k) \right\|^2 \right]$, respectively:
\begin{itemize}
    \item $\textrm{Term 1} \xrightarrow{\text{Proposition \ref{slam-convergence-key-proposition-2}}} \mathbb{E}\left[\theta^{\eta_k}_{c}(x^{k+1})-\theta^{\eta_k}_{c}(x^k)\right] \xrightarrow{\text{Lemma \ref{RS-property-grad}}} \mathbb{E}\left[\theta_{c}(x^{k+1})-\theta_{c}(x^k)\right]$; and
    \item $\textrm{Term 2} \xrightarrow{\text{Proposition \ref{slam-convergence-key-proposition-3}}} \mathbb{E} \left[ \left\| G_{t_k}^{\eta_k}(x^k) \right\|^2 \right] \xrightarrow{\text{Lemma \ref{RS-property-grad}}} \mathbb{E} \left[ \left\| G_{t_k}(x^k) \right\|^2 \right]$.
\end{itemize}
\end{mdframed}

~

Let $\ell^{r}_k$ denote the total number of backtracking required when the batch $r$ is employed at iteration $k$. To facilitate our analysis in Lemma \ref{slam-convergence-key-proposition-1}, we denote the point generated by the $\ell$-th $(\ell = 0, \cdots, \ell^{r}_k)$ backtracking by $z^{r}_{k, \ell}$. In particular, $z^{r}_{k, 0} \triangleq x^k$, $z^{r}_{k, 1}$ and $z^{r}_{k, 2}$ are
\begin{align*}
    z^{r}_{k, 1} &\triangleq \Pi_X \left[ x^k-t_k^{\mathrm{init}} \, \bar{g}_{c,\epsilon_k}^{\eta_k,r}(x^k) \right], ~ \textrm{and} \\
    z^{r}_{k, 2} &\triangleq \Pi_X \left[ x^k-\max\{\beta t_k^{\mathrm{init}},t^{\mathrm{low}}\} \, \bar{g}_{c,\epsilon_k}^{\eta_k,r}(x^k) \right],
\end{align*}
respectively. By definition, it is easy to see that $\ell^{r}_k \leq Q$ where $Q$ is given in \eqref{Q-definition}.

\begin{lemma}\label{slam-convergence-key-proposition-1}\em
    Suppose Assumption $\mathrm{B}$ holds. Then for any $k \geq 0$, we have
    \begin{equation*}
        \mathbb{E} \left[ \left| \bar{\theta}_c^{\eta_k,\tau_k}(x^{k+1})-\bar{\theta}_{c,\epsilon_k}^{\eta_k,\tau_k}(x^{k+1}) \right| + \left| \bar{\theta}_c^{\eta_k,\tau_k}(x^k)-\bar{\theta}_{c,\epsilon_k}^{\eta_k,\tau_k}(x^k) \right| \,\middle\vert\, \mathcal{F}_k\right] \leq 2(Q+1) L^{y}_{\theta}\sqrt{\epsilon_k},
    \end{equation*}
    where $Q$ is defined in \eqref{Q-definition}.
\end{lemma}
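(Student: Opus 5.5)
The approach is to prove a per-batch bound conditional on $\mathcal{H}^{r}_{k}$ and then transfer it to the accepted batch $\tau_k$ with Lemma \ref{general-lemma}; the factor $2$ in the statement is exactly the factor that lemma introduces. For a fixed batch $r$ at iteration $k$, I would define
\begin{equation*}
W^{r}_{k} \triangleq \sum_{\ell=1}^{Q}\left|\bar{\theta}_c^{\eta_k,r}(z^{r}_{k,\ell})-\bar{\theta}_{c,\epsilon_k}^{\eta_k,r}(z^{r}_{k,\ell})\right| + \left|\bar{\theta}_c^{\eta_k,r}(x^k)-\bar{\theta}_{c,\epsilon_k}^{\eta_k,r}(x^k)\right|,
\end{equation*}
where $z^{r}_{k,\ell}$ ranges over all candidate trial points the backtracking could visit. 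If $\ell^{r}_k<Q$, I pad the list with the remaining points of the deterministic schedule $t_k^{\mathrm{init}},\max\{\beta t_k^{\mathrm{init}},t^{\mathrm{low}}\},\dots$, so that the sum always has exactly $Q$ terms, independent of where the linesearch stops. Since $\ell^{r}_k\le Q$ and all terms are nonnegative, the accepted point $x^{k+1}=z^{\tau_k}_{k,\ell^{\tau_k}_k}$ is one of these candidates. Hence the quantity inside the expectation in the statement is bounded by $W^{\tau_k}_k$, and it suffices to show $\mathbb{E}[W^{r}_k\mid\mathcal{H}^{r}_k]\le (Q+1)L^y_\theta\sqrt{\epsilon_k}$.

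For a single term, fix a point $z$ (either $x^k$ or some $z^{r}_{k,\ell}$). By (B3),
\begin{equation*}
\left|\bar{\theta}_c^{\eta_k,r}(z)-\bar{\theta}_{c,\epsilon_k}^{\eta_k,r}(z)\right|\le \frac{L^y_\theta}{N_k}\sum_{i=1}^{N_k}\left\|y_c^{\epsilon_k}(z+\eta_k u^{i,k,r})-y_c(z+\eta_k u^{i,k,r})\right\|.
\end{equation*}
Each inexact solution $y_c^{\epsilon_k}(\hat{x})$ is produced by a separate run of Algorithm \ref{SA-lower}, at the point $\hat{x}=z+\eta_k u^{i,k,r}\in X_{\bar\eta}$, using its own fresh samples $\xi^{r,t}$. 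Conditioning on $\hat{x}$, Lemma \ref{lower-level-error} together with Jensen's inequality gives $\mathbb{E}[\|y_c^{\epsilon_k}(\hat x)-y_c(\hat x)\|\mid \hat x]\le\sqrt{\epsilon_k}$. The tower property over $\mathcal{H}^r_k$ and the batch variables then yields a conditional bound of $L^y_\theta\sqrt{\epsilon_k}$ per term.

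The main obstacle is exactly this conditioning step for the trial points $z^{r}_{k,\ell}$. These points depend on the same batch $(u,v,\xi)$ through $\bar g^{\eta_k,r}_{c,\epsilon_k}(x^k)$, and through the inexact solutions used to form that estimator. So I must argue that the SA runs used to evaluate $\bar\theta_{c,\epsilon_k}$ at $z^{r}_{k,\ell}+\eta_k u^{i,k,r}$ are driven by fresh SA randomness that is independent given the query point. I would state this explicitly as the reading of (B6)/Lemma \ref{lower-level-error}: the bound \eqref{SA-inexactness} holds conditionally on any $\hat x^k$, however that point was generated. Once this is granted, summing the $Q+1$ terms gives $\mathbb{E}[W^r_k\mid\mathcal{H}^r_k]\le (Q+1)L^y_\theta\sqrt{\epsilon_k}$, a deterministic bound.

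Applying Lemma \ref{general-lemma} with $w_k=(Q+1)L^y_\theta\sqrt{\epsilon_k}$ gives $\mathbb{E}[W^{\tau_k}_k\mid\mathcal{F}_k]\le 2(Q+1)L^y_\theta\sqrt{\epsilon_k}$. Combined with the domination of the target quantity by $W^{\tau_k}_k$ from the first step, this proves the claim.
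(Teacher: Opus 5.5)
Your proposal is correct and follows essentially the same route as the paper: bound each term by (B3), Lemma \ref{lower-level-error} and Jensen, sum over $x^k$ and the at most $Q$ trial points to get $\mathbb{E}[W^r_k\mid\mathcal{H}^r_k]\le (Q+1)L^y_\theta\sqrt{\epsilon_k}$, then pass to the accepted batch with Lemma \ref{general-lemma}, which supplies the factor $2$. Two of your refinements tighten steps the paper passes over: padding the sum to a deterministic $Q+1$ terms avoids interchanging $\mathbb{E}[\cdot\mid\mathcal{H}^r_k]$ with a sum whose upper limit $\ell^r_k$ is not $\mathcal{H}^r_k$-measurable, and requiring fresh SA randomness justifies invoking \eqref{SA-inexactness} at the batch-dependent query points $z^r_{k,\ell}+\eta_k u^{i,k,r}$.
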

\begin{proof}
For batch $r$ and any $\ell=0, \ldots, \ell^{r}_k$, by assumption $\mathrm{(B3)}$, we have
\begin{equation*}
    \left| \bar{\theta}_c^{\eta_k,r}(z^{r}_{k, \ell})  - \bar{\theta}_{c,\epsilon_k}^{\eta_k,r}(z^{r}_{k, \ell}) \right| \leq \frac{L_\theta^y}{N_k}\sum_{i=1}^{N_k} \| y_c(z^{r}_{k,\ell}+\eta_k u^{i,k,r}) -y_c^{\epsilon_k,r}(z^{r}_{k,\ell}+\eta_k u^{i,k,r}) \|.
\end{equation*}
Consequently by Lemma \ref{lower-level-error}, we obtain
\begin{equation*}
    \mathbb{E} \left[ \left| \bar{\theta}_c^{\eta_k,r}(z^{r}_{k, \ell})  - \bar{\theta}_{c,\epsilon_k}^{\eta_k,r}(z^{r}_{k, \ell}) \right| \,\middle\vert\, \mathcal{H}^{r}_{k} \right] \leq \frac{L_\theta^y}{N_k} \sum_{i=1}^{N_k}\sqrt{\epsilon_k} =L_\theta^y\sqrt{\epsilon_k}, \quad \forall \ell = 0, \cdots, \ell^{r}_{k}.
\end{equation*}
We define $W^{r}_{k} \triangleq \sum_{\ell=0}^{\ell^{r}_{k}} \left| \bar{\theta}_c^{\eta_k,r}(z^{r}_{k, \ell})  - \bar{\theta}_{c,\epsilon_k}^{\eta_k,r}(z^{r}_{k, \ell}) \right|$. Then we have
\begin{equation*}
    \mathbb{E} \left[ W^{r}_{k} \,\middle\vert\, \mathcal{H}^{r}_{k} \right] = \sum_{\ell=0}^{\ell^{r}_{k}} \mathbb{E} \left[ \left| \bar{\theta}_c^{\eta_k,r}(z^{r}_{k, \ell})  - \bar{\theta}_{c,\epsilon_k}^{\eta_k,r}(z^{r}_{k, \ell}) \right| \,\middle\vert\, \mathcal{H}^{r}_{k} \right] \leq(\ell^{r}_{k}+1)L_\theta^y\sqrt{\epsilon_k} \leq(Q+1)L_\theta^y\sqrt{\epsilon_k}.
\end{equation*}
By Lemma \ref{general-lemma}, it yields that
\begin{equation*}
    \mathbb{E}[W^{\tau_k}_{k} \mid \mathcal{F}_k] \leq 2(Q+1)L_\theta^y\sqrt{\epsilon_k}.
\end{equation*}
By definition, we know that $x^{k+1} \triangleq z^{\tau_{k}}_{k, \ell^{\tau_{k}}_k}$ and $x^{k} = z^{\tau_{k}}_{k, 0}$ (batch $\tau_{k}-1$ is discarded and no update). Therefore, we may establish that
\begin{equation*}
    \begin{aligned}
        &\mathbb{E} \left[ \left| \bar{\theta}_c^{\eta_k,\tau_k}(x^{k+1})-\bar{\theta}_{c,\epsilon_k}^{\eta_k,\tau_k}(x^{k+1}) \right| + \left| \bar{\theta}_c^{\eta_k,\tau_k}(x^k)-\bar{\theta}_{c,\epsilon_k}^{\eta_k,\tau_k}(x^k) \right| \,\middle\vert\, \mathcal{F}_k \right] \\
        &= \mathbb{E} \left[ \left|\bar{\theta}_c^{\eta_k,\tau_k}(z^{\tau_{k}}_{k, \ell^{\tau_{k}}_k})-\bar{\theta}_{c,\epsilon_k}^{\eta_k,\tau_k}(z^{\tau_{k}}_{k, \ell^{\tau_{k}}_k}) \right| + \left| \bar{\theta}_c^{\eta_k,\tau_k}(z^{\tau_{k}}_{k, 0})-\bar{\theta}_{c,\epsilon_k}^{\eta_k,\tau_k}(z^{\tau_{k}}_{k, 0}) \right| \,\middle\vert\, \mathcal{F}_k\right] \\
        &\leq \mathbb{E}[W^{\tau_k}_{k} \mid \mathcal{F}_k] \leq 2(Q+1)L_\theta^y\sqrt{\epsilon_k},
    \end{aligned}
\end{equation*}
which completes the proof.
\end{proof}

The following three propositions lay the foundation for the establishment of the final main result of our \textbf{B-SLAM} scheme.

\begin{proposition}[Comparison of objective increments]\label{slam-convergence-key-proposition-2}\em
Suppose Assumption $\mathrm{B}$ holds. Define
\begin{equation*}
\begin{aligned}
    C_N &\triangleq \frac{2 t^{\mathrm{upp}} L_\theta^2}{\alpha} +\frac{2L_\theta^2+ L_\theta^2}{ L_{\nabla \theta}},\\
    E_{k} &\triangleq 2(Q+1) L^{y}_{\theta}\sqrt{\epsilon_k}+\frac{C_N}{N_k} + 8 L_{\nabla \theta}( t^{\mathrm{upp}})^2v_k+4 L_{\nabla \theta} D_X^2\mathbb{P} \{ t_k\neq t_{k-1} \}.
\end{aligned}
\end{equation*}
Then for any $k \geq 0$, we have
\begin{equation}\label{slam-convergence-key-proposition-2-bound}
    \mathbb{E} \left[ \theta_{c}^{\eta_k}(x^{k+1})-\theta_{c}^{\eta_k}(x^k) \right] \leq \mathbb{E} \left[ \bar{\theta}_{c,\epsilon_k}^{\eta_k,\tau_k}(x^{k+1})-\bar{\theta}_{c,\epsilon_k}^{\eta_k,\tau_k}(x^k) \right] +\frac{\alpha}{2 t^{\mathrm{upp}}} \mathbb{E} \left[ \| x^{k+1}-x^k \|^2 \right] + E_{k}.
\end{equation}
\end{proposition}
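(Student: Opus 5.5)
I will decompose the increment of the smoothed objective along the accepted batch. Write $h^{r}_{k}(x) \triangleq \theta^{\eta_k}_{c}(x) - \bar{\theta}^{\eta_k,r}_{c}(x)$ as in Lemma \ref{two-h-diff}. Then, pathwise,
\begin{align*}
\theta^{\eta_k}_{c}(x^{k+1}) - \theta^{\eta_k}_{c}(x^{k})
&= \left[\bar{\theta}^{\eta_k,\tau_k}_{c,\epsilon_k}(x^{k+1}) - \bar{\theta}^{\eta_k,\tau_k}_{c,\epsilon_k}(x^{k})\right]
+ \left[\bar{\theta}^{\eta_k,\tau_k}_{c}-\bar{\theta}^{\eta_k,\tau_k}_{c,\epsilon_k}\right](x^{k+1}) \\
&\quad - \left[\bar{\theta}^{\eta_k,\tau_k}_{c}-\bar{\theta}^{\eta_k,\tau_k}_{c,\epsilon_k}\right](x^{k})
+ h^{\tau_k}_{k}(x^{k+1}) - h^{\tau_k}_{k}(x^{k}).
\end{align*}
The two bracketed lower-level inexactness terms are bounded in conditional expectation by $2(Q+1)L^{y}_{\theta}\sqrt{\epsilon_k}$ via Lemma \ref{slam-convergence-key-proposition-1}. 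This produces the first summand of $E_k$.

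The remaining difficulty is the sampling error $h^{\tau_k}_{k}(x^{k+1}) - h^{\tau_k}_{k}(x^{k})$. Here $x^{k+1}$ depends on batch $\tau_k$, so there is no direct unbiasedness. I will follow the device of Section \ref{Sec-3} and insert the $\mathcal{F}_k$-measurable auxiliary point $\bar{x}^{k+1}$ from \eqref{bar-x-k+1}:
\[
h^{\tau_k}_{k}(x^{k+1}) - h^{\tau_k}_{k}(x^{k}) = \left[h^{\tau_k}_{k}(x^{k+1}) - h^{\tau_k}_{k}(\bar{x}^{k+1})\right] + \left[h^{\tau_k}_{k}(\bar{x}^{k+1}) - h^{\tau_k}_{k}(x^{k})\right].
\]
The second bracket is exactly what Lemma \ref{two-h-diff} controls. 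That lemma supplies the term $\frac{\alpha}{2t^{\mathrm{upp}}}\mathbb{E}\|x^{k+1}-x^k\|^2$ and the constants $\frac{2t^{\mathrm{upp}}L_\theta^2}{\alpha N_k}$ and $\frac{2L_\theta^2}{L_{\nabla\theta}N_k}$. It also contributes $L_{\nabla\theta}D_X^2\mathbb{P}\{t_k\neq t_{k-1}\}$ and $2L_{\nabla\theta}(t^{\mathrm{upp}})^2v_k$.

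For the first bracket I will not use unbiasedness. Instead I will use smoothness: both $\theta^{\eta_k}_{c}$ and $\bar{\theta}^{\eta_k,r}_{c}$ are $L_{\nabla\theta}$-smooth by (B2), so $h^{\tau_k}_{k}$ is $2L_{\nabla\theta}$-smooth. Applying the descent lemma at $\bar{x}^{k+1}$ gives
\[
|h(x^{k+1}) - h(\bar{x}^{k+1})| \le \|\nabla h(\bar{x}^{k+1})\|\,\|x^{k+1}-\bar{x}^{k+1}\| + L_{\nabla\theta}\|x^{k+1}-\bar{x}^{k+1}\|^2 .
\]
The gradient $\nabla h^{r}_{k}(\bar{x}^{k+1}) = \nabla\theta^{\eta_k}_c(\bar{x}^{k+1}) - \nabla\bar{\theta}^{\eta_k,r}_c(\bar{x}^{k+1})$ is evaluated at an $\mathcal{H}^r_k$-measurable point. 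By Lemma \ref{slam-suport-lemma-2} its conditional second moment is therefore at most $L_\theta^2/N_k$, and Lemma \ref{general-lemma} transfers this to $\tau_k$ at the cost of a factor $2$. Young's inequality then gives
\[
\|\nabla h\|\,\|x^{k+1}-\bar{x}^{k+1}\| \le \frac{\|\nabla h\|^2}{2L_{\nabla\theta}} + \frac{L_{\nabla\theta}}{2}\|x^{k+1}-\bar{x}^{k+1}\|^2 .
\]
In expectation the first piece is at most $L_\theta^2/(L_{\nabla\theta}N_k)$; adding the $\frac{2L_\theta^2}{L_{\nabla\theta}N_k}$ term from Lemma \ref{two-h-diff}, the terms in $1/N_k$ sum exactly to $C_N/N_k$. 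The quadratic terms total $\frac{3}{2}L_{\nabla\theta}\,\mathbb{E}\|x^{k+1}-\bar{x}^{k+1}\|^2$, which I bound by Lemma \ref{diff-xk+1-barxk+1}.

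The last step is bookkeeping. Lemma \ref{diff-xk+1-barxk+1} turns the $\frac32 L_{\nabla\theta}$ quadratic terms into $3L_{\nabla\theta}D_X^2\mathbb{P}\{t_k\neq t_{k-1}\}+6L_{\nabla\theta}(t^{\mathrm{upp}})^2v_k$. Adding the corresponding pieces from Lemma \ref{two-h-diff} yields exactly the coefficients $4L_{\nabla\theta}D_X^2$ and $8L_{\nabla\theta}(t^{\mathrm{upp}})^2$ in $E_k$. Finally I will take total expectations and assemble everything into \eqref{slam-convergence-key-proposition-2-bound}.

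The main obstacle I expect is the selection effect. The accepted batch index $\tau_k$ is random, and $x^{k+1}$ depends on the accepted batch. I must check carefully that each conditional bound is applied at an $\mathcal{H}^{r}_{k}$-measurable point, here $\bar{x}^{k+1}$ or $x^k$, before passing to $\tau_k$ through Lemma \ref{general-lemma}.
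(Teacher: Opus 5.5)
Your proposal is correct and matches the paper's proof essentially line for line. It uses the same split into the lower-level inexactness terms (Lemma \ref{slam-convergence-key-proposition-1}) plus $h^{\tau_k}_k(x^{k+1})-h^{\tau_k}_k(x^k)$, the same insertion of the $\mathcal{F}_k$-measurable point $\bar{x}^{k+1}$, the same smoothness-plus-Young bound with $\mathbb{E}\|\nabla h^{\tau_k}_k(\bar{x}^{k+1})\|^2\le 2L_\theta^2/N_k$, and Lemmas \ref{two-h-diff} and \ref{diff-xk+1-barxk+1}, and your constants reproduce $C_N$ and $E_k$ exactly. The only cosmetic difference is that the paper first merges the quadratic pieces into $2L_{\nabla\theta}\,\mathbb{E}\|x^{k+1}-\bar{x}^{k+1}\|^2$ before applying Lemma \ref{diff-xk+1-barxk+1}, whereas you apply it to the two pieces separately.
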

\begin{proof}
Consider again the function $h^{r}_{k}(x)\triangleq \theta_{c}^{\eta_k}(x)-\bar{\theta}_c^{\eta_k,r}(x)$ defined in Lemma \ref{two-h-diff}. Then we have
\begin{align}\label{slam-convergence-key-proposition-2-eqn1}
    & \mathbb{E} \left[ \theta_c^{\eta_k}(x^{k+1})-\theta_c^{\eta_k}(x^k) \right] - \mathbb{E} \left[ \bar{\theta}_{c,\epsilon_k}^{\eta_k,\tau_k}(x^{k+1}) - \bar{\theta}_{c,\epsilon_k}^{\eta_k,\tau_k}(x^k) \right] \notag \\
    & = \mathbb{E} \left[ h^{\tau_k}_{k}(x^{k+1})-h^{\tau_k}_{k}(x^k) \right] + \mathbb{E} \left[ \bar{\theta}_c^{\eta_k,\tau_k}(x^{k+1}) -\bar{\theta}_{c,\epsilon_k}^{\eta_k,\tau_k}(x^{k+1}) \right] - \mathbb{E} \left[ \bar{\theta}_c^{\eta_k,\tau_k}(x^k) -\bar{\theta}_{c,\epsilon_k}^{\eta_k,\tau_k}(x^k) \right] \notag \\
    & \leq \mathbb{E} \left[ h^{\tau_k}_{k}(x^{k+1})-h^{\tau_k}_{k}(x^k) \right] + 2(Q+1) L^{y}_{\theta}\sqrt{\epsilon_k},
\end{align}
where the last inequality follows from Lemma \ref{slam-convergence-key-proposition-1}. Since $\bar{x}^{k+1}$ is adapted to $\mathcal{F}_k$, Lemma~\ref{slam-suport-lemma-2} at $\bar{x}^{k+1}$ together with Lemma \eqref{general-lemma} give
\begin{equation}\label{slam-convergence-key-proposition-2-eqn6}
    \mathbb{E} \left[ \left\| \nabla h^{\tau_k}_{k}(\bar{x}^{k+1}) \right\|^2 \,\middle\vert\, \mathcal{F}_{k} \right] \leq\frac{2 L_\theta^2}{N_k} \implies \mathbb{E} \left[ \left\| \nabla h^{\tau_k}_{k}(\bar{x}^{k+1}) \right\|^2 \right] \leq \frac{2 L_\theta^2}{N_k}.
\end{equation}
The $L_{\nabla\theta}$-smoothness of $\theta_c^{\eta_k}$ and $\bar{\theta}_c^{\eta_k,\tau_k}$ from assumption $\mathrm{(B2)}$ gives
\begin{equation*}
    \theta_c^{\eta_k}(x^{k+1}) - \theta_c^{\eta_k}(\bar{x}^{k+1}) \leq \left\langle \nabla\theta_c^{\eta_k}(\bar{x}^{k+1}), x^{k+1}-\bar{x}^{k+1} \right\rangle +\frac{L_{\nabla\theta}}{2} \|x^{k+1}-\bar{x}^{k+1}\|^2
\end{equation*}
and
\begin{equation*}
    \bar{\theta}_c^{\eta_k,\tau_k}(x^{k+1}) - \bar{\theta}_c^{\eta_k,\tau_k}(\bar{x}^{k+1}) \geq \left\langle \nabla\bar{\theta}_c^{\eta_k,\tau_k}(\bar{x}^{k+1}), x^{k+1}-\bar{x}^{k+1} \right\rangle - \frac{L_{\nabla\theta}}{2} \|x^{k+1}-\bar{x}^{k+1}\|^2.
\end{equation*}
Recalling that $h_k^{\tau_k} \triangleq \theta_c^{\eta_k} -\bar{\theta}_c^{\eta_k,\tau_k}$, subtracting the second inequality from the first yields
\begin{align}\label{slam-convergence-key-proposition-2-eqn7}
    h^{\tau_k}_{k}(x^{k+1})-h^{\tau_k}_{k}(\bar{x}^{k+1}) &\leq \left\langle \nabla h^{\tau_k}_{k}(\bar{x}^{k+1}), x^{k+1}-\bar{x}^{k+1} \right\rangle + L_{\nabla \theta} \| x^{k+1}-\bar{x}^{k+1} \|^2 \notag \\
    &\leq\frac1{2 L_{\nabla \theta}}\left\| \nabla h^{\tau_k}_{k}(\bar{x}^{k+1}) \right\|^2 + \frac{3 L_{\nabla \theta}}{2} \| x^{k+1}-\bar{x}^{k+1} \|^2.
\end{align}
From \eqref{proposition-1.2-eqn1}, \eqref{proposition-1.2-eqn2}, \eqref{slam-convergence-key-proposition-2-eqn6}, and \eqref{slam-convergence-key-proposition-2-eqn7}, we may establish
\begin{align}\label{slam-convergence-key-proposition-2-eqn8}
    &\mathbb{E} \left[ h^{\tau_k}_{k}(x^{k+1})-h^{\tau_k}_{k}(x^k) \right] \leq \frac{\alpha}{2 t^{\mathrm{upp}}} \mathbb{E} \left[ \| x^{k+1}-x^k \|^2 \right] + \frac{C_N}{N_k}+2 L_{\nabla \theta} \mathbb{E} \left[ \| x^{k+1}-\bar{x}^{k+1} \|^2 \right] \notag \\
    & \qquad \leq\frac{\alpha}{2 t^{\mathrm{upp}}} \mathbb{E} \left[ \| x^{k+1}-x^k \|^2 \right] + \frac{C_N}{N_k}+8 L_{\nabla \theta}( t^{\mathrm{upp}})^2v_k +4 L_{\nabla \theta} D_X^2\mathbb{P} \{ t_k\neq t_{k-1} \}.
\end{align}
By plugging \eqref{slam-convergence-key-proposition-2-eqn8} into \eqref{slam-convergence-key-proposition-2-eqn1}, we prove the claim.
\end{proof}

\begin{proposition}[Comparison of residuals]\label{slam-convergence-key-proposition-3}\em
    Suppose Assumption $\mathrm{B}$ holds. Then for any $k \geq 0$, we have
    \begin{equation}\label{residual-bound}
        -\mathbb{E} \left[\left\| \bar{G}_{t_k,\epsilon_k}^{\eta_k,\tau_k}(x^k) \right\|^2 \right] \leq -\frac{1}{2} \mathbb{E} \left[ \left\| G_{t_k}^{\eta_k}(x^k) \right\|^2 \right] + 2v_k,
    \end{equation}
    where $v_{k}$ is defined in \eqref{vk-def}.
\end{proposition}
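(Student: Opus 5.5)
The approach is to compare the two residuals pointwise via nonexpansiveness of the projection, and then take expectations using the corollary bound \eqref{SLAM-accepted-gradient-errors} on the accepted batch. Both residuals are evaluated at $x^k$ with the same stepsize $t_k$. By definition,
\[
G_{t_k}^{\eta_k}(x^k)-\bar{G}_{t_k,\epsilon_k}^{\eta_k,\tau_k}(x^k) = t_k^{-1}\left( \Pi_X[x^k-t_k\bar{g}^{\eta_k,\tau_k}_{c,\epsilon_k}(x^k)] - \Pi_X[x^k-t_k\nabla\theta_c^{\eta_k}(x^k)] \right).
\]
Nonexpansiveness of $\Pi_X$ then gives
\[
\left\| G_{t_k}^{\eta_k}(x^k)-\bar{G}_{t_k,\epsilon_k}^{\eta_k,\tau_k}(x^k) \right\| \leq \|\nabla\theta_c^{\eta_k}(x^k)-\bar{g}^{\eta_k,\tau_k}_{c,\epsilon_k}(x^k)\| = \|\Delta^{k,\tau_k}\|,
\]
with $\Delta^{k,\tau_k}$ as in \eqref{delta-Delta}. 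The factor $t_k$ cancels exactly, so the random, batch-dependent stepsize causes no difficulty at this stage. This is a deterministic, samplewise inequality.

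Next, I apply $\|a\|^2\le 2\|b\|^2+2\|a-b\|^2$ with $a=G_{t_k}^{\eta_k}(x^k)$ and $b=\bar{G}_{t_k,\epsilon_k}^{\eta_k,\tau_k}(x^k)$. This yields
\[
\tfrac12\|G_{t_k}^{\eta_k}(x^k)\|^2 \le \|\bar{G}_{t_k,\epsilon_k}^{\eta_k,\tau_k}(x^k)\|^2 + \|\Delta^{k,\tau_k}\|^2,
\]
or equivalently
\[
-\|\bar{G}_{t_k,\epsilon_k}^{\eta_k,\tau_k}(x^k)\|^2 \le -\tfrac12\|G_{t_k}^{\eta_k}(x^k)\|^2 + \|\Delta^{k,\tau_k}\|^2.
\]

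Finally, I take the conditional expectation given $\mathcal{F}_k$ and then the unconditional expectation. The only stochastic term needing control is $\mathbb{E}[\|\Delta^{k,\tau_k}\|^2\mid\mathcal{F}_k]$. This quantity is evaluated at the \emph{accepted} batch $\tau_k$, which is selected in a way that depends on the batches. The corollary following Lemma \ref{general-lemma}, which combines Lemma \ref{slam-suport-lemma-3} with Lemma \ref{general-lemma}, bounds it by $2v_k$. Since $v_k$ is deterministic, taking full expectations gives \eqref{residual-bound}.

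The main point needing care is this selection effect: applying \eqref{vk-def} naively to batch $\tau_k$ would be wrong. It is handled by invoking \eqref{SLAM-accepted-gradient-errors} rather than Lemma \ref{slam-suport-lemma-3} directly; the factor $2$ in front of $v_k$ comes from $\mathbb{E}[\tau_k\mid\mathcal{F}_k]\le 2$.
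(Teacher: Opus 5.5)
Your proposal is correct and follows the paper's own route: the paper bounds $\|G^{\eta_k}_{t_k}(x^k)-\bar{G}^{\eta_k,\tau_k}_{t_k,\epsilon_k}(x^k)\|$ by $\|\Delta^{k,\tau_k}\|$ by citing a lemma of Wang et al.\ (equivalent to your nonexpansiveness argument), applies $\|a\|^2\le 2\|a-b\|^2+2\|b\|^2$, and uses the accepted-batch bound \eqref{SLAM-accepted-gradient-errors} to get $4v_k$ before rearranging. Your point that the selection of $\tau_k$ is the source of the factor $2$ (via Lemma \ref{general-lemma}) is exactly what the paper relies on.
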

\begin{proof}
    Recall that $\Delta^{k, r} \triangleq \nabla\theta^{\eta_k}_{c}(x^k)-\bar{g}^{\eta_k,r}_{c,\epsilon_k}(x^k)$ given in \eqref{delta-Delta}. By invoking \cite[Lemma 5-(ii)]{wang-shanbhag-xie-2025}, we have
    \begin{align*}
        \mathbb{E} \left[ \left\| G_{t_k}^{\eta_k}(x^k) \right\|^2 \right] &\leq 2 \mathbb{E} \left[ \left\| G_{t_k}^{\eta_k}(x^k) - \bar{G}_{t_k,\epsilon_k}^{\eta_k,\tau_k}(x^k) \right\|^2 \right] + 2 \mathbb{E} \left[ \left\| \bar{G}_{t_k,\epsilon_k}^{\eta_k,\tau_k}(x^k) \right\|^2 \right] \\
        &\leq 2 \mathbb{E} \left[ \left\| \Delta^{k, \tau_{k}} \right\|^2 \right] + 2 \mathbb{E} \left[ \left\| \bar{G}_{t_k,\epsilon_k}^{\eta_k,\tau_k}(x^k) \right\|^2 \right] \overset{\eqref{SLAM-accepted-gradient-errors}}{\leq} 4v_k + 2 \mathbb{E} \left[ \left\| \bar{G}_{t_k,\epsilon_k}^{\eta_k,\tau_k}(x^k) \right\|^2 \right],
    \end{align*}
    where $v_{k}$ is defined in \eqref{vk-def}. By rearranging the term, we may obtain the desired result.
\end{proof}

\begin{proposition}\label{slam-convergence-key-proposition-4}\em
Suppose Assumption $\mathrm{B}$ holds. Then for any $k \geq 0$, we have
\begin{align}\label{slam-convergence-key-proposition-4-main}
    &\frac{\alpha t^{\mathrm{low}}}{8} \mathbb{E} \left[ \left\| G_{ t^{\mathrm{upp}}}(x^k) \right\|^2 \right] \leq \mathbb{E} \left[ \theta_{c}(x^k) \right] - \mathbb{E} \left[ \theta_{c}(x^{k+1}) \right] + 2L_\theta\eta_k+\frac{\alpha t^{\mathrm{upp}} L_{\nabla \theta}^2n^2}{4}\eta_k^2 + 2 B_k \notag \\
    & \qquad + 2(Q+1) L^{y}_{\theta}\sqrt{\epsilon_k} + \frac{C_N}{N_k}+(8 L_{\nabla \theta}( t^{\mathrm{upp}})^2+\alpha t^{\mathrm{upp}}) v_k + 4 L_{\nabla \theta} D_X^2\mathbb{P} \{ t_k\neq t_{k-1} \},
\end{align}
where $B_k$ is defined in \eqref{Bk-def}, $Q$ is defined in \eqref{Q-definition}, and $C_N$ is defined in Proposition \ref{slam-convergence-key-proposition-2}.
\end{proposition}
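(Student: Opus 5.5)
Following the Roadmap, I would start from the expected accepted Armijo inequality \eqref{exp-sto-biased-Armijo}. Since $\|x^{k+1}-x^k\| = t_k\|\bar{G}^{\eta_k,\tau_k}_{t_k,\epsilon_k}(x^k)\|$, its right-hand side can be written as $-\frac{\alpha}{t_k}\|x^{k+1}-x^k\|^2 + e^k$. Term 1 is bounded from below using Proposition \ref{slam-convergence-key-proposition-2}, which gives
\begin{equation*}
\mathbb{E}[\theta^{\eta_k}_c(x^{k+1})-\theta^{\eta_k}_c(x^k)] \leq -\alpha\,\mathbb{E}\big[t_k\|\bar{G}^{\eta_k,\tau_k}_{t_k,\epsilon_k}(x^k)\|^2\big] + \tfrac{\alpha}{2t^{\mathrm{upp}}}\mathbb{E}\|x^{k+1}-x^k\|^2 + e^k + E_k .
\end{equation*}
The extra term is $\frac{\alpha}{2t^{\mathrm{upp}}}\|x^{k+1}-x^k\|^2 = \frac{\alpha t_k^2}{2t^{\mathrm{upp}}}\|\bar G\|^2 \le \frac{\alpha t_k}{2}\|\bar G\|^2$ because $t_k\le t^{\mathrm{upp}}$ almost surely (Lemma \ref{SLAM-stepsize-lemma}). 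It therefore absorbs half of the descent term, leaving $-\frac{\alpha}{2}\mathbb{E}[t_k\|\bar G\|^2]$. Using $t_k\ge t^{\mathrm{low}}$, this is at most $-\frac{\alpha t^{\mathrm{low}}}{2}\mathbb{E}\|\bar G\|^2$. Proposition \ref{slam-convergence-key-proposition-3} then bounds it by $-\frac{\alpha t^{\mathrm{low}}}{4}\mathbb{E}\|G^{\eta_k}_{t_k}(x^k)\|^2 + \alpha t^{\mathrm{low}} v_k$, and the error is at most $\alpha t^{\mathrm{upp}}v_k$. This yields the $\alpha t^{\mathrm{upp}}v_k$ contribution in \eqref{slam-convergence-key-proposition-4-main}. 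The term $e^k=2B_k$ and the pieces of $E_k$ (namely $2(Q+1)L^y_\theta\sqrt{\epsilon_k}$, $C_N/N_k$, $8L_{\nabla\theta}(t^{\mathrm{upp}})^2v_k$ and $4L_{\nabla\theta}D_X^2\mathbb{P}\{t_k\ne t_{k-1}\}$) carry over unchanged.

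Next I remove the smoothing. By Lemma \ref{RS-property-grad}(iii) with $L_0=L_\theta$, we have $\theta^{\eta_k}_c(x^{k+1})-\theta^{\eta_k}_c(x^k)\ge \theta_c(x^{k+1})-\theta_c(x^k)-2L_\theta\eta_k$, which produces the $2L_\theta\eta_k$ term. For the residual, I first pass from $t_k$ to $t^{\mathrm{upp}}$ by monotonicity of the residual in $t$ \cite[Lemma 5]{wang-shanbhag-xie-2025}: $\|G^{\eta_k}_{t^{\mathrm{upp}}}(x^k)\|\le\|G^{\eta_k}_{t_k}(x^k)\|$. I then compare $G^{\eta_k}_{t^{\mathrm{upp}}}$ with $G_{t^{\mathrm{upp}}}$. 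Nonexpansiveness of the projection gives $\|G_{t}(x)-G^{\eta}_t(x)\|\le\|\nabla\theta_c(x)-\nabla\theta^\eta_c(x)\|\le \eta L_{\nabla\theta}n$ by Lemma \ref{RS-property-grad}(v). Hence $-\|G^{\eta_k}_{t^{\mathrm{upp}}}\|^2\le -\frac12\|G_{t^{\mathrm{upp}}}\|^2+L_{\nabla\theta}^2n^2\eta_k^2$, and after multiplying by $\frac{\alpha t^{\mathrm{low}}}{4}$ we obtain $-\frac{\alpha t^{\mathrm{low}}}{8}\mathbb{E}\|G_{t^{\mathrm{upp}}}(x^k)\|^2$ plus an error of at most $\frac{\alpha t^{\mathrm{upp}}L_{\nabla\theta}^2n^2}{4}\eta_k^2$, matching the stated term. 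Rearranging gives \eqref{slam-convergence-key-proposition-4-main}.

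The main obstacle is the ordering of expectations and random stepsizes: $t_k$ is random and correlated with $\bar G$. The plan handles this by applying only almost-sure bounds $t^{\mathrm{low}}\le t_k\le t^{\mathrm{upp}}$ inside the expectation before comparing residuals. A second point to check is that Lemma \ref{RS-property-grad}(v) applies at $x^k\in X$; this requires $L_{\nabla\theta}$-smoothness of $\theta_c$ on $X_{\eta_k}$, which I would take from (B2) after averaging over $\xi$.
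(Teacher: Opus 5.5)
Your proposal is correct and follows the paper's proof essentially step for step: the accepted Armijo inequality, Proposition~\ref{slam-convergence-key-proposition-2}, absorbing $\frac{\alpha}{2t^{\mathrm{upp}}}\|x^{k+1}-x^k\|^2$ via $t_k\le t^{\mathrm{upp}}$, Proposition~\ref{slam-convergence-key-proposition-3}, and Lemma~\ref{RS-property-grad}(iii),(v) together with monotonicity of the residual in $t$. The only differences are harmless reorderings. You replace $t_k$ by $t^{\mathrm{low}}$ before invoking Proposition~\ref{slam-convergence-key-proposition-3}, so you use it exactly as stated rather than in the $t_k$-weighted (pointwise) form the paper implicitly relies on, and you apply monotonicity in $t$ to the smoothed residual before removing the smoothing rather than after.
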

\begin{proof}
    Since the final batch $\tau_{k}$ satisfies \eqref{sto-biased-Armijo} with $e^{k} = 2B_{k}$ at the $k$-th iteration, we have
    \begin{equation}\label{slam-convergence-key-proposition-4-eqn1}
        \bar{\theta}_{c,\epsilon_k}^{\eta_k,\tau_k}(x^{k+1}) - \bar{\theta}_{c,\epsilon_k}^{\eta_k,\tau_k}(x^k) \leq - \frac{\alpha}{t_k}\|x^{k+1}-x^k\|^2 + 2 B_k = - \alpha t_k \left\|\bar{G}_{t_k,\epsilon_k}^{\eta_k,\tau_k}(x^k)\right\|^2 + 2 B_k,
    \end{equation}
    where the last equality is due to the fact that $x^{k+1} - x^k = -t_k\bar{G}_{t_k,\epsilon_k}^{\eta_k, \tau_k}(x^k)$. By Propositions \ref{slam-convergence-key-proposition-2} and \ref{slam-convergence-key-proposition-3}, we have that
    \begin{align}\label{slam-convergence-key-proposition-4-eqn2}
        \mathbb{E}[\theta_{c}^{\eta_k}(x^{k+1})-\theta_{c}^{\eta_k}(x^k)] &\overset{\eqref{slam-convergence-key-proposition-2-bound}}{\leq} \mathbb{E}[\bar{\theta}_{c,\epsilon_k}^{\eta_k,\tau_k}(x^{k+1})-\bar{\theta}_{c,\epsilon_k}^{\eta_k,\tau_k}(x^k)] +\frac{\alpha}{2 t^{\mathrm{upp}}} \mathbb{E}[\left\| x^{k+1}-x^k \right\|^2] + E_{k} \notag \\
        &\overset{\eqref{slam-convergence-key-proposition-4-eqn1}}{\leq} - \alpha\mathbb{E} \left[ t_k \left\|\bar{G}_{t_k,\epsilon_k}^{\eta_k,\tau_k}(x^k)\right\|^2 \right] + \frac{\alpha}{2t^{\mathrm{upp}}} \mathbb{E} \left[ t_k^2 \left\|\bar{G}_{t_k,\epsilon_k}^{\eta_k,\tau_k}(x^k)\right\|^2 \right] + 2 B_k + E_k \notag \\
        &\leq -\frac{\alpha}{2}\mathbb{E} \left[ t_k \left\|\bar{G}_{t_k,\epsilon_k}^{\eta_k,\tau_k}(x^k)\right\|^2  \right] + 2 B_k + E_k \qquad (t_k^2/t^{\mathrm{upp}} \leq t_k) \notag \\
        &\overset{\eqref{residual-bound}}{\leq} -\frac{\alpha}{4}\mathbb{E} \left[ t_k\left\|G_{t_k}^{\eta_k}(x^k)\right\|^2 \right] + \alpha t^{\mathrm{upp}}v_k + 2B_k + E_k,
    \end{align}
    where $E_{k}$ is defined in Proposition \ref{slam-convergence-key-proposition-2}. We claim that
    \begin{equation}\label{slam-convergence-key-proposition-4-eqn3}
        -\frac{\alpha t_k}{4} \left\|G_{t_k}^{\eta_k}(x^k) \right\|^2 \leq -\frac{\alpha t^{\mathrm{low}}}{8} \left\| G_{t_k}(x^k) \right\|^2 + \frac{\alpha t^{\mathrm{upp}}n^2L_{\nabla\theta}^2}{4} \eta_k^2.
    \end{equation}
    Indeed, by the nonexpansiveness of the projection, we have $\|G_{t_k}(x^k)-G_{t_k}^{\eta_k}(x^k)\| \leq \|\nabla\theta_c(x^k)-\nabla\theta_c^{\eta_k}(x^k)\|$, implying that
    \begin{align*}
        \|G_{t_k}(x^k)\|^2 \leq 2\|G_{t_k}^{\eta_k}(x^k)\|^2 + 2\|\nabla\theta_c(x^k)-\nabla\theta_c^{\eta_k}(x^k)\|^2 \leq 2\|G_{t_k}^{\eta_k}(x^k)\|^2 + 2n^2L_{\nabla\theta}^2\eta_k^2,
    \end{align*}
    where the last inequality is from Lemma \ref{RS-property-grad}-(v). By rearranging the terms and multiplying by $-\alpha t_k/4$, together with $t^{\mathrm{low}} \leq t_{k} \leq t^{\mathrm{upp}}$, we may establish \eqref{slam-convergence-key-proposition-4-eqn3}. By combining \eqref{slam-convergence-key-proposition-4-eqn2} with \eqref{slam-convergence-key-proposition-4-eqn3}, it leads to
    \begin{equation}\label{slam-convergence-key-proposition-4-eqn4}
        \mathbb{E} \left[ \theta_{c}^{\eta_k}(x^{k+1})-\theta_{c}^{\eta_k}(x^k) \right] \leq -\frac{\alpha t^{\mathrm{low}}}{8} \mathbb{E} \left[ \left\| G_{t_k}(x^k) \right\|^2 \right] + \frac{\alpha t^{\mathrm{upp}}n^2L_{\nabla\theta}^2}{4}\eta_k^2 + \alpha t^{\mathrm{upp}}v_k + 2B_k + E_k.
    \end{equation}
    Similar to the proof of \cite[Lemma 5]{wang-shanbhag-xie-2025}, we may show that $\|G_{t^{\mathrm{upp}}}(x^k)\| \leq \|G_{t_k}(x^k)\|$. By Lemma \ref{RS-property-grad}-(iii), we have that
    \begin{equation}\label{slam-convergence-key-proposition-4-eqn5}
        \mathbb{E}[\theta_{c}^{\eta_k}(x^{k+1})-\theta_{c}^{\eta_k}(x^k)] \geq \mathbb{E}[\theta_{c}(x^{k+1})-\theta_{c}(x^k)] - 2L_{\theta}\eta_{k}.
    \end{equation}
    By combining \eqref{slam-convergence-key-proposition-4-eqn4} with \eqref{slam-convergence-key-proposition-4-eqn5}, together with the definition of $E_{k}$ in Proposition \ref{slam-convergence-key-proposition-2}, we complete the proof.
\end{proof}

We define the constants $C_{1}, \cdots, C_{8}$ as
\begin{align*}
    C_1 &\triangleq 2L_\theta, \qquad C_2 \triangleq \frac{\alpha t^{\mathrm{upp}} L_{\nabla \theta}^2n^2}{4}, \qquad C_3 \triangleq 2(Q+3) L^{y}_{\theta}, \\
    C_4 &\triangleq C_N+\frac{ L_\theta^2+16\sqrt{2\pi}L_\theta^2n}{c_0}+32\sqrt{2\pi}L_\theta^2n (8L_{\nabla \theta}( t^{\mathrm{upp}})^2+\alpha t^{\mathrm{upp}}), \\
    C_5 &\triangleq n^2( L^{y}_{\theta})^2(c_0^{-1} + 16 L_{\nabla \theta}( t^{\mathrm{upp}})^2 + 2 \alpha t^{\mathrm{upp}}), \qquad C_{6}\triangleq 4 L_{\nabla \theta} D_X^2.
\end{align*}
Substituting $B_k$ and $v_k$ in \eqref{slam-convergence-key-proposition-4-main} gives
\begin{align}\label{slam-convergence-key-proposition-5-pre}
    \frac{\alpha t^{\mathrm{low}}}{8}\mathbb{E}[\left\| G_{ t^{\mathrm{upp}}}(x^k) \right\|^2] &\leq \mathbb{E}[\theta_{c}(x^k)]-\mathbb{E}[\theta_{c}(x^{k+1})] + C_1 \eta_k + C_2 \eta_k^2 \notag \\
    &+ C_3\sqrt{\epsilon_k} + \frac{C_4}{N_k} + C_5\frac{\epsilon_k}{\eta_k^2} + C_{6}\mathbb{P} \{ t_k\neq t_{k-1} \}.
\end{align}

\begin{theorem}\label{slam-main-result}\em
    Suppose that Assumption $\mathrm{B}$ holds. Consider the sequence generated by \textbf{B-SLAM}. Suppose for a given $K$, we choose $\eta_{k} \triangleq 1/K$, $\epsilon_{k} \triangleq 1/K^{4}$, $N_{k} \triangleq \lceil c_{N} K \rceil$, and $p_{j} = \lceil c_{p} K \rceil$, where $c_{N}\geq 1$ and $c_{p}\in (0, 1)$. Let $\theta^{\inf}_{c} \triangleq \inf_{x\in X} \theta_{c}(x)$. Then the following hold. \\
    (i) We have that
    \begin{align*}
        \mathbb{E} \left[ \left\| G_{t^{\mathrm{upp}}}(x^{R_{K}}) \right\|^{2} \right] &\leq \frac{8}{\alpha t^{\mathrm{low}} K} \left[ \theta_{c}(x^{0}) - \theta^{\inf}_{c} + C_{1} + C_{1} + \frac{C_{4}}{c_{N}} + \frac{C_{2}+C_{3}+C_{5}}{K} + C_{6}Q \left\lceil \frac{1}{c_{p}} \right\rceil \right],
    \end{align*}
    where $Q$ is defined in \eqref{Q-definition}. \\
    (ii) The iteration complexity $K_{\epsilon}$ and the sample complexity $S_{\epsilon} = \sum_{k=0}^{K_{\epsilon}-1} N_{k}$ are $\mathcal{O}(\epsilon^{-2})$ and $\mathcal{O}(\epsilon^{-4})$, respectively.
\end{theorem}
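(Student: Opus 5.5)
The plan is to start from the per-iteration inequality \eqref{slam-convergence-key-proposition-5-pre}, which already isolates $\frac{\alpha t^{\mathrm{low}}}{8}\mathbb{E}[\|G_{t^{\mathrm{upp}}}(x^k)\|^2]$ against a telescoping difference and explicit error terms. I will sum it over $k=0,\dots,K-1$. The function-value terms telescope to $\theta_c(x^0)-\mathbb{E}[\theta_c(x^K)]\le \theta_c(x^0)-\theta_c^{\inf}$; this bound is finite because $\theta_c$ is continuous (Lemma \ref{lemma-regularized-gap-function}(ii)) on the compact set $X$ (B4). Dividing by $K\cdot\frac{\alpha t^{\mathrm{low}}}{8}$ and using that $R_K$ is uniform on $\{0,\dots,K-1\}$ turns the left-hand side into $\frac{\alpha t^{\mathrm{low}}}{8}\mathbb{E}[\|G_{t^{\mathrm{upp}}}(x^{R_K})\|^2]$.

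Next I substitute the parameter choices $\eta_k=1/K$, $\epsilon_k=1/K^4$ and $N_k=\lceil c_NK\rceil$ into each error term and bound each sum separately.
\begin{itemize}
\item $\sum_k C_1\eta_k=C_1$.
\item $\sum_k C_2\eta_k^2=C_2/K$.
\item $\sum_k C_3\sqrt{\epsilon_k}=C_3/K$.
\item $\sum_k C_4/N_k\le C_4/c_N$.
\item For the smoothing–inexactness coupling, $\epsilon_k/\eta_k^2=K^{-2}$, so $\sum_k C_5\epsilon_k/\eta_k^2=C_5/K$. This is why $\epsilon_k$ must decay like $\eta_k^4$.
\end{itemize}
Together these give the constants in (i), including the $(C_2+C_3+C_5)/K$ term and the leading $C_1$.

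The step needing care is the step-change term $C_6\sum_k\mathbb{P}\{t_k\neq t_{k-1}\}=C_6\mathbb{E}[T(K)]$. Here I invoke Lemma \ref{SLAM-bound-TK-lemma}, which gives the deterministic bound $T(K)\le (J(K)+1)Q$. With $p_j=\lceil c_pK\rceil$, each cycle has length at least $c_pK$, so $J(K)+1\le\lceil 1/c_p\rceil$, as in \cite[Lemma 7]{wang-shanbhag-xie-2025}. This yields $C_6Q\lceil 1/c_p\rceil$. Since that quantity is independent of $K$, after division by $K$ it contributes $O(1/K)$. The extra $C_1$ appearing in the statement can be absorbed as slack, using $c_N\ge1$ and $K\ge1$.

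Part (ii) then follows immediately. By Jensen, $\mathbb{E}[\|G_{t^{\mathrm{upp}}}(x^{R_K})\|]\le(\mathbb{E}[\|G\|^2])^{1/2}=O(K^{-1/2})$, so $K_\epsilon=O(\epsilon^{-2})$. The sample count is $\sum_k N_k\cdot\mathbb{E}[\tau_k]$, and $\mathbb{E}[\tau_k]\le2$ by Lemma \ref{SLAM-stepsize-lemma}, giving $S_\epsilon=O(K_\epsilon^2)=O(\epsilon^{-4})$.

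I expect the main obstacle to be bookkeeping rather than a conceptual step. The sample complexity must account for the random number of batches $\tau_k$ and possibly the inner SA iterations $T_k\sim K^4$; I would count only outer samples of $\xi$, consistent with $S_\epsilon=\sum N_k$.
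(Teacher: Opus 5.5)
Your proposal is correct and follows the paper's proof essentially step for step. Like the paper, you sum \eqref{slam-convergence-key-proposition-5-pre} over $k$, telescope against $\theta_c^{\inf}$, bound $\sum_k \mathbb{P}\{t_k\neq t_{k-1}\}=\mathbb{E}[T(K)]\le (J(K)+1)Q\le\lceil 1/c_p\rceil Q$, and substitute $\eta_k,\epsilon_k,N_k$. Your Jensen step and the factor $\mathbb{E}[\tau_k\mid\mathcal{F}_k]\le 2$ in the sample count in (ii) only make explicit what the paper leaves implicit, and the duplicated $C_1$ is harmless simply because $C_1\ge 0$, with no need for $c_N\ge 1$ or $K\ge 1$.
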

\begin{proof}
    \noindent (i) By summing \eqref{slam-convergence-key-proposition-5-pre} over $k = 0, \ldots, K-1$ and telescoping, we obtain
    \begin{align}\label{slam-main-result-eqn1}
        &\frac{\alpha t^{\mathrm{low}}}{8} \sum_{k=0}^{K-1} \mathbb{E} \left[ \left\| G_{t^{\mathrm{upp}}}(x^{R_{K}}) \right\|^{2} \right] \notag \\
        &\leq \theta_{c}(x^{0}) - \theta^{\inf}_{c} + \sum_{k=0}^{K-1} \left( C_{1} \eta_{k} + C_{2} \eta^{2}_{k} + C_{3} \sqrt{\epsilon_{k}} + \frac{C_{4}}{N_{k}} + C_{5}\frac{\epsilon_{k}}{\eta^{2}_{k}} \right) + C_{6} \sum_{k=0}^{K-1} \mathbb{P} \{ t_k\neq t_{k-1} \}.
    \end{align}
    By Lemma \ref{SLAM-bound-TK-lemma}, we have
    \begin{align}\label{slam-main-result-eqn2}
        \sum_{k=0}^{K-1} \mathbb{P}\{ t_{k}\neq t_{k-1} \} &= \sum_{k=0}^{K-1} \mathbb{E} [ \, \mathbf{1}_{\{ t_{k}\neq t_{k-1} \}} \, ] = \mathbb{E} \left[ \, \sum_{k=0}^{K-1} \mathbf{1}_{\{ t_{k}\neq t_{k-1} \}} \, \right] = \mathbb{E} [ \, T(K) \, ] \notag \\
        &\leq (J(K)+1) \, Q \leq \left\lceil \frac{1}{c_{p}} \right\rceil \, Q,
    \end{align}
    where the derivation of the bound $J(K) + 1 \leq \lceil 1/c_{p} \rceil$ can be found in \cite[Theorem 1]{wang-shanbhag-xie-2025}. By our choices of $\eta_{k}$, $\epsilon_{k}$, and $N_{k}$, combining \eqref{slam-main-result-eqn1} with \eqref{slam-main-result-eqn2}, the final result follows from $\mathbb{E} \left[ \left\| G_{t^{\mathrm{upp}}}(x^{R_{K}}) \right\|^{2} \right] = \frac{1}{K} \sum_{k=0}^{K-1} \mathbb{E} \left[ \left\| G_{t^{\mathrm{upp}}}(x^k) \right\|^{2} \right]$. That is, we may establish
    \begin{align*}
        \mathbb{E} \left[ \left\| G_{t^{\mathrm{upp}}}(x^{R_{K}}) \right\|^{2} \right] &\leq \frac{8}{\alpha t^{\mathrm{low}} K} \left[ \theta_{c}(x^{0}) - \theta^{\inf}_{c} + C_{1} + C_{1} + \frac{C_{4}}{c_{N}} + \frac{C_{2}+C_{3}+C_{5}}{K} + C_{6}Q \left\lceil \frac{1}{c_{p}} \right\rceil \right],
    \end{align*}
    which completes the proof.
    
    (ii) It follows from (i) that $\mathbb{E}[\|G_{t^{\mathrm{upp}}}(x^{R_{K}})\|^{2}] = \mathcal{O}(1/K)$. Therefore, the required iteration complexity is $K_{\epsilon} = \mathcal{O}(\epsilon^{-2})$ and the sample complexity is $S_{\epsilon} = \sum_{k=0}^{K_{\epsilon}-1} N_{k} = \mathcal{O}(\epsilon^{-4})$.
\end{proof}

\begin{remark}\em
    Under the constant-stepsize choice $t^{\mathrm{low}} = t^{\mathrm{upp}} = \gamma/L_{\nabla\theta}$, our iteration and expected outer sample complexities reduce to $\mathcal{O}(L_{\nabla\theta}\epsilon^{-2})$ and $\mathcal{O}(L_{\nabla\theta}^{2}\epsilon^{-4})$, respectively, with the remaining problem constants fixed. Our result matches the dependence on $\epsilon$ in Ghadimi et al. \cite{ghadimi-lan-zhang-2016}, but incurs an additional factor of $L_{\nabla\theta}$ due to our choice $N_k = \mathcal{O}(K)$. It is worth noting that their stochastic gradients and zeroth-order estimators are unbiased, whereas our estimators additionally incur bias from inexact lower-level solutions. $\hfill \Box$
\end{remark}

\section{Numerical Experiments}\label{Sec-5}

In this section, we apply our \textbf{SEG-LS} and \textbf{B-SLAM} schemes on three distinct examples.

\subsection{Stochastic fractional convex quadratic problems}\label{Sec-5.1}

In this subsection, we compare two \textbf{SEG} schemes with the stochastic forward-backward-forward (\textbf{SFBF}) method \cite{bot-mertikopoulos-staudigl-vuong-2021} on a stochastic fractional problem that corresponds to a pseudomonotone $\mathrm{SVI}\:(X, F)$. To this end, we consider
\begin{equation*}
    \min_{x\in X} \;\; f(x) \, \triangleq \, \mathbb{E}\left[\Tilde{g}(x, \bxi)\right]/h(x),
\end{equation*}
where $g(x) \triangleq \mathbb{E}[\Tilde{g}(x, \bxi)]$ and $h(x)$ are strictly positive convex quadratic and linear functions, respectively. In this example, we set $\Tilde{g}(x, \xi) \triangleq \frac{1}{2} x^{\top} Q(\xi) x + c(\xi)^{\top} x + q(\xi)$ and $h(x) = r^{\top}x + t$. The feasible region of this problem is a box of the form $X \triangleq \{ x\in \mathbb{R}^{n} \,\vert\, 0 < x_{\mathrm{min}} \leq x_{i} \leq x_{\mathrm{max}}, ~ \forall i\in [n] \}$.

Before we proceed, let us first examine the Lipschitz continuity constant $L$ of $F(x) = \nabla f(x)$ over $X$ in the following lemma. We may observe that a smaller $h_{\mathrm{low}}$ leads to a larger Lipschitz continuity constant, which can be achieved by choosing a smaller $x_{\min}$ since $h(x)$ is an affine function.

\begin{lemma}
    Let $L_g$, $L_h$, $L_{\nabla g}$, and $L_{\nabla h}$ denote the Lipschitz constants of $g$, $h$, $\nabla g$, and $\nabla h$, respectively. Suppose that there exist constants $h_{\mathrm{low}}$, $h_{\mathrm{upp}}$, $g_{\mathrm{low}}$, $g_{\mathrm{upp}}$, $M_{\nabla h}$, and $M_{\nabla g}$ such that $h_{\mathrm{low}} \leq h(x) \leq h_{\mathrm{upp}}$, $g_{\mathrm{low}} \leq g(x) \leq g_{\mathrm{upp}}$, $\|\nabla h(x)\| \leq M_{\nabla h}$, and $\|\nabla g(x)\| \leq M_{\nabla g}$ hold for all $x \in X$. Then we have that
    \begin{equation*}
        \| \nabla f(x) - \nabla f(y) \| \leq \left( \frac{L_{\nabla g}}{h_{\mathrm{low}}} + \frac{M_{\nabla g}L_{h} + g_{\mathrm{upp}}L_{\nabla h} + M_{\nabla h}L_{g}}{h^{2}_{\mathrm{low}}} + \frac{2g_{\mathrm{upp}}M_{\nabla h}L_{h}}{h^{3}_{\mathrm{low}}} \right) \| x - y \|.
    \end{equation*}
\end{lemma}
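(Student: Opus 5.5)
We prove the bound by writing $\nabla f$ through the quotient rule and splitting $\nabla f(x)-\nabla f(y)$ into differences, each controlled by one of the assumed Lipschitz or boundedness constants. Since $f = g/h$ with $h \geq h_{\mathrm{low}} > 0$ on $X$, we have
\begin{equation*}
    \nabla f(x) \, = \, \frac{\nabla g(x)}{h(x)} - \frac{g(x)\nabla h(x)}{h(x)^{2}}.
\end{equation*}
The plan is to bound the two pieces separately and add them via the triangle inequality. The first piece should yield the terms $L_{\nabla g}/h_{\mathrm{low}}$ and $M_{\nabla g}L_h/h^2_{\mathrm{low}}$. The second piece should yield the terms $(g_{\mathrm{upp}}L_{\nabla h}+M_{\nabla h}L_g)/h^2_{\mathrm{low}}$ and $2g_{\mathrm{upp}}M_{\nabla h}L_h/h^3_{\mathrm{low}}$.

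\textbf{First piece.} We add and subtract $\nabla g(y)/h(x)$:
\begin{equation*}
    \left\| \frac{\nabla g(x)}{h(x)} - \frac{\nabla g(y)}{h(y)} \right\| \leq \frac{\|\nabla g(x)-\nabla g(y)\|}{h(x)} + \|\nabla g(y)\| \frac{|h(y)-h(x)|}{h(x)h(y)}.
\end{equation*}
Using $h \geq h_{\mathrm{low}}$, $\|\nabla g\| \leq M_{\nabla g}$, and the Lipschitz constants $L_{\nabla g}$ and $L_h$, this is at most $\big(L_{\nabla g}/h_{\mathrm{low}} + M_{\nabla g}L_h/h^2_{\mathrm{low}}\big)\|x-y\|$.

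\textbf{Second piece.} We telescope $g(x)\nabla h(x)/h(x)^2 - g(y)\nabla h(y)/h(y)^2$ by changing one factor at a time, which gives three terms:
\begin{itemize}
    \item $g(x)\big(\nabla h(x)-\nabla h(y)\big)/h(x)^2$, bounded by $g_{\mathrm{upp}}L_{\nabla h}\|x-y\|/h^2_{\mathrm{low}}$;
    \item $\big(g(x)-g(y)\big)\nabla h(y)/h(x)^2$, bounded by $M_{\nabla h}L_g\|x-y\|/h^2_{\mathrm{low}}$;
    \item $g(y)\nabla h(y)\big(h(x)^{-2}-h(y)^{-2}\big)$, handled as follows.
\end{itemize}
For the third term we factor
\begin{equation*}
    h(x)^{-2}-h(y)^{-2} \, = \, \frac{\big(h(y)-h(x)\big)\big(h(y)+h(x)\big)}{h(x)^2h(y)^2},
\end{equation*}
whose absolute value is at most $|h(x)-h(y)|\left(\dfrac{1}{h(x)^2h(y)}+\dfrac{1}{h(x)h(y)^2}\right) \leq 2L_h\|x-y\|/h^3_{\mathrm{low}}$. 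Multiplying by $|g(y)|\,\|\nabla h(y)\| \leq g_{\mathrm{upp}}M_{\nabla h}$ gives the $2g_{\mathrm{upp}}M_{\nabla h}L_h/h^3_{\mathrm{low}}$ term.

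The only point needing care is the sign of $g$. We use $|g| \leq g_{\mathrm{upp}}$, which holds because $g$ is strictly positive in this setting; if that were not available, one would replace $g_{\mathrm{upp}}$ by $\max\{|g_{\mathrm{low}}|,|g_{\mathrm{upp}}|\}$. Everything else is routine.

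Summing all the contributions gives exactly the stated constant.
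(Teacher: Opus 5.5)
Your proof is correct and follows the paper's argument essentially verbatim: the paper uses the same quotient-rule expression for $\nabla f$ and the same five-term add-and-subtract decomposition (changing $\nabla g$, then $h$ in the first quotient; then $\nabla h$, $g$, and $h^{2}$ in the second), and bounds each term exactly as you do. Your explicit remark that $|g|\le g_{\mathrm{upp}}$ relies on the positivity of $g$, together with your use of $h_{\mathrm{low}}>0$, spells out assumptions that the paper relies on without stating them.
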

\begin{proof}
    Consider the gradient map $\nabla f$ where $h$ is a positive function. Then we have that
    \allowdisplaybreaks
    \begin{align*}
        &\| \nabla f(x) - \nabla f(y) \| \leq \left\| \frac{\nabla g(x)}{h(x)} - \frac{g(x)\nabla h(x)}{h^{2}(x)} - \frac{\nabla g(y)}{h(y)} + \frac{g(y)\nabla h(y)}{h^{2}(y)} \right\| \\
        &\leq \left\| \frac{\nabla g(x)}{h(x)} - \frac{\nabla g(y)}{h(x)} \right\| + \left\| \frac{\nabla g(y)}{h(x)} - \frac{\nabla g(y)}{h(y)} \right\| \\
        &+ \left\| \frac{g(x)\nabla h(x)}{h^{2}(x)} - \frac{g(x)\nabla h(y)}{h^{2}(x)} \right\| + \left\| \frac{g(x)\nabla h(y)}{h^{2}(x)} - \frac{g(y)\nabla h(y)}{h^{2}(x)} \right\| + \left\| \frac{g(y)\nabla h(y)}{h^{2}(x)} - \frac{g(y)\nabla h(y)}{h^{2}(y)} \right\| \\
        &\leq \left( \frac{L_{\nabla g}}{h_{\mathrm{low}}} + \frac{M_{\nabla g}L_{h} + g_{\mathrm{upp}}L_{\nabla h} + M_{\nabla h}L_{g}}{h^{2}_{\mathrm{low}}} + \frac{2g_{\mathrm{upp}}M_{\nabla h}L_{h}}{h^{3}_{\mathrm{low}}} \right) \| x - y \|,
    \end{align*}
    which completes the proof.
\end{proof}

(I) \emph{Problem parameters.} In this example, we consider $n = 4$, $Q(\xi) = M(\xi) + 50 \mathbf{I}_{n}$, where $M(\xi)\in \mathbb{R}^{n\times n}$ is a random diagonal matrix with i.i.d. diagonal entries drawn from $\mathcal{N} (0, 1)$, $c(\xi) \in \mathbb{R}^{n}$ is a random vector with i.i.d. entries drawn from $\mathrm{U}\:[0, 2]$, $q(\xi) \sim \mathrm{U}\:[-1, 1]$, $r = \mathbf{1}_{n}$, and $t = 0$. Suppose that $x_{\mathrm{max}} = 14$. Table \ref{SEG-pseudomonotone-table} shows that different values of $x_{\mathrm{min}}$ result in different values of $L$.

(II) \emph{Implementation.} In this experiment, we set $c_{N} = 0.003$ hence $N_{k} = \lceil c_{N} K \rceil = \lceil 0.003 K \rceil$. With regard to the backtracking linesearch procedure, we set the parameters $\alpha = 0.1$ and $\beta = 0.9$. We compare three SEG schemes under different stepsizes and cycles settings.

(1) \texttt{SEG-LS}: \textbf{SEG-LS} with parameters $t^{\mathrm{upp}} = 1$, $c_{p} = 0.001$ hence $\{ p_{j} \} = \lceil K/1000 \rceil$.

(2) \texttt{SEG-CONST}: SEG scheme with $\mathcal{O}(\tfrac{1}{L})$ constant stepsize. By \cite[Theorem 3.25]{iusem-jofre-oliveira-thompson-2017}, the stepsize should be chosen as $t_k \equiv t \in \left(0, \tfrac{1}{\sqrt{6}L}\right)$. Here, we set the stepsizes as $t_k \equiv \tfrac{1}{6L}$.

(3) \texttt{SFBF}: The \textbf{SFBF} scheme \cite{bot-mertikopoulos-staudigl-vuong-2021} produces a sequence $\{ (x^{k}, y^{k}) \}_{k=0}^{K}$ defined as
\begin{equation}
    \begin{aligned}
        y^{k} &\triangleq \Pi_{X} [ x^{k} - t_{k} \bar{F}_{k}(x^{k}, \xi^{k}) ], \\
        x^{k+1} &\triangleq y^{k} + t_{k} (  \bar{F}_{k}(x^{k}, \xi^{k}) -  \bar{F}_{k}(y^{k}, \zeta^{k}) ),
    \end{aligned} \tag{SFBF}
\end{equation}
where $\{ t_{k} \}_{k=0}^{K}$ is a positive sequence of stepsizes that is bounded by some reciprocal of the Lipschitz constant $L$ and here we set $t_k \equiv \tfrac{1}{6L}$. The main computational advantage of \textbf{SFBF} over \textbf{SEG} is that it only necessitates a single projection step at each iteration. The batch size sequence is required to satisfy $\sum_{k=0}^{\infty} \frac{1}{N_k} < \infty$; see \cite[Assumption 6]{bot-mertikopoulos-staudigl-vuong-2021}.

We choose the initialization as $x^{0} = 7.5\mathbf{e}_{4}$ and set the total iteration budget to $K = 2\texttt{e}4$. When $x_{\min} = 0.1$, the expected square residuals and the first $100$ stepsizes are shown in Figures \(\ref{SEG-psdmon-residuals}\) and \(\ref{SEG-psdmon-stepsizes}\), respectively, averaged over $20$ sample paths.

\begin{figure}
    \centering
    \begin{subfigure}{0.45\textwidth}
        \centering
        \includegraphics[width=\textwidth]{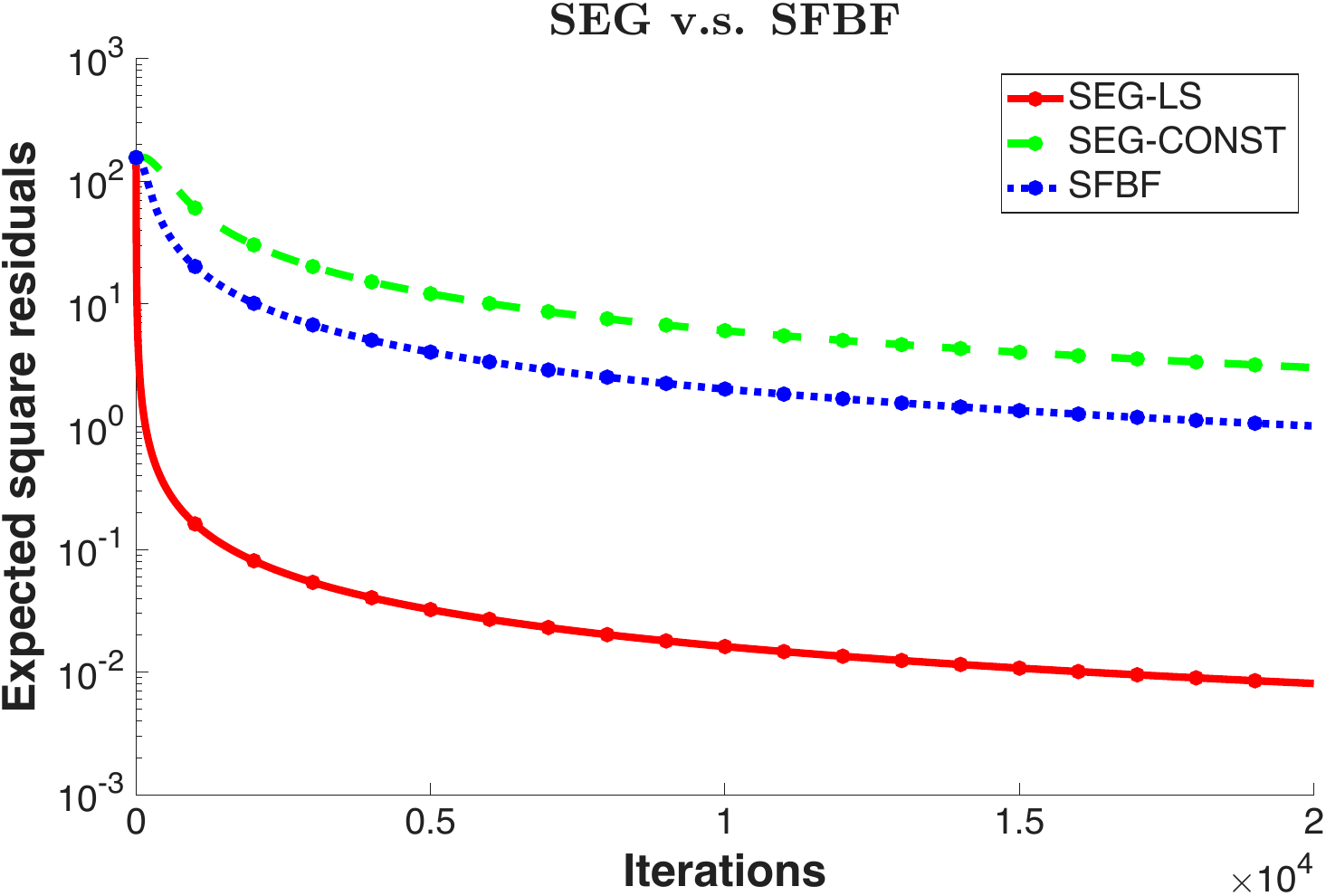}
        \caption{The expected square residuals.}
        \label{SEG-psdmon-residuals}
    \end{subfigure}
    \hfill
    \begin{subfigure}{0.45\textwidth}
        \centering
        \includegraphics[width=\textwidth]{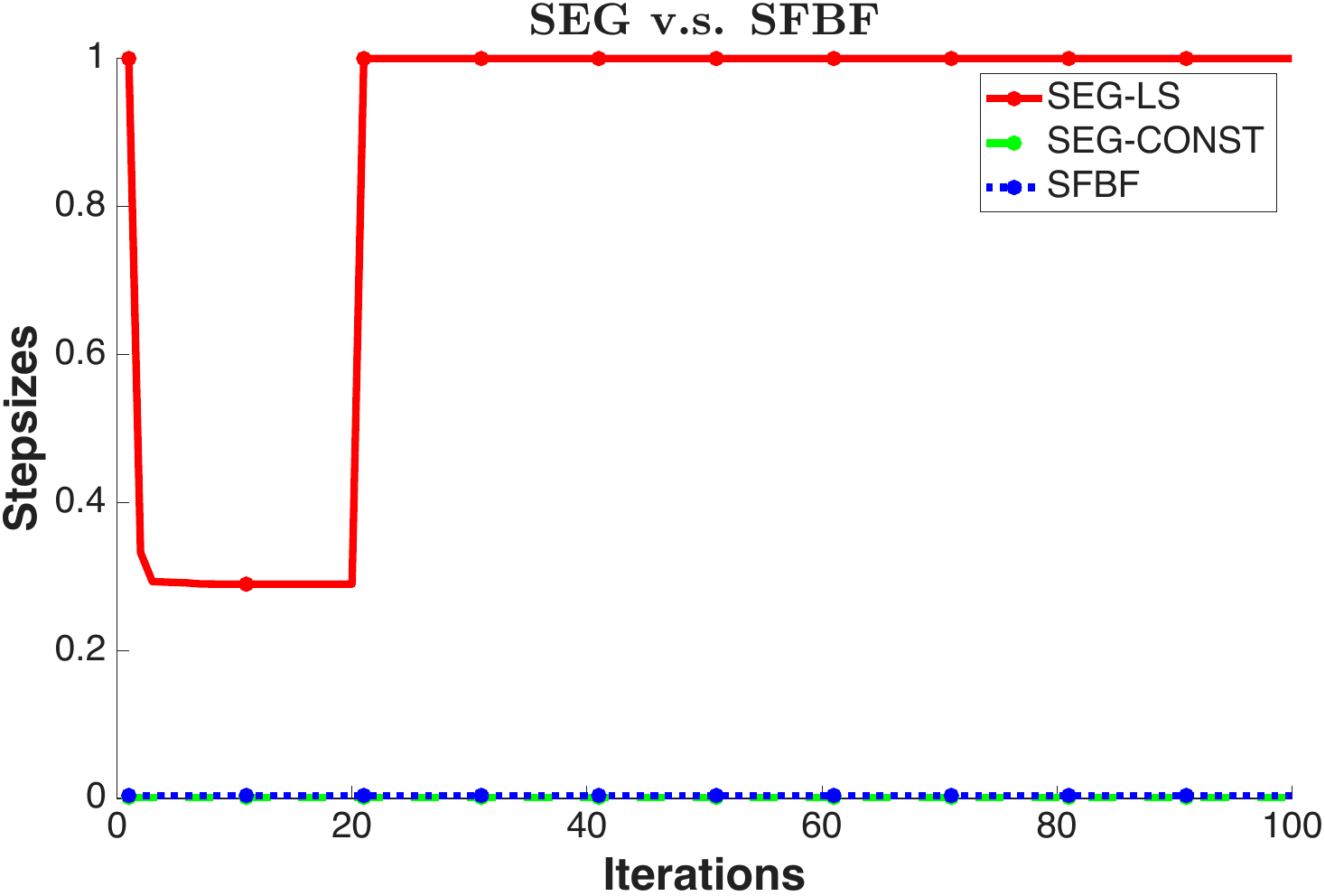}
        \caption{The first $100$ stepsizes.}
        \label{SEG-psdmon-stepsizes}
    \end{subfigure}
    \caption{Two \textbf{SEG} schemes v.s. \textbf{SFBF} for a stochastic fractional problem when $x_{\min} = 0.1$.}
    \label{SEG-psdmon}
\end{figure}

\begin{table}[H]
\centering
{\renewcommand{\arraystretch}{1.0}
\begin{tabular}{@{\hspace{15pt}}c@{\hspace{30pt}}c@{\hspace{30pt}}c@{\hspace{30pt}}c@{\hspace{30pt}}c@{\hspace{20pt}}}
\toprule
\toprule
$x_{\min}$ & $L$ & SEG-LS & SEG-CONST & SFBF \\
\midrule
\midrule
$1$ & $12.5$ & 7.83\text{e}-3 & 2.23\text{e}-1 & 7.29\text{e}-2 \\
$0.75$ & $16.67$ & 7.86\text{e}-3 & 3.27\text{e}-1 & 1.03\text{e}-1 \\
$0.5$ & $25$ & 7.93\text{e}-3 & 5.35\text{e}-1 & 1.51\text{e}-1 \\
$0.25$ & $50$ & 8.01\text{e}-3 & 1.16\text{e}0 & 3.39\text{e}-1 \\
$0.1$ & $125$ & 8.08\text{e}-3 & 3.04\text{e}0 & 1.01\text{e}0 \\
\bottomrule
\bottomrule
\end{tabular}
}
\caption{The expected squared residuals for different values of $x_{\min}$ $(K = 2\texttt{e}4)$.}
\label{SEG-pseudomonotone-table}
\end{table}

Table \ref{SEG-pseudomonotone-table} demonstrate that \texttt{SEG-LS} achieves better performance than \texttt{SEG-CONST} and \texttt{SFBF}, with a more pronounced advantage for sufficiently large values of $L$. Such an advantage stems from its adaptive stepsize selection.

\subsection{Stochastic monotone Nash-Cournot games}\label{Sec-5.2}

In this subsection, we compare two \textbf{SEG} schemes with the relaxed inertial stochastic forward-backward-forward (\textbf{RISFBF}) method \cite{cui-shanbhag-staudigl-vuong-2025} on a stochastic monotone Nash-Cournot game. To this end, we consider the following stochastic $N$-player game
\begin{equation*}
    \min_{x_{i} \in X_{i}} \;\; f_{i}(x_{i}, x_{-i}) \, \triangleq \, \mathbb{E}[\Tilde{c}_{i}(\bxi)]g_{i}(x_{i}) - \mathbb{E}[\Tilde{p}(\bar{x}, \bxi)]x_{i}, \quad \forall i\in [N],
\end{equation*}
where $\mathbb{E}[\tilde{c}_{i}(\boldsymbol{\xi})]g_i(x_i)$ denotes the convex private cost function of player $i$, and $\mathbb{E}[\tilde{p}(\bar{x},\boldsymbol{\xi})]$ denotes the expected linear inverse demand function, with $\tilde{p}(\bar{x}, \xi) \triangleq a(\xi) - b(\xi)\bar{x}$ and $\bar{x} = \sum_{i=1}^{N} x_i$. By computation, we may obtain the concatenation gradient map
\begin{equation*}
    F(x) = ( c_{i} \nabla_{x_i} g_i(x_i))_{i=1}^N - a \mathbf{1} + b(I+ {\bf 1} {\bf 1}^\top) x,
\end{equation*}
where $c_{i} \triangleq \mathbb{E}[\Tilde{c}_{i}(\bxi)]$, $a \triangleq \mathbb{E}[a(\bxi)]$, and $b \triangleq \mathbb{E}[b(\bxi)]$. We specify the parameters below.

(I) \emph{Problem parameters.} In this example, we assume that $N = 6$, $X_{i} = [0, 15]$ for any $i\in [N]$, $\xi \sim \mathrm{U}\:[-4, 4]$, $\Tilde{c}_{i}(\xi) = 1 + \xi/20$ for any $i\in [N]$, $a(\xi) = 10 + \xi/5$ and $b(\xi) = 1 + \xi/10$. Let $g_{i}$ be an increasing convex function given by $g_{i}(x_{i}) \triangleq 3x_{i} + 2\sqrt{(x_{i}-7.5)^{2} + \mu^{-2}}$ where $\mu > 0$ is a parameter. By computation, we may obtain the Lipschitz continuity constant $L(\mu) = 2\mu + 0.7$. Table \ref{SEG-monotone-table} reports the Lipschitz continuity constants $L$ corresponding to different choices of $\mu$.

(II) \emph{Implementation.} In this experiment, we set $c_{N} = 0.003$ hence $N_{k} = \lceil c_{N} K \rceil = \lceil 0.003 K \rceil$. With regard to the backtracking linesearch procedure, we set the parameters $\alpha = 0.1$ and $\beta = 0.9$. We compare two \textbf{SEG} schemes with different stepsizes and \textbf{RISFBF} scheme.

(1) \texttt{SEG-LS}: \textbf{SEG-LS} with parameters $t^{\mathrm{upp}} = 1$, $c_{p} = 0.001$ hence $\{ p_{j} \} = \lceil K/1000 \rceil$.

(2) \texttt{SEG-CONST}: SEG scheme with $\mathcal{O}(\frac{1}{L})$ constant stepsize. Here we set the stepsizes as $t_k \equiv \frac{1}{4L}$.

(3) \texttt{RISFBF}: The \textbf{RISFBF} scheme produces a sequence $\{ (x^{k}, y^{k}, z^{k}) \}_{k=0}^{K}$ defined as
\begin{equation}
    \begin{aligned}
        z^{k} & \, \triangleq \, x^{k} + \alpha_{k} (x^{k} - x^{k-1}), \\
        y^{k} & \, \triangleq \, \Pi_{X} [ z^{k} - t_{k} \bar{F}_{k}(z^{k}, \xi^{k}) ], \\
        x^{k+1} & \, \triangleq \, (1-\rho_{k}) z^{k} + \rho_{k} [ y^{k} + t_{k} ( \bar{F}_{k}(z^{k}, \xi^{k}) - \bar{F}_{k}(y^{k}, \zeta^{k})) ],
    \end{aligned} \tag{RISFBF}
\end{equation}
where $\{ \alpha_{k} \}_{k=0}^{K}$ is a nonnegative inertia sequence, $\{ t_{k} \}_{k=0}^{K}$ is a positive sequence of stepsizes that is bounded by some reciprocal of the Lipschitz constant $L$, and $\{ \rho_{k} \}_{k=0}^{K}$ is a nonnegative relaxation sequence. As with the \textbf{SFBF} method, the \textbf{RISFBF} method requires only one projection step per iteration. We continue to use the mini-batch size $N_{k} = \lceil c_N K\rceil$ and adopt parameters similar to those in \cite{cui-shanbhag-staudigl-vuong-2025}. Specifically, we set $t_{k} \equiv \frac{1}{4L}$, $\alpha_{k} = \alpha_{0}(1-\frac{1}{k+1})$ where $\alpha_{0} = 0.2$, and $\rho_{k} = \frac{3(1-\alpha_{0})^{2}}{2(2\alpha^{2}_{k}-\alpha_{k}+1)(1+Lt_{k})}$.

We choose the initialization as $x^{0} = 4\mathbf{e}_{6}$ and set the total iteration budget to $K = 2\texttt{e}4$. When $\mu = 500$, the expected square residuals and the first $100$ stepsizes are shown in Figure \ref{SEG-RISFBF-residuals} and Figure \ref{SEG-RISFBF-stepsizes}, respectively, averaged over $20$ sample paths.

\begin{figure}
    \centering
    \begin{subfigure}{0.45\textwidth}
        \centering
        \includegraphics[width=\textwidth]{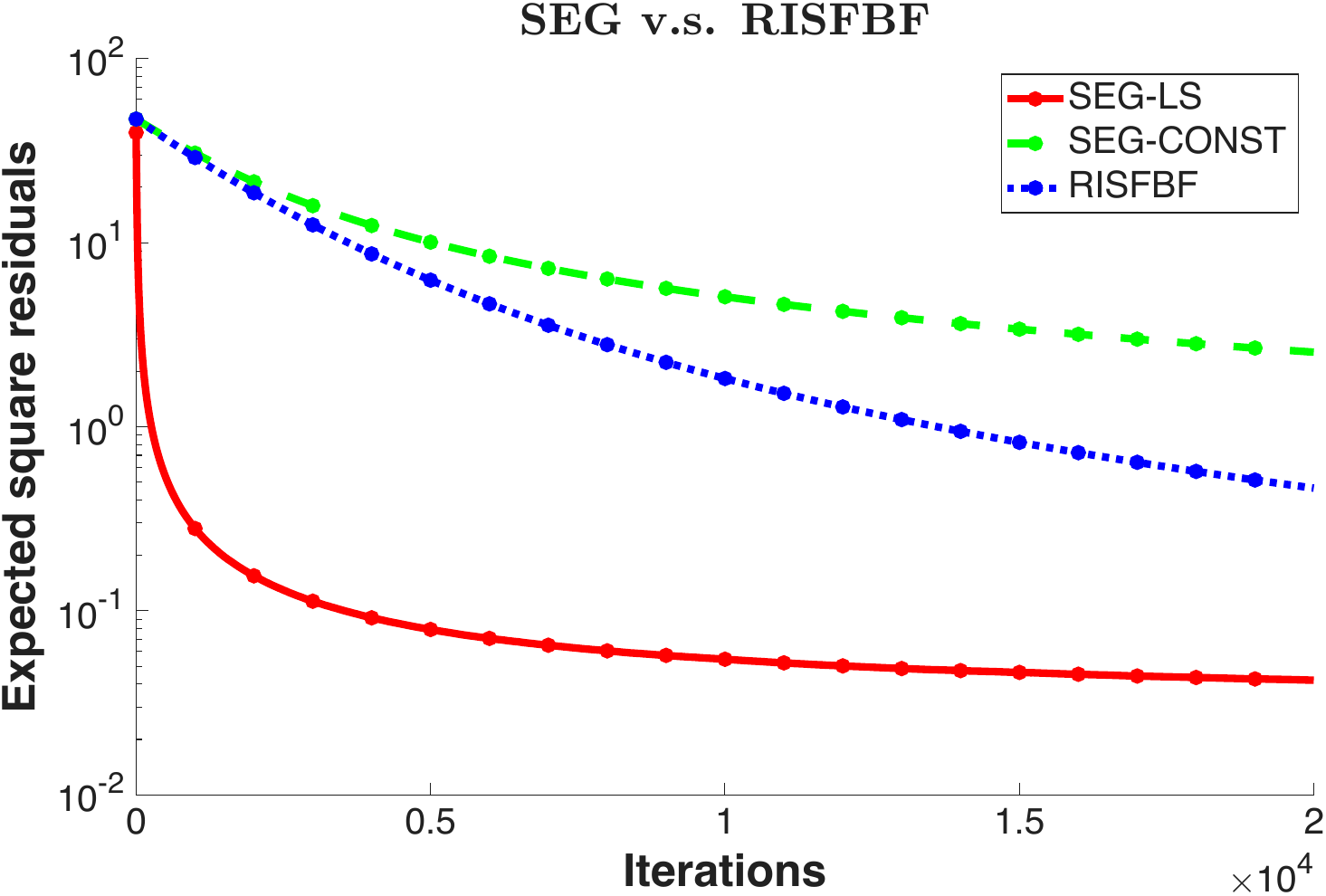}
        \caption{The expected square residuals.}
        \label{SEG-RISFBF-residuals}
    \end{subfigure}
    \hfill
    \begin{subfigure}{0.45\textwidth}
        \centering
        \includegraphics[width=\textwidth]{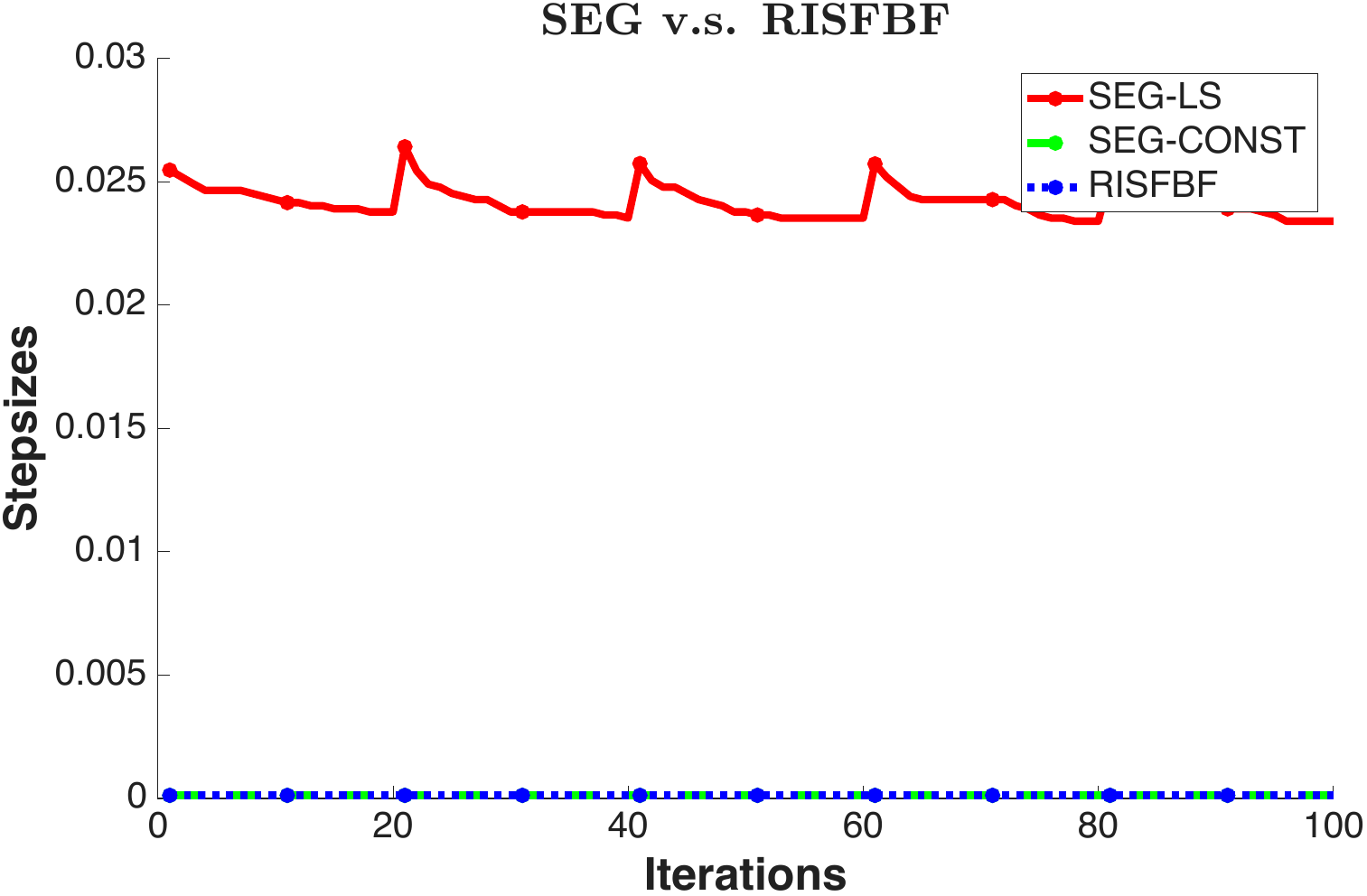}
        \caption{The first $100$ stepsizes.}
        \label{SEG-RISFBF-stepsizes}
    \end{subfigure}
    \caption{Two \textbf{SEG} schemes v.s. \textbf{RISFBF} for a stochastic monotone Cournot game when $\mu = 1000$.}
    \label{SEG-mon}
\end{figure}

\begin{table}[t]
\centering
{\renewcommand{\arraystretch}{1.0}
\begin{tabular}{@{\hspace{15pt}}c@{\hspace{30pt}}c@{\hspace{30pt}}c@{\hspace{30pt}}c@{\hspace{30pt}}c@{\hspace{20pt}}}
\toprule
\toprule
$\mu$ & $L$ & SEG-LS & SEG-CONST & RISFBF \\
\midrule
\midrule
$10$ & $20.7$ & 3.87\text{e}-2 & 4.28\text{e}-2 & 7.89\text{e}-5 \\
$50$ & $100.7$ & 4.19\text{e}-2 & 1.29\text{e}-1 & 1.17\text{e}-3 \\
$100$ & $200.7$ & 4.81\text{e}-2 & 2.56\text{e}-1 & 4.77\text{e}-3 \\ 
$500$ & $1000.7$ & 4.87\text{e}-2 & 1.27\text{e}0 & 1.17\text{e}-1 \\
$1000$ & $2000.7$ & 4.95\text{e}-2 & 2.54\text{e}0 & 4.65\text{e}-1 \\
\bottomrule
\bottomrule
\end{tabular}
}
\caption{The expected squared residuals for different values of $\mu$ $(K = 2\texttt{e}4)$.}
\label{SEG-monotone-table}
\end{table}

We may observe from Figure \ref{SEG-mon} and Table \ref{SEG-monotone-table} that \texttt{SEG-LS} outperforms \texttt{SEG-CONST} and \texttt{RISFBF}, particularly when $L$ is sufficiently large. This improvement can be attributed to the adaptive stepsize update mechanism of \texttt{SEG-LS}. However, when $L$ is small, \texttt{SEG-LS} and \texttt{SEG-CONST} have the similar performances, whereas \texttt{RISFBF} exhibits the best convergence behavior.

\subsection{Strictly copositive congestion games under uncertainty}\label{Sec-5.3}

In this subsection, we consider a stochastic $N$-player strictly copositive congestion game from \cite[Section 6.3]{xiao-shanbhag-2025-GR}, where the $i$th player solves the convex optimization problem
\begin{align*}
    \max_{x_i \in X_i} \;\; f_{i}(x_{i}, x_{-i}) \, \triangleq \, U_{i}(x_{i}) - x_{i} \sum_{j \neq i} g_{j}(x_{j}), \quad \forall i\in [N].
\end{align*}
The corresponding catenation map $F(x) \triangleq (\nabla_{x_{i}} f_{i}(x_{i}, x_{-i}))_{i=1}^{N}$ may be potentially non-monotone. 

(I) \emph{Problem parameters.} We consider the same setting as \cite{xiao-shanbhag-2025-GR}. Suppose there are $N = 8$ players. Assume that $X_{i} = [0, 20]$ for every $i\in [N]$,  $U_{i}(x_{i}) = -\frac{1}{2}x^{2}_{i} + \mathbb{E}[80+\bxi]x_{i}$ where $\bxi \sim \mathrm{U} \: [-4, 4]$, and $g_{j}(x_{j}) \triangleq \tfrac{1}{100}(x_{j}-10)^{2} + 10$ for each $j\neq i$. By \cite{xiao-shanbhag-2025-GR}, the strict copositivity required in Lemma \ref{copositivity_2} holds at the unique NE $x^{\ast} = (10, \cdots, 10)^{\top}$.

(II) \emph{Implementation.} In this experiment, we compare the performances of our \textbf{B-SLAM} scheme with \textbf{SAMGR} scheme in \cite{xiao-shanbhag-2025-GR} with three different constant stepsizes. With regard to the backtracking linesearch procedure in both schemes, we set the parameters $\alpha = 0.1$ and $\beta = 0.9$. The algorithmic parameters are specified as follows.

(1) \texttt{SAMGR}: \textbf{SAMGR} with three different constant stepsizes 
$\gamma^{0} \in \{0.10,\,0.15,\,0.20\}$. All other parameter settings are the same as those in \cite{xiao-shanbhag-2025-GR}.

(2) \texttt{B-SLAM}: \textbf{B-SLAM} with parameters $t^{\mathrm{upp}} = 1$, $c_{p} = 0.02$ hence $\{ m_{k} \} = \lceil K/50 \rceil$.

\begin{figure}[h]
    \centering
    \begin{subfigure}{0.45\textwidth}
        \centering
        \includegraphics[width=\textwidth]{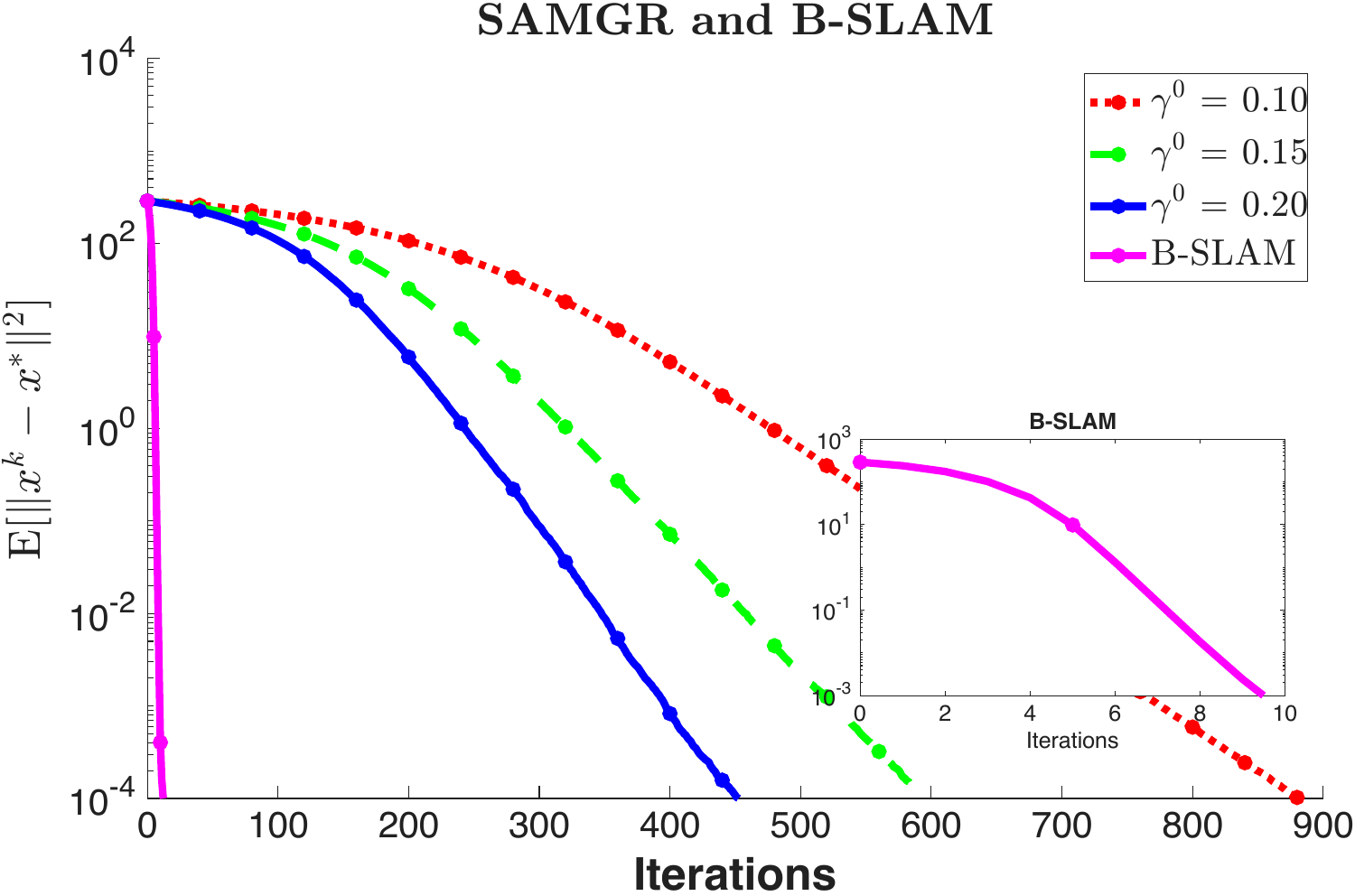}
        \caption{\textbf{SAMGR} and \textbf{B-SLAM}}
        \label{SAMGR-SLAM}
    \end{subfigure}
    \hfill
    \begin{subfigure}{0.45\textwidth}
        \centering
        \includegraphics[width=\textwidth]{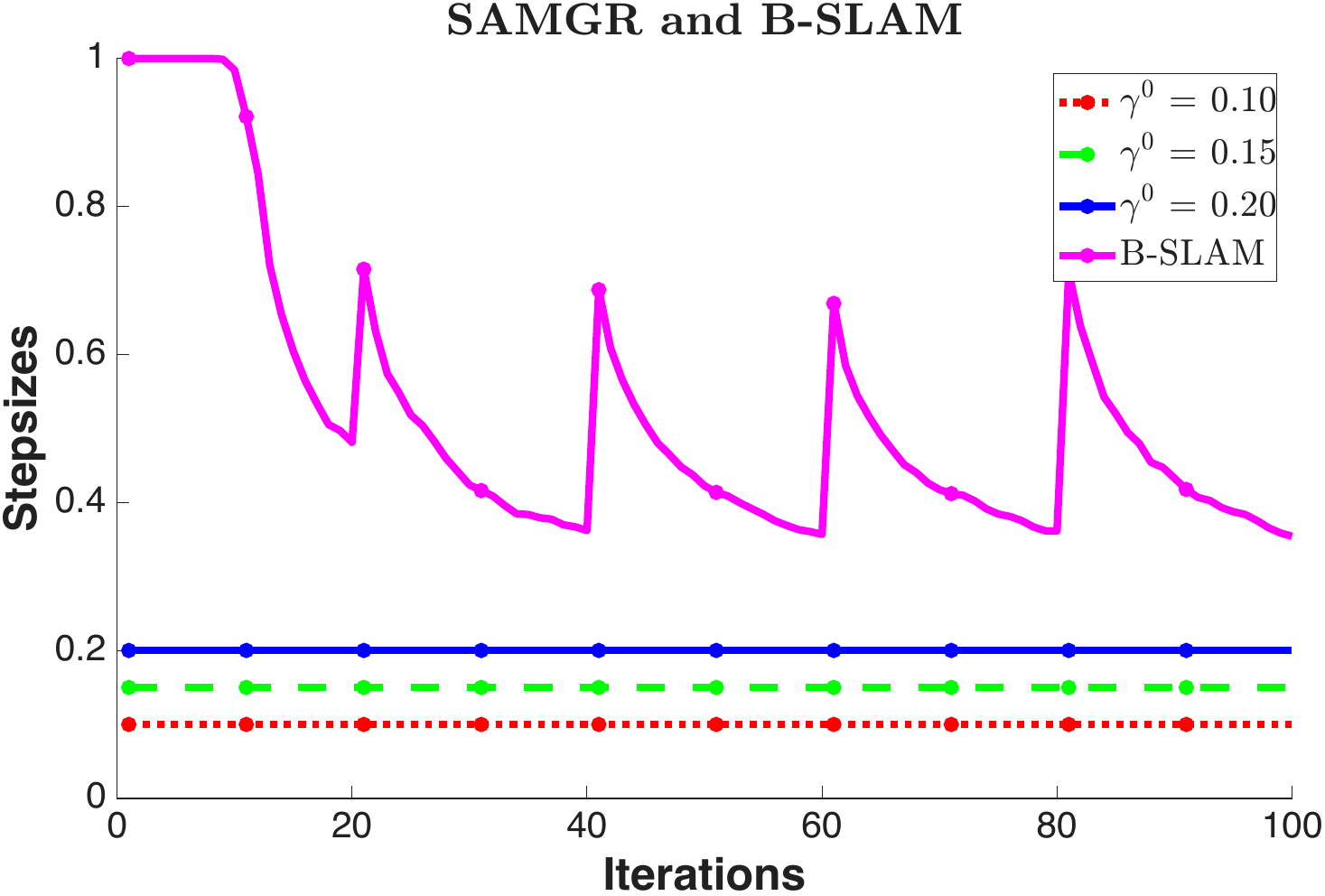}
        \caption{The first $100$ stepsizes.}
        \label{SAMGR-SLAM-stepsizes}
    \end{subfigure}
    \caption{\textbf{SAMGR} and \textbf{B-SLAM} for the stochastic network congestion problem.}
    \label{SAMGR-BSLAM-comparison-figure}
\end{figure}

For both schemes, we initialize at $x^{0} = 4\mathbf{e}_{8}$ and set the total iteration budget to $K = 1\texttt{e}3$. Due to the unavailability of $\nabla \theta_{c}(x)$, we use $\mathbb{E}[\| x^{k} - x^{\ast} \|^{2}]$ as the convergence measure. The expected square residuals and stepsize sequences at the first $100$ iterations are shown in Figure \ref{SAMGR-SLAM} and Figure \ref{SAMGR-SLAM-stepsizes}, respectively, averaged over $20$ sample paths.

As illustrated in Figure \ref{SAMGR-SLAM}, our \textbf{B-SLAM} significantly outperforms \textbf{SAMGR}. It can be observed that our \textbf{B-SLAM} achieves an accuracy of $10^{-3}$ within $10$ iterations, which is mainly attributed to the adaptive stepsize nature, as shown in Figure \ref{SAMGR-SLAM-stepsizes}. It can be observed from Figure \ref{SAMGR-SLAM-stepsizes} that the final accepted stepsize is larger than the lower bound $t^{\mathrm{low}}$ at the 20th iteration, but equals $t^{\mathrm{low}}$ at the 40th, 60th, 80th, and 100th iterations.

\section{Conclusion}\label{Sec-6}

Extant stochastic approximation schemes for solving SVI problems are subject to several limitations. In particular, such schemes often rely on constant or diminishing steplength sequences that depend on prior knowledge of the Lipschitz constant of the map, potentially resulting in overly conservative stepsizes and degraded empirical performance. In this work, we introduce the periodically restarted linesearch and address these challenges by developing stochastic extragradient  and gradient schemes equipped with linesearch for solving SVI problems. Specifically, we propose a parameter-free \textbf{SEG-LS} scheme, wherein the resulting stepsize sequence is adaptive, non-monotone, and does not require knowledge of the Lipschitz constant. We further consider a stochastic biased linesearch procedure and address non-monotone SVIs under strict copositivity by proposing a biased \textbf{SLAM} (\textbf{B-SLAM}) scheme, which also ensures provable convergence. Preliminary numerical results support the theoretical findings and demonstrate the practical promise of the proposed schemes. Future work will focus on extending these linesearch-enabled approaches to broader classes of stochastic equilibrium problems. 

\bibliographystyle{plain}
\bibliography{references}
 
\end{document}